\newcommand{\nc}{\newcommand}
\newcommand{\rnc}{\renewcommand}
\nc{\exto}[1]{\stackrel{#1}{\longrightarrow}}
\nc{\dlim}{{\mathop{\lim\limits_{\longrightarrow}}}}
\nc{\lan}{\big\langle}
\nc{\ran}{\big\rangle}
\nc{\kk}{{\mathsf{k}}}
\nc{\ix}{{\mathsf{i}}}
\nc{\jx}{{\mathsf{j}}}
\nc{\C}{{\mathbb{C}}}
\nc{\HH}{{\mathbb{H}}}
\nc{\LL}{{\mathbb{L}}}
\nc{\PP}{{\mathbb{P}}}
\nc{\QQ}{{\mathbb{Q}}}
\nc{\RR}{{\mathbb{R}}}
\nc{\TT}{{\mathbb{T}}}
\rnc{\SS}{{\mathbb{S}}}
\nc{\ZZ}{{\mathbb{Z}}}
\nc{\CA}{{\mathcal{A}}}
\nc{\CB}{{\mathcal{B}}}
\nc{\CC}{{\mathcal{C}}}
\nc{\D}{{\mathcal{D}}}
\nc{\CE}{{\mathcal{E}}}
\nc{\CF}{{\mathcal{F}}}
\nc{\CG}{{\mathcal{G}}}
\nc{\CH}{{\mathcal{H}}}
\nc{\CJ}{{\mathcal{J}}}
\nc{\CK}{{\mathcal{K}}}
\nc{\CL}{{\mathcal{L}}}
\nc{\CM}{{\mathcal{M}}}
\nc{\CMI}{{\mathcal{MI}}}
\nc{\CMF}{{\mathcal{MF}}}
\nc{\CMFI}{{\mathcal{MFI}}}
\nc{\CMFB}{{\mathcal{MFB}}}
\nc{\CN}{{\mathcal{N}}}
\nc{\CO}{{\mathcal{O}}}
\nc{\CQ}{{\mathcal{Q}}}
\nc{\CP}{{\mathcal{P}}}
\nc{\CR}{{\mathcal{R}}}
\nc{\CS}{{\mathcal{S}}}
\nc{\CT}{{\mathcal{T}}}
\nc{\CU}{{\mathcal{U}}}
\nc{\CV}{{\mathcal{V}}}
\nc{\CW}{{\mathcal{W}}}
\nc{\CX}{{\mathcal{X}}}
\nc{\CY}{{\mathcal{Y}}}
\nc{\CZ}{{\mathcal{Z}}}
\nc{\CMo}{{\mathcal{M}^\circ}}
\nc{\Co}{{{C}^\circ}}
\nc{\BY}{{\overline{Y}}}
\nc{\BYD}{{\overline{Y}{}^{|D|}}}
\nc{\OZ}{{\overline{Z}}}
\nc{\bg}{{\bar{g}}}
\nc{\bq}{{\mathbf{q}}}
\nc{\BD}{{\mathbf{D}}}
\nc{\BG}{{\mathbf{G}}}
\nc{\BM}{{\mathbf{M}}}
\nc{\BP}{{\mathbf{P}}}
\nc{\BZ}{{\mathbf{Z}}}
\nc{\BPr}{{\mathsf{P}}}
\nc{\BR}{{\mathbf{R}}}
\nc{\BRO}[1]{{{\mathbf{R}}^{\circ}_{#1}}}
\nc{\BRD}[1]{{{\mathbf{R}}^{|D|}_{#1}}}
\nc{\BRP}[1]{{{\mathbf{R}}^{1}_{#1}}}
\nc{\BRTP}[1]{{{\mathbf{\tilde{R}}}{}^{1}_{#1}}}
\nc{\BS}{{\mathbf{S}}}
\nc{\BMS}{{{\mathbf{M}}^{{s}}}}
\nc{\BMSS}{{{\mathbf{M}}^{{ss}}}}
\nc{\BMZ}{{\mathbf{M}^{\circ}}}
\nc{\BCL}{{\mathbf{L}}}
\nc{\PCC}{{{}^\perp\CC}}
\nc{\ch}{{\mathsf{ch}}}
\nc{\td}{{\mathsf{td}}}
\nc{\Cl}{{\mathsf{Cliff}}}
\nc{\Clev}{{\mathop{\mathsf{Cliff}}^{\circ}}}
\nc{\FA}{{\mathfrak{A}}}
\nc{\FB}{{\mathfrak{B}}}
\nc{\FF}{{\mathfrak{F}}}
\nc{\FI}{{\mathfrak{I}}}
\nc{\FZ}{{\mathfrak{Z}}}
\nc{\TFA}{{\tilde{\mathfrak{A}}}}
\nc{\TFB}{{\tilde{\mathfrak{B}}}}
\nc{\fa}{{\mathfrak{a}}}
\nc{\fg}{{\mathfrak{g}}}
\nc{\fp}{{\mathfrak{p}}}
\nc{\FD}{{\mathfrak{D}}}
\nc{\FE}{{\mathfrak{E}}}
\nc{\FL}{{\mathfrak{L}}}
\nc{\FM}{{\mathfrak{M}}}
\nc{\FR}{{\mathfrak{R}}}
\nc{\FS}{{\mathsf{S}}}
\nc{\sfc}{{\mathsf{c}}}
\nc{\sfch}{{\mathsf{ch}}}
\nc{\sfh}{{\mathsf{h}}}
\nc{\SD}{{\mathsf{D}}}
\nc{\SK}{{\mathsf{K}}}
\nc{\SO}{{\mathsf{O}}}
\nc{\SQ}{{\mathsf{Q}}}
\nc{\SPV}{{\mathsf{S}^+\mathsf{V}}}
\nc{\SMV}{{\mathsf{S}^-\mathsf{V}}}
\nc{\SPMV}{{\mathsf{S}^\pm\mathsf{V}}}
\nc{\SX}{{S_X}}
\nc{\SY}{{S_Y}}
\nc{\phipsi}{{q}}
\nc{\eps}{\varepsilon}
\nc{\pim}{{\pi_-}}
\nc{\pip}{{\pi_+}}
\nc{\BE}{{{\mathbf E}}}
\nc{\TD}{{\widetilde{\D}}}
\nc{\TFD}{{\widetilde{\FD}}}
\nc{\TE}{{\tilde{E}}}
\nc{\TQ}{{\tilde{Q}}}
\nc{\TCA}{{\tilde{\CA}}}
\nc{\TCF}{{\tilde{\CF}}}
\nc{\TCG}{{\tilde{\CG}}}
\nc{\TCH}{{\tilde{\CH}}}
\nc{\TCL}{{\tilde{\CL}}}
\nc{\TF}{{\tilde{F}}}
\nc{\TW}{{\tilde{W}}}
\nc{\TCB}{{\widetilde{\CB}}}
\nc{\TCC}{{\tilde{\CC}}}
\nc{\TCX}{{\tilde{\CX}}}
\nc{\TCY}{{\tilde{\CY}}}
\nc{\TPhi}{{\tilde{\Phi}}}
\nc{\OPhi}{{\bar{\Phi}}}
\nc{\txi}{{\tilde{\xi}}}
\nc{\tp}{{\tilde{p}}}
\nc{\tq}{{\tilde{q}}}
\nc{\tzeta}{{\tilde{\zeta}}}
\nc{\tpi}{{\tilde{\pi}}}
\nc{\HCB}{{\widehat{\CB}}}
\nc{\HCU}{{\widehat{\CU}}}
\nc{\HE}{{\widehat{\CE}}}
\nc{\HS}{{\widehat{S}}}
\nc{\HX}{{\hat{X}}}
\nc{\HY}{{\hat{Y}}}
\nc{\HZ}{{\hat{Z}}}
\nc{\hxi}{{\hat{\xi}}}
\nc{\UH}{{\mathcal{H}}}
\nc{\TM}{{\widetilde{M}}}
\nc{\TCM}{{\widetilde{\CM}}}
\nc{\TS}{{\widetilde{S}}}
\nc{\TU}{{\widetilde{U}}}
\nc{\TX}{{\widetilde{X}}}
\nc{\TY}{{\widetilde{Y}}}
\nc{\TZ}{{\widetilde{Z}}}
\nc{\TYO}{{{\widetilde{Y}}^\circ}}
\nc{\barf}{{\bar{f}}}
\nc{\te}{{\tilde{e}}{}}
\nc{\tf}{{\tilde{f}}}
\nc{\tg}{{\tilde{g}}}
\nc{\ti}{{\tilde{\imath}}}
\nc{\tj}{{\tilde{\jmath}}}
\nc{\ty}{{\tilde{y}}}
\nc{\tphi}{{\tilde{\phi}}}
\nc{\hf}{{\hat{f}}}
\nc{\urho}{{\underline{\rho}}}
\nc{\LRA}{\Leftrightarrow}
\nc{\RA}{\Rightarrow}
\nc{\lotimes}{\mathbin{\mathop{\otimes}\limits^{\mathbb{L}}}}
\nc{\CEnd}{\mathop{\mathcal{E}\mathit{nd}}\nolimits}
\nc{\CExt}{\mathop{\mathcal{E}\mathit{xt}}\nolimits}
\nc{\CHom}{\mathop{\mathcal{H}\mathit{om}}\nolimits}
\nc{\RH}{\mathop{{\mathsf{R}}\Gamma}\nolimits}
\nc{\RGamma}{\mathop{{\mathsf{R}}\Gamma}\nolimits}
\nc{\RHom}{\mathop{\mathsf{RHom}}\nolimits}
\nc{\RCHom}{\mathop{\mathsf{R}\mathcal{H}\mathit{om}}\nolimits}
\nc{\RG}{\mathop{\mathsf{R\Gamma}}\nolimits}
\nc{\Hom}{\mathop{\mathsf{Hom}}\nolimits}
\nc{\Ext}{\mathop{\mathsf{Ext}}\nolimits}
\nc{\End}{\mathop{\mathsf{End}}\nolimits}
\nc{\Tor}{\mathop{\mathsf{Tor}}\nolimits}
\nc{\Tordim}{\mathop{\mathsf{Tor}\text{\rm-}\mathsf{dim}}\nolimits}
\nc{\Hilb}{\mathop{\mathsf{Hilb}}\nolimits}
\nc{\Spec}{\mathop{\mathsf{Spec}}\nolimits}
\nc{\Proj}{\mathop{\mathsf{Proj}}\nolimits}
\nc{\Pic}{\mathop{\mathsf{Pic}}\nolimits}
\renewcommand{\Im}{\mathop{\mathsf{Im}}\nolimits}
\nc{\Tw}{\mathop{\mathsf{Tw}}\nolimits}
\nc{\Cone}{\mathop{\mathsf{Cone}}\nolimits}
\nc{\Ker}{\mathop{\mathsf{Ker}}\nolimits}
\nc{\Coker}{\mathop{\mathsf{Coker}}\nolimits}
\nc{\codim}{\mathop{\mathsf{codim}}\nolimits}
\nc{\sing}{{\mathsf{sing}}}
\nc{\supp}{\mathop{\mathsf{supp}}}
\nc{\perf}{{\mathsf{perf}}}
\nc{\rank}{\mathop{\mathsf{rank}}}
\nc{\Pf}{{\mathsf{Pf}}}
\nc{\Gr}{{\mathsf{Gr}}}
\nc{\OGr}{{\mathsf{OGr}}}
\nc{\SGr}{{\mathsf{SGr}}}
\nc{\Flag}{{\mathsf{Fl}}}
\nc{\Kosz}{{\mathsf{Kosz}}}
\nc{\LGr}{{\mathsf{LGr}}}
\nc{\GTGr}{{\mathsf{G_2Gr}}}
\nc{\GT}{{\mathsf{G_2}}}
\nc{\GTF}{{\mathsf{G_2F}}}
\nc{\OF}{{\mathsf{OF}}}
\nc{\Fl}{{\mathsf{Fl}}}
\nc{\Bl}{{\mathsf{Bl}}}
\nc{\GL}{{\mathsf{GL}}}
\nc{\PGL}{{\mathsf{PGL}}}
\nc{\SL}{{\mathsf{SL}}}
\nc{\SP}{{\mathsf{Sp}}}
\nc{\Spin}{{\mathsf{Spin}}}
\nc{\Tot}{{\mathsf{Tot}}}
\nc{\ev}{{\mathsf{ev}}}
\nc{\od}{{\mathsf{odd}}}
\nc{\norm}{{\mathop{\mathrm{norm}}}}
\nc{\coev}{{\mathsf{coev}}}
\nc{\id}{{\mathsf{id}}}
\nc{\opp}{{\mathsf{opp}}}
\nc{\PS}{{{\PP^3}}}
\nc{\Qu}{{\mathsf{Q}}}
\nc{\tdim}{\mathop{\Tor\dim}}
\nc{\ecart}{{\fbox{$\scriptstyle\mathsf{EC}$}}}
\nc{\ad}{{\mathop{\mathsf ad}}}
\nc{\gr}{{\mathop{\mathsf gr}}}
\nc{\qgr}{{\mathop{\mathsf qgr}}}
\nc{\tor}{{\mathop{\mathsf tor}}}
\rnc{\mod}{{\mathop{\mathsf mod}}}
\nc{\Mod}{{\mathop{\mathsf Mod}}}
\nc{\Coh}{{\mathop{\mathsf Coh}}}
\nc{\Ab}{{\mathop{\mathcal{A}\mathit{b}}}}
\nc{\QCoh}{{\mathop{\mathsf QCoh}}}
\nc{\AAV}{{\mathcal{AAV}}}
\nc{\Rep}{{\mathsf{Rep}}}
\nc{\Cubics}{{{\mathcal{S}}_3}}
\nc{\VFT}{{{\mathcal{S}}_{14}}}
\nc{\VFTE}{{{\mathcal{N}}_{\mathrm{reg,sm}}}}
\nc{\MX}{{\CM_X}}
\nc{\MY}{{\CM_Y}}
\nc{\MYE}{{\CM_{Y,\CE}}}
\nc{\Yd}{{Y_d}}
\nc{\Yfive}{{Y_5}}
\nc{\Xg}{{X_{2g-2}}}
\nc{\Xtt}{{X_{22}}}
\nc{\Xst}{{X_{16}}}
\nc{\Xtw}{{X_{12}}}
\nc{\Xe}{{X_{8}}}
\nc{\Xf}{{X_{4}}}
\nc{\git}{{/\!\!/\!{}_\chi}}
\theoremstyle{plain}
\newtheorem{theorem}{Theorem}[section]
\newtheorem{conjecture}[theorem]{Conjecture}
\newtheorem{lemma}[theorem]{Lemma}
\newtheorem{proposition}[theorem]{Proposition}
\newtheorem{corollary}[theorem]{Corollary}
\theoremstyle{definition}
\newtheorem{definition}[theorem]{Definition}
\theoremstyle{remark}
\newtheorem{remark}[theorem]{Remark}
\title{Instanton bundles on Fano threefolds}
\author{Alexander Kuznetsov}
\subjclass[2010]{14J60, 14F05}
\address{\sloppy
\parbox{0.9\textwidth}{
Algebra Section, Steklov Mathematical Institute,
8 Gubkin str., Moscow 119991 Russia
\hfill\\[5pt]
The Poncelet Laboratory, Independent University of Moscow
\hfill\\[5pt]
Laboratory of Algebraic Geometry, SU-HSE\\[5pt]
}}
\email{akuznet@mi.ras.ru}
\date{}
\thanks{I was partially supported by
RFFI grants 10-01-93110, 10-01-93113, 11-01-00393, 11-01-00568, \hbox{11-01-92613-KO-a}, NSh-5139.2012.1,
the grant of the Simons foundation, and
by AG Laboratory SU-HSE, RF government grant, ag.11.G34.31.0023.}
\begin{document}

\begin{abstract}
We introduce the notion of an instanton bundle on a Fano threefold of index 2.
For such bundles we give an analogue of a monadic description and discuss
the curve of jumping lines. The cases of threefolds of degree 5 and 4 are
considered in a greater detail.
\end{abstract}

\maketitle

\section{Introduction}

The moduli space of stable bundles on the projective space $\PP^3$ is an important object of investigation
in algebraic geometry. Especially important subclass of stable bundles is constituted by the so-called {\em mathematical instanton bundles}.
By definition a mathematical instanton on $\PP^3$ is a stable vector bundle $E$ of rank 2 with $c_1(E) = 0$
and with the property that
$$
H^1(\PP^3,E(-2)) = 0,
$$
known as the {\em instantonic condition}. The second Chern class, $c_2(E)$ is known as the {\em charge},
or the {\em topological charge} of the instanton $E$.

Originally, instanton bundles appeared in the seminal work of Atiyah--Drinfeld--Hitchin--Manin \cite{ADHM}
as a way to describe Yang--Mills instantons on a four-sphere $S^4$ which play an important role in Yang--Mills
gauge theory. Since then they attracted a lot of attention, especially the questions like smoothness and connectedness
of their moduli space and different approaches to their construction were considered. Also a number of generalizations
of instantons appeared, such as instantons on higher-dimensional projective spaces \cite{OS,ST}
(in particular symplectic instantons) and noncommutative instantons \cite{KKO}.

The goal of this paper is to introduce another (in a way more direct) generalization of instantons.
Instead going to higher dimensions, or into the noncommutative world, we suggest just to replace
$\PP^3$ with another Fano threefold. In doing so we note that the line bundle $\CO_{\PP^3}(-2)$
appearing in the instantonic condition is nothing but the square root of the canonical bundle, so as soon
as we have a Fano threefold with canonical class being a square we can consider instantons on it.
This attracts our attention to Fano threefolds of index $2$.

Here we should also mention an independent paper of Daniele Faenzi~\cite{Fa},
which also discusses a generalization of instanton bundles to Fano threefolds,
especially to those with trivial third Betti number. In particular, the results
obtained in {\em loc. cit.} for the Fano threefold of index 2 and degree 5 and 4
are very close to the results in the present paper.

Recall that the index of a Fano manifold is the maximal integer dividing its canonical class.
By Fano--Iskovskikh--Mukai classification the index of a Fano threefold is bounded by $4$,
with $\PP^3$ being the only index $4$ variety, and the quadric $Q^3$ the only index $3$ variety.
Among the Fano threefolds of index $2$ the most important are those with Picard number 1.
Given such a threefold $Y$ we denote by $\CO_Y(1)$ the ample generator of the Picard group.
Then the canonical bundle of $Y$ is $\CO_Y(-2)$ and $\CO_Y(-1)$ is its square root.
So, we have the following

\begin{definition}[\cite{K03}]
Let $Y$ be a Fano threefold of index $2$.
An {\sf instanton bundle}\/ on $Y$ is a stable vector bundle $E$ of rank $2$ with $c_1(E) = 0$ such that
\begin{equation}\label{ins}
H^1(Y,E(-1)) = 0.
\end{equation}
The integer $c_2(E)$ is called the {\sf (topological) charge}\/ of the instanton $E$.
\end{definition}

The goal of this paper is to show that instantons on Fano threefolds of index 2 share many properties
of usual instantons. So, their investigation, interesting by itself, may be helpful for further
study of instantons on $\PP^3$. To be more precise we will concentrate on the following two issues:
the {\em monadic construction} and the {\em Grauert--M\"ulich Theorem}.

Recall that every instanton of charge $n$ on $\PP^3$ can be represented as the cohomology
in the middle term of a self-dual three-term complex
$$
\CO_{\PP^3}(-1)^{n} \to \CO_{\PP^3}^{2n+2} \to \CO_{\PP^3}(1)^{n}
$$
(known as a {\em monad}). The reason for this is a relatively simple structure of the bounded derived category $\D^b(\PP^3)$
of coherent sheaves on $\PP^3$. This category is known to have many full exceptional collections,
the most convenient for our question is the collection $(\CO_{\PP^3}(-1),\CO_{\PP^3},\CO_{\PP^3}(1),\CO_{\PP^3}(2))$.
The instantonic condition implies (by stability and Serre duality) that any instanton lies in the right orthogonal
to $\CO_{\PP^3}(2)$, which is the subcategory of $\D^b(\PP^3)$ generated by $\CO_{\PP^3}(-1)$, $\CO_{\PP^3}$, and $\CO_{\PP^3}(1)$.
Decomposing the instanton with respect to this collection gives the monad.

Of course, generic Fano threefold does not have a full exceptional collection, so the above description
cannot work verbatim. However, a certain part of it works. To be more precise, each Fano threefold $Y$
of index 2 has an exceptional collection $(\CO_Y,\CO_Y(1))$ (not full), which gives rise to a semiorthogonal
decomposition
$$
\D^b(Y) = \langle \CB_Y, \CO_Y, \CO_Y(1) \rangle,
$$
where triangulated category $\CB_Y$, defined as the orthogonal $\CB_Y = \langle \CO_Y,\CO_Y(1) \rangle^\perp$,
is called {\em the nontrivial component} of $\D^b(Y)$ and discussed in~\cite{K09}. Now if $E$
is an instanton of charge $n$ on $Y$ then analogously to the case of $\PP^3$ the instantonic condition implies
that $E$ is right orthogonal to $\CO_Y(1)$, hence it is contained in the subcategory $\langle \CB_Y,\CO_Y \rangle$
of $\D^b(Y)$. Decomposing $E$ with respect to this semiorthogonal decomposition we can see that the component 
with respect to $\CO_Y$ is just $\CO_Y^{n-2}$, while the component in~$\CB_Y$ is a very special vector bundle 
$\TE$ of rank $n$ which is called the {\sf acyclic extension} of the instanton $E$. The decomposition itself 
takes the form of a short exact sequence
$$
0 \to E \to \TE \to \CO_Y^{n-2} \to 0,
$$
which is an analogue of the monad. Moreover, the bundle $\TE$ itself should be considered as an analogue
of the {\em Kronecker module} (see e.g.~\cite{OSS}) associated to the instanton. We show that $\TE$ has many nice
properties, in particular it is self-dual with respect to a certain antiautoequivalence of the category $\CB_Y$,
which generalizes usual symmetry property of Kronecker modules. Moreover, we show that one can easily reconstruct
the instanton from its acyclic extension.

Another approach to construction and classification of instantons is based on investigation
of the behavior of the restriction of an instanton to lines. In the case of $\PP^3$ this behavior
is described by the classical Grauert--M\"ulich Theorem saying that if $E$ is an instanton of charge $n$ then
\begin{itemize}
\item for generic line $L \subset \PP^3$ one has $E_{|L} \cong \CO_L \oplus \CO_L$;
\item the lines $L \subset \PP^3$ for which the restriction $E_{L}$ is nontrivial ({\sf jumping lines})
are parameterized by a degree $n$ divisor $D_E$ in the Grassmannian $\Gr(2,4)$ of lines;
\item the divisor comes with a coherent sheaf (which is locally free of rank $1$
in points corresponding to lines $L$ such that $E_{|L} = \CO_L(1) \oplus \CO_L(-1)$),
and the instanton can be reconstructed from  the divisor and the associated sheaf.
\end{itemize}
We aim to prove the same for Fano threefolds of index 2. Of course, in this case we should look
at the Hilbert scheme of lines on $Y$ (which is traditionally called the {\em Fano scheme of lines}) $F(Y)$
which is a certain surface naturally associated to the threefold $Y$. It is not clear whether the analogue
of the first part of the Grauert--M\"ulich Theorem is true in this case, however the second definitely holds.
We show that as soon as the generic line on $Y$ is not a jumping line for an instanton $E$ of charge $n$,
the scheme of jumping lines is a curve $D_E$ on $F(Y)$ which is homologous to $nD_L$, where $D_L$ is the
curve on $F(Y)$ parameterizing lines intersecting a given line $L$. Moreover, we show that the curve
$D_E$ comes equipped with a coherent sheaf $\CL_E$ (locally free of rank 1 at the points corresponding
to 1-jumping lines) and discuss the question of reconstructing $E$ from the pair $(D_E,\CL_E)$.

The general study of instantons outlined above is illustrated by a more detailed description
of what goes on for Fano threefolds of index $2$ and degree $5$ and $4$ respectively.

In case of degree $5$ there is only one such threefold $Y_5$, it can be constructed
as a linear section of codimension 3 of the Grassmannian $\Gr(2,5)$ embedded into the Pl\"ucker space $\PP(\Lambda^2\kk^5)$.
Such linear section is given by the corresponding three-dimensional
space of skew-forms in terms of which one can describe the geometry (and the derived category) of $Y_5$.
In particular, the nontrivial part $\CB_{Y_5}$ of the derived category of $Y_5$ is generated by
an exceptional pair of vector bundles (\cite{Or91}) which gives a description of the acyclic
extension $\TE$ of an instanton in terms of representations of the Kronecker quiver with 3 arrows
(which is a complete analogue of the Kronecker module describing instantons on $\PP^3$), and instanton
itself is described as the cohomology of a self-dual monad
$$
\CU^n \to \CO_{Y_5}^{4n+2} \to (\CU^*)^n,
$$
where $\CU$ is just the restriction of the tautological rank 2 vector bundle from the Grassmannian $\Gr(2,5)$.
On the other hand, the Fano scheme of lines on $Y_5$ is identified with $\PP^2$ and we show that the Kronecker
module above can be thought of as a net of quadrics parameterized by this $\PP^2$. In these terms the curve
$D_E$ of jumping lines of an instanton $E$ gets identified with the degeneration curve of the net of quadrics
and the associated sheaf $\CL_E$ with (the twist of) the corresponding theta-characteristic on $D_E$.
The usual procedure of reconstructing the net of quadrics from the associated theta-characteristic shows
that the instanton $E$ can be reconstructed from the pair $(D_E,\CL_E)$ in this case.

In case of degree 4 we also have a nice interpretation. Each Fano threefold $Y_4$ of index 2 and degree 4 is
an intersection of two quadrics in $\PP^5$. In the pencil of quadrics passing through $Y_4$ there are 6 degenerate
quadrics. We consider the double covering $C$ of $\PP^1$ (parameterizing quadrics in the pencil) ramified
in these 6 points. The curve $C$ has genus 2 and it is well known that $\CB_Y \cong \D^b(C)$ in this case
(see~\cite{BO1} or~\cite{K08a}). Let $\tau$ be the hyperelliptic involution of $C$. We show that the acyclic
extension $\TE$ of an instanton $E$ of charge $n$ on $Y_4$ corresponds under the above equivalence to
a semistable vector bundle $F$ on $C$ of rank $n$ such that $\tau^*F \cong F^*$ which has a special behavior
with respect to the Raynaud's bundle on $C$. Moreover, the Fano scheme of lines on $Y_4$ is isomorphic
(noncanonically) to the abelian surface $\Pic^0C$ and we show that the curve $D_E$ coincides with
the theta-divisor on $\Pic^0C$ associated with the bundle $F$. In particular, we show that in this case
one can reconstruct the instanton $E$ from the pair $(D_E,\CL_E)$.

The paper is organized as follows. In Section 2 we collect the preliminary material required for the rest
of the paper. In particular we discuss Fano threefolds of index 2 and their derived categories.
Section 3 is the central part of the paper where we develop the general theory of instantons.
In particular, we introduce the acyclic extension of an instanton and discuss the curve of its jumping lines.
In Section~\ref{sy5} we consider in detail the case of degree 5 Fano threefolds, and Section~\ref{sy4} deals
with degree 4 case. Finally, in Section~\ref{s_further} we outline possible approaches to instantons
on Fano threefolds of index 2 and degrees 3, 2, and 1.

\medskip

{\bf Acknowledgements.}
I am very grateful to Misha Verbitsky for suggesting me to give a talk at the conference ``Instantons in complex geometry'',
this was a great motivation to formulate my thoughts on the subject; and to Dima Markushevich for stimulating me
to write down the content of the talk from which this paper appeared. Also I would like to thank Mihnea Popa
for explaining me the state of art with theta-divisors of stable bundles and attracting my attention to Raynaud's bundles.
Finally, I am very grateful to the referees for many useful comments.

\section{Preliminaries}

We work over an algebraically closed field $\kk$ of characteristic $0$.

\subsection{Stable sheaves}

Let $F$ be a coherent sheaf on a smooth projective variety $X$ of dimension $n$. 
Assume a polarization (i.e. an ample divisor $H$ on $X$) is chosen.
Then {\sf the slope} of $F$ is defined as
\begin{equation*}
\mu_H(F) = c_1(F)\cdot H^{n-1}/r(F).
\end{equation*}
A sheaf $F$ is called {\sf Mumford-semistable}, or {\sf $\mu$-semistable} if for each subsheaf $G \subset F$
with $r(G) < r(F)$ one has $\mu_H(G) \le \mu_H(F)$. If the last inequality is strict for all such $G$ then
one says that $F$ is stable.

Analogously, $F$ is called {\sf Gieseker-semistable} if for each subsheaf $G \subset F$ with $r(G) < r(F)$
one has
\begin{equation*}
\chi(X,G(tH))/r(G) \le \chi(X,F(tH))/r(F)
\qquad\text{for $t \gg 0$}.
\end{equation*}
Here $\chi(X,-)$ stands for the Euler characteristic of a sheaf. By Riemann--Roch $\chi(X,F(tH))/r(F)$ is a polynomial
of degree $n$ with the coefficient at $t^n$ independent of $F$ and the coefficient at $t^{n-1}$ proportional to $\mu_H(F)$.
Thus each Mumford-stable sheaf is Gieseker-stable, and each Gieseker-semistable sheaf is Mumford-semistable.

Note also that rescaling of $H$ does not affect the (semi)stability of coherent sheaves. Thus if Neron--Severi group 
of $X$ is isomorphic to $\ZZ$ one can forget about the choice of polarization. Moreover, in this case one can
consider $c_1(F)$ just as an integer and the slope $\mu(F) = c_1(F)/r(F)$ as a rational number.
We are going to use this convention throughout the paper.

Note also that if the Picard group of $X$ is $\ZZ$ then a twisting of a sheaf $F$ by a line bundle
results in shifting the slope of $F$ by the integer equal to the class of this line bundle in $\Pic X$.
In particular, there is a unique twist such that the slope $\mu(F)$ is contained the interval $-1 < \mu(F) \le 0$.
This twist is called {\sf the normalized form} of $F$ and is denoted by $F_\norm$.

The following criterion is very useful for verification of stability.

\begin{lemma}[\cite{Hop}]\label{hoppe}
Assume that the Picard group of $X$ is $\ZZ$ and its ample generator $\CO_X(1)$ has global sections.
Let $F$ be a vector bundle of rank $r$ on $X$ such that for each $1\le k \le r-1$ the vector bundle 
$(\Lambda^kF)_\norm$ has no global sections. Then $F$ is stable.
\end{lemma}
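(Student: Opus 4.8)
The plan is to argue by contraposition: assuming $F$ is not stable, I will produce a nonzero global section of $(\Lambda^kF)_\norm$ for some $k$ with $1\le k\le r-1$, contradicting the hypothesis. So suppose $F$ is not stable and choose a destabilizing subsheaf $G\subset F$, that is, a subsheaf of rank $k$ with $1\le k\le r-1$ and $\mu(G)\ge\mu(F)$.

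First I would pass from $G$ to its determinant. Since $X$ is smooth, $\det G:=(\Lambda^kG)^{**}$ is reflexive of rank $1$, hence a line bundle, and because $\Pic X=\ZZ$ it has the form $\CO_X(d)$ with $d=c_1(G)$. The inclusion $G\hookrightarrow F$ induces a generically injective, hence nonzero, morphism $\Lambda^kG\to\Lambda^kF$; as $\Lambda^kF$ is locally free, this morphism extends to a nonzero map $\det G=\CO_X(d)\to\Lambda^kF$. Equivalently, twisting by $\CO_X(-d)$, we obtain a nonzero global section of $(\Lambda^kF)(-d)$.

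Next comes the slope bookkeeping. A standard Chern class computation gives $c_1(\Lambda^kF)=\binom{r-1}{k-1}c_1(F)$ and $\rank\Lambda^kF=\binom{r}{k}$, whence $\mu(\Lambda^kF)=k\mu(F)$; consequently $(\Lambda^kF)_\norm=(\Lambda^kF)(-m)$ with $m=\lceil k\mu(F)\rceil$. The destabilizing inequality $\mu(G)\ge\mu(F)$ reads $d/k\ge\mu(F)$, i.e.\ $d\ge k\mu(F)$, and since $d$ is an integer this forces $d\ge\lceil k\mu(F)\rceil=m$. Writing $\ell=d-m\ge 0$, we have $(\Lambda^kF)(-d)=(\Lambda^kF)_\norm\otimes\CO_X(-\ell)$, so the section produced above is in fact a nonzero global section of $(\Lambda^kF)_\norm(-\ell)$.

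Finally I would use the hypothesis that $\CO_X(1)$ has global sections to remove the remaining twist. Choosing a nonzero section $t\in H^0(X,\CO_X(1))$ and multiplying the section of $(\Lambda^kF)_\norm(-\ell)$ by $t^{\ell}$ yields a section of $(\Lambda^kF)_\norm$; since $X$ is integral and all sheaves involved are torsion-free, this product is again nonzero, contradicting the assumption that $(\Lambda^kF)_\norm$ has no global sections. The only delicate point is the extension of $\Lambda^kG\to\Lambda^kF$ to $\det G\to\Lambda^kF$, which I expect to be the main obstacle to a fully rigorous write-up; I would handle it either by invoking reflexivity of $\Lambda^kF$ (so that $\Hom(\Lambda^kG,\Lambda^kF)=\Hom(\det G,\Lambda^kF)$), or, more elementarily, by first replacing $G$ with its saturation in $F$ — which only increases $\mu(G)$ while preserving the rank, hence keeps the destabilizing inequality — so that $\det G$ embeds into $\Lambda^kF$ directly.
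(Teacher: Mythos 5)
Your proof is correct, and the paper itself offers nothing to compare it with: the lemma is stated there with a citation to Hoppe's paper and no proof is given, and your argument is precisely the standard one from the literature. All the delicate points are handled properly — the extension of $\Lambda^kG\to\Lambda^kF$ to a nonzero map $\det G=\CO_X(d)\to\Lambda^kF$ via reflexivity of the locally free sheaf $\Lambda^kF$ (or via saturating $G$, which only increases the slope), the computation $\mu(\Lambda^kF)=k\mu(F)$ so that the destabilizing inequality and integrality force $d\ge\lceil k\mu(F)\rceil$, and the multiplication by $t^{\ell}$ with $t\in H^0(X,\CO_X(1))$ nonzero, which is exactly the step for which the hypothesis that $\CO_X(1)$ has global sections is included in the statement.
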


We will refer to Lemma~\ref{hoppe} as {\sf Hoppe's criterion}.

\subsection{Fano threefolds of index 2}

A {\sf Fano variety} is a smooth projective variety $Y$ with the anticanonical class $-K_Y$ ample.
The {\sf index} of a Fano variety $Y$ is the maximal integer dividing the canonical class.
We refer to~\cite{IP} for a detailed introduction into the modern theory of Fano varieties.

It is well known that for a Fano variety of dimension $m$ the index does not exceed $m+1$ (see~\cite{Fu,IP}).
Moreover, there is only one Fano $m$-fold of index $m+1$, which is the projective space $\PP^m$,
and only one Fano $m$-fold of index $m$, which is the quadric $Q^m \subset \PP^{m+1}$.
In case of threefolds, thus we have $\PP^3$ of index 4 and $Q^3$ of index 3,
as well as Fano threefolds of index 2 and 1. All of them are classified in~\cite{IP}.
In this paper we restrict the attention to Fano threefolds of index 2 and the Picard group
of rank 1. There are five families of those, classified by the degree of the ample generator of the Picard group:
\begin{description}
\item[degree 5] $Y_5 = \Gr(2,5) \cap \PP^6 \subset \PP^9$ (a linear section of the Grassmannian);
\item[degree 4] $Y_4 = Q_1 \cap Q_2 \subset \PP^5$ (an intersection of two $4$-dimensional quadrics);
\item[degree 3] $Y_3 \subset \PP^4$ (a cubic threefold);
\item[degree 2] $Y_2 \to \PP^3$ (a quartic double solid);
\item[degree 1] $Y_1 \dashrightarrow \PP^2$ (a hypersurface of degree $6$ in the weighted projective space $\PP(1,1,1,2,3)$).
\end{description}

From now on we denote by $Y$ any Fano threefold of index 2. We will indicate the degree by a lower index, 
for example $Y_5$ will stand for the degree 5 threefold.
Since the Picard number of $Y$ is 1, it follows that
$$
H^2(Y,\ZZ) = H^4(Y,\ZZ) = H^6(Y,\ZZ) = \ZZ,
$$
(generated by the class of a hyperplane section, the class of a line, and the class of a point)
so the Chern classes of vector bundles can be thought of as integers. The ample generator
of the Picard group is denoted by $\CO_Y(1)$, so we have
$$
\omega_Y \cong \CO_Y(-2).
$$

\subsection{The Fano scheme of lines}\label{ss-fy}

The Hilbert scheme of lines on $Y$ is a surface which we denote by $F(Y)$ and it is called traditionally
the {\sf Fano scheme of lines on $Y$}. By definition, if $W^* = \Gamma(Y,\CO_Y(1))$ then $F(Y)$ is a subscheme
in $\Gr(2,W)$ consisting of all lines in $\PP(W)$ which lie in (the closure of) the image of $Y$
via the (rational) map given by the line bundle $\CO_Y(1)$.

For a line $L \subset Y$ we denote by $D_L \subset F(Y)$ the curve parameterizing lines intersecting $L$
and its class in the group $A^1(F(Y))$ of $1$-cycles on $F(Y)$ modulo rational equivalence (which we denote by $\sim$).

Let $Z$ denote the universal family of lines. It is a codimension 2 subscheme in $Y\times F(Y)$,
its fibers over $F(Y)$ are mapped onto lines in $Y$. Thus we have a diagram
$$
\xymatrix{
& Z \ar[dl]_q \ar[dr]^p \\ Y && F(Y)
}
$$
\begin{lemma}\label{qfib}
If a Fano threefold $Y$ of index $2$ is generic in its deformation class then 
the map $q$ in the above diagram is flat and finite.
\end{lemma}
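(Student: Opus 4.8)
The plan is to deduce the statement from the ``miracle flatness'' criterion after establishing that $q$ has finite fibers for generic $Y$. First I would record the relevant dimensions and smoothness. For $Y$ generic in its deformation class the Fano scheme $F(Y)$ is a smooth surface (see~\cite{IP} and Subsection~\ref{ss-fy}), and since $p\colon Z \to F(Y)$ is a $\PP^1$-fibration over it, $Z$ is a smooth (in particular Cohen--Macaulay) threefold. As $Y$ is a smooth threefold as well and $q$ is projective, the morphism $q\colon Z \to Y$ is a projective map between smooth varieties of the same dimension $3$. Recall also that $Y$ is covered by lines, so $q$ is surjective, hence dominant, hence generically finite.

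Next I would invoke miracle flatness: a morphism from a Cohen--Macaulay scheme to a regular scheme of the same dimension is flat as soon as all of its (nonempty) fibers are $0$-dimensional, and a proper quasi-finite morphism is finite. Thus everything reduces to showing that for generic $Y$ no point of $Y$ lies on an infinite family of lines, i.e. that every fiber $q^{-1}(y)$ is finite. For the local analysis I would use the normal bundle. Every line $L \subset Y$ satisfies $\deg \CN_{L/Y} = 0$, and at a general line $\CN_{L/Y} \cong \CO_L \oplus \CO_L$. For such $L$ and any $y \in L$ one has $H^0(L, \CN_{L/Y}(-y)) = H^0(\CO_L(-1)^{\oplus 2}) = 0$, so $L$ carries no first-order deformation fixing $y$; hence only finitely many lines pass through $y$ along this branch. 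Consequently a positive-dimensional fiber $q^{-1}(y)$ can occur only if there is a one-parameter family of lines through $y$ whose general member is special, with $\CN_{L/Y} \cong \CO_L(1) \oplus \CO_L(-1)$.

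It remains to rule out such configurations for generic $Y$, which I expect to be the main obstacle. Here I would argue in family. Over the parameter space $\CH$ of the deformation class form the universal family of lines $\CZ$ together with its two projections, the relative version of the map $q$, and the proper structure morphism $\CZ \to \CH$. The locus $E \subset \CZ$ where the relative $q$ fails to be quasi-finite is closed, so its image in $\CH$ is closed by properness of $\CZ \to \CH$. To see that this image is a proper subset it suffices to exhibit, in each of the five families, a single smooth member for which $q$ is finite: for degree $5$ this is the classical threefold $Y_5$, through every point of which pass exactly three lines, while for the remaining degrees one produces one explicit example using the known description of $F(Y)$. The complement of the image is then open and nonempty, so the generic $Y$ has $q$ quasi-finite.

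Combining the steps, for generic $Y$ the map $q$ is proper and quasi-finite, hence finite, and by Cohen--Macaulayness of $Z$ together with regularity of $Y$ it is flat. The delicate point is precisely the verification that the ``bad'' locus does not fill up the whole family: the normal-bundle criterion controls it infinitesimally along a single line, but the global conclusion that no point lies on infinitely many lines requires both the properness of $\CZ \to \CH$ and a checked base case in each deformation class.
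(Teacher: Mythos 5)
Your framework is sound in outline: miracle flatness correctly reduces flatness to finiteness of fibers (using that $Z$, a $\PP^1$-bundle over a smooth $F(Y)$, is Cohen--Macaulay and $Y$ is regular), the normal-bundle computation is right ($\deg\CN_{L/Y} = -K_Y\cdot L - 2 = 0$, so a positive-dimensional fiber of $q$ forces a pencil of lines through a point whose general member has $\CN_{L/Y}\cong\CO_L(1)\oplus\CO_L(-1)$), and the semicontinuity argument over the parameter space (bad locus closed in $\CZ$ by Chevalley, image closed in $\CH$ by properness of the relative Hilbert scheme) is a legitimate way to convert one good member per family into the statement for generic $Y$. The genuine gap is exactly where you flag ``the main obstacle'': the base cases. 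You verify a good member only for $d=5$; for $d=4,3,2,1$ you assert that ``one produces one explicit example using the known description of $F(Y)$'', which is not routine and is not done. For cubics the natural candidate fails: the Fermat cubic threefold has Eckardt points, i.e.\ points lying on a cone of lines, and the paper's remark right after the lemma exhibits another smooth cubic, $x_0^2x_1+x_1^3+x_2^3+x_3^3+x_4^3=0$, whose map $q$ has an elliptic-curve fiber --- so the bad locus is genuinely nonempty for $d=3$, and exhibiting a good member is essentially as hard as the dimension count it is meant to replace. For $d=1,2$ even the description of lines is more delicate ($\CO_Y(1)$ is not very ample for $d=1$), so the asserted examples cannot be waved through.

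The paper's proof also differs instructively for high degree: for $d=4$ and $d=5$ it proves finiteness for \emph{every} smooth member, with no genericity and no example needed. If a point $y\in Y_d$ lay on infinitely many lines, they would sweep out a cone, a surface contained in $Y_d\cap T_yY_d$ and lying on a quadric through $Y_d$, hence of degree less than $d$; but the Lefschetz theorem gives $\Pic Y_d = \ZZ\cdot H$, so every surface in $Y_d$ has degree a positive multiple of $d$ --- contradiction. For $d\le 3$ the paper invokes a parameter count, which bounds the codimension of the bad locus in the whole family directly rather than through a single checked member. To repair your proof, either import this Lefschetz argument for $d=4,5$ (which also hands you the needed examples for free) and carry out the dimension count for $d\le 3$, feeding the result into your semicontinuity setup; as written, the cases $d\le 4$ rest on unproven assertions.
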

\begin{proof}
In case of degree $d = 5$ and $d = 4$ it is easy to see that the map $q$ is finite and flat for any $Y_d$.
Indeed, if there is a point on $Y_d$ with infinite number of lines on $Y_d$ passing through this point
then these lines sweep in $Y_d$ a surface of degree less than $d$ which is impossible by the Lefschetz Theorem.
On the other hand, for $d \le 3$ one can verify the claim by a parameter counting.
\end{proof}

\begin{remark}
Although for generic $Y$ the map $q$ is flat and finite, both may fail for special 3-folds $Y$.
For example, consider the cubic 3-fold in $\PP^4 = \PP(x_0,,\dots,x_4)$ with equation
$x_0^2x_1 + x_1^3 + x_2^3 + x_3^3 + x_4^3 = 0$. It is easy to check that it is smooth.
However the lines passing throw the point $(1:0:0:0:0)$ are parameterized by the elliptic curve
$x_0 = x_1 = x_2^3 + x_3^3 + x_4^3 = 0$, so the fiber of $q$ over this point is not finite.
\end{remark}

On the other hand, the map $p:Z \to F(Y)$ is always flat and smooth. In fact, it is a projectivization
of the restriction to $F(Y)$ of the tautological bundle of $\Gr(2,W)$. We denote this rank 2 bundle
on $F(Y)$ by $M$. We will need to identify the first Chern class of $M$.

\begin{lemma}
We have $c_1(M) = -d D_L$.
\end{lemma}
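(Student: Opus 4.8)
The plan is to compute $c_1(M)$ by transporting an intersection calculation from the universal family $Z$ down to $F(Y)$ along $p$. First I would record the two structural facts that make the bookkeeping work. Since $M$ is the restriction of the tautological rank $2$ subbundle of $\Gr(2,W)$, the projectivization $p\colon Z = \PP(M) \to F(Y)$ (with fiber $p^{-1}([U])$ identified with the line $\PP(U)\subset\PP(W)$) carries a relative hyperplane bundle $\CO_p(1)$, and under this identification one has $\CO_p(1) = q^*\CO_Y(1)$. Set $\xi = c_1(\CO_p(1)) = q^*c_1(\CO_Y(1)) \in H^2(Z)$. For the tautological-subbundle projectivization of the rank $2$ bundle $M$ the Grothendieck relation reads $\xi^2 + p^*c_1(M)\,\xi + p^*c_2(M) = 0$, and applying $p_*$ together with $p_*\xi = 1$ and $p_*1 = 0$ gives the clean identity $p_*(\xi^2) = -c_1(M)$.

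It then remains to evaluate $p_*(\xi^2)$. Since $\xi = q^*c_1(\CO_Y(1))$, we have $\xi^2 = q^*\bigl(c_1(\CO_Y(1))^2\bigr)$. In $H^4(Y,\ZZ) = \ZZ\cdot[L]$ the self-intersection of the polarization is $\CO_Y(1)^2 = d\,[L]$: multiplying both sides by $\CO_Y(1)$ gives $\CO_Y(1)^3 = d$ on the left and $d\,(\CO_Y(1)\cdot[L]) = d$ on the right, as a line has degree $1$. Hence $\xi^2 = d\,q^*[L]$, and therefore $p_*(\xi^2) = d\,p_*(q^*[L])$.

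The geometric heart of the argument is the identification $p_*(q^*[L]) = D_L$. Set-theoretically $p(q^{-1}(L))$ is exactly the locus of lines meeting $L$, namely $D_L$: a point of $q^{-1}(L)$ is a pair $(y,[L'])$ with $y \in L \cap L'$, so its image under $p$ is precisely the set of $L'$ with $L\cap L' \ne \emptyset$. To see that the cycle-theoretic pushforward carries multiplicity $1$, I would check that the induced map $q^{-1}(L) \to D_L$ is generically one-to-one: over a general $[L'] \in D_L$ the fiber is the single point $L\cap L'$, since two distinct lines meet in at most one point. Combining the identities above yields $-c_1(M) = p_*(\xi^2) = d\,D_L$, that is $c_1(M) = -d\,D_L$, as claimed.

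The step I expect to require the most care is this last one. Because the lemma is asserted for an arbitrary $Y$, and not only for the generic $Y$ of Lemma~\ref{qfib}, the map $q$ need not be flat or finite, so $q^{-1}(L)$ may acquire extra components---for instance the whole fiber $p^{-1}([L])$ over $[L']=L$---or fail to be reduced. I would circumvent this by working throughout with cohomology classes (equivalently, rational equivalence), using that $\xi^2 = q^*(\CO_Y(1)^2)$ is well defined independently of any flatness hypothesis, and noting that any spurious component of $q^{-1}(L)$ that $p$ contracts contributes $0$ to $p_*$ for dimension reasons. The only contribution of the correct dimension is the reduced curve $D_L$, appearing with multiplicity $1$ by the degree count above, which is what pins down the constant $-d$.
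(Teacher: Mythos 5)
Your proof is correct, but it takes a genuinely different route from the paper's. The paper argues extrinsically through the Grassmannian: $c_1(M^*)$, restricted from $\Gr(2,W)$, is represented by the Schubert divisor of lines meeting a fixed codimension-$2$ linear subspace $\PP(W')$, i.e.\ by the lines on $Y$ meeting the curve $Y\cap\PP(W')$; since that curve has class $c_1(\CO_Y(1))^2$, rationally equivalent to $dL$, the divisor is $\sim dD_L$. You instead work intrinsically on the universal family $Z=\PP(M)$, combining the Grothendieck relation (giving $p_*(\xi^2)=-c_1(M)$) with the identification $\xi=q^*c_1(\CO_Y(1))$ --- which the paper itself invokes in the proof of Corollary~\ref{kzf} --- and reduce everything to the single geometric identity $p_*q^*[L]=D_L$. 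The two arguments share the same core (both push the incidence correspondence forward from a curve of class $d[L]$ on $Y$), but yours replaces the quoted Schubert representative by a self-contained Chern-class computation, and it has the virtue of making explicit what the paper glosses over: that $q^{-1}(L)$ contains the $p$-contracted component $p^{-1}([L])$, which contributes nothing, plus the incidence curve mapping with degree $1$ onto $D_L$ --- exactly the decomposition $q^{-1}(L_i)=p^{-1}(x_i)\cup\tilde D_{L_i}$ that the paper uses later in its Grauert--M\"ulich theorem. One refinement you should add: generic injectivity of $\tilde D_L\to D_L$ controls only the degree of the map, whereas the coefficient $1$ also requires the cycle $q^*[L]$ to be generically reduced along $\tilde D_L$; in characteristic $0$ this follows by choosing $L$ generic, since lines sweep out $Y$ and $q$ is \'etale over a dense open subset wherever it is generically finite. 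This is not a gap specific to your argument: the paper's own proof passes the rational equivalence $[Y\cap\PP(W')]\sim dL$ through the same (possibly non-flat) incidence correspondence and assumes $\CO_Y(1)$ globally generated "for simplicity," so your cohomological framing is, if anything, the more careful of the two.
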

\begin{proof}
For simplicity assume that $\CO_Y(1)$ is generated by global sections, i.e. the map $Y \dashrightarrow \PP(W)$
is regular. Take a subspace $W' \subset W$ of codimension $2$. Then $c_1(M^*)$ is represented by all lines $L \subset \PP(W)$
which intersect $\PP(W')$. In the other words it is the set of lines on $Y$ which pass through $Y \cap \PP(W')$.
But $Y \cap \PP(W')$ is a linear section of $Y$ of codimension 2, so its class is $c_1(\CO_Y(1))^2$ which is rationally
equivalent to $dL$, where $L$ is a line on $Y$. Hence the required set of lines is rationally equivalent
to $d$ times the set of lines intersecting $L$, that is to $dD_L$.
\end{proof}

\begin{corollary}\label{kzf}
We have $\omega_{Z/F(Y)} \cong p^*\CO_{F(Y)}(dD_L) \otimes q^*\CO_Y(-2)$ and $\omega_{Z/(Y\times F(Y))} \cong p^*\CO_{F(Y)}(dD_L)$.
\end{corollary}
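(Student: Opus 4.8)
The plan is to exploit the structure recalled just above the statement: $p\colon Z \to F(Y)$ is a smooth $\PP^1$-fibration, namely the projectivization $Z = \PP_{F(Y)}(M)$ of the rank $2$ bundle $M$. Since $p$ is smooth of relative dimension $1$, the relative dualizing sheaf $\omega_{Z/F(Y)}$ is an honest line bundle, computed by the standard relative canonical formula for a projective bundle,
\[
\omega_{Z/F(Y)} \cong \CO_Z(-2) \otimes p^*(\det M)^\vee,
\]
where $\CO_Z(1)$ is the relative hyperplane bundle and $\CO_Z(-1) \subset p^*M$ is the tautological subbundle, so that the fibre of $p$ over $[L]$ is the line $L = \PP(M_{[L]}) \subset \PP(W)$.

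Before substituting I would pin down $\CO_Z(1)$. The inclusion $Z \hookrightarrow \PP(W) \times F(Y)$ identifies the tautological subbundle $\CO_Z(-1) \subset p^*M \subset W \otimes \CO_Z$ with the pullback $q^*\CO_{\PP(W)}(-1)$ of the tautological subbundle of $\PP(W)$: over a point of $Z$ corresponding to a line $\ell \subset M_{[L]} \subset W$, both fibres are the same $\ell$. Hence $\CO_Z(1) \cong q^*\CO_Y(1)$ with no spurious twist pulled back from $F(Y)$, and in particular $\CO_Z(-2) \cong q^*\CO_Y(-2)$. Combining this with the preceding lemma, which gives $\det M \cong \CO_{F(Y)}(-dD_L)$, i.e. $(\det M)^\vee \cong \CO_{F(Y)}(dD_L)$, the displayed formula becomes
\[
\omega_{Z/F(Y)} \cong q^*\CO_Y(-2) \otimes p^*\CO_{F(Y)}(dD_L),
\]
which is the first assertion.

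For the second formula I would pass to the tower $Z \overset{i}{\hookrightarrow} Y \times F(Y) \overset{\pi}{\to} F(Y)$, where $\pi$ is the projection to $F(Y)$ and $\mathrm{pr}\colon Y\times F(Y)\to Y$ is the other projection, so that $q = \mathrm{pr}\circ i$. Because both $Z$ and $Y \times F(Y)$ are smooth over $F(Y)$, the closed immersion $i$ is a regular embedding of codimension $2$, hence $\omega_{Z/(Y\times F(Y))} \cong \det N_{Z/(Y\times F(Y))}$ is again a line bundle, and relative dualizing sheaves multiply along the tower:
\[
\omega_{Z/F(Y)} \cong \omega_{Z/(Y\times F(Y))} \otimes i^*\omega_{(Y\times F(Y))/F(Y)}.
\]
The product projection $\pi$ has relative dualizing sheaf $\mathrm{pr}^*\omega_Y$, so $i^*\omega_{(Y\times F(Y))/F(Y)} \cong q^*\omega_Y \cong q^*\CO_Y(-2)$. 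Feeding in the first formula and cancelling the common factor $q^*\CO_Y(-2)$ yields $\omega_{Z/(Y\times F(Y))} \cong p^*\CO_{F(Y)}(dD_L)$, the second assertion.

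The genuinely delicate points here are bookkeeping rather than conceptual: one must fix the sign in the projective bundle formula and, above all, verify that $\CO_Z(1)$ equals $q^*\CO_Y(1)$ \emph{exactly}, with no correction factor pulled back from $F(Y)$ — this is where the explicit description of $Z$ as the family of lines in $M \subset W \otimes \CO_{F(Y)}$ is essential. One should also keep in mind that $F(Y)$ may be singular, so the second step is phrased in terms of relative dualizing sheaves; it nonetheless produces line bundles because $p$ is smooth and $i$ is a regular embedding.
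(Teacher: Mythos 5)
Your proof is correct and takes essentially the same route as the paper: the projective bundle formula $\omega_{Z/F(Y)} \cong p^*\det M^* \otimes \CO_{Z/F(Y)}(-2)$ combined with $c_1(M) = -dD_L$ and the identification $\CO_{Z/F(Y)}(1) = q^*\CO_Y(1)$, followed by multiplicativity of relative dualizing sheaves along $Z \hookrightarrow Y \times F(Y) \to F(Y)$ with $i^*\omega_{(Y\times F(Y))/F(Y)} \cong q^*\omega_Y \cong q^*\CO_Y(-2)$. Your additional verifications (that $\CO_Z(1)$ acquires no twist pulled back from $F(Y)$, and that the embedding is regular of codimension $2$) merely make explicit what the paper leaves implicit.
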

\begin{proof}
Since $Z = \PP_{F(Y)}(M)$ we have $\omega_{Z/F(Y)} \cong p^*\det M^*\otimes\CO_{Z/F(Y)}(-2)$. The second formula follows immediately from
$\omega_{Z/(Y\times F(Y))} \cong \omega_{Z/F(Y)}\otimes q^*\omega_Y^{-1}$ since $\omega_Y \cong \CO_Y(-2)$ and $\CO_{Z/F(Y)}(1) = q^*\CO_Y(1)$.
\end{proof}

\subsection{Derived categories}

For an algebraic variety $X$ we denote by $\D^b(X)$ the bounded derived category of coherent sheaves on $X$.
It is a $\kk$-linear triangulated category. The shift functor in any triangulated category $\CT$ is denoted by $[1]$.
We denote $\Ext^p(F,G) = \Hom(F,G[p])$ and $\Ext^\bullet(F,G) = \oplus_{p \in \ZZ} \Ext^p(F,G)[-p]$.
One says that a triangulated category $\CT$ is {\sf $\Ext$-finite} if $\Ext^\bullet(F,G)$ is a finite dimensional graded vector space
for all $F,G \in \CT$. The derived category $\D^b(X)$ is $\Ext$-finite if $X$ is smooth and proper.

\begin{definition}[\cite{BK,BO1}]
A {\sf semiorthogonal decomposition}\/ of a triangulated category $\CT$ is a sequence of full triangulated
subcategories $\CA_1,\dots,\CA_m$ in $\CT$ such that $\Hom_{\CT}(\CA_i,\CA_j) = 0$ for $i > j$
and for every object $T \in \CT$ there exists a chain of morphisms
$0 = T_m \to T_{m-1} \to \dots \to T_1 \to T_0 = T$ such that
the cone of the morphism $T_k \to T_{k-1}$ is contained in $\CA_k$
for each $k=1,2,\dots,m$.
\end{definition}

A semiorthogonal decomposition with components $\CA_1,\dots,\CA_m$ is denoted $\CT = \langle \CA_1,\dots,\CA_m \rangle$.
The easiest way to produce a semiorthogonal decomposition is by using exceptional objects or collections.

\begin{definition}[\cite{B}]
An object $F \in \CT$ is called {\em exceptional}\/ if $\Ext^\bullet(F,F)=\kk$.
A collection of exceptional objects $(F_1,\dots,F_m)$ is called {\em exceptional}\/
if $\Ext^p(F_l,F_k)=0$ for all $l > k$ and all $p\in\ZZ$.
\end{definition}

The minimal triangulated subcategory of $\CT$ containing an exceptional object $F$ is equivalent
to the derived category of $\kk$-vector spaces. It is denoted by $\langle F \rangle$,
or sometimes just by $F$.

\begin{lemma}[\cite{BO1}]\label{sodgen}
If $\CT$ is an $\Ext$-finite triangulated category then any exceptional collection $F_1,\dots,F_m$ in $\CT$
induces a semiorthogonal decomposition
$$
\CT = \langle \CA , F_1, \dots, F_m \rangle
$$
where $\CA = \langle F_1, \dots, F_m \rangle^\perp = \{F \in \CT\ |\ \text{$\Ext^\bullet(F_k,F) = 0$ for all $1 \le k \le m$}\}$.
\end{lemma}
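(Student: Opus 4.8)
The statement has two halves, and the plan is to dispatch them separately. The first half, the semiorthogonality relations $\Hom(\CA_i,\CA_j)=0$ for $i>j$, is essentially free: ordering the components as $\CA,\langle F_1\rangle,\dots,\langle F_m\rangle$, these relations amount to $\Hom(F_k,\CA)=0$ for every $k$, which holds by the very definition $\CA=\langle F_1,\dots,F_m\rangle^\perp$, together with $\Ext^\bullet(F_l,F_k)=0$ for $l>k$, which is the defining property of an exceptional collection. (Each $\langle F_k\rangle$ is triangulated, being equivalent to the derived category of $\kk$, as recalled above.) Thus the whole content is the second half: producing, for an arbitrary $T\in\CT$, a chain of the required form whose successive cones land in the prescribed components.

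The engine will be the single-object projection. For one exceptional object $F$ and any $T$, $\Ext$-finiteness makes $V:=\Ext^\bullet(F,T)=\bigoplus_p\Ext^p(F,T)[-p]$ a finite-dimensional graded vector space, so $V\otimes F$ is a genuine object of $\CT$ (a finite sum of shifts of $F$). Completing the canonical evaluation morphism to a triangle
$$V\otimes F\to T\to C\to (V\otimes F)[1],$$
I would apply $\Ext^\bullet(F,-)$ and use exceptionality $\Ext^\bullet(F,F)=\kk$: the first arrow then induces the identity of $V$, forcing $\Ext^\bullet(F,C)=0$, i.e. $C\in F^\perp$. This exhibits the two-term decomposition $\CT=\langle F^\perp,F\rangle$, with the fibre of $T$ in $\langle F\rangle$ and its cone in $F^\perp$.

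I would then induct on $m$, peeling off the rightmost object $F_m$. Set $\CT'=\langle F_m\rangle^\perp$; it is a thick triangulated subcategory (right orthogonals are closed under shifts, cones, and summands) and is again $\Ext$-finite. Since $\Ext^\bullet(F_m,F_i)=0$ for $i<m$, the objects $F_1,\dots,F_{m-1}$ lie in $\CT'$ and form an exceptional collection there, and imposing $\Ext^\bullet(F_m,-)=0$ (membership in $\CT'$) together with $\Ext^\bullet(F_i,-)=0$ for $i<m$ is exactly membership in $\CA$, so the orthogonal of this shorter collection inside $\CT'$ coincides with $\CA$. The induction hypothesis gives $\CT'=\langle\CA,F_1,\dots,F_{m-1}\rangle$, and the single-object case gives $\CT=\langle\CT',F_m\rangle$; refining the left component of the latter by the former produces $\CT=\langle\CA,F_1,\dots,F_{m-1},F_m\rangle$.

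The one genuinely non-formal point, and where I expect the main work to lie, is this refinement step: splicing the two-term chain for $T$ (with piece in $\langle F_m\rangle$ and cone $T'\in\CT'$) together with the $m$-term chain that the induction hypothesis produces for $T'$, so as to read off an $(m+1)$-term chain whose cones sit in $\CA,\langle F_1\rangle,\dots,\langle F_m\rangle$ in the correct order. This requires keeping careful track of the direction of the maps in the definition of a semiorthogonal decomposition and extracting the successive cones by repeated use of the octahedral axiom; the vanishings that guarantee each spliced cone lands in its designated component are precisely the semiorthogonality relations already checked in the first paragraph. Everything else — the existence of $V\otimes F$ and of the evaluation map, and the stability of the orthogonality conditions under the projection — is routine once $\Ext$-finiteness is invoked.
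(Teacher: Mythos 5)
Your proof is correct and is the standard Bondal--Orlov argument, which the paper itself only cites for this lemma without reproducing a proof: the evaluation triangle $\Ext^\bullet(F,T)\otimes F \to T \to C$ with $C \in F^\perp$ (using $\Ext$-finiteness and $\Ext^\bullet(F,F)=\kk$), followed by induction on $m$ peeling off $F_m$ inside $\CT' = F_m^\perp$. The one step you leave sketched, the splice, is indeed routine and works exactly as you indicate: setting $T_k = \mathrm{Fib}\bigl(T \to \Cone(T'_k \to T')\bigr)$, one octahedron per step identifies $\Cone(T_k \to T_{k-1})$ with the corresponding cone in the chain for $T'$, so the lifted chain has its cones in $\CA, \langle F_1\rangle, \dots, \langle F_m\rangle$ in the required order.
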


This construction can be efficiently applied to Fano varieties. Recall that by Kodaira vanishing
any line bundle on a Fano variety is exceptional. Moreover, if $X$ is a Fano variety of index $r$
then the sequence $\CO_X,\CO_X(1),\dots,\CO_X(r-1)$ is exceptional. In particular, for Fano threefolds
of index $2$ we have an exceptional pair $\CO_Y,\CO_Y(1)$. By Lemma~\ref{sodgen} it extends to a semiorthogonal decomposition
\begin{equation}\label{dby}
\D^b(Y) = \langle \CB_Y,\CO_Y,\CO_Y(1) \rangle,
\qquad
\CB_Y = \langle \CO_Y,\CO_Y(1) \rangle^\perp.
\end{equation}
The category $\CB_Y$ is called {\sf the nontrivial component} of $\D^b(Y)$.
Some of its properties are discussed in~\cite{K09}.

For each exceptional object $E \in \CT$ one can define the so-called mutation functors as follows.
For each object $F \in \CT$ consider the canonical evaluation map $\Ext^\bullet(E,F)\otimes E \to F$.
Its cone is denoted by $\LL_E(F)$ and is called {\sf the left mutation of $F$ through $E$}.
By definition we have a distinguished triangle
\begin{equation}\label{lmut}
\Ext^\bullet(E,F)\otimes E \to F \to \LL_E(F).
\end{equation}
The {\sf right mutation of $F$ through $E$} is defined dually, by using the coevaluation map and the distinguished triangle
\begin{equation}\label{rmut}
\RR_E(F) \to F \to \Ext^\bullet(F,E)^*\otimes E.
\end{equation}

The following fact is well known.

\begin{lemma}[\cite{BK}]\label{muteq}
The left and right mutations through $E$ vanish on the subcategory $\langle E\rangle$ and induce mutually inverse equivalences
$$
\xymatrix@1{{}^\perp E\  \ar@<2pt>[rr]^-{\LL_E} && \ E^\perp \ar@<2pt>[ll]^-{\RR_E} }
$$
\end{lemma}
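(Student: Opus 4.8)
The plan is to verify the three assertions in turn: that $\LL_E$ and $\RR_E$ annihilate the subcategory $\langle E\rangle$, that $\LL_E$ carries ${}^\perp E$ into $E^\perp$ while $\RR_E$ carries $E^\perp$ into ${}^\perp E$, and finally that the resulting functors are mutually inverse. First I would treat the vanishing. For $F = E$ the exceptionality of $E$ gives $\Ext^\bullet(E,E)=\kk$ concentrated in degree $0$, so the evaluation map in~\eqref{lmut} is the identity and its cone $\LL_E(E)$ is zero. Since $\LL_E$ is exact and every object of $\langle E\rangle$ is assembled from shifts of $E$ by distinguished triangles, $\LL_E$ vanishes on all of $\langle E\rangle$; the statement for $\RR_E$ is dual.

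Next I would apply the cohomological functor $\Ext^\bullet(E,-)$ to the triangle~\eqref{lmut}. Using exceptionality one computes
$$
\Ext^\bullet\big(E,\Ext^\bullet(E,F)\otimes E\big) = \Ext^\bullet(E,F)\otimes\Ext^\bullet(E,E) = \Ext^\bullet(E,F),
$$
and the decisive point is that the map this induces to $\Ext^\bullet(E,F)$ is the identity, because the evaluation map is precisely the counit of the relevant adjunction. The long exact sequence then forces $\Ext^\bullet(E,\LL_E(F)) = 0$, that is $\LL_E(F)\in E^\perp$ for every $F$. Applying $\Ext^\bullet(-,E)$ to~\eqref{rmut} in exactly the same manner shows $\RR_E(F)\in{}^\perp E$.

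Finally I would establish the inverseness. If $F\in{}^\perp E$ then $\Ext^\bullet(F,E)=0$, so the third term of~\eqref{rmut} vanishes and $\RR_E(F)\cong F$; dually $\LL_E(G)\cong G$ for $G\in E^\perp$. Now I apply the exact functor $\RR_E$ to the defining triangle~\eqref{lmut} of $\LL_E(F)$: its first term $\Ext^\bullet(E,F)\otimes E$ lies in $\langle E\rangle$ and is killed by the vanishing step, while the middle term maps to $\RR_E(F)\cong F$, so the triangle collapses to a natural isomorphism $\RR_E(\LL_E(F))\cong F$. Applying $\LL_E$ to~\eqref{rmut} symmetrically yields $\LL_E(\RR_E(G))\cong G$ for $G\in E^\perp$. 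Thus $\LL_E$ and $\RR_E$ restrict to mutually inverse equivalences between ${}^\perp E$ and $E^\perp$.

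The only genuinely delicate point is the identity-map claim in the second paragraph: one must know that $\Ext^\bullet(E,-)$ applied to the canonical evaluation map recovers the identity of $\Ext^\bullet(E,F)$, and it is here that the exceptionality $\Ext^\bullet(E,E)=\kk$ is indispensable. This is the step I would write out most carefully, treating the graded vector space $\Ext^\bullet(E,F)$ and the object $\Ext^\bullet(E,F)\otimes E$ with due attention to the shifts; everything else is formal bookkeeping with distinguished triangles and the exactness of the mutation functors.
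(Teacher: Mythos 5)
Your proof is correct, and since the paper states this lemma without proof (citing \cite{BK}), there is nothing to diverge from: your argument is precisely the standard one from that reference --- kill $\langle E\rangle$ using $\Ext^\bullet(E,E)=\kk$, check $\LL_E(F)\in E^\perp$ by applying $\Ext^\bullet(E,-)$ to~\eqref{lmut} and using that evaluation induces the identity, note $\RR_E(F)\cong F$ for $F\in{}^\perp E$ directly from~\eqref{rmut}, and then apply one mutation to the defining triangle of the other. The single point you assume silently is that $\LL_E$ and $\RR_E$ are well-defined \emph{exact functors} at all: cones are not functorial in a triangulated category, and in \cite{BK} this is resolved by identifying the mutations with the projection functors of the semiorthogonal decompositions $\CT=\langle E^\perp, E\rangle$ and $\CT=\langle E,{}^\perp E\rangle$ (the triangle~\eqref{lmut} has its outer vertices in $\langle E\rangle$ and $E^\perp$ respectively, which makes it canonical); granting that, every step of your argument is sound, including your correct emphasis that exceptionality is what makes the map $\Ext^\bullet(E,F)\otimes\Ext^\bullet(E,E)\to\Ext^\bullet(E,F)$ an isomorphism.
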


\section{Instanton bundles}

Let $Y$ be a Fano threefold of index $2$.
Recall that by definition an {\sf instanton of charge $n$}\/ on $Y$ is
a stable vector bundle $E$ of rank $2$ with $c_1(E) = 0$, $c_2(E) = n$,
enjoying the instantonic condition~\eqref{ins}, which we rewrite for convenience
$$
H^1(Y,E(-1)) = 0.
$$

\subsection{Cohomology groups}

No wonder that the condition~\eqref{ins} has very similar consequences as the classical instanton condition on $\PP^3$.
For example, the cohomology table of $E$ has the same shape.

\begin{lemma}[\cite{K03}]\label{inst-coh}
Let $E$ be an instanton bundle of charge $n$ on a Fano threefold of index $2$ and degree~$d$. Then
the cohomology table of $E$ has the following shape
$$
\begin{array}{|c|c|c|c|c|c|c|c|}
\hline
t & \dots & -3 & -2 & -1 & 0 & 1 & \dots \\
\hline
\hline
h^3(E(t)) & \dots & * & 0 & 0 & 0 & 0 & \dots \\
\hline
h^2(E(t)) & \dots & * & n-2 & 0 & 0 & 0 & \dots \\
\hline
h^1(E(t)) & \dots & 0 & 0 & 0 & n-2 & * & \dots \\
\hline
h^0(E(t)) & \dots & 0 & 0 & 0 & 0 & * & \dots \\
\hline
\end{array}
$$
In particular,
$$
\begin{array}{ll}
H^0(E(t)) = 0 & \text{ for $t \le 0$,}\\
H^1(E(t)) = 0 & \text{ for $t \le -1$,}\\
H^2(E(t)) = 0 & \text{ for $t \ge -1$,}\\
H^3(E(t)) = 0 & \text{ for $t \ge -2$.}
\end{array}
$$
\end{lemma}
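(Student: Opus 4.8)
The plan is to reduce everything to two vanishing statements by exploiting the self-duality of $E$ together with Serre duality, and then to read off the two nonzero entries by Riemann--Roch. Since $E$ has rank $2$ and $c_1(E)=0$ we have $E^\vee\cong E\otimes(\det E)^{-1}\cong E$, so for every $t$ Serre duality on $Y$ gives $H^i(Y,E(t))\cong H^{3-i}(Y,E(-t-2))^*$, using $\omega_Y\cong\CO_Y(-2)$. Under this symmetry the assertion ``$H^0(E(t))=0$ for $t\le 0$'' is equivalent to ``$H^3(E(t))=0$ for $t\ge -2$'', and ``$H^1(E(t))=0$ for $t\le -1$'' is equivalent to ``$H^2(E(t))=0$ for $t\ge -1$''. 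Hence it suffices to prove the two left-hand vanishings, and the self-dual twist sits at $t=-1$, exactly where the instantonic condition lives.

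The vanishing $H^0(E(t))=0$ for $t\le 0$ is immediate from stability: since $\mu(E(t))=t$, a nonzero section $\CO_Y\to E(t)$ would produce a saturated line subsheaf $\CO_Y(a)\subset E(t)$ with $a\ge 0$, whereas stability forces $a<t$, which is impossible for $t\le 0$. By the duality above this also yields $H^3(E(t))=0$ for $t\ge -2$.

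The heart of the matter is $H^1(E(t))=0$ for all $t\le -1$, which I will prove by downward induction starting from the instantonic condition $H^1(E(-1))=0$. The key is to transport that vanishing onto a surface. Let $S\in|\CO_Y(1)|$ be a general, hence integral, hyperplane section (a del~Pezzo surface); tensoring $0\to\CO_Y(-1)\to\CO_Y\to\CO_S\to 0$ by $E$ gives $0\to E(-1)\to E\to E|_S\to 0$, and the associated long exact sequence together with $H^0(E)=0$ and $H^1(E(-1))=0$ forces $H^0(S,E|_S)=0$. Because $\CO_S(1)$ is effective, multiplication by a section embeds $E|_S(t)\hookrightarrow E|_S$ for every $t\le 0$, whence $H^0(S,E|_S(t))=0$ for all $t\le 0$. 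Now run the induction: assuming $H^1(E(t))=0$ for some $t\le -1$, the cohomology sequence of $0\to E(t-1)\to E(t)\to E|_S(t)\to 0$ reads $H^0(E|_S(t))\to H^1(E(t-1))\to H^1(E(t))$, and both outer terms vanish, so $H^1(E(t-1))=0$. This establishes $H^1(E(t))=0$ for all $t\le -1$, and dually $H^2(E(t))=0$ for all $t\ge -1$.

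Finally, the two remaining entries are pure Riemann--Roch. For a rank $2$ bundle with $c_1=0$, $c_2=n$ one has $\ch(E)=2-n[\ell]$ with $[\ell]$ the class of a line, and since $\td_1(Y)=H$, $\int_Y\td_3(Y)=\chi(\CO_Y)=1$ and $H\cdot[\ell]=1$, Hirzebruch--Riemann--Roch gives $\chi(E)=2-n$. At $t=0$ all of $h^0,h^2,h^3$ vanish by the above, so $h^1(E(0))=-\chi(E)=n-2$, and Serre duality turns this into $h^2(E(-2))=n-2$. I expect the only genuinely delicate point to be the downward propagation of the $H^1$-vanishing: a priori one fears needing a restriction theorem of Mehta--Ramanathan type to control the restriction $E|_S$, but the reduction to $H^0(S,E|_S)=0$ sidesteps semistability of $E|_S$ entirely, leaving the remaining steps routine.
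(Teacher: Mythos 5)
Your proof is correct, and for the central step it takes a genuinely different route from the paper's. The two arguments share the easy ingredients: $H^0(E(t))=0$ for $t\le 0$ by stability, transfer between $H^0,H^3$ and $H^1,H^2$ via Serre duality combined with the self-duality $E^*\cong E$, and the values $h^1(E)=h^2(E(-2))=n-2$ by Riemann--Roch. Where you differ is in propagating the instantonic vanishing: you restrict to a general integral $S\in|\CO_Y(1)|$, extract $H^0(S,E|_S)=0$ from $H^0(E)=H^1(E(-1))=0$, deduce $H^0(S,E|_S(t))=0$ for $t\le 0$ by multiplying with a non-zerodivisor section, and run a downward induction killing $H^1(E(t))$ for all $t\le -1$. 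The paper instead works on the dual side: starting from $H^2(E(-1))=0$ (Serre duality plus the instantonic condition), it tensors $E$ with the Koszul complex of three general sections of $\CO(1)$ cutting out a length-$d$ zero-dimensional scheme $Z$, and the hypercohomology spectral sequence together with $H^2(E(-1))=H^3(E(-2))=0$ and $H^{>0}(E\otimes\CO_Z)=0$ kills $H^2(E(t))$ for $t\ge 0$ by upward induction, after which Serre duality yields your $H^1$ line. Your version is more geometric and, as you correctly anticipate, the reduction to $H^0(S,E|_S)=0$ makes any Mehta--Ramanathan-type restriction theorem unnecessary; it only requires an integral member of $|\CO_Y(1)|$, which exists in every degree including $d=1$ (there $\CO_Y(1)$ has a base point, but $h^0=d+2\ge 3$, the system has no fixed component, and Bertini gives irreducibility since the associated map has image of dimension $\ge 2$ --- worth a sentence in your write-up). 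The paper's Koszul argument buys uniformity by never choosing a divisor, replacing it with a zero-dimensional complete intersection; note it needs the same input $h^0(\CO_Y(1))\ge 3$ that you use implicitly. Two cosmetic points: the detour through the saturated line subsheaf is unnecessary, since a nonzero section gives $\CO_Y\hookrightarrow E(t)$ directly and $\mu(\CO_Y)=0\ge t=\mu(E(t))$ already contradicts stability; and the identity $\ch(E)=2-n[\ell]$ silently uses $c_3(E)=0$, which you should justify by $c_3(E)=c_3(E^*)=-c_3(E)$ from the self-duality.
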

\begin{proof}
First note that $H^0(E(t)) = 0$ for $t \le 0$ by stability of $E$.
Further, by Serre duality
$$
H^3(E(t))^* = H^0(E^*(-t-2)) = H^0(E(-t-2)) = 0
$$
for $t \ge -2$.
Also by the Serre duality we have $H^2(E(-1))^* = H^1(E^*(-1)) = H^1(E(-1)) = 0$.
Finally, consider the Koszul complex
$$
0 \to \CO(-3) \to \CO(-2)^3 \to \CO(-1)^3 \to \CO \to \CO_Z \to 0,
$$
given by a triple of global sections of $\CO(1)$ with $Z$ a zero-dimensional subscheme of $Y$ of length $d$
(note that $\dim H^0(Y_d,\CO(1)) = d + 2 \ge 3$, so we can always find a triple of sections).
Note that $E\otimes\CO_Z$ is an artinian sheaf, in particular $H^{>0}(E\otimes\CO_Z) = 0$.
On the other hand, looking at the hypercohomology spectral sequence of the above Koszul complex
tensored with $E$ we see that $H^2(E)$ cannot be killed by anything (since $H^2(E(-1)) = H^3(E(-2)) = 0$),
hence if $H^2(E) \ne 0$ it should contribute nontrivially into $H^2(E\otimes\CO_Z) = 0$. Thus $H^2(E) = 0$.
Twisting additionally by $\CO(t)$ with $t \ge 0$ and using the same argument we prove inductively
that $H^2(E(t)) = 0$ for all $t \ge 0$. Then by Serre duality we have $H^1(E(-2-t)) = 0$.
This explains all zeros in the table. Applying Riemann--Roch one can easily deduce that
$\dim H^1(E) = \dim H^2(E(-2)) = n-2$.
\end{proof}

\begin{corollary}
The charge of an instanton bundle is greater or equal than $2$.
\end{corollary}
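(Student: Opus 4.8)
The plan is to read the bound off directly from the cohomology table established in Lemma~\ref{inst-coh}, since that lemma has already carried out all the substantive work. Indeed, using stability, Serre duality, the instantonic condition~\eqref{ins}, and the Koszul resolution argument, the lemma produces the vanishings $H^0(Y,E) = H^2(Y,E) = H^3(Y,E) = 0$ and then evaluates the single surviving cohomology group via Riemann--Roch. So the corollary should follow with essentially no additional input.

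Concretely, I would first recall the Euler characteristic computation underlying the $t=0$ column of the table. Since $E$ has rank $2$ with $c_1(E)=0$ and $c_2(E)=n$, Hirzebruch--Riemann--Roch on the Fano threefold $Y$ gives $\chi(Y,E) = 2-n$: the term $2\chi(\CO_Y)=2$ comes from the rank (using $\chi(\CO_Y)=1$), while pairing $\ch_2(E) = -n$ against the degree-one part of the Todd class contributes $-n$, via $c_1(Y)=2H$ and $H\cdot\ell = 1$ for a line $\ell$. Combining $\chi(Y,E)=2-n$ with the three vanishings above forces
$$
h^1(Y,E) \;=\; -\chi(Y,E) \;=\; n-2.
$$

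The conclusion is then immediate: $h^1(Y,E)$ is the dimension of a vector space, hence a nonnegative integer, so $n-2 \ge 0$ and therefore $n \ge 2$. Equivalently one may argue from the entry $h^2(Y,E(-2)) = n-2$ in the same table, which is nonnegative for the identical reason; the two viewpoints are interchanged by Serre duality.

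There is no genuine obstacle here, as the whole content resides in Lemma~\ref{inst-coh}; the corollary merely records that a quantity computed to equal $n-2$ cannot be negative because it counts dimensions. The only point worth flagging is that one must genuinely invoke the lemma's cohomology vanishings, and not Riemann--Roch alone, so that the alternating sum $\chi(Y,E)$ collapses to the single term $-h^1(Y,E)$; it is this collapse that upgrades the numerical identity $\chi(Y,E)=2-n$ into the inequality $n\ge 2$.
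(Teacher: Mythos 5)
Your proposal is correct and is essentially the paper's own argument: the corollary is immediate from Lemma~\ref{inst-coh}, whose proof already combines the vanishings $h^0(E)=h^2(E)=h^3(E)=0$ with Riemann--Roch to get $h^1(Y,E)=n-2$, and nonnegativity of this dimension forces $n\ge 2$. You have merely spelled out the computation $\chi(Y,E)=2-n$ that the paper leaves implicit (note that one should also record $\ch_3(E)=0$, which holds because $c_3(E)=0$ by self-duality of the rank-$2$ bundle $E$ with trivial determinant), so there is no substantive difference between the two proofs.
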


The instanton bundles of charge 2 are called {\sf the minimal instantons}.
They are particularly interesting. For example they have the following vanishing property.

\begin{corollary}
If $E$ is a minimal instanton then $H^i(E(t)) = 0$ for all $i$ and $-2 \le t \le 0$.
\end{corollary}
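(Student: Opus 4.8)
The plan is to read the claim directly off the cohomology table of Lemma~\ref{inst-coh}, specialized to charge $n = 2$. By definition a minimal instanton has $n = 2$, so $n - 2 = 0$, and the two entries of the table that carry the value $n-2$, namely $h^2(E(-2))$ and $h^1(E(0))$, vanish. The window $-2 \le t \le 0$ consists of the three twists $t = -2, -1, 0$, so I only need to check the four cohomology groups $H^0, H^1, H^2, H^3$ at each of these.

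First I would dispose of the top and bottom rows uniformly: Lemma~\ref{inst-coh} gives $H^0(E(t)) = 0$ for all $t \le 0$ and $H^3(E(t)) = 0$ for all $t \ge -2$, which already covers $H^0$ and $H^3$ on the entire window. For the two middle rows I would invoke $H^2(E(t)) = 0$ for $t \ge -1$, which handles $H^2(E(-1))$ and $H^2(E(0))$, and $H^1(E(t)) = 0$ for $t \le -1$, which handles $H^1(E(-2))$ and $H^1(E(-1))$. What remains are exactly the two boundary entries $H^2(E(-2))$ and $H^1(E(0))$, and these equal $n - 2 = 0$ for a minimal instanton by the table. Hence all four groups vanish at each of $t = -2, -1, 0$.

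There is no genuine obstacle here: the entire content is already contained in Lemma~\ref{inst-coh}, and setting $n = 2$ simply annihilates the last two potentially nonzero entries. Indeed, one may phrase the proof in a single sentence by observing that the only entries of the cohomology table not forced to be zero on the range $-2 \le t \le 0$ are precisely the two $n-2$ entries, both of which vanish when the charge is minimal.
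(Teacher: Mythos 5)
Your proof is correct and is exactly the argument the paper intends: the corollary is stated without proof as an immediate consequence of Lemma~\ref{inst-coh}, and your reading of the cohomology table --- with the only non-forced entries $h^2(E(-2)) = n-2$ and $h^1(E(0)) = n-2$ vanishing for $n = 2$ --- is the whole content. Nothing is missing.
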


\begin{remark}\label{qu1}
The possible values of $\dim H^0(E(1)) = \dim H^3(E(-3))$ and $\dim H^1(E(1)) = \dim H^2(E(-3))$
are hard to find. There is a simple restriction
$$
\dim H^0(E(1)) - \dim H^1(E(1)) = 2d - 2n + 4
$$
which is given by Riemann--Roch. Moreover, probably one can show that
$$
\dim H^0(E(1)) \le 2d,
\qquad
\dim H^1(E(1)) \le 2n - 4.
$$
For this it is enough to check that for generic linear section $C$ of $Y$ of codimension $2$
(which is an elliptic curve) one has $H^0(C,E_{|C}) = 0$. In this case it would be easy to deduce
for minimal instantons the equalities $H^\bullet(E(1)) = \kk^{2d}$, $H^\bullet(E(-3)) = \kk^{2d}[-3]$.
%
\end{remark}

\subsection{The acyclic extension}

As we have seen in Lemma~\ref{inst-coh} each instanton $E$ enjoys the vanishing
$$
H^\bullet(Y,E(-1)) = 0.
$$
One can easily produce from $E$ another bundle which has a stronger vanishing.


\begin{lemma}
For each instanton bundle $E$ there exists a unique short exact sequence
\begin{equation}\label{te}
\xymatrix@1{0 \ar[r] & E \ar[r]^-{\lambda_E} & \TE \ar[r] & \CO_Y^{n-2} \ar[r] & 0 }
\end{equation}
such that $\TE$ is acyclic, i.e.
$$
H^\bullet(Y,\TE) = 0.
$$
\end{lemma}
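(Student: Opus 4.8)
The plan is to realize $\TE$ as the \emph{universal extension} of $E$ by a trivial bundle and to read off both acyclicity and uniqueness from the resulting long exact cohomology sequence. Everything rests on Lemma~\ref{inst-coh}, which shows that the only nonzero cohomology group of $E$ is $H^1(Y,E)$ and that $\dim H^1(Y,E) = n-2$. Write $V = H^1(Y,E)$. Since $Y$ is Fano, Kodaira vanishing gives $H^i(Y,\CO_Y) = 0$ for $i > 0$, so there is a canonical identification
$$
\Ext^1(\CO_Y \otimes V, E) \cong \Ext^1(\CO_Y, E) \otimes V^* = H^1(Y,E) \otimes V^* = \End(V),
$$
and I would take $\TE$ to be the extension of $\CO_Y \otimes V \cong \CO_Y^{n-2}$ by $E$ whose class is $\id_V \in \End(V)$. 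This is a short exact sequence of the required shape.

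Acyclicity is then a cohomology computation. The connecting homomorphism $\delta \colon H^0(Y,\CO_Y \otimes V) \to H^1(Y,E)$ of this sequence is cup product with its class; under the tautological identification $H^0(Y,\CO_Y \otimes V) = V$ it equals $\id_V$, hence is an isomorphism. Feeding this into the long exact sequence, and using that $E$ has cohomology only in degree $1$ while $\CO_Y^{n-2}$ has cohomology only in degree $0$, I obtain $H^0(\TE) \hookrightarrow \Ker\delta = 0$; next, $H^1(E) \to H^1(\TE)$ is zero because $\delta$ is surjective, so $H^1(\TE) \hookrightarrow H^1(\CO_Y^{n-2}) = 0$; and $H^i(\TE) = 0$ for $i \ge 2$ for degree reasons. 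Hence $H^\bullet(Y,\TE) = 0$.

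For uniqueness I would run this backwards. Given any short exact sequence $0 \to E \to G \to \CO_Y^{n-2} \to 0$ with $G$ acyclic, the long exact sequence together with the vanishings $H^0(G) = H^1(G) = 0$, $H^0(E) = 0$ and $H^1(\CO_Y^{n-2}) = 0$ forces its connecting map $H^0(Y,\CO_Y^{n-2}) \to H^1(Y,E)$ to be an isomorphism. Since this map is again the image of the extension class under $\Ext^1(\CO_Y^{n-2}, E) \cong \Hom(H^0(Y,\CO_Y^{n-2}), H^1(Y,E))$, the class is an isomorphism of vector spaces, and any two such classes differ by an automorphism of $\CO_Y^{n-2}$; this yields an isomorphism between any two acyclic extensions restricting to the identity on $E$. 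The only point requiring genuine care is the identification of the connecting homomorphism with the extension class and the check that it is canonical; once that is in place, both existence and uniqueness are formal consequences of the cohomology table.
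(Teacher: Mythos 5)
Your proposal is correct and takes essentially the same approach as the paper: the paper's proof is the single observation that $\TE$ is the universal extension of $H^1(Y,E)\otimes\CO_Y$ by $E$, which is exactly the extension with class $\id_V$ that you construct. Your verification of acyclicity via the connecting homomorphism, and of uniqueness by identifying the connecting map of an arbitrary acyclic extension with its extension class, simply makes explicit the details the paper leaves to the reader.
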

\begin{proof}
Indeed, it is clear that $\TE$ is nothing but the universal extension of $H^1(Y,E)\otimes\CO_Y$ by $E$.
\end{proof}

Another way to describe $\TE$ is by saying that
\begin{equation*}
\TE = \LL_{\CO_Y}E,
\end{equation*}
the left mutation of $E$ through $\CO_Y$. Indeed, the definition of the left mutation~\eqref{lmut}
in this case literally coincides with exact sequence~\eqref{te}.

The bundle $\TE$ will be referred to as {\sf the acyclic extension}\/ of the instanton $E$.
Recall the semiorthogonal decomposition~\eqref{dby} of $\D^b(Y)$. We have the following

\begin{lemma}\label{te1}
The acyclic extension of an instanton of charge $E$ is a simple $\mu$-semistable vector bundle $\TE$ on $Y$ 
with
$$
r(\TE) = n,\quad
c_1(\TE) = 0,\quad
c_2(\TE) = n,\quad
c_3(\TE) = 0,\qquad
H^\bullet(\TE) = H^\bullet(\TE(-1)) = 0.
$$
In particular, $\TE \in \CB_Y$. Moreover,
$$
h^0(\TE^*) = h^1(\TE^*) = n-2,\qquad
h^2(\TE^*) = h^3(\TE^*) = 0.
$$
\end{lemma}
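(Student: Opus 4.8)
The plan is to read off every assertion from the defining sequence~\eqref{te},
$$
0 \to E \to \TE \to \CO_Y^{n-2} \to 0,
$$
combined with the cohomology table of Lemma~\ref{inst-coh}. The numerical invariants and local freeness come for free: an extension of the locally free sheaves $\CO_Y^{n-2}$ and $E$ is locally free, additivity of rank gives $r(\TE) = 2 + (n-2) = n$, and the Whitney formula gives $c(\TE) = c(E)c(\CO_Y^{n-2}) = c(E)$, whence $c_1(\TE)=0$, $c_2(\TE)=n$ and $c_3(\TE) = c_3(E) = 0$ (the last because $E$ has rank $2$). For the vanishing $H^\bullet(\TE)=0$, I would run the long exact sequence of~\eqref{te}, using $H^{>0}(\CO_Y)=0$ and the values $H^0(E)=H^2(E)=H^3(E)=0$, $H^1(E)=\kk^{n-2}$ from Lemma~\ref{inst-coh}; the only nontrivial point is that the connecting map $H^0(\CO_Y^{n-2}) \to H^1(E)$ is an isomorphism, which is exactly the defining property of the universal extension. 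Twisting~\eqref{te} by $\CO_Y(-1)$ and feeding in $H^\bullet(E(-1))=0$ (Lemma~\ref{inst-coh}) together with $H^\bullet(\CO_Y(-1))=0$ (Kodaira vanishing plus absence of sections) gives $H^\bullet(\TE(-1))=0$. Since $\Ext^\bullet(\CO_Y,\TE)=H^\bullet(\TE)$ and $\Ext^\bullet(\CO_Y(1),\TE)=H^\bullet(\TE(-1))$ both vanish, $\TE \in \CB_Y$.

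Next I would prove simplicity by two applications of $\Hom$ to~\eqref{te}. Applying $\Hom(-,\TE)$ and using $\Ext^\bullet(\CO_Y^{n-2},\TE) = H^\bullet(\TE)^{\oplus(n-2)} = 0$ identifies $\Hom(\TE,\TE) \cong \Hom(E,\TE)$. Applying $\Hom(E,-)$ and using $\Hom(E,\CO_Y^{n-2}) = \Hom(E,\CO_Y)^{\oplus(n-2)} = 0$ — the vanishing $\Hom(E,\CO_Y) \cong H^3(E(-2))^* = 0$ following from Lemma~\ref{inst-coh} by Serre duality — identifies $\Hom(E,\TE) \cong \Hom(E,E)$. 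As $E$ is stable, hence simple, $\Hom(E,E)=\kk$, so $\Hom(\TE,\TE)=\kk$.

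The main obstacle is $\mu$-semistability, which I would settle by a subsheaf analysis. Let $G \subset \TE$ have $0 < r(G) < n$; since $\mu(\TE)=0$ it suffices to show $c_1(G) \le 0$. Intersecting with $E$ yields $0 \to G \cap E \to G \to G' \to 0$ with $G' \subset \CO_Y^{n-2}$, so $c_1(G) = c_1(G\cap E) + c_1(G')$. Here $c_1(G \cap E) \le 0$: if $\rank(G\cap E)=1$ this follows from $\mu$-semistability of $E$ (slope $0$), and if $G\cap E$ has full rank $2$ then $c_1(G\cap E) \le c_1(E) = 0$ because the quotient is torsion. Likewise $\CO_Y^{n-2}$ is $\mu$-semistable of slope $0$, being a direct sum of copies of the stable line bundle $\CO_Y$, so its subsheaf $G'$ has $c_1(G') \le 0$. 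Hence $c_1(G) \le 0$, giving $\mu$-semistability. (Note $\TE$ is in general only semistable, not stable: for $n>2$ the subsheaf $E \subset \TE$ has the same slope $0$.)

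Finally I would compute $H^\bullet(\TE^*)$ by dualizing~\eqref{te}. Since $\CO_Y^{n-2}$ is locally free, dualization gives the exact sequence $0 \to \CO_Y^{n-2} \to \TE^* \to E^* \to 0$. Because $E$ has rank $2$ and $c_1(E)=0$ one has $E^* \cong E$, so $H^\bullet(E^*)$ is again the column of Lemma~\ref{inst-coh}: $h^0=h^2=h^3=0$ and $h^1=n-2$. Combining this with $H^\bullet(\CO_Y^{n-2})=\kk^{n-2}$ (concentrated in degree $0$) in the long exact cohomology sequence yields $h^0(\TE^*)=h^1(\TE^*)=n-2$ and $h^2(\TE^*)=h^3(\TE^*)=0$, as claimed.
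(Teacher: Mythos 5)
Your proposal is correct and follows essentially the same route as the paper: every claim is read off the defining sequence~\eqref{te} (and its twist and dual), simplicity via the same two applications of $\Hom(E,-)$ and $\Hom(-,\TE)$, and $\mu$-semistability from $\TE$ being an extension of two $\mu$-semistable sheaves of slope $0$. The only difference is that you spell out details the paper leaves implicit --- the subsheaf analysis proving the extension fact, the connecting-map isomorphism from the universal-extension property, and $\Hom(E,\CO_Y)=0$ via Serre duality rather than via $H^0(E)=0$ --- all of which are sound.
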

\begin{proof}
Chern classes and cohomology of $\TE$ are computed immediately using the defining sequence~\eqref{te}.
To compute the cohomology of $\TE(-1)$ we twist~\eqref{te} by $-1$, and to compute
the cohomology of $\TE^*$ we dualize~\eqref{te} and use self duality of $E$.

To check that $\TE$ is simple we first show that $\Hom(E,\TE) = \kk$ (by applying $\Hom(E,-)$ to~\eqref{te}
and noting that $E$ itself is simple and $\Hom(E,\CO_Y) = H^0(Y,E) = 0$). Then applying $\Hom(-,\TE)$ to~\eqref{te}
we see that $\TE$ is simple. Finally, to establish $\mu$-semistability of $\TE$ we note that $\TE$ is an extension
of two $\mu$-semistable sheaves of the same slope.
\end{proof}

\subsection{The antiautoequivalence}

Recall that any instanton, being a rank 2 bundle with trivial determinant, is self dual.
This self duality translates to the following property of the acyclic extension.

Consider the following antiautoequivalence of the category $\CO_Y^\perp \subset \D^b(Y)$.
First, note that the duality functor
$$
F \mapsto \RCHom(F,\CO_Y)
$$
gives an antiequivalence of the category $\CO_Y^\perp$ onto the category ${}^\perp\CO_Y$.
Composing it with the {\sf left mutation}\/ functor $\LL_\CO$ with respect to $\CO_Y$,
and using Lemma~\ref{muteq} we conclude that
$$
\SD:\CO_Y^\perp \to \CO_Y^\perp,\qquad
F \mapsto \LL_\CO(\RCHom(F,\CO_Y))
$$
is an antiautoequivalence of $\CO_Y^\perp$. Moreover, it is easy to see that $\SD$ is involutive.

\begin{lemma}
We have a functorial isomorphism $\delta:\SD^2 \stackrel\sim\to \id$.
\end{lemma}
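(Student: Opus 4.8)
The plan is to realize $\SD$ as a composite and to reduce the claim to two independent facts: that the duality functor is an involution, and that $\LL_\CO$ and $\RR_\CO$ are mutually inverse (Lemma~\ref{muteq}). Write $\mathbb{D} := \RCHom(-,\CO_Y)$ for the duality functor, so that by definition $\SD = \LL_\CO\circ\mathbb{D}$. First I would record the basic properties of $\mathbb{D}$. Since $Y$ is smooth every object of $\D^b(Y)$ is perfect, so $\mathbb{D}$ is a contravariant involution with a functorial isomorphism $\mathbb{D}^2\cong\id$; moreover $\mathbb{D}(\CO_Y)\cong\CO_Y$, and the adjunction defining $\RCHom$ gives a natural identification $\Ext^\bullet(\CO_Y,\mathbb{D}(H))\cong\Ext^\bullet(H,\CO_Y)$ for all $H$ (take $\RHom(\CO_Y,\RCHom(H,\CO_Y))\cong\RHom(H,\RCHom(\CO_Y,\CO_Y))$). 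In particular $\mathbb{D}$ exchanges $\CO_Y^\perp$ and ${}^\perp\CO_Y$, as already noted in the text.

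The key step is a commutation relation between $\mathbb{D}$ and the two mutations through $\CO_Y$, namely a functorial isomorphism
$$\mathbb{D}\circ\LL_\CO \;\cong\; \RR_\CO\circ\mathbb{D}.$$
To prove it I would apply the contravariant functor $\mathbb{D}$ to the defining triangle~\eqref{lmut} of $\LL_\CO(G)$ for $G\in{}^\perp\CO_Y$. Using $\mathbb{D}(\CO_Y)\cong\CO_Y$ and $\mathbb{D}(\Ext^\bullet(\CO_Y,G)\otimes\CO_Y)\cong\Ext^\bullet(\CO_Y,G)^*\otimes\CO_Y$, together with the identification $\Ext^\bullet(\CO_Y,G)\cong\Ext^\bullet(\mathbb{D}(G),\CO_Y)$ from the previous paragraph, the resulting triangle acquires exactly the shape of the defining triangle~\eqref{rmut} of $\RR_\CO(\mathbb{D}(G))$. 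Comparing the two triangles yields the desired isomorphism of the remaining vertices.

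Granting the commutation relation, the computation is immediate: for $F\in\CO_Y^\perp$,
$$\SD^2(F)=\LL_\CO\mathbb{D}\LL_\CO\mathbb{D}(F)\cong\LL_\CO\RR_\CO\mathbb{D}^2(F)\cong\LL_\CO\RR_\CO(F)\cong F,$$
where the middle isomorphism uses the commutation relation applied to $\mathbb{D}(F)\in{}^\perp\CO_Y$, the next uses $\mathbb{D}^2\cong\id$, and the last uses that $\LL_\CO$ and $\RR_\CO$ are mutually inverse equivalences between ${}^\perp\CO_Y$ and $\CO_Y^\perp$ (Lemma~\ref{muteq}). Chaining these functorial isomorphisms produces the sought $\delta:\SD^2\xrightarrow{\ \sim\ }\id$.

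The main obstacle is to make the commutation relation genuinely functorial rather than merely an isomorphism on objects. Because the cone of a morphism is not canonically functorial, the delicate point is to verify that applying $\mathbb{D}$ to the evaluation map $\Ext^\bullet(\CO_Y,G)\otimes\CO_Y\to G$ recovers, under the identifications above, the coevaluation map $\mathbb{D}(G)\to\Ext^\bullet(\mathbb{D}(G),\CO_Y)^*\otimes\CO_Y$ that defines $\RR_\CO(\mathbb{D}(G))$; this compatibility of evaluation and coevaluation under duality is precisely what upgrades the object-level isomorphism of triangles to a natural transformation. Once it is in place, the fact that $\LL_\CO$ and $\RR_\CO$ are honest functors guarantees that the induced isomorphism on third vertices is natural in $G$, and the resulting $\delta$ is well defined.
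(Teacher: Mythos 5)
Your argument is correct, and it reaches the paper's conclusion by a mild but genuine repackaging of the same underlying computation. The paper works triangle-by-triangle: it dualizes the defining triangle $\RHom(F,\CO_Y)\otimes\CO_Y \to \RCHom(F,\CO_Y) \to \SD(F)$ to obtain $\RCHom(\SD(F),\CO_Y) \to F \to \RHom(F,\CO_Y)^*\otimes\CO_Y$, and then applies $\LL_\CO$; since $\LL_\CO$ kills the $\CO_Y$-vertex, this yields $\SD^2(F) \cong \LL_\CO(F) = F$ outright, using only $\LL_\CO(\CO_Y)=0$ and the trivial fact that $\LL_\CO$ is the identity on $\CO_Y^\perp$ --- Lemma~\ref{muteq} is never invoked. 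You instead isolate the dualization step as the standalone commutation relation $\RCHom(-,\CO_Y)\circ\LL_\CO \cong \RR_\CO\circ\RCHom(-,\CO_Y)$ and then finish with $\LL_\CO\circ\RR_\CO \cong \id$ on $\CO_Y^\perp$ from Lemma~\ref{muteq}; in fact the paper's dualized triangle is exactly your relation evaluated at $G = \RCHom(F,\CO_Y)$, since it exhibits $\RCHom(\SD(F),\CO_Y)$ as $\RR_\CO(F)$. What your version buys is a reusable statement (it immediately shows, for instance, that the quasi-inverse of $\SD$ is $\RCHom(-,\CO_Y)\circ\RR_\CO$, before one even knows $\SD$ is involutive); what the paper's buys is self-containedness, needing only the cheap half of mutation theory. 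On the one delicate point you flag --- upgrading the isomorphism of cones to a natural one --- there is a cleaner route than matching evaluation against coevaluation by hand: both the dualized triangle $\RCHom(\LL_\CO G,\CO_Y) \to \RCHom(G,\CO_Y) \to \Ext^\bullet(\CO_Y,G)^*\otimes\CO_Y$ and the triangle~\eqref{rmut} for $\RR_\CO(\RCHom(G,\CO_Y))$ are decomposition triangles of $\RCHom(G,\CO_Y)$ with respect to the semiorthogonal decomposition $\D^b(Y) = \langle \CO_Y, {}^\perp\CO_Y \rangle$ (the left vertices lie in ${}^\perp\CO_Y$ because $\LL_\CO$ takes values in $\CO_Y^\perp$ and the duality swaps the two orthogonals, while the right vertices lie in $\langle\CO_Y\rangle$); such decompositions are unique up to a unique isomorphism and are functorial, so the identification of the left vertices is automatically natural in $G$. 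The same remark is what justifies the paper's equally brisk assertion that applying the exact functor $\LL_\CO$ ``gives a functorial isomorphism.''
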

\begin{proof}
Indeed, for each $F$ we have a canonical distinguished triangle
$$
\RHom(F,\CO_Y)\otimes\CO_Y \to \RCHom(F,\CO_Y) \to \SD(F)
$$
Dualizing it we obtain a triangle
$$
\RCHom(\SD(F),\CO_Y) \to F \to \RHom(F,\CO_Y)^*\otimes\CO_Y.
$$
Since $\LL_\CO(\CO_Y) = 0$, the application of exact functor $\LL_\CO$ gives a functorial isomorphism $\SD^2(F) \cong \LL_\CO(F)$.
But if $F \in \CO_Y^\perp$ then $\LL_\CO(F) = F$.
\end{proof}

Moreover, the antiautoequivalence $\SD$ preserves the subcategory $\CB_Y$.

\begin{proposition}
The category $\CB_Y$ is preserved by the antiautoequivalence $\SD$.
\end{proposition}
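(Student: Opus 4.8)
The plan is to check directly that $\SD$ carries $\CB_Y$ into itself; since we already have the functorial isomorphism $\SD^2 \cong \id$, the restriction of $\SD$ to $\CB_Y$ will then be its own inverse and hence an anti\-autoequivalence of $\CB_Y$, which is exactly the assertion. The key simplification is that $\SD$ takes values in $\CO_Y^\perp$ by its very construction, so for any $F \in \CB_Y \subset \CO_Y^\perp$ we automatically get $\Ext^\bullet(\CO_Y, \SD(F)) = 0$ for free. Recalling that $\CB_Y = \langle \CO_Y, \CO_Y(1) \rangle^\perp$, the only remaining point is the single vanishing $\Ext^\bullet(\CO_Y(1), \SD(F)) = 0$.

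To establish it I would apply $\RHom(\CO_Y(1), -)$ to the distinguished triangle
\[
\RHom(F, \CO_Y) \otimes \CO_Y \to \RCHom(F, \CO_Y) \to \SD(F)
\]
that defines $\SD(F) = \LL_\CO(\RCHom(F, \CO_Y))$. The contribution of the first term is $\RHom(F, \CO_Y) \otimes \Ext^\bullet(\CO_Y(1), \CO_Y)$, and this vanishes because $(\CO_Y, \CO_Y(1))$ is an exceptional pair, so $\Ext^\bullet(\CO_Y(1), \CO_Y) = 0$. The triangle then yields an isomorphism $\Ext^\bullet(\CO_Y(1), \SD(F)) \cong \Ext^\bullet(\CO_Y(1), \RCHom(F, \CO_Y))$, and by tensor--hom adjunction the right-hand side is $\Ext^\bullet(F(1), \CO_Y) = \Ext^\bullet(F, \CO_Y(-1))$.

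Finally I would close with Serre duality. Since $\omega_Y \cong \CO_Y(-2)$, the group $\Ext^\bullet(F, \CO_Y(-1))$ is dual, up to the shift by $3$, to $\Ext^\bullet(\CO_Y(-1), F \otimes \omega_Y) = \Ext^\bullet(\CO_Y(-1), F(-2)) = H^\bullet(Y, F(-1))$, and the latter vanishes precisely because $F \in \CB_Y$ is right orthogonal to $\CO_Y(1)$. This gives $\Ext^\bullet(\CO_Y(1), \SD(F)) = 0$, hence $\SD(F) \in \CB_Y$, completing the argument.

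I do not expect a genuine obstacle: the statement collapses to one cohomology vanishing, and the whole proof is bookkeeping with the defining triangle, the exceptionality of $(\CO_Y, \CO_Y(1))$, and Serre duality. The only points needing a little care are noticing that the mutation term drops out, so the computation reduces to $\Ext^\bullet(F, \CO_Y(-1))$, and matching the twists in the Serre duality step. A more structural alternative would be to observe that the duality functor $\RCHom(-, \CO_Y)$ reverses the decomposition $\langle \CB_Y, \CO_Y, \CO_Y(1) \rangle$ into $\langle \CO_Y(-1), \CO_Y, \RCHom(\CB_Y, \CO_Y) \rangle$, identifying $\RCHom(\CB_Y, \CO_Y)$ with ${}^\perp\langle \CO_Y(-1), \CO_Y \rangle$, and then to track this subcategory through the equivalence $\LL_\CO \colon {}^\perp\CO_Y \to \CO_Y^\perp$ of Lemma~\ref{muteq}; but the explicit triangle computation above is shorter.
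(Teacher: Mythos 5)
Your argument is correct, and it is in substance the same as the paper's: the paper's proof is precisely the structural variant you sketch in your last sentence ($\RCHom(-,\CO_Y)$ sends $\CB_Y$ into ${}^\perp\langle\CO_Y(-1),\CO_Y\rangle$, the mutation $\LL_\CO$ then lands in ${}^\perp\CO_Y(-1)\cap\CO_Y^\perp$, and Serre duality with $\omega_Y\cong\CO_Y(-2)$ gives ${}^\perp\CO_Y(-1)=\CO_Y(1)^\perp$). Your explicit triangle computation merely unpacks this at the level of a single object $F$, using the same three ingredients --- the defining triangle of $\LL_\CO$, the vanishing $\Ext^\bullet(\CO_Y(1),\CO_Y)=0$ from exceptionality of the pair, and the same Serre duality twist --- so there is nothing to flag.
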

\begin{proof}
Assume that $F \in \CB_Y = \langle \CO_Y,\CO_Y(1) \rangle^\perp$.
Then we have $\RCHom(F,\CO_Y) \in {}^\perp\langle \CO_Y(-1),\CO_Y \rangle$ and so
$\SD(F) = \LL_\CO(\RCHom(F,\CO_Y)) \in {}^\perp\CO_Y(-1) \cap \CO_Y^\perp$.
But since $\omega_Y \cong \CO_Y(-2)$ it follows from the Serre duality that ${}^\perp\CO_Y(-1) = \CO_Y(1)^\perp$,
so we see that $\SD(F) \in \CO_Y^\perp \cap \CO_Y(1)^\perp = \langle \CO_Y,\CO_Y(1) \rangle^\perp = \CB_Y$.
\end{proof}


\subsection{The self-duality of acyclic extensions}

Now we can state the self duality property of $\TE$.

\begin{proposition}\label{dte}
If $\TE$ is the acyclic extension of an instanton then there is a canonical isomorphism $\phi:\SD(\TE) \to \TE$.
Moreover, the isomorphism $\phi$ is skew-symmetric, that is the diagram
$$
\xymatrix{
& \SD(\TE) \ar[ld]_-{\SD(\phi)} \ar[rd]^-{-\phi} \\
\SD^2(\TE) \ar[rr]^-{\delta_\TE} && \TE
}
$$
commutes.
\end{proposition}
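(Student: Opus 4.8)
The plan is to produce $\phi$ directly from the tautological self-duality of the rank $2$ bundle $E$, and then to reduce the skew-symmetry statement to the computation of a single scalar. First I record the input datum. Since $E$ has rank $2$ and $c_1(E)=0$, the wedge product gives a perfect pairing $E\otimes E\to\Lambda^2E\cong\CO_Y$, and hence an isomorphism $\psi\colon E\xrightarrow{\sim}E^*$, where $E^*=\RCHom(E,\CO_Y)$. This pairing is alternating, so under the biduality identification $E\cong E^{**}$ one has $\psi^\vee=-\psi$. This single sign is the ultimate source of the skew-symmetry to be proven.

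Next I construct $\phi$. Applying $\RCHom(-,\CO_Y)$ to the defining sequence~\eqref{te}, whose terms are all vector bundles, produces a short exact sequence
$$0\to\CO_Y^{n-2}\to\TE^*\to E^*\to0 .$$
Composing the surjection with $\psi^{-1}\colon E^*\xrightarrow{\sim}E$ and then applying the triangulated functor $\LL_\CO$, I use $\LL_\CO(\CO_Y^{n-2})=0$ together with $\LL_\CO(E)=\TE$ (the defining property of the acyclic extension) to turn the resulting triangle into an isomorphism
$$\phi\colon\SD(\TE)=\LL_\CO(\TE^*)\xrightarrow{\sim}\LL_\CO(E)=\TE .$$
This already yields $\SD(\TE)\cong\TE$, which is the first assertion.

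For skew-symmetry I would argue as follows. By Lemma~\ref{te1} the bundle $\TE$ is simple, and $\phi$ identifies $\SD(\TE)$ with $\TE$, so $\Hom(\SD(\TE),\TE)\cong\Hom(\TE,\TE)=\kk$ is one-dimensional and spanned by $\phi$. Both $\phi$ and the composite $\delta_\TE\circ\SD(\phi)$ lie in this line and are isomorphisms, whence
$$\delta_\TE\circ\SD(\phi)=c\,\phi\qquad\text{for a unique }c\in\kk^\times ,$$
and the Proposition is exactly the assertion $c=-1$ (one checks that $c$ is unchanged under rescaling $\phi$, so it is a genuine invariant of the situation). Applying $\SD$ to this relation, combining the result with the naturality square of $\delta\colon\SD^2\Rightarrow\id$ evaluated at $\phi$, and using the coherence identity $\delta_{\SD(\TE)}\circ\SD(\delta_\TE)=\id$ (valid because $\delta$ is induced by ordinary biduality of sheaves, for which the triangle identity is standard) forces $c^2=1$, so that $c\in\{+1,-1\}$.

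Finally the sign is pinned down by tracing $\phi$ back through its construction to $\psi$: the operations $\RCHom(-,\CO_Y)$ and $\LL_\CO$ preserve the symmetry type of a pairing, the only place a sign can enter being the biduality packaged into $\delta$, where the relation $\psi^\vee=-\psi$ contributes precisely the factor $-1$. The cleanest check is the minimal case $n=2$, where $\CO_Y^{n-2}=0$, hence $\TE=E$, $\SD(\TE)=E^*$ and $\phi=\psi^{-1}$; there the asserted commuting triangle reduces literally to the alternating property of the wedge pairing. I expect the main obstacle to be exactly this last step: upgrading the dichotomy $c=\pm1$ to the definite value $-1$ uniformly in $n$, i.e.\ making rigorous the claim that $\phi$ inherits the symmetry type of $\psi$, by matching the biduality sign hidden in $\delta$ with $\psi^\vee=-\psi$, rather than merely appealing to the transparent case $n=2$.
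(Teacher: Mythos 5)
Your construction of $\phi$ is correct and coincides with the paper's: dualizing \eqref{te}, composing with the symplectic identification (the paper writes $\sigma = \psi^{-1}\colon E^* \to E$), splicing back with \eqref{te}, and applying $\LL_\CO$, using $\LL_\CO(\CO_Y^{n-2})=0$ and $\LL_\CO(E)=\TE$. Your reduction of the skew-symmetry to a single scalar is also sound as far as it goes: since $\TE$ is simple (Lemma~\ref{te1}) and $\phi$ is an isomorphism, $\Hom(\SD(\TE),\TE)=\kk\cdot\phi$, so $\delta_\TE\circ\SD(\phi)=c\,\phi$ with $c$ independent of the scaling of $\phi$, and the $\SD$-plus-coherence argument gives $c^2=1$.

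However, the decisive step $c=-1$ is a genuine gap, as you yourself flag. The assertion that ``$\RCHom(-,\CO_Y)$ and $\LL_\CO$ preserve the symmetry type of a pairing'' is precisely the content of the proposition, not an available lemma; and the transparent case $n=2$ does not settle general $n$, because nothing in your argument shows that $c$ is independent of the instanton $E$ or of the charge $n$ --- a priori $c$ could be $+1$ for some instantons and $-1$ for others, both consistent with $c^2=1$, and no connectedness or local-constancy argument for the moduli space is available here. The paper closes exactly this gap by a short explicit diagram chase that your setup already supports: by construction $\phi=\LL_\CO(\lambda_E\sigma\lambda_E^T)$ fits into a commutative triangle with the canonical map $\RCHom(\TE,\CO_Y)\to\SD(\TE)$; dualizing that triangle and applying $\LL_\CO$ identifies the induced map $\SD^2(\TE)\to\TE$ with $\delta_\TE$ (this is immediate from how $\delta$ was defined in the paper's lemma, so no coherence axioms are needed), while the slanted arrow becomes $\LL_\CO(\lambda_E\sigma^T\lambda_E^T)=-\phi$ because $\sigma^T=-\sigma$. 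This gives $\delta_\TE\circ\SD(\phi)=-\phi$ on the nose, uniformly in $n$ and without invoking simplicity; once this computation is done, your scalar reduction becomes redundant. The missing content of your proof is exactly this one explicit dualization.
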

\begin{proof}
Applying $\RCHom(-,\CO_Y)$ to~\eqref{te} and denoting by $\sigma:E^* \to E$ the canonical isomorphism we obtain an exact sequence
\begin{equation}\label{ted}
\xymatrix@1{0 \ar[r] & \CO_Y^{n-2} \ar[r] & \RCHom(\TE,\CO_Y) \ar[r]^-{\sigma\lambda_E^T} & E \ar[r] &  0.}
\end{equation}
Combining it with~\eqref{te} we obtain a long exact sequence
$$
\xymatrix@1{0 \ar[r] & \CO_Y^{n-2} \ar[r] & \RCHom(\TE,\CO_Y) \ar[rr]^-{\lambda_E\sigma\lambda_E^T} && \TE \ar[r] &  \CO_Y^{n-2} \ar[r] & 0.}
$$
Since $\LL_\CO(\CO_Y) = 0$, we see that
$$
\phi := \LL_\CO(\lambda_E\sigma\lambda_E^T):\SD(\TE) \to \LL_\CO(\TE) = \TE
$$
is an isomorphism.
Let us show that $\phi$ is skew-symmetric. For this note that the above arguments give the following commutative diagram
$$
\xymatrix@!C{
\RCHom(\TE,\CO_Y) \ar[rd]_{\lambda_E\sigma\lambda_E^T} \ar[rr] && \SD(\TE) \ar[ld]^\phi \\
& \TE
}
$$
Dualizing it we obtain
$$
\xymatrix@!C{
\TE && \RCHom(\SD(\TE),\CO_Y) \ar[ll] \\
& \RCHom(\TE,\CO_Y) \ar[lu]^{\lambda_E\sigma^T\lambda_E^T} \ar[ru]_{\phi^T}
}
$$
and applying $\LL_\CO$ we obtain
$$
\xymatrix@!C{
\TE && \SD^2(\TE) \ar[ll] \\
& \SD(\TE) \ar[lu]^{\LL_\CO(\lambda_E\sigma^T\lambda_E^T)} \ar[ru]_{\SD(\phi)}
}
$$
Now it remains to note that the arrow in the top row is $\delta_\TE$, and since $\sigma^T = - \sigma$, the left arrow is $-\phi$.
\end{proof}

\subsection{Reconstruction of the instanton}

It turns out that any vector bundle $F$ satisfying properties of both Lemma~\ref{te1} and Proposition~\ref{dte} is the acyclic extension
of appropriate instanton.

\begin{theorem}\label{ftoe}
Assume that $F$ is a vector bundle on $Y$ with
$$
\begin{array}{l}
r(F) = n,\quad
c_1(F) = 0,\quad
c_2(F) = n,\quad
c_3(F) = 0,\\
H^\bullet(F) = H^\bullet(F(-1)) = 0,\\
\SD(F) \cong F.
\end{array}
$$
Then $H^i(Y,F^*) = 0$ unless $i = 0,1$ and $h^0(F^*) = h^1(F^*) \le n-2$.

Moreover, if $h^0(F^*) = n-2$ then there is a unique instanton $E$ of charge $n$ such that $F \cong \TE$.
\end{theorem}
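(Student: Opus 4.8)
The plan is to deduce everything from the single four-term sequence produced by the self-duality. Writing $\SD(F) = \LL_\CO(F^*)$ and feeding $F^*$ into the mutation triangle~\eqref{lmut}, the hypothesis $\SD(F) \cong F$ gives a distinguished triangle
\[
H^\bullet(Y,F^*) \otimes \CO_Y \to F^* \to F,
\]
where I used $\Ext^\bullet(\CO_Y,F^*) = H^\bullet(Y,F^*)$. Taking the long exact sequence of cohomology sheaves and remembering that $F^*$ and $F$ sit in degree $0$, I would read off at once that $H^i(Y,F^*) = 0$ for $i \ne 0,1$, together with an exact sequence of sheaves
\begin{equation}
0 \to H^0(F^*) \otimes \CO_Y \to F^* \to F \to H^1(F^*) \otimes \CO_Y \to 0. \tag{$\star$}
\end{equation}
Because the alternating sum of ranks in $(\star)$ vanishes and $r(F^*) = r(F) = n$, this forces $h^0(F^*) = h^1(F^*)$; write $a$ for this common value.

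Next I would analyze $K := \Im(F^* \to F)$. By $(\star)$ it fits into $0 \to H^0(F^*)\otimes\CO_Y \to F^* \to K \to 0$ and $0 \to K \to F \to H^1(F^*)\otimes\CO_Y \to 0$, so $r(K) = n-a$ and, since the trivial bundle has total Chern class $1$, the second sequence gives $c_1(K)=0$, $c_2(K)=n$. The same sequence and $H^\bullet(F)=0$ yield $H^0(K)=0$; applying $\Hom(-,\CO_Y)$ to it, and using $\Ext^1(\CO_Y^a,\CO_Y)=0$ together with $\dim\Hom(F,\CO_Y)=h^0(F^*)=a=\dim\Hom(\CO_Y^a,\CO_Y)$, shows that the injection $\Hom(\CO_Y^a,\CO_Y)\hookrightarrow\Hom(F,\CO_Y)$ is an isomorphism, hence $\Hom(K,\CO_Y)=0$. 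Now I exclude small ranks: if $r(K)=0$ then $K=0$ and $(\star)$ degenerates to $F\cong\CO_Y^a$, contradicting $H^\bullet(F)=0$; if $r(K)=1$ then $c_1(K)=0$ makes $K$ an ideal sheaf $I_W$ of a subscheme of codimension $\ge 2$, whose dual is $\CO_Y$, so $\Hom(K,\CO_Y)=\kk\ne 0$, again a contradiction. Therefore $r(K)\ge 2$, i.e. $h^0(F^*)=a\le n-2$, which is the first assertion.

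For the reconstruction assume $a=n-2$, so $K$ has rank $2$ with $c_1(K)=0$, $c_2(K)=n$; I claim $E:=K$ is the required instanton. Stability is immediate from Hoppe's criterion (Lemma~\ref{hoppe}): the only condition to check is $k=1$, and since $\mu(K)=0$ one has $(\Lambda^1K)_\norm=K$ with $H^0(K)=0$. The instantonic condition follows by twisting $0\to K\to F\to\CO_Y^{n-2}\to 0$ by $\CO_Y(-1)$ and using $H^0(\CO_Y(-1))=0$ and $H^1(F(-1))=0$ to get $H^1(K(-1))=0$. Thus $E$ is an instanton of charge $n$. Finally $F\cong\TE$: the second sequence exhibits $0\to E\to F\to\CO_Y^{n-2}\to 0$ with $H^\bullet(F)=0$, so $F$ is an acyclic extension of $\CO_Y^{n-2}$ by $E$; since the acyclic classes in $\Ext^1(\CO_Y^{n-2},E)\cong\Hom(\kk^{n-2},H^1(E))$ are exactly the invertible maps, forming a single $\Aut(\CO_Y^{n-2})$-orbit, $F$ is isomorphic to the universal extension $\TE$.

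For uniqueness I would show that $E$ is recovered canonically from $F$ as $\Im(F^*\to F)$. If $E'$ is another instanton with $\TE'\cong F$, then the computation underlying Proposition~\ref{dte} identifies the corresponding self-duality map $\RCHom(\TE',\CO_Y)\to\TE'$ with $\lambda_{E'}\sigma\lambda_{E'}^T$, whose image is $\lambda_{E'}(E')\cong E'$; under $\TE'\cong F$ this image is $K$, so $E'\cong K=E$. The main obstacle I anticipate is the bound $h^0(F^*)\le n-2$: the equality $h^0=h^1$ is formal, but excluding the rank-one case genuinely needs the vanishing $\Hom(K,\CO_Y)=0$ extracted above, and in the uniqueness step one must check that the reconstruction $\Im(F^*\to F)$ is independent, up to isomorphism, of the chosen self-duality isomorphism $\SD(F)\cong F$ (two choices differ by an automorphism of $F$, which only moves $K$ within its isomorphism class).
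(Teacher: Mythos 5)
Your proof is correct and follows essentially the paper's own route: unpacking $\SD(F)\cong F$ via the mutation triangle into the four-term sequence and recovering the instanton as the image $K$ of $F^*\to F$, with only cosmetic deviations (a rank count in $(\star)$ instead of Riemann--Roch for $h^0=h^1$, and the vanishing $\Hom(K,\CO_Y)=0$ instead of the paper's observation that $c_2(K)=n\ne 0$ already forbids $r(K)\le 1$), plus a welcome extra explicitness about $F\cong\TE$ and the uniqueness of $E$, which the paper leaves implicit. Just add the paper's one-line remark that $K$ is locally free, being the kernel of the epimorphism of vector bundles $F\to\CO_Y^a$, since Hoppe's criterion and the very definition of an instanton require $E=K$ to be a vector bundle.
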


\begin{remark}\label{k0b}
It is easy to see that the conditions $H^\bullet(F) = H^\bullet(F(-1)) = 0$ together with $c_1(F) = 0$ imply
$c_2(F) = r(F)$ and $c_3(F) = 0$. Indeed, it follows easily from the description of the numerical Grothendieck
group of the category $\CB_Y$, see~\cite{K09}.
\end{remark}

\begin{proof}
Let us write down the condition $\SD(F) \cong F$ explicitly.
Since $F$ is a vector bundle, we have $\RCHom(F,\CO_Y) \cong F^*$.
Hence $\SD(F) = \Cone(H^\bullet(Y,F^*)\otimes\CO_Y \to F^*)$.
Writing down the long exact sequence of sheaf cohomology we obtain a long exact sequence
$$
0 \to H^0(Y,F^*)\otimes\CO_Y \to F^* \to F \to H^1(Y,F^*)\otimes\CO_Y \to 0
$$
as well as the vanishing of $H^i(Y,F^*)$ for $i \ne 0,1$. Note that
by Riemann--Roch the Euler characteristic of $F^*$ is zero, hence $h^0(F^*) = h^1(F^*)$.
Denoting this integer by $h$ we can rewrite the above sequence as
%
$$
0 \to \CO_Y^h \to F^* \to F \to \CO_Y^h \to 0.
$$
Let $E$ be the image of the map $F^* \to F$. Note that $E$ is locally free
(as a kernel of an epimorphism of vector bundles). Moreover,
$c_1(E) = 0$ and $c_2(E) = n$, hence $r(E) \ge 2$.
Thus $h = n -r(E) \le n-2$.


Finally, if $h = n-2$ then $E$ has rank 2, is locally free, and $c_1(E) = 0$, $c_2(E) = n$.
Moreover, it is stable since $H^0(Y,E) = \Coker(H^0(Y,F^*) \to H^0(Y,F^*)) = 0$,
and $H^1(Y,E(-1)) = 0$ since both $F(-1)$ and $\CO_Y(-1)$ are acyclic.
\end{proof}





\subsection{Ideals of lines}

Recall that a line on a Fano threefold $Y$ is a rational curve on $Y$ of degree 1.

\begin{proposition}\label{dil}
For any Fano threefold $Y$ of index $2$ and any line $L \subset Y$ the ideal sheaf $I_L$ is contained in $\CB_Y$.
Moreover, it is fixed by $\SD$
$$
\SD(I_L) \cong I_L.
$$
\end{proposition}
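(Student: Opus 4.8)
The plan is to prove both assertions by reducing everything to the ideal sequence
$0 \to I_L \to \CO_Y \to \CO_L \to 0$ of the line, combined with the two geometric facts that $L \cong \PP^1$ with $\CO_Y(1)_{|L} \cong \CO_{\PP^1}(1)$, and that $\omega_Y \cong \CO_Y(-2)$.

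First I would check that $I_L \in \CB_Y = \langle \CO_Y,\CO_Y(1)\rangle^\perp$, i.e. the vanishings $\Ext^\bullet(\CO_Y,I_L) = H^\bullet(Y,I_L) = 0$ and $\Ext^\bullet(\CO_Y(1),I_L) = H^\bullet(Y,I_L(-1)) = 0$. Both follow from the ideal sequence and its twist by $\CO_Y(-1)$: one has $H^\bullet(\CO_Y) = \kk$ in degree $0$ restricting isomorphically onto $H^\bullet(\CO_L) = H^\bullet(\PP^1,\CO_{\PP^1}) = \kk$, whence $H^\bullet(I_L) = 0$; and $H^\bullet(\CO_Y(-1)) = 0$ (no sections since it is anti-ample, higher vanishing by Kodaira applied to $\CO_Y(-1) \cong \omega_Y\otimes\CO_Y(1)$) together with $H^\bullet(\CO_L(-1)) = H^\bullet(\PP^1,\CO_{\PP^1}(-1)) = 0$ give $H^\bullet(I_L(-1)) = 0$. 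In particular $I_L \in \CO_Y^\perp$, so $\SD$ is defined on it.

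For the self-duality I recall that $\SD(I_L) = \LL_{\CO_Y}\bigl(\RCHom(I_L,\CO_Y)\bigr)$, and I would compute $\RCHom(I_L,\CO_Y)$ by applying the contravariant functor $\RCHom(-,\CO_Y)$ to the ideal sequence, getting the triangle
\[
\RCHom(\CO_L,\CO_Y) \to \CO_Y \to \RCHom(I_L,\CO_Y) \to \RCHom(\CO_L,\CO_Y)[1].
\]
The crucial input is $\RCHom(\CO_L,\CO_Y) \cong \CO_L[-2]$: since $L$ is a smooth (hence l.c.i.) curve of codimension $2$, Grothendieck duality gives $\RCHom(\CO_L,\CO_Y) \cong \det N_{L/Y}[-2]$, and adjunction with $\omega_Y \cong \CO_Y(-2)$ forces $\det N_{L/Y} \cong \omega_L\otimes(\omega_Y)_{|L}^{-1} \cong \CO_{\PP^1}(-2)\otimes\CO_{\PP^1}(2) \cong \CO_L$. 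Rotating the triangle then exhibits $\CO_Y$ as a subobject, giving $\CO_Y \to \RCHom(I_L,\CO_Y) \to \CO_L[-1]$. Applying the triangulated functor $\LL_{\CO_Y}$ and using $\LL_{\CO_Y}(\CO_Y) = 0$ (Lemma~\ref{muteq}, as $\CO_Y$ is exceptional), I obtain $\SD(I_L) \cong \LL_{\CO_Y}(\CO_L)[-1]$. Finally $\Ext^\bullet(\CO_Y,\CO_L) = H^\bullet(\PP^1,\CO_{\PP^1}) = \kk$ sits in degree $0$, the evaluation map $\CO_Y \to \CO_L$ is the restriction surjection, and its cone is $I_L[1]$ by the ideal sequence; hence $\LL_{\CO_Y}(\CO_L) \cong I_L[1]$ and $\SD(I_L) \cong I_L[1][-1] \cong I_L$.

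I expect the only genuinely delicate point to be the duality identification $\RCHom(\CO_L,\CO_Y) \cong \CO_L[-2]$: it is precisely the index-$2$ equality $\omega_Y \cong \CO_Y(-2)$ that makes the twist in $\det N_{L/Y}$ cancel and produce the \emph{trivial} bundle on $L$, and this triviality is exactly what makes $\SD$ fix $I_L$ on the nose rather than up to a nontrivial twist. The remaining manipulations are formal properties of mutation triangles and of the ideal sequence.
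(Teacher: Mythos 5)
Your proof is correct and follows the paper's argument essentially step for step: the same cohomology vanishings from the ideal sequence give $I_L \in \CB_Y$, and the same Grothendieck duality computation $\omega_{L/Y} \cong \omega_L \otimes \omega_{Y|L}^{-1} \cong \CO_L$ (hinging on $\omega_Y \cong \CO_Y(-2)$) followed by the mutation $\LL_{\CO_Y}$, which kills $\CO_Y$ and sends $\CO_L[-1]$ to $I_L$, yields $\SD(I_L) \cong I_L$. You merely make explicit a few details the paper leaves implicit, such as the vanishing $H^\bullet(Y,\CO_Y(-1)) = 0$ via Kodaira and the identification $\LL_{\CO_Y}(\CO_L) \cong I_L[1]$, all of which are correct.
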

\begin{proof}
From the exact sequence
\begin{equation}\label{il}
0 \to I_L \to \CO_Y \to \CO_L \to 0
\end{equation}
it follows easily that $H^\bullet(Y,I_L) = H^\bullet(Y,I_L(-1)) = 0$, so $I_L \in \CB_Y$.
Further, applying $\RCHom(-,\CO_Y)$ and taking into account that
$$
\RCHom(\CO_L,\CO_Y) \cong \CO_L[-2]
$$
by Grothendieck duality (since $\omega_{L/Y} = \omega_L\otimes\omega_{Y|L}^{-1} = \CO_L(-2)\otimes\CO_L(2) = \CO_L$),
we obtain a triangle
\begin{equation}\label{ildual}
\CO_Y \to \RCHom(I_L,\CO_Y) \to \CO_L[-1].
\end{equation} 
Since $\LL_\CO(\CO_Y) = 0$ we conclude that
$$
\SD(I_L) = \LL_\CO(\RCHom(I_L,\CO_Y)) = \LL_\CO(\CO_L[-1]) = \Cone(\CO_Y[-1] \to \CO_L[-1]) = I_L
$$
hence the claim.
\end{proof}

\begin{remark}
In fact one can show that the isomorphism $\SD(I_L) \cong I_L$ is skew-symmetric in the sense
of Proposition~\ref{dte}. However we will not need this fact, so we skip the proof.
\end{remark}

As we will see below the ideals of lines give a connection between the geometric
and categorical properties of lines. However, sometimes it is more convenient
to use the (twisted and shifted) dual objects. We denote
\begin{equation}\label{defjl}
J_L := \RCHom(I_L,\CO_Y(-1))[1] \in \D^b(Y).
\end{equation}

\begin{lemma}
We have a distinguished triangle
\begin{equation}\label{jle}
\CO_Y(-1)[1] \to J_L \to \CO_L(-1).
\end{equation}
Moreover, $J_L \in \CB_Y$.
\end{lemma}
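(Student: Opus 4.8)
The plan is to obtain the triangle \eqref{jle} by dualizing the structure sequence \eqref{il}, and then to verify the two orthogonality conditions defining $\CB_Y$ by a Hom-tensor adjunction followed by Serre duality, reducing everything to the vanishings already established for $I_L$ in Proposition~\ref{dil}.

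For the triangle, I would apply the contravariant functor $\RCHom(-,\CO_Y(-1))$ to the short exact sequence \eqref{il}, viewed as a distinguished triangle $I_L \to \CO_Y \to \CO_L$. The middle term gives $\RCHom(\CO_Y,\CO_Y(-1)) \cong \CO_Y(-1)$, while for the outer term I would reuse the Grothendieck duality computation from the proof of Proposition~\ref{dil}, namely $\RCHom(\CO_L,\CO_Y) \cong \CO_L[-2]$; twisting by $\CO_Y(-1)$ yields $\RCHom(\CO_L,\CO_Y(-1)) \cong \CO_L(-1)[-2]$. This produces a triangle $\CO_L(-1)[-2] \to \CO_Y(-1) \to \RCHom(I_L,\CO_Y(-1))$. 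Shifting by $[1]$ turns the right-hand term into $J_L$ by the definition \eqref{defjl}, and a single rotation then puts the triangle in exactly the form \eqref{jle}. This part is routine bookkeeping with shifts and rotations.

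For the membership $J_L \in \CB_Y$, I would check that $\Ext^\bullet(\CO_Y(k),J_L) = 0$ for $k = 0,1$. Using the adjunction $\RHom(A,\RCHom(B,C)) \cong \RHom(A\lotimes B,C)$ together with \eqref{defjl}, one obtains
$$
\RHom(\CO_Y(k),J_L) \cong \RHom(I_L(k),\CO_Y(-1))[1] \cong \RHom(I_L,\CO_Y(-1-k))[1].
$$
By Serre duality (using $\omega_Y \cong \CO_Y(-2)$) the right-hand side is dual, up to shift, to $H^\bullet(Y,I_L(k-1))$. For $k=0$ this is $H^\bullet(Y,I_L(-1))$ and for $k=1$ it is $H^\bullet(Y,I_L)$, both of which vanish because Proposition~\ref{dil} gives $I_L \in \CB_Y \subset \CO_Y^\perp \cap \CO_Y(1)^\perp$. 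Hence both $\Ext$-groups vanish and $J_L \in \langle\CO_Y,\CO_Y(1)\rangle^\perp = \CB_Y$.

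I expect the only real subtlety to be orientational: keeping the shifts and the directions of the connecting maps straight when dualizing and rotating, and getting the Serre-duality twist correct. An alternative, and perhaps cleaner, route for the second claim is purely formal, mirroring the proof that $\SD$ preserves $\CB_Y$: the functor $\Phi\colon F \mapsto \RCHom(F,\CO_Y)(-1)[1]$ carries $\CO_Y^\perp \cap \CO_Y(1)^\perp$ into ${}^\perp\CO_Y(-1) \cap {}^\perp\CO_Y(-2)$, and since $\omega_Y \cong \CO_Y(-2)$ Serre duality identifies ${}^\perp\CO_Y(-1) = \CO_Y(1)^\perp$ and ${}^\perp\CO_Y(-2) = \CO_Y^\perp$; as $J_L = \Phi(I_L)$, the inclusion $I_L \in \CB_Y$ gives $J_L \in \CB_Y$ at once.
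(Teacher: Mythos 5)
Your proof is correct and follows essentially the same route as the paper: the triangle is obtained by dualizing \eqref{il} via Grothendieck duality (the paper just cites \eqref{ildual}, which was produced in exactly this way in Proposition~\ref{dil}, then shifts and twists), and the orthogonality to $\CO_Y(1)$ is verified by the identical adjunction-plus-Serre-duality computation reducing to the vanishing of $H^\bullet(Y,I_L)$. The only cosmetic difference is that the paper deduces $J_L \in \CO_Y^\perp$ directly from the triangle, using that $\CO_Y(-1)$ and $\CO_L(-1)$ are acyclic, whereas you run the Serre-duality reduction uniformly for both twists; both verifications (and your formal alternative mirroring the proof that $\SD$ preserves $\CB_Y$) are sound.
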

\begin{proof}
The triangle is obtained from~\eqref{ildual} by a shift and a twist.
Since both $\CO_Y(-1)$ and $\CO_L(-1)$ are acyclic we conclude that $J_L \in \CO_Y^\perp$.
On the other hand
\begin{multline*}
\RHom(\CO_Y(1),J_L) =
\RHom(\CO_Y(1),\RCHom(I_L,\CO_Y(-1)[1])) =
\RHom(\CO_Y(1)\otimes I_L,\CO_Y(-1)[1]) = \\ =
\RHom(I_L,\CO_Y(-2)[1]) \cong
\RHom(\CO_Y,I_L[2])^* = 0
\end{multline*}
(we used the Serre duality in the last isomorphism) hence $J_L \in \CB_Y$.
\end{proof}

\begin{remark}\label{jlcone}
One can check that the object $J_L$ is isomorphic to a cone of the unique nontrivial
morphism $\CO_L(-1)[-1] \to \CO_{Y}(-1)[1]$. Indeed, it is a cone of such a morphism
just by~\eqref{jle}, and the morphism is nontrivial since otherwise we would have
$J_L \cong \CO_{Y}(-1)[1] \oplus \CO_L(-1)$ and thus $J_L$ would not be orthogonal to $\CO_{Y}(1)$.
Finally, to check that the morphism is unique we note that it is obtained by the antiautoequivalence
$\RCHom(-,\CO_{Y}(-1)[1])$ from the morphism $\CO_Y \to \CO_L$. The later morphism is evidently unique
hence the claim.
\end{remark}


\subsection{Jumping lines}

A line $L \subset Y$ is a {\sf jumping line}\/ for an instanton $E$ if $E_{|L} \cong \CO_L(i) \oplus \CO_L(-i)$ with $i > 0$.
More precisely we will say in this case that $L$ is an {\sf $i$-jumping line}. By analogy with the case of instantons on $\PP^3$
it is very tempting to state the following

\begin{conjecture}\label{jl-conj}
For any instanton $E$ on $Y$ a generic line is not jumping.
\end{conjecture}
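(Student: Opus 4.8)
The plan is to reformulate ``jumping'' as the nonvanishing of a single $\Hom$-group, to observe that this condition cuts out a closed subset of $F(Y)$, and then to attempt a Grauert--M\"ulich-type descent to show that this subset is proper. I expect the descent to be the genuine obstacle, and indeed to be the reason the statement can only be conjectured in general.

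\emph{Cohomological reformulation.} Since $E$ has rank $2$ with $c_1(E)=0$, for every line $L$ its restriction splits as $E_{|L}\cong\CO_L(a)\oplus\CO_L(-a)$ with $a\ge 0$, and $L$ is non-jumping precisely when $a=0$, i.e. when $H^0(L,E_{|L}(-1))=0$. Using $E^*\cong E$ one has $\RHom(E,\CO_L(-1))\cong\RGamma(L,E_{|L}(-1))$; applying $\RHom(E,-)$ to the triangle~\eqref{jle} and using $\RHom(E,\CO_Y(-1))=\RGamma(E(-1))=0$ (Lemma~\ref{inst-coh}) gives
$$
\RHom(E,J_L)\cong\RGamma(L,E_{|L}(-1)).
$$
Thus $L$ is non-jumping if and only if $\Hom(E,J_L)=0$. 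Because $\RHom(\CO_Y,J_L)=0$ for $J_L\in\CB_Y$, the sequence~\eqref{te} yields $\RHom(\TE,J_L)\cong\RHom(E,J_L)$, so the same condition is detected by $\Hom(\TE,J_L)$. As the objects $J_L$ form a flat family over $F(Y)$ (built from the universal subscheme $Z\subset Y\times F(Y)$), semicontinuity shows that the jumping locus $\{L:\Hom(\TE,J_L)\ne 0\}$ is closed; the conjecture therefore amounts to producing a single non-jumping line on each component of $F(Y)$.

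\emph{Relative picture.} Next I would work on the incidence diagram $Y\xleftarrow{\,q\,}Z\xrightarrow{\,p\,}F(Y)$. The restrictions $E_{|L}$ are the fibres of $q^*E$ along the $\PP^1$-bundle $p$, and since $\CO_{Z/F(Y)}(1)=q^*\CO_Y(1)$ by Corollary~\ref{kzf}, their splitting type is governed by $p_*\bigl(q^*E\otimes\CO_{Z/F(Y)}(-1)\bigr)$ and its first direct image. Non-jumping then becomes the statement that a morphism of vector bundles on $F(Y)$ has maximal rank, i.e. that a degeneracy locus is proper. To bound it I would run the Grauert--M\"ulich mechanism: were every line jumping, the relative Harder--Narasimhan sub-line-bundles $\CO_L(a_L)\hookrightarrow E_{|L}$ would glue into a saturated subsheaf of $q^*E$ over a dense open part of $Z$, and one would try to descend this to a subsheaf of $E$ on $Y$ contradicting the stability of $E$. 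The obstruction to the relative sub-bundle extending flatly is a second-fundamental-form map valued in the relative tangent sheaf $T_{Z/F(Y)}$ together with the normal directions of $L$ in $Y$, and everything hinges on showing that this map cannot vanish identically.

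\emph{The obstacle.} This last step is exactly where a uniform argument seems to be missing. On $\PP^3$ the Grauert--M\"ulich proof works because the lines through a fixed point form a positive-dimensional family (a $\PP^2$) and $\PGL_4$ acts transitively on lines: a single balanced restriction then propagates to the generic line, and the relative-tangent positivity needed to contradict semistability is automatic. For a general Fano threefold $Y$ both features disappear --- by Lemma~\ref{qfib} the projection $q$ is generically finite, so only finitely many lines pass through a generic point, and there is no transitive automorphism action to transport a good line. Consequently one cannot deform $L$ transversally enough to force the relative sub-line-bundle to be non-flat, and the descent has no evident substitute; in the absence of a restriction theorem for lines on arbitrary $Y$ the statement must remain conjectural. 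For the special threefolds $Y_5$ and $Y_4$ one can instead verify it by hand, using the net-of-quadrics and theta-divisor descriptions of Sections~\ref{sy5} and~\ref{sy4}.
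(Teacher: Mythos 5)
The statement you were asked about is a conjecture: the paper offers no proof of it, and you were right not to manufacture one. Your analysis in fact tracks the paper closely. The reformulation of jumping as $\Hom(\TE,J_L)\ne 0$ is exactly Proposition~\ref{tejl} (the paper even computes $\dim\Hom(\TE,J_L)=i$ for an $i$-jumping line), the universal family of objects $J_L$ you invoke is the object $\CJ$ built from $I_Z$ in the paper, and the obstruction you name is precisely the paper's own remark following the conjecture: the standard Grauert--M\"ulich descent of \cite{OSS} breaks down because, by Lemma~\ref{qfib}, the map $q:Z\to Y$ is finite for generic $Y$, so its fibers are disconnected and the Harder--Narasimhan subsheaf of $q^*E$ on the fibers of $p$ cannot be descended along $q$ to destabilize $E$. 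Up to this point your proposal is an accurate diagnosis rather than a proof, which is the correct outcome.

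However, your closing sentence contains a genuine error: the paper does \emph{not} verify Conjecture~\ref{jl-conj} for $Y_5$ and $Y_4$. For $Y_5$, Proposition~\ref{dgde} only translates the conjecture into the statement that the net of quadrics $\gamma_E$ contains a nondegenerate quadric, and since $\gamma_E$ is merely shown to be semistable (Proposition~\ref{gsta}), the translation reduces Conjecture~\ref{jl-conj} to Conjecture~\ref{ssnq}, which is itself left open (and the paper's remark shows the naive generalization from pencils fails for larger linear systems of quadrics). For $Y_4$, the corollary following Proposition~\ref{ley4} is explicitly conditional: it says that \emph{if} $\Hom(\CF,\CL)=0$ for generic $\CL\in\Pic^1(C)$ whenever $\CF$ satisfies~\eqref{cfsd}, \eqref{cfso}, \eqref{cfr}, \emph{then} Conjecture~\ref{jl-conj} holds for $Y_4$; that hypothesis is a generic-vanishing (theta-divisor) statement the paper does not prove. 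You have conflated Conjecture~\ref{jl-conj} with Conjecture~\ref{jlines}: it is the reconstruction statement~\ref{jlines}, not the generic-non-jumping statement, that the paper actually establishes for degrees $5$ and $4$.
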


The standard approach \cite{OSS} to this Conjecture does not work because the map from the universal line to~$Y$ has disconnected fibers
(as we have seen in Lemma~\ref{qfib} the map is finite). We will show in sections~\ref{sy5} and~\ref{sy4} that this Conjecture
is related to some well known geometric questions.

Assume that $E$ is an instanton such that generic line is not jumping for $E$. Let $D_E \subset F(Y)$
be the subscheme parameterizing jumping lines of $E$ and write $i:D_E \to F(Y)$ for the embedding.
Also recall the notation introduced in section~\ref{ss-fy}.
The following result is an analogue of the Grauert--M\"ulich Theorem.


\begin{theorem}
If $E$ is an instanton on $Y$ of charge $n$ such that generic line is not jumping for~$E$ then
$$
D_E \sim nD_L.
$$
Further, there is a coherent sheaf $\CL_E$ on $D_E$ such that
$$
Rp_*q^*E(-1) \cong i_*\CL_E[-1].
$$
The sheaf $\CL_E$ is invertible on the open subset of $D_E$ parameterizing $1$-jumping lines,
and has the property
$$
\CL_E \cong \RCHom(\CL_E,\CO_{D_E}((n-d)D_L).
$$
In particular, if $E$ has no $2$-jumping lines then $\CL_E$ is a line bundle such that $\CL_E^2 \cong \CO_{D_E}((n-d)D_L)$.
\end{theorem}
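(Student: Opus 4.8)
The plan is to study the vector bundle $\CF := q^*E(-1)$ on the universal line $Z$ together with its derived pushforward along the $\PP^1$-bundle $p\colon Z \to F(Y)$. First I would record the fibrewise cohomology: over $[L] \in F(Y)$ the fibre of $p$ maps isomorphically onto $L$, and $\CF$ restricts to $E(-1)_{|L}$, which equals $\CO_L(-1)^{\oplus 2}$ for a non-jumping line and $\CO_L(i-1) \oplus \CO_L(-i-1)$ for an $i$-jumping line; in every case the Euler characteristic is zero, the cohomology vanishes precisely off $D_E$, and $h^0 = h^1 = i$ along the $i$-jumping locus. Since the generic line is not jumping, the generic fibre of $p$ is $\CF$-acyclic, so $p_*\CF$ is a torsion-free sheaf of generic rank $0$ on $F(Y)$ and therefore $R^0p_*\CF = p_*\CF = 0$. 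As the top direct image commutes with base change, $R^1p_*\CF$ has fibre $H^1(L,\CF_{|L})$ at $[L]$ and is thus supported exactly on $D_E$; writing $\CL_E := R^1p_*\CF$, viewed as a sheaf on $D_E$, this gives $Rp_*\CF \cong i_*\CL_E[-1]$.

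Next I would extract the structure of $\CL_E$ from the fact that $Rp_*\CF$ is a perfect complex of amplitude $[0,1]$. Locally it is represented by $[\CO_{F(Y)}^{\oplus m} \exto{\psi} \CO_{F(Y)}^{\oplus m}]$ with $\psi$ injective (because $R^0p_*\CF = 0$) and torsion cokernel, so $D_E = \{\det\psi = 0\}$ and $\CL_E = \Coker\psi$. Where $\psi$ has corank one it is locally $\mathrm{diag}(1,\dots,1,f)$, whence $\CL_E$ is invertible on $D_E$; this is exactly the $1$-jumping locus, while the corank jumps to two at $2$-jumping lines. To identify the class of $D_E$ I would note that $\det Rp_*\CF \cong \CO_{F(Y)}(-D_E)$, so $[D_E] = -c_1(Rp_*\CF)$, and compute $c_1(Rp_*\CF)$ via Grothendieck--Riemann--Roch as the codimension-one component of $p_*\big(\ch(\CF)\,\td(T_{Z/F(Y)})\big)$. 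Feeding in $\ch(E) = 2 - n[\mathrm{line}]$ (so $\ch_2(E) = -c_2(E)$), the relation $\CO_{Z/F(Y)}(1) = q^*\CO_Y(1)$ and $c_1(T_{Z/F(Y)}) = 2\xi - d\,p^*D_L$ read off from Corollary~\ref{kzf} (with $\xi = q^*c_1(\CO_Y(1))$), together with $p_*\xi^2 = dD_L$, $p_*(\xi\cdot p^*D_L) = D_L$ and $p_*((p^*D_L)^2) = 0$, the $\td$-contributions cancel and leave $c_1(Rp_*\CF) = -nD_L$. Hence $D_E \sim nD_L$.

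For the self-duality I would invoke relative Grothendieck duality for the proper morphism $p$ of relative dimension $1$, whose relative dualizing complex is $\omega_{Z/F(Y)}[1]$, given by Corollary~\ref{kzf}. Because the instanton is self-dual, $E^* \cong E$, one computes
$$
\RCHom_Z(\CF,\omega_{Z/F(Y)}) \cong q^*E(1)\otimes p^*\CO_{F(Y)}(dD_L)\otimes q^*\CO_Y(-2) \cong \CF \otimes p^*\CO_{F(Y)}(dD_L),
$$
so the projection formula turns relative duality into
$$
Rp_*\CF \otimes \CO_{F(Y)}(dD_L)[1] \cong \RCHom_{F(Y)}(Rp_*\CF,\CO_{F(Y)}).
$$
Substituting $Rp_*\CF \cong i_*\CL_E[-1]$ and applying Grothendieck duality for the Cartier divisor $i\colon D_E \hookrightarrow F(Y)$, for which $i^!\CO_{F(Y)} \cong \CO_{D_E}(D_E)[-1]$, all the shifts collapse and yield
$$
\CL_E\otimes\CO_{D_E}(dD_L) \cong \RCHom_{D_E}(\CL_E,\CO_{D_E}(D_E)).
$$
Since $D_E \sim nD_L$ gives $\CO_{D_E}(D_E) \cong \CO_{D_E}(nD_L)$, twisting by $\CO_{D_E}(-dD_L)$ produces the asserted isomorphism $\CL_E \cong \RCHom_{D_E}(\CL_E,\CO_{D_E}((n-d)D_L))$. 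Finally, if there are no $2$-jumping lines then $\psi$ has corank at most one everywhere, $\CL_E$ is a line bundle, $\RCHom_{D_E}(\CL_E,-)$ reduces to $\CL_E^{-1}\otimes(-)$, and the isomorphism becomes $\CL_E^2 \cong \CO_{D_E}((n-d)D_L)$.

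The hard part will be the careful bookkeeping of twists and shifts, so that relative Serre duality lands exactly on the stated self-duality formula rather than on some twist of it; this is precisely where the form of $\omega_{Z/F(Y)}$ from Corollary~\ref{kzf} and the self-duality $E^*\cong E$ must be combined. The second delicate point is verifying in the Grothendieck--Riemann--Roch computation that the Todd contributions cancel to leave exactly $-nD_L$. The remaining ingredients---base change, the corank-one degeneracy-locus analysis giving invertibility of $\CL_E$, and the duality for a Cartier divisor---are standard, provided $F(Y)$ is a smooth surface and $q$ is finite and flat, which holds for generic $Y$ by Lemma~\ref{qfib}.
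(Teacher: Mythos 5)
Your proof is correct, and it shares the paper's overall skeleton (push $q^*E(-1)$ down along $p$, identify $D_E$ with $-c_1(Rp_*q^*E(-1))$, and get the self-duality of $\CL_E$ from relative Grothendieck duality plus duality for the divisor $i:D_E\hookrightarrow F(Y)$ --- your duality bookkeeping, including the twist by $\omega_{Z/F(Y)}$ from Corollary~\ref{kzf} and the final collapse of shifts, matches the paper's verbatim). But two of your sub-arguments are genuinely different. For the class computation, the paper does \emph{not} run GRR directly: it only uses GRR to note that $c_1(Rp_*q^*E(-1))$ depends solely on $\ch(E)$, then replaces $E$ by the degenerate sheaf $E'=\Ker(\CO_Y^{\oplus 2}\to\oplus\CO_{L_i})$ for $n$ generic lines and computes geometrically, via the decomposition $q^{-1}(L_i)=p^{-1}(x_i)\cup\tilde D_{L_i}$, that $c_1(Rp_*q^*\CO_{L_i}(-1))=D_{L_i}$. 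Your direct computation (Grothendieck relation on $Z=\PP_{F(Y)}(M)$, cancellation of the Todd terms) is a legitimate alternative and arguably more mechanical. For the vanishing $R^0p_*q^*E(-1)=0$, the paper argues by duality: from $\RCHom(\CF,\CO_{F(Y)})\cong\CF(dD_L)[1]$ and the fact that derived duals of sheaves supported on proper closed subschemes of a surface live in positive degrees, it extracts $\CF^0=0$. Your argument --- $p_*$ of a locally free sheaf along the $\PP^1$-bundle $p$ is torsion-free, and it is generically zero, hence zero --- is more elementary (it tacitly assumes $F(Y)$ integral, or an argument component by component, but the paper's support argument makes comparable tacit assumptions). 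Your local two-term model $[\CO^{\oplus m}\exto{\psi}\CO^{\oplus m}]$ also gives a cleaner justification of the invertibility of $\CL_E$ on the $1$-jumping locus, and the natural determinantal scheme structure on $D_E$, where the paper only records that $\CL_E$ has rank $1$ there via base change.

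One point you should make explicit: your list of pushforward identities ($p_*\xi^2=dD_L$, $p_*(\xi\cdot p^*D_L)=D_L$, $p_*((p^*D_L)^2)=0$) omits the one identity that actually produces the $nD_L$ term, namely $p_*q^*[\mathrm{line}]=D_L$, needed to push forward the $-n\,q^*[\mathrm{line}]$ summand of $\ch_2(q^*E(-1))$. This is precisely the nontrivial geometric input on which the paper spends its effort: it is proved by decomposing $q^{-1}(L)$ into the contracted fiber $p^{-1}(x_L)$ and the section $\tilde D_L$ over $D_L$ (with the correct multiplicity handled by the exact sequence $0\to G_{\tilde D_L}\to q^*\CO_L\to\CO_{p^{-1}(x_L)}\to 0$), using flatness of $q$ over a generic line. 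Your closing appeal to Lemma~\ref{qfib} supplies the flatness, but without stating and justifying $p_*q^*[\mathrm{line}]=D_L$ your claim that the computation ``leaves $-nD_L$'' is silently incomplete; with that identity added, the computation indeed closes ($dD_L-2dD_L+dD_L=0$ from the $\xi$-terms, the degree-$\tfrac{1}{12}$ Todd terms cancel as you say, leaving $-nD_L$).
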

\begin{proof}
Consider the object $\CF := Rp_*q^*E(-1) \in \D^b(F(Y))$. If $x$ is a point of $F(Y)$
such that the corresponding line $L_x$ on $Y$ is not a jumping line then
$H^\bullet(L_x,E(-1)_{|L_x}) = 0$, hence $\CF$ is supported on the subscheme $D_E$.
Further, if $L_x$ is a 1-jumping line then $H^\bullet(L_x,E(-1)_{|L_x}) = \kk \oplus \kk[-1]$,
which means that $\CF$ is a rank 1 sheaf on $D_E$ shifted by $-1$.
Thus
$$
D_E = - c_1(\CF) = - c_1(Rp_*q^*E(-1)).
$$
Note that by Grothendieck--Riemann--Roch the first Chern class of $Rp_*q^*(E(-1))$
does not depend on $E$ itself, it depends only on the Chern character of $E$. In particular, 
to compute the rational equivalence class of $D_E$ we can replace $E$ by any sheaf 
with the same Chern character. The most convenient choice is to take
$$
E' = \Ker \left(\CO_Y^{\oplus 2} \to \oplus_{i=1}^n \CO_{L_i}\right),
$$
where $L_1,\dots,L_n$ is a generic $n$-tuple of lines.
It is clear that $Rp_*q^*\CO_Y(-1) = 0$, hence we have
$Rp_*q^*E'(-1) \cong \oplus Rp_*q^*\CO_{L_i}(-1)[-1]$.
It remains to check that $c_1(Rp_*q^*\CO_{L_i}(-1)) = D_{L_i}$.

Indeed, let $L_i$ be the line corresponding to a point $x_i \in F(Y)$.
As $L_i$ is generic, we may assume that the map $q$ is flat over $L_i$,
so $q^*\CO_{L_i} = \CO_{q^{-1}(L_i)}$. But it is clear that
$$
q^{-1}(L_i) = p^{-1}(x_i) \cup \tilde D_{L_i},
$$
where $\tilde D_{L_i}$ is a section of the map $p$ over $D_{L_i}$
(the points of $\tilde D_{L_i}$ are the pairs $(y,x) \in Y\times F(Y)$
such that $x \in D_L$ and $y$ is the unique point of intersection
of the line $L_x$ with $L_i$). Thus we have an exact sequence
$$
0 \to G_{\tilde D_{L_i}} \to q^*\CO_{L_i} \to \CO_{p^{-1}(x_i)} \to 0,
$$
where $G_{\tilde D_{L_i}}$ is the sheaf of ideals of the scheme-theoretical intersection
$p^{-1}(x_i) \cap \tilde D_{L_i}$ on $\tilde D_{L_i}$. In particular, it is a sheaf of rank $1$
on $\tilde D_{L_i}$.
Tensoring the above sequence by $q^*\CO_Y(-1)$ and taking into account that
$Rp_*(\CO_{p^{-1}(x_i)}\otimes q^*\CO_Y(-1)) = H^\bullet(p^{-1}(x),\CO_{p^{-1}(x)}(-1)) \otimes \CO_x = 0$
since $p^{-1}(x) = \PP^1$, we conclude that
$$
Rp_*q^*(\CO_{L_i}(-1)) =
Rp_*(q^*\CO_{L_i}\otimes q^*\CO_Y(-1)) =
Rp_*(G_{\tilde D_{L_i}}\otimes q^*\CO_Y(-1)).
$$
Since the restriction of the map $p$ to $\tilde D_{L_i}$ is an isomorphism onto $D_{L_i}$ we conclude
that $Rp_*q^*(\CO_{L_i}(-1))$ is a rank $1$ sheaf on $D_{L_i}$. 
Hence its first Chern class indeed equals $D_{L_i}$.

For the second claim we have to check that $\CF$ is a coherent sheaf shifted by $-1$.
Since the map $p$ has relative dimension 1, the object $\CF$ can have cohomology only in degree $0$ and $1$.
Thus we have to check that the cohomology in degree $0$ vanishes. Indeed, let $\CF^0$ denote the cohomology of $\CF$
in degree 0 and $\CF^1$ the cohomology in degree 1. Then we have a distinguished triangle
$$
\CF^0 \to \CF \to \CF^1[-1].
$$
Applying the Grothendieck duality and taking into account that $\omega_{Z/F(Y)} = p^*\CO_{F(Y)}(dD_L)\otimes q^*\CO_Y(-2)$
by Corollary~\ref{kzf}, we have
\begin{multline*}
\RCHom(\CF,\CO_{F(Y)}) =
\RCHom(Rp_*q^*(E(-1)),\CO_{F(Y)}) \cong
Rp_*\RCHom(q^*(E(-1)),p^!\CO_{F(Y)}) \cong \\ \cong
Rp_*(q^*(E^*(1))\otimes \omega_{Z/F(Y)}[1]) \cong
Rp_*(q^*(E^*(1))\otimes p^*\CO_{F(Y)}(dD_L)\otimes q^*\CO_Y(-2)[1]) \cong \\ \cong
Rp_*(q^*(E(-1)))\otimes p^*\CO_{F(Y)}(dD_L)[1]
\cong \CF(dD_L)[1].
\end{multline*}
On the other hand, applying duality to the distinguished triangle for $\CF$ we obtain a triangle
$$
\RCHom(\CF^1,\CO_{F(Y)})[1] \to \CF(dD_L)[1] \to \RCHom(\CF^0,\CO_{F(Y)}).
$$
Note that since both $\CF^0$ and $\CF^1$ are supported on a closed subscheme of $F(Y)$,
their derived duals a concentrated in degrees higher than 1. Hence the first and the third term
of the triangle are concentrated in nonnegative degrees. It follows that the cohomology of $\CF(dD_L)[1]$
in degree $-1$, which is nothing but $\CF^0(dD_L)$, vanishes. Thus $\CF^0 = 0$ and $\CF = \CF^1[-1]$.
Moreover, since $\CF_1$ is supported on the curve $D_E$ we can write
$\CF = i_*\CL_E[-1]$, this being a definition of the coherent sheaf $\CL_E$.
We have already seen the sheaf $\CL_E$ is of rank 1 at any point of $D_E$ corresponding
to a 1-jumping line.

Finally, recall that $\RCHom(\CF,\CO_{F(Y)}) \cong \CF(dD_L)[1]$. Substituting here $\CF = i_*\CL_E[-1]$
and using the Grothendieck duality we deduce
%
\begin{multline*}
i_*\CL_E(dD_L) \cong
\RCHom(i_*\CL_E[-1],\CO_{F(Y)}) \cong
i_*\RCHom(\CL_E[-1],i^!\CO_{F(Y)}) \cong \\ \cong
i_*\RCHom(\CL_E,\omega_{D_E/F(Y)}) \cong
i_*\RCHom(\CL_E,\CO_{D_E}(D_E)) \cong
i_*\RCHom(\CL_E,\CO_{D_E}(nD_L))
%
\end{multline*}
which gives the required property of $\CL_E$. Finally, if there are no 2-jumping lines
and so $\CL_E$ is a line bundle this is evidently equivalent to $\CL_E^2 \cong \CO_{D_E}((n-d)D_L)$.
\end{proof}

Now we can state the following

\begin{conjecture}\label{jlines}
The curve of jumping lines $D_E$ together with the line bundle $\CL_E$ determines the instanton.
\end{conjecture}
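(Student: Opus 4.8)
Since this is a conjecture, what follows is a program rather than a complete argument; it is the route I would take and the shape of the obstruction it runs into. The first move is to replace $E$ by its acyclic extension. By the construction of the acyclic extension together with Theorem~\ref{ftoe}, the assignment $E \mapsto \TE$ is a bijection between instantons of charge $n$ and vector bundles $F$ satisfying the numerical and self-duality conditions of Lemma~\ref{te1} and Proposition~\ref{dte} with $h^0(F^*)=n-2$; hence it suffices to recover $\TE$ from the pair $(D_E,\CL_E)$. Next I would note that $(D_E,\CL_E)$ is nothing but the object $\CF = i_*\CL_E[-1] = Rp_*q^*E(-1)$ on $F(Y)$, and that, because $Rp_*q^*\CO_Y(-1)=0$, applying the functor $Rp_*q^*((-)(-1))$ to the defining sequence~\eqref{te} yields $\CF \cong Rp_*q^*\TE(-1)$. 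Writing $\Phi(G) := Rp_*q^*(G(-1))$ for the Fourier--Mukai functor with kernel $q^*\CO_Y(-1)$ supported on $Z\subset Y\times F(Y)$, the conjecture becomes the injectivity of $\Phi$ on the set of acyclic extensions.

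To invert $\Phi$ I would introduce its adjoint $\Psi(H) := Rq_*(p^*H\otimes K)$, where the twist $K$ is assembled from the relative dualizing sheaf $\omega_{Z/F(Y)}$ of Corollary~\ref{kzf}, and study the unit $\id \to \Psi\Phi$ on the subcategory $\CB_Y$, to which $\TE$ belongs by Lemma~\ref{te1}. The self-duality would be the main lever: the identity $\RCHom(\CF,\CO_{F(Y)})\cong\CF(dD_L)[1]$ proved in the Grauert--M\"ulich Theorem above is precisely the image under $\Phi$ of the symmetry $\SD(\TE)\cong\TE$ of Proposition~\ref{dte}, and the theta-characteristic relation $\CL_E^2\cong\CO_{D_E}((n-d)D_L)$ is its residue along $D_E$. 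The hope is that this symmetry rigidifies $\CF$ enough that, after left mutation through $\CO_Y$, the object $\Psi\Phi(\TE)$ collapses back onto $\TE$; in the best case this would express $\Phi$ as fully faithful on $\CB_Y$ and prove the conjecture uniformly.

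The hard part, and the reason the statement is only conjectural, is that $\Phi$ is genuinely \emph{not} fully faithful in general: the source $\CB_Y$ and the target $\D^b(F(Y))$ carry incompatible Serre structures. For $d=4$ one has $\CB_Y\cong\D^b(C)$ with $C$ a genus-$2$ curve while $F(Y)$ is the abelian surface $\Pic^0C$, so no categorical quasi-inverse exists and the kernel of $\Phi$ on $\CB_Y$ must be controlled by hand. I therefore expect the reconstruction to succeed only by feeding in the special geometry of $F(Y)$ family by family: for $d=5$, realizing $\TE$ as a net of quadrics on $F(Y_5)=\PP^2$ and recovering it from the degeneration curve $D_E$ together with the theta-characteristic $\CL_E$ by the classical procedure; for $d=4$, recovering the semistable bundle on $C$ from its theta-divisor $D_E\subset\Pic^0C$ and $\CL_E$. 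A uniform argument would in addition need a generic line to be non-jumping (Conjecture~\ref{jl-conj}), so that $(D_E,\CL_E)$ carries the full information in the first place; this genericity input is the second place where I expect the argument to stall, especially for the low-degree families $d\le 3$.
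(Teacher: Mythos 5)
Your program matches the paper's treatment of this statement: the conjecture is left open in general (with the same observation you make, that the standard OSS-style reconstruction fails because the jumping lines sweep only a surface), and it is verified exactly along your family-by-family lines --- for $d=5$ by recovering the net of quadrics from the theta-characteristic $\theta_\gamma = \CL_E(1)$ via its decomposition with respect to the exceptional collection $(\CO(-2),\CO(-1),\CO)$ on $F(Y_5)=\PP^2$, and for $d=4$ by precisely your adjoint/unit mechanism, computing $\Phi_{\CP^*}\circ\Phi_\CP(\CF_E)\cong \CF_E[-1]\oplus \CF_E\otimes N^*_{C/\Pic^1 C}$ so that $\CF_E$ is recovered as the degree-one cohomology. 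Your one loose inference --- that mismatched Serre structures preclude a categorical quasi-inverse --- is harmless here, since the paper's $d=4$ reconstruction likewise proceeds despite the failure of full faithfulness, by splitting the unit triangle through the vanishing of $\Ext^2$ on a curve.
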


Again, the standard reconstruction procedure \cite{OSS} does not work here since the lines corresponding
to points of $D_E$ do not sweep $Y$ (they sweep a certain surface), so it is not clear a priori
how one could produce the bundle $E$ out of this surface. We will see however that for Fano threefolds
of degree~5 and~4 the Conjecture is true.

\subsection{Jumping lines in terms of $\CB_Y$}

It turns out that the curve of jumping lines can be described in the intrinsic terms of the category $\CB_Y$.
This description will be useful later. To make a statement recall that for each line $L$ we have defined
an object $J_L = \RCHom(I_L,\CO_Y(-1))[1] \in \D^b(Y)$. This can be used to construct a universal family
of objects $J_L$.

Indeed, first note that the universal family of ideal sheaves $I_L$ is the ideal sheaf $I_Z$ on $Y\times F(Y)$,
where $Z$ is the universal line. Denote the embedding of $Z$ into $Y \times F(Y)$ by $\zeta$. Now consider
$$
\CJ = \RCHom(I_Z,q_1^*\CO_Y(-1)\otimes p_1^*\CO_{F(Y)}(-dD_L)[1]),
$$
where $p_1$ and $q_1$ are the projections from $Y\times F(Y)$ to $F(Y)$ and $Y$ respectively.
Applying the functor $\RCHom(-,q_1^*\CO_Y(-1)[1])$ to the exact sequence $0 \to I_Z \to \CO_{Y\times F(Y)} \to \CO_Z \to 0$
and taking into account the fact that by Grothendieck duality we have
\begin{multline*}
\RCHom(\CO_Z,q_1^*\CO_Y(-1)\otimes p_1^*\CO_{F(Y)}(-dD_L)[1]) \cong
\zeta_*\zeta^!(q_1^*\CO_Y(-1)\otimes p_1^*\CO_{F(Y)}(-dD_L))[1] \cong \\ \cong
\zeta_*(q^*\CO_Y(-1)\otimes p^*\CO_{F(Y)}(-dD_L)\otimes\omega_{Z/Y\times F(Y)}[-1]) \cong
\zeta_*q^*\CO_Y(-1)[-1] \cong \CO_Z(-1)[-1],
\end{multline*}
we deduce that $\CJ$ fits into the following distinguished triangle
\begin{equation}\label{cj}
q_1^*\CO_Y(-1)\otimes p_1^*\CO_{F(Y)}(-dD_L)[1] \to \CJ \to \CO_Z(-1).
\end{equation}

\begin{proposition}\label{tejl}
Let $\TE$ be the acyclic extension of an instanton $E$.
A line $L$ on $Y$ is a jumping line for $E$ if and only if $\Hom(\TE,J_L) \ne 0$.
Moreover, we have
$$
Rp_*q^*E(-1) \cong Rp_{1*}\RCHom(q_1^*\TE,\CJ).
$$
In particular, if generic line is not jumping for $E$ then
$Rp_{1*}\RCHom(q_1^*\TE,\CJ) \cong i_*\CL_E[-1]$.
\end{proposition}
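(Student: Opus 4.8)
The plan is to deduce all three assertions from one computation, exploiting that the triangles~\eqref{jle} and~\eqref{cj} present $J_L$ and $\CJ$ as extensions whose ``bundle part'' (supported on all of $Y$, respectively $Y\times F(Y)$) is killed by $\RHom(\TE,-)$, respectively by $Rp_{1*}\RCHom(q_1^*\TE,-)$, so that only the ``line part'' $\CO_L(-1)$, respectively $\CO_Z(-1)$, survives. The vanishing that drives everything is
$$
\RHom(\TE,\CO_Y(-1)) = H^\bullet(Y,\TE^*(-1)) = 0;
$$
it follows by dualizing the defining sequence~\eqref{te} to $0 \to \CO_Y^{n-2} \to \TE^* \to E \to 0$ (using $E^*\cong E$), twisting by $\CO_Y(-1)$, and noting that both $\CO_Y(-1)$ and $E(-1)$ are acyclic by Kodaira vanishing and Lemma~\ref{inst-coh}. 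The second ingredient is $Rp_*q^*\CO_Y(-1)=0$, already observed above, which holds because $q^*\CO_Y(-1)$ restricts to $\CO_{\PP^1}(-1)$ on every fiber of $p$.

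For the first assertion I would apply $\RHom(\TE,-)$ to~\eqref{jle}. The term $\RHom(\TE,\CO_Y(-1)[1])$ vanishes by the displayed identity, so $\RHom(\TE,J_L)\cong\RHom(\TE,\CO_L(-1))$. Adjunction along the inclusion of $L$ and local freeness of $\TE$ turn this into $\RHom_L(\TE_{|L},\CO_L(-1)) = H^\bullet(L,(\TE_{|L})^*(-1))$. Restricting $0 \to \CO_Y^{n-2} \to \TE^* \to E \to 0$ to $L$ (still exact, all terms being locally free) and twisting by $\CO_L(-1)$, the summand $\CO_L(-1)^{n-2}$ is acyclic, whence $\RHom(\TE,J_L)\cong H^\bullet(L,E_{|L}(-1))$. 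Writing $E_{|L}\cong\CO_L(i)\oplus\CO_L(-i)$ with $i\ge 0$, the degree-zero part is $H^0(L,\CO_L(i-1))$, which is nonzero precisely when $i>0$, i.e.\ precisely when $L$ is jumping. This is the claimed equivalence.

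The second assertion is the same argument in family. I would apply $Rp_{1*}\RCHom(q_1^*\TE,-)$ to~\eqref{cj}. On the first term, local freeness of $\TE$ gives $\RCHom(q_1^*\TE,q_1^*\CO_Y(-1)) = q_1^*(\TE^*(-1))$; tensoring by $p_1^*\CO_{F(Y)}(-dD_L)[1]$, the projection formula together with flat base change along the product yields $H^\bullet(Y,\TE^*(-1))\otimes\CO_{F(Y)}(-dD_L)[1] = 0$. Hence $Rp_{1*}\RCHom(q_1^*\TE,\CJ)\cong Rp_{1*}\RCHom(q_1^*\TE,\CO_Z(-1))$. Since $\CO_Z(-1)=\zeta_*q^*\CO_Y(-1)$, the projection formula for $\zeta$ identifies $\RCHom(q_1^*\TE,\CO_Z(-1)) = q_1^*\TE^*\otimes\CO_Z(-1) \cong \zeta_*q^*(\TE^*(-1))$, so that $Rp_{1*}$ of it equals $R(p_1\zeta)_*q^*(\TE^*(-1)) = Rp_*q^*(\TE^*(-1))$. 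Finally, pulling back $0 \to \CO_Y^{n-2} \to \TE^* \to E \to 0$ twisted by $\CO_Y(-1)$ through $q^*$ and applying $Rp_*$ kills the term $Rp_*q^*\CO_Y(-1)^{n-2}$, leaving $Rp_*q^*(\TE^*(-1))\cong Rp_*q^*E(-1)$. This is the desired isomorphism, and the last assertion follows at once by combining it with the preceding theorem, which identifies $Rp_*q^*E(-1)\cong i_*\CL_E[-1]$ when the generic line is not jumping.

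I expect the main obstacle to be bookkeeping rather than ideas: one must check compatibility of the several applications of Grothendieck duality, the projection formula, flat base change, and pushforward along $\zeta$ and $p_1$, and in particular confirm that the twist $p_1^*\CO_{F(Y)}(-dD_L)$ built into $\CJ$ plays no role here---it occurs only inside the bundle term, which vanishes for reasons independent of it. A secondary point needing care is that every restriction or pullback sequence stays short exact, which is guaranteed by the local freeness of $\TE$, $\TE^*$, $E$ and $\CO_Y^{n-2}$, so that no $\Tor$ terms appear.
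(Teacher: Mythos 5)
Your proposal is correct and follows essentially the same route as the paper: both arguments kill the bundle terms of the triangles \eqref{jle} and \eqref{cj} via the vanishings $H^\bullet(Y,\TE^*(-1))=0$ and $Rp_*q^*\CO_Y(-1)=0$, reducing the pointwise statement to $H^\bullet(L,E_{|L}(-1))$ and the family statement to $Rp_*q^*E(-1)$, then quoting the Grauert--M\"ulich-type theorem for the last claim. The only cosmetic difference is that for the first assertion the paper first replaces $\TE$ by $E$ using $J_L\in\CO_Y^\perp$, whereas you keep $\TE$ and restrict the dualized extension $0\to\CO_Y^{n-2}\to\TE^*\to E\to 0$ to $L$; the two computations are interchangeable.
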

\begin{proof}
First, $J_L \in \CB_Y \subset \CO_Y^\perp$, hence $\Ext^\bullet(\TE,J_L) = \Ext^\bullet(E,J_L)$.
Further,
$$
\Ext^\bullet(E,\CO_Y(-1)) = H^\bullet(Y,E^*(-1)) = H^\bullet(Y,E(-1)) = 0
$$
by self-duality of $E$, hence $\Ext^\bullet(E,J_L) = \Ext^\bullet(E,\CO_L(-1))$. Finally,
using again the self-duality of $E$ we see that
$$
\Ext^\bullet(E,\CO_L(-1)) =
H^\bullet(Y,E^*\otimes\CO_L(-1)) =
H^\bullet(Y,E\otimes\CO_L(-1)) =
H^\bullet(L,E_{|L}(-1)).
$$
Combining all this we see that for non-jumping line $L$ we have $\Ext^\bullet(\TE,J_L) = 0$,
while for an $i$-jumping line $L$ we have $\dim\Hom(\TE,J_L) = \dim\Ext^1(\TE,J_L) = i$.

For the second statement we apply the functor $Rp_{1*}\RCHom(q_1^*\TE,-)$ to the triangle~\eqref{cj}.
Note that
\begin{multline*}
Rp_{1*}\RCHom(q_1^*\TE,q_1^*\CO_Y(-1)\otimes p_1^*\CO_{F(Y)}(-dD_L)) \cong
Rp_{1*}(q_1^*\TE^*(-1)\otimes p_1^*\CO_{F(Y)}(-dD_L)) \cong \\ \cong
H^\bullet(Y,\TE^*(-1))\otimes \CO_{F(Y)}(-dD_L) = 0
\end{multline*}
since $\TE^*$ is an extension of $E^* \cong E$ by $\CO_Y^{n-2}$ and both bundles
are in $\CO_Y(1)^\perp$. On the other hand,
\begin{equation*}
Rp_{1*}\RCHom(q_1^*\TE,\CO_Z(-1)) \cong
Rp_*q^*(\TE^*(-1))
\end{equation*}
Again, since $\TE^*$ is an extension
of $E$ by $\CO_Y^{n-2}$ and $Rp_{1*}q_1^*(\CO_Y(-1)) = 0$ by base change
we conclude that $Rp_{1*}q_1^*(\TE^*(-1)) \cong Rp_*q^*(E(-1))$.
Combining all this we deduce the required isomorphism. 
\end{proof}

The same trick can be used for the description of the divisor of intersecting lines in $F(Y)\times F(Y)$
and for the curve $D_L \subset F(Y)$ as well.

\begin{lemma}\label{llpint}
Two distinct lines $L$ and $L'$ intersect if and only if $\Hom(I_L,J_{L'}) \ne 0$.
\end{lemma}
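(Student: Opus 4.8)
The plan is to compute $\Hom(I_L,J_{L'})$ by peeling off the two defining triangles of $J_{L'}$ and of $I_L$, reducing everything to an $\Ext$ between the structure sheaves of the two lines. First I would apply $\RHom(I_L,-)$ to the triangle~\eqref{jle}, namely $\CO_Y(-1)[1]\to J_{L'}\to\CO_{L'}(-1)$. The key preliminary vanishing is $\RHom(I_L,\CO_Y(-1))=0$: by Serre duality (using $\omega_Y\cong\CO_Y(-2)$) this group is dual, up to shift, to $\RHom(\CO_Y(1),I_L)$, which vanishes because $I_L\in\CB_Y=\langle\CO_Y,\CO_Y(1)\rangle^\perp$ by Proposition~\ref{dil}. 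Hence $\RHom(I_L,J_{L'})\cong\RHom(I_L,\CO_{L'}(-1))$. Next I would apply $\RHom(-,\CO_{L'}(-1))$ to the ideal sequence~\eqref{il}, $I_L\to\CO_Y\to\CO_L$; here $\RHom(\CO_Y,\CO_{L'}(-1))=\RGamma(Y,\CO_{L'}(-1))=\RGamma(\PP^1,\CO_{\PP^1}(-1))=0$, so $\RHom(I_L,\CO_{L'}(-1))\cong\RHom(\CO_L,\CO_{L'}(-1))[1]$. Taking $H^0$ yields the clean identity $\Hom(I_L,J_{L'})\cong\Ext^1(\CO_L,\CO_{L'}(-1))$, which I would then analyze geometrically.

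The remaining task is to show that $\Ext^1_Y(\CO_L,\CO_{L'}(-1))\neq0$ precisely when $L$ and $L'$ meet. Since it is computed from sheaves supported on $L$ and $L'$, this group is supported on the scheme-theoretic intersection $L\cap L'$. If $L\cap L'=\emptyset$, then all such groups vanish and $\Hom(I_L,J_{L'})=0$, which gives one implication immediately. If the lines meet, then, being distinct lines in the projective space $\PP(W)$, they meet in a single reduced point $y_0$ at which their tangent directions are distinct. I would then invoke the local-to-global spectral sequence $H^p(Y,\CExt^q(\CO_L,\CO_{L'}(-1)))\Rightarrow\Ext^{p+q}(\CO_L,\CO_{L'}(-1))$; the sheaves $\CExt^q$ are skyscrapers at $y_0$, so the sequence degenerates and $\Ext^q\cong H^0(Y,\CExt^q)$.

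Thus the whole statement comes down to the local computation of $\CExt^\bullet(\CO_L,\CO_{L'})$ at $y_0$ (the twist by $\CO_Y(-1)$ is irrelevant for a sheaf supported at a point). I would resolve $\CO_L$ by its Koszul complex, using that $L$ is locally a complete intersection of two equations, and choose local coordinates $x,y,z$ on $Y$ with $L$ the $x$-axis and $T_{y_0}L'$ the $y$-direction; a direct computation then gives $\CExt^0=0$ and $\CExt^1\cong\kk_{y_0}$. This is where the distinctness of the tangent directions enters, and I expect this local $\Ext$ computation to be the main point of the argument: one must check that the two transverse tangents force $\CExt^1$ to be exactly one-dimensional rather than vanishing. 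Feeding $\CExt^1\cong\kk_{y_0}$ back through the spectral sequence gives $\Ext^1(\CO_L,\CO_{L'}(-1))\cong\kk\neq0$, hence $\Hom(I_L,J_{L'})\neq0$, completing the equivalence.
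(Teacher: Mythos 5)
Your proof is correct and follows essentially the same route as the paper: both arguments peel off the triangle~\eqref{jle} and the ideal sequence~\eqref{il} (you in the opposite order, with the vanishings $\RHom(I_L,\CO_Y(-1))=0$ and $\RGamma(Y,\CO_{L'}(-1))=0$ playing the roles of the paper's $\Ext^\bullet(\CO_Y,J_{L'})=0$ and $\Ext^\bullet(\CO_L,\CO_Y(-1))=0$), reducing the claim to the identity $\Hom(I_L,J_{L'})\cong\Ext^1(\CO_L,\CO_{L'}(-1))$ and then to the standard fact that this group is $\kk$ at a transverse intersection point and $0$ for disjoint lines. The only difference is that you carry out the Koszul-resolution computation of $\CExt^\bullet(\CO_L,\CO_{L'})$ explicitly where the paper simply quotes it, which is a harmless (indeed welcome) expansion.
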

\begin{proof}
Since $\Ext^\bullet(\CO_Y,J_{L'}) = 0$ we have $\Ext^\bullet(I_L,J_L) = \Ext^{\bullet-1}(\CO_L,J_{L'})$.
Similarly, by Serre duality we have $\Ext^\bullet(\CO_L,\CO_Y(-1)) \cong \Ext^\bullet(\CO_Y(-1),\CO_L(-2)[3])^* \cong H^\bullet(L,\CO_L(-1)[3])^* = 0$,
whence $\Ext^\bullet(\CO_L,J_{L'}) \cong \Ext^\bullet(\CO_L,\CO_{L'})$. On the other hand, if lines $L$ and $L'$ do not intersect then this is zero.
If they intersect in a point then $\Ext^i(\CO_L,\CO_{L'}) = \kk$ for $i = 1$ and $i = 2$. Combining with the above isomorphisms we conclude that
$$
\Ext^i(I_L,J_{L'}) = \begin{cases} \kk, & \text{if $L$ intersects $L'$ and $i = 0$, $1$}\\ 0, & \text{otherwise} \end{cases}
$$
which proves the Lemma.
\end{proof}

\section{Instantons on Fano threefolds of degree $5$}\label{sy5}

In this section we consider in detail the case of the Fano threefold $Y_5$ of index 2 and degree 5.
We start with a short reminder on the geometry and derived category of $Y_5$.

\subsection{Derived category}

Recall that $Y_5$ is a linear section of codimension 3 of $\Gr(2,5)$.
Denote by $V$ the vector space of dimension 5 and by $A \subset \Lambda^2V^*$
a generic vector subspace of dimension 3 (the group $\SL(V)$ acts with an open orbit
on the Grassmannian $\Gr(3,\Lambda^2V^*)$ and any $A$ from the open orbit gives the same linear section). 
Denote also by $\CU$ the restriction of the tautological rank 2 subbundle from $\Gr(2,V)$ to~$Y_5$ and let
$$
\CU^\perp = \Ker(V^*\otimes\CO_Y \to \CU^*).
$$
Recall that by~\cite{Or91} the category $\D^b(Y_5)$ is generated by an exceptional collection.
For our purposes the most convenient choice of the collection is
\begin{equation}\label{dby5}
\D^b(Y_5) = \langle \CU, \CU^\perp, \CO_{Y_5}, \CO_{Y_5}(1) \rangle.
\end{equation}

It gives the following descriptions of the category $\CB_{Y_5}$.

\begin{lemma}\label{by5}
The category $\CB_{Y_5}$ is generated by either of the following two exceptional pairs
$$
\CB_{Y_5} = \langle \CU, \CU^\perp \rangle = \langle (V/\CU)(-1),\CU \rangle.
$$
Moreover, we have canonical isomorphisms
$$
\Ext^\bullet(\CU,\CU^\perp) =
\Ext^\bullet((V/\CU)(-1),\CU) = A.
$$
\end{lemma}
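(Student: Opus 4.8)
The statement splits into two descriptions of $\CB_{Y_5}$ and two $\Ext$-computations, and the plan is to get the first description essentially for free, then do a single cohomology calculation that feeds both $\Ext$-statements, and finally pass to the second description by one mutation. The first identification $\CB_{Y_5} = \langle\CU,\CU^\perp\rangle$ comes from regrouping Orlov's collection~\eqref{dby5}. Indeed \eqref{dby5} may be read as $\D^b(Y_5) = \langle\langle\CU,\CU^\perp\rangle,\CO_{Y_5},\CO_{Y_5}(1)\rangle$, while by definition~\eqref{dby} we also have $\D^b(Y_5) = \langle\CB_{Y_5},\CO_{Y_5},\CO_{Y_5}(1)\rangle$. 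These two semiorthogonal decompositions have the same right block $\langle\CO_{Y_5},\CO_{Y_5}(1)\rangle$, and since in a two-step decomposition the left block is its right orthogonal, the left blocks agree. In particular $(\CU,\CU^\perp)$ is an exceptional pair and $\Ext^\bullet(\CU^\perp,\CU)=0$.

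Next I would compute $\Ext^\bullet(\CU,\CU^\perp)=H^\bullet(Y_5,\CU^*\otimes\CU^\perp)$. The map to $A$ is geometric: each $\omega\in A\subset\Lambda^2V^*$, viewed as a skew map $V\to V^*$, carries $\CU$ into $\CU^\perp$ precisely because the defining condition of $Y_5\subset\Gr(2,V)$ is that $\omega$ restrict to zero on every tautological plane; this gives a canonical map $A\to\Hom(\CU,\CU^\perp)$. To show it is an isomorphism with no higher $\Ext$, I would tensor the defining sequence $0\to\CU^\perp\to V^*\otimes\CO_{Y_5}\to\CU^*\to0$ by $\CU^*$ and take cohomology, reducing to $H^\bullet(Y_5,\CU^*)$, $H^\bullet(Y_5,\CO_{Y_5}(1))$ and $H^\bullet(Y_5,S^2\CU^*)$ via $\CU^*\otimes\CU^*=S^2\CU^*\oplus\CO_{Y_5}(1)$. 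On $H^0$ the restriction map $V^*\otimes V^*\to H^0(\CU^*\otimes\CU^*)$ respects the symmetric/skew splitting: its kernel on $\Lambda^2V^*$ is exactly $A$, while on $S^2V^*$ it is injective since a quadric vanishing on all tautological planes (which sweep out $\PP(V)$) must vanish. This pins down $H^0=A$, matching the canonical map.

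The one genuinely computational step, and the main obstacle, is the vanishing $H^{\ge1}(Y_5,S^2\CU^*)=0$ together with surjectivity of the restriction onto $H^0(Y_5,S^2\CU^*)$; these are what force $\Ext^{\ge1}(\CU,\CU^\perp)=0$. I would prove them by resolving $\CO_{Y_5}$ on $\Gr(2,V)$ by the Koszul complex with terms $\Lambda^iA\otimes\CO_\Gr(-i)$ attached to the section of $A^*\otimes\CO_\Gr(1)$ cutting out $Y_5$, twisting by $S^2\CU^*$, and evaluating each $H^\bullet(\Gr(2,V),S^2\CU^*(-i))$ by Borel--Bott--Weil.

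Finally, for the second description I would mutate. Computing $\LL_\CU\CU^\perp=\Cone(A\otimes\CU\to\CU^\perp)$, where the arrow is the evaluation of the canonical maps above, I expect the evaluation to be surjective by genericity of $A$, with locally free kernel, giving an exact sequence $0\to(V/\CU)(-1)\to A\otimes\CU\to\CU^\perp\to0$; a rank count and the determinant computation ($\det=\CO_{Y_5}(-2)$ on both sides) identify the kernel with $(V/\CU)(-1)$, so $\LL_\CU\CU^\perp\cong(V/\CU)(-1)[1]$. By Lemma~\ref{muteq} the mutation reorders the pair, whence $\CB_{Y_5}=\langle\CU,\CU^\perp\rangle=\langle(V/\CU)(-1),\CU\rangle$. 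The remaining $\Ext$-identity is then free: since $\CU$ has rank $2$ we have $\CU(1)\cong\CU^*$, so $\Ext^\bullet((V/\CU)(-1),\CU)=H^\bullet(Y_5,\CU^\perp(1)\otimes\CU)=H^\bullet(Y_5,\CU^\perp\otimes\CU^*)$, which is literally the group $H^\bullet(Y_5,\CU^*\otimes\CU^\perp)=A$ computed above; the parallel canonical map, sending $\omega\in A$ to the map $V\to\CU^*$ it induces (which kills $\CU$ and hence factors through $V/\CU$), exhibits the identification with $A$ once more.
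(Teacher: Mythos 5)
Your first decomposition and your direct computation of $\Ext^\bullet(\CU,\CU^\perp)=A$ are sound; in fact the computation is more self-contained than the paper's, which simply cites \cite{Or91} for the $\Ext$'s and only explains the geometric construction of the evaluation map $A\otimes\CU\to\CU^\perp$ (the same construction you give). Your Koszul/Borel--Bott--Weil reduction does work: the twists $S^2\CU^*(-i)$, $\CU^*(-i)$ and $\CO_{\Gr}(1-i)$ have no cohomology on $\Gr(2,V)$ for $i=1,2,3$, and your $H^0$-level identification of the kernel with $A$ is exactly right. The genuine gap is in your last step. Identifying the kernel $K$ of the (asserted) surjection $A\otimes\CU\to\CU^\perp$ ``by a rank count and the determinant computation'' is not a valid argument: rank $3$ and $\det K\cong\CO_{Y_5}(-2)$ are shared by many bundles (e.g.\ $\CU\oplus\CO_{Y_5}(-1)$ or $\CO_{Y_5}(-2)\oplus\CO_{Y_5}^{\oplus 2}$), and nothing in your setup singles out $(V/\CU)(-1)$ among them. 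What mutation theory gives you for free is only $\CB_{Y_5}=\langle K,\CU\rangle$ with $K=\LL_\CU(\CU^\perp)[-1]$ exceptional; the content of the lemma's second decomposition is precisely the identification $K\cong(V/\CU)(-1)$, and that is the part you leave unproved. (A smaller issue: surjectivity of the evaluation is asserted ``by genericity of $A$''; it amounts to the statement that no $3$-dimensional subspace of $V$ is isotropic for all forms in $A$, which deserves at least a line.)

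There are two ways to close the gap. The paper's route avoids the mutation altogether: apply the antiautoequivalence $F\mapsto\RCHom(F,\CO_{Y_5}(-1))$ to the full collection~\eqref{dby5}; since $(\CU^\perp)^*\cong V/\CU$ and $\CU^*(-1)\cong\CU$, this yields the collection $\langle\CO_{Y_5}(-2),\CO_{Y_5}(-1),(V/\CU)(-1),\CU\rangle$, and then Serre duality (using $\omega_{Y_5}\cong\CO_{Y_5}(-2)$) converts $\CB_{Y_5}=\langle\CO_{Y_5},\CO_{Y_5}(1)\rangle^\perp$ into ${}^\perp\langle\CO_{Y_5}(-2),\CO_{Y_5}(-1)\rangle$, giving $\CB_{Y_5}=\langle(V/\CU)(-1),\CU\rangle$ directly. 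Note that the paper then deduces the exact sequence~\eqref{seqy5} as a \emph{consequence} of the lemma, whereas you are trying to run the logic in the opposite direction. Alternatively, you can repair your own route: write down the canonical map $(V/\CU)(-1)\to A^*\otimes\CU$ (using $\CU\cong\CU^*(-1)$, send $v$ to the functional $\omega\mapsto\omega(v,\cdot)|_{\CU}$, which kills $\CU(-1)$ because every fiber of $\CU$ is isotropic for all of $A$) and check fiberwise that it is injective with image equal to the kernel of evaluation; that is an honest identification where rank-and-determinant is not.
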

\begin{proof}
The first decomposition follows immediately from the definition of $\CB_{Y_5}$ and~\eqref{dby5}.
To get the second, we apply to $\D^b(Y_5)$ the antiautoequivalence $F \mapsto \RCHom(F,\CO_{Y_5}(-1))$.
Since $(\CU^\perp)^* = V/\CU$ and $\CU^*(-1) \cong \CU$, we see that it takes~\eqref{dby5} to
$$
\D^b(Y_5) = \langle \CO_{Y_5}(-2),\CO_{Y_5}(-1),(V/\CU)(-1),\CU \rangle.
$$
Finally, by Serre duality we have
$$
\CB_{Y_5} =
\langle \CO_{Y_5}, \CO_{Y_5}(1) \rangle^\perp =
{}^\perp\langle \CO_{Y_5}(-2), \CO_{Y_5}(-1) \rangle,
$$
which gives the second decomposition of $\CB_{Y_5}$.


For the computation of $\Ext$'s we refer to~\cite{Or91}. Here we will only explain how the evaluation
morphism
$$
\alpha:A \otimes \CU \to \CU^\perp
$$
can be described. Consider the map
$A\otimes\CU \to A\otimes V\otimes\CO_{Y_5} \xrightarrow{\ \ev\ } V^*\otimes\CO_{Y_5}$,
where $\ev$ is the evaluation of a 2-form (recall that $A$ is a subspace in $\Lambda^2V^*$) on a vector.
Its composition with the projection $V^*\otimes\CO_{Y_5} \to \CU^*$ vanishes (by definition of $Y_5$),
hence the map itself factors through the subbundle $\CU^\perp$.
\end{proof}

We would like to point out the following two funny consequences of the Lemma. First, observe that it follows
that the left mutation of $\CU^\perp$ through $\CU$ is $(V/\CU)(-1)[1]$ and dually, the right mutation of $(V/\CU)(-1)$
through $\CU$ is $\CU^\perp[-1]$. In the other words, we have the following exact sequence
\begin{equation}\label{seqy5}
0 \to (V/\CU)(-1) \to A\otimes\CU \to \CU^\perp \to 0.
\end{equation}
Also note that the antiautoequivalence from the proof of Lemma~\ref{by5} takes the above exact sequence to
$$
0 \to (V/\CU)(-1) \to A^*\otimes\CU \to \CU^\perp \to 0.
$$
Since the sequence is canonical, it follows that there is an isomorphism
\begin{equation}\label{aas}
A \cong A^*,
\end{equation}
which can be easily shown to be symmetric. From now on for each vector $a \in A$ we will
denote by $a^* \in A^*$ the covector corresponding to $a$ under isomorphism~\eqref{aas}.

\subsection{The Fano scheme of lines}

It is well known that the Fano scheme of lines on $Y_5$ is $\PP^2$. We will need the following
more precise description.

\begin{lemma}
We have $F(Y_5) = \PP(A)$. Moreover, for each point $a \in \PP(A)$
we have an exact sequence
\begin{equation}\label{ily5}
0 \to \CU \xrightarrow{\ a\ } \CU^\perp \to I_L \to 0,
\end{equation}
and a distinguished triangle
\begin{equation}\label{jly5}
(V/\CU)(-1) \xrightarrow{\ a\ } \CU \to J_L.
\end{equation}
\end{lemma}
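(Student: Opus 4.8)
The plan is to deduce all three assertions from a single construction: the one-parameter restriction of the evaluation morphism $\alpha\colon A\otimes\CU\to\CU^\perp$ of Lemma~\ref{by5}. For a nonzero $a\in A$ I set $\phi_a:=\alpha(a\otimes-)\colon\CU\to\CU^\perp$, so that fibrewise over a point $W\in Y_5$ (a $2$-plane $W\subset V$ with $a'|_W=0$ for all $a'\in A$) the map is $W\to\mathrm{Ann}(W)=(V/W)^*$, $v\mapsto i_va$. First I would check that $\phi_a$ is injective as a map of sheaves and that $c_1(\CU^\perp)-c_1(\CU)=0$, so its cokernel $Q_a$ is a rank-one sheaf with $c_1(Q_a)=0$. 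The real content is then to identify $Q_a$ with $I_{L_a}$ for a line $L_a$ and to identify $a\mapsto L_a$ with the isomorphism $F(Y_5)\cong\PP(A)$.

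The fibrewise analysis is the geometric core. The kernel of $(\phi_a)_W$ is $W\cap\mathrm{rad}(a)$, where $\mathrm{rad}(a)$ is the radical of the skew form $a$. Using the genericity of $A$ I note that $\PP(A)$ misses the codimension-$3$ locus of forms of rank $\le 2$ in $\PP(\Lambda^2V^*)$, so every nonzero $a$ has rank $4$ and $\mathrm{rad}(a)=\langle e_a\rangle$ is a single line; hence $(\phi_a)_W$ is injective precisely when $e_a\notin W$, and the degeneracy locus is $L_a:=\{W\in Y_5:e_a\in W\}$. Writing $W=\langle e_a,w\rangle$, membership in $Y_5$ forces $w\in U_a:=\{v\in V:a'(e_a,v)=0\ \forall a'\in A\}$; a dimension count (again generic: the map $A\to V^*$, $a'\mapsto i_{e_a}a'$ has one-dimensional kernel $\langle a\rangle$) gives $\dim U_a=3$, so $L_a=\{W:\langle e_a\rangle\subset W\subset U_a\}$ is a line on $Y_5$. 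In particular the degeneracy locus has codimension $2$, whence $Q_a$ is torsion-free.

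It remains to match $Q_a$ with $I_{L_a}$ and to dualize. Since $Q_a$ is torsion-free of rank one with $c_1=0$ on the smooth threefold $Y_5$ with $\Pic Y_5=\ZZ$, its reflexive hull is $\CO_{Y_5}$ and $Q_a=I_{S_a}$ for a subscheme $S_a$ supported on $L_a$; the remaining and \emph{main} point is to see that $\phi_a$ degenerates simply along the reduced line, i.e. $S_a=L_a$ scheme-theoretically, which I would extract from a local computation of the Fitting/Eagon--Northcott structure of $\coker\phi_a$ at a generic point of $L_a$ (the form drops rank by one transversally). This yields~\eqref{ily5}, and then~\eqref{jly5} is essentially free: applying the antiautoequivalence $D:=\RCHom(-,\CO_Y(-1))[1]$ to~\eqref{ily5} and using $D(I_{L_a})=J_{L_a}$ (definition~\eqref{defjl}), $D(\CU)=\CU^*(-1)[1]=\CU[1]$ and $D(\CU^\perp)=(V/\CU)(-1)[1]$ converts the short exact sequence into the triangle $(V/\CU)(-1)\to\CU\to J_{L_a}$, the connecting map being identified with $a$ via the symmetry $A\cong A^*$ of~\eqref{aas}. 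Finally, the construction is flat in $a$, so $a\mapsto I_{L_a}$ defines a morphism $\PP(A)\to F(Y_5)$; it is injective because $a\mapsto e_a$ is injective for generic $A$ and $L_a$ recovers $e_a$, and being a degree-one morphism between surfaces both isomorphic to $\PP^2$ (recall $F(Y_5)\cong\PP^2$) it is an isomorphism. Thus the scheme-theoretic identification $Q_a\cong I_{L_a}$ is the one step that is not mere bookkeeping, while injectivity, the Chern-class count, the duality producing~\eqref{jly5}, and the final bijectivity argument are routine.
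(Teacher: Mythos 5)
Your proposal is correct in substance but runs along a genuinely different track from the paper's, and even in the opposite direction. The paper starts from the morphism $a:(V/\CU)(-1)\to\CU$ coming from the second exceptional pair of Lemma~\ref{by5} and never touches the geometry of the skew forms: stability of $V/\CU$ and $\CU$ forces the kernel to be a line bundle, namely $\CO_{Y_5}(-1)$; a Chern class count makes the cokernel a rank-one torsion sheaf on a line; and acyclicity of $\CO_{Y_5}(-1)$, $(V/\CU)(-1)$ and $\CU$ kills any zero-dimensional torsion, pinning the cokernel down as $\CO_L(-1)$. The triangle~\eqref{jly5} then follows from Remark~\ref{jlcone} (the connecting map is nonzero because $\CU$ is torsion-free, so the surjection $\CU\to\CO_L(-1)$ cannot split), and~\eqref{ily5} is obtained at the very end by applying the involution $\RCHom(-,\CO_{Y_5}(-1))[1]$. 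You instead begin with $\CU\to\CU^\perp$, establish~\eqref{ily5} first by explicit fibrewise linear algebra (the radical $e_a$ of the rank-$4$ form, the degeneracy line $L_a=\{W:\langle e_a\rangle\subset W\subset U_a\}$) together with Buchsbaum--Rim/Eagon--Northcott, and dualize to get~\eqref{jly5}. What your route buys: a concrete geometric description of $L_a$ and of the map $a\mapsto L_a$, so that the identification $F(Y_5)=\PP(A)$ --- which the paper treats rather casually, leaning on the classical fact $F(Y_5)\cong\PP^2$ --- becomes an honest assertion (injectivity via $a\mapsto e_a$, then an injective, hence birational, morphism of smooth projective surfaces). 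What the paper's route buys: no genericity bookkeeping for $A$ (no rank-$4$ claim, no dimension count for $U_a$), and a coordinate-free disposal of embedded points via acyclicity, which is exactly the issue your Fitting-ideal step has to confront.

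Two smaller remarks. First, the one step you flag as the main point, the scheme-theoretic equality $S_a=L_a$, can be closed without the transversality computation you propose: once the degeneracy locus has codimension $2$, Buchsbaum--Rim gives exactness of $0\to\CU\to\CU^\perp\to I_{S_a}\to 0$ with $S_a$ Cohen--Macaulay of pure dimension $1$ (no embedded points), while the Chern classes of $\CU$ and $\CU^\perp$ force $[S_a]$ to equal the class of a line; a purely one-dimensional CM subscheme of degree $1$ supported on $L_a$ is the reduced line $L_a$ itself. Second, the identification of the connecting map in~\eqref{jly5} with $a$ under~\eqref{aas} holds only up to sign (compare Lemma~\ref{sd5}, where the duality acts on $A$ by $-1$); this is harmless, since the cones of $a$ and $-a$ are isomorphic, but worth recording if you ever need the skew-symmetry statements of Proposition~\ref{dte}.
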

\begin{proof}
Stability of $V/\CU$ and $\CU^*$ implies that the morphism $a:(V/\CU)(-1) \to \CU$ has kernel of rank $1$.
Since it is reflexive, we conclude that it is a line bundle. Again, by stability of $V/\CU$ and $\CU^*$
we know that it has degree $-1$, so the kernel is $\CO_{Y_5}(-1)$. Computing the Chern class of the cokernel
we see that it is a torsion sheaf of rank $1$ on some line $L$ on $Y_5$. Moreover, since the sheaves
$\CO_{Y_5}(-1)$, $(V/\CU)(-1)$ and $\CU$ are all acyclic, the cokernel is acyclic as well.
In particular, it has no $0$-dimensional torsion, so it is a line bundle on $L$,
which being acyclic should be isomorphic to $\CO_L(-1)$. Thus we obtain an exact sequence
$$
0 \to \CO_{Y_5}(-1) \to (V/\CU)(-1) \xrightarrow{\ a\ } \CU \to \CO_L(-1) \to 0.
$$
In other words, we see that the cone of $a:(V/\CU)(-1) \to \CU$ is quasi-isomorphic to the (shifted by $1$) cone
of a morphism $\CO_L(-1) \to \CO_{Y_5}(-1)[2]$ and as it was explained in Remark~\ref{jlcone} to justify the
triangle~\eqref{jly5} it remains to show that this morphism is nontrivial. Indeed, if the morphism were trivial
then the cone will be the direct sum of $\CO_{Y_5}(-1)[1]$ and $\CO_L(-1)$, which should imply in particular
that the surjection $\CU \to \CO_L(-1)$ splits, which of course is false as $\CU$ is torsion free.

Now to obtain the first exact triangle it is sufficient to remember that $J_L = \RCHom(I_L,\CO_{Y_5}(-1))[1]$
(just by definition). Since $\RCHom(-,\CO_{Y_5}(-1))[1]$ is an involution, we can apply it to~\eqref{jly5}.
It is easy to see that one will get precisely~\eqref{ily5}.
%
%
\end{proof}

\begin{remark}
Alternatively, the object $J_L$ can be written as the cone of a morphism $a^\perp\otimes\CU \to \CU^\perp$,
where $a^\perp \subset A$ is the orthogonal complement of $a \in A$. It follows from Lemma~\ref{llpint}
that lines $L$ and $L'$ intersect if and only if the corresponding vectors $a,a' \in A$ are orthogonal.
Thus, the divisor $D_L$ is the line on $\PP(A)$ orthogonal to $a$ with respect to the quadratic form on $A$
corresponding to the isomorphism~\eqref{aas}.
\end{remark}

\subsection{The action of the antiautoequivalence}

Let us describe the antiautoequivalence $\SD$. For this it suffices to understand how it acts on the bundles $\CU$ and $\CU^\perp$.

\begin{lemma}\label{sd5}
We have $\SD(\CU) = \CU^\perp[1]$, $\SD(\CU^\perp) = \CU[1]$.
Moreover, the morphism
$$
\SD[-1]:A = \Hom(\CU,\CU^\perp) \to \Hom(\SD[-1](\CU^\perp),\SD[-1](\CU)) = \Hom(\CU,\CU^\perp) = A
$$
is $-1$.
%
\end{lemma}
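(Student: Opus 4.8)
The plan is to compute $\SD(\CU)$ and $\SD(\CU^\perp)$ directly from the definition $\SD(F) = \LL_\CO(\RCHom(F,\CO_Y))$, and then to track the induced action on morphism spaces. First I would handle $\CU$. Since $\CU^*(-1) \cong \CU$ and $\CU \in \CO_Y^\perp$, the duality functor $\RCHom(-,\CO_Y)$ sends $\CU$ to $\CU^* \cong \CU(1)$. Applying the left mutation $\LL_\CO$ amounts to forming the cone of the evaluation $\RHom(\CO_Y,\CU(1)) \otimes \CO_Y \to \CU(1)$, i.e. $H^\bullet(Y_5,\CU(1)) \otimes \CO_Y \to \CU(1)$. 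Here I would use the cohomology of $\CU(1)$ on $Y_5$ together with the exact sequence~\eqref{seqy5} relating $(V/\CU)(-1)$, $A \otimes \CU$, and $\CU^\perp$; the point is that mutating $\CU(1)$ through $\CO_Y$ should land in $\CO_Y^\perp$ and, by comparison with the semiorthogonal structure~\eqref{dby5} and Lemma~\ref{by5}, must be $\CU^\perp[1]$ up to the shift dictated by where the cohomology sits. The computation for $\CU^\perp$ is symmetric, using $(\CU^\perp)^* = V/\CU$ as in the proof of Lemma~\ref{by5}.

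A cleaner route, which I expect to be the one to carry out, is to observe that $\SD$ is built from the antiautoequivalence $G := \RCHom(-,\CO_{Y_5}(-1))[1]$ appearing in the proof of Lemma~\ref{by5}, which already satisfies $G(\CU) = \CU$ and $G(\CU^\perp) = (V/\CU)(-1)$ and interchanges the two exceptional pairs generating $\CB_{Y_5}$. Since $\SD$ and $G$ differ by a twist, a shift, and a mutation that are all controlled on $\CB_{Y_5}$, I would express $\SD$ restricted to $\CB_{Y_5}$ in terms of $G$ composed with the mutation equivalences of Lemma~\ref{muteq}, and read off $\SD(\CU) = \CU^\perp[1]$, $\SD(\CU^\perp) = \CU[1]$ from the fact that $\SD$ must be an antiautoequivalence of the pair $\langle \CU,\CU^\perp\rangle$ sending each object to the Serre-dual position. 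The self-consistency check is that $\SD^2 \cong \id$ (the Lemma proved earlier), which forces $\SD$ to swap the two objects up to a common shift, and Lemma~\ref{te1} applied to $\CU^\perp = \SD(\CU)$ fixes the shift to be $[1]$.

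For the second assertion I would compute the induced map on $\Hom(\CU,\CU^\perp) = A$. An antiautoequivalence sends $\Hom(\CU,\CU^\perp)$ to $\Hom(\SD(\CU^\perp),\SD(\CU)) = \Hom(\CU[1],\CU^\perp[1]) = \Hom(\CU,\CU^\perp)$, so $\SD$ (after the shift correction $\SD[-1]$) induces a linear endomorphism of $A$. To identify it as $-1$, I would trace a morphism $a \in A = \Hom(\CU,\CU^\perp)$ through the explicit chain: dualize via $\RCHom(-,\CO_Y)$, which transposes $a$ and uses the symmetric isomorphism~\eqref{aas} identifying $A \cong A^*$, then apply $\LL_\CO$. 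The key input is the observation already recorded after Lemma~\ref{by5}: the antiautoequivalence $G$ takes the canonical sequence~\eqref{seqy5} with map $a$ to the analogous sequence with map $a^*$, and the isomorphism $A \cong A^*$ is symmetric. Combining the transposition with the sign coming from the skew-symmetry of the self-duality (Proposition~\ref{dte}, where $\sigma^T = -\sigma$) yields the factor $-1$.

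The main obstacle I anticipate is bookkeeping the signs and shifts precisely: establishing that $\SD$ swaps $\CU$ and $\CU^\perp$ up to a shift is straightforward, but pinning down that the shift is exactly $[1]$ and that the induced map on $A$ is $-1$ rather than $+1$ requires carefully orienting the triangle defining $\LL_\CO$, the transpose under $\RCHom$, and the symmetric identification $A \cong A^*$. I would resolve this by comparing against the skew-symmetry already proven in Proposition~\ref{dte}, which provides an independent check that the induced involution on the relevant $\Ext$-group carries the correct sign.
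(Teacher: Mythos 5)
Your computation of $\SD(\CU)$ and $\SD(\CU^\perp)$ is close to the paper's, but as written it leans on two supports that do not hold. First, $\SD^2\cong\id$ does \emph{not} force $\SD$ to interchange $\CU$ and $\CU^\perp$: an involutive antiautoequivalence could just as well fix each of them up to shift --- this is exactly what the antiautoequivalence $G=\RCHom(-,\CO_{Y_5}(-1))[1]$ from the proof of Lemma~\ref{by5} does with $\CU$ --- and Lemma~\ref{te1}, which concerns acyclic extensions of instantons, says nothing about the shift of $\SD(\CU)$. Second, the relevant exact sequence is not~\eqref{seqy5} but the tautological one defining $\CU^\perp$. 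The honest and short argument, which is the paper's, runs: $\RCHom(\CU,\CO_{Y_5})=\CU^*$, the evaluation $V^*\otimes\CO_{Y_5}\to\CU^*$ computes the canonical map $H^\bullet(Y_5,\CU^*)\otimes\CO_{Y_5}\to\CU^*$ (note $H^\bullet(Y_5,\CU^*)=V^*$ since $\CU^\perp\in\CO_{Y_5}^\perp$), and it is surjective with kernel $\CU^\perp$ by the very definition of $\CU^\perp$, so $\LL_\CO(\CU^*)=\Cone(V^*\otimes\CO_{Y_5}\to\CU^*)=\CU^\perp[1]$; dually $\RCHom(\CU^\perp,\CO_{Y_5})=V/\CU$ and $\LL_\CO(V/\CU)=\Cone(V\otimes\CO_{Y_5}\to V/\CU)=\CU[1]$. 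This part of your plan is repairable with these substitutions.

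The genuine gap is in the sign. You propose to extract the factor $-1$ from ``the skew-symmetry of the self-duality (Proposition~\ref{dte}, where $\sigma^T=-\sigma$)'', but that $\sigma$ is the symplectic isomorphism $E^*\to E$ of a rank-$2$ instanton, and Lemma~\ref{sd5} is a statement about $\SD$ alone --- no instanton occurs in it --- so Proposition~\ref{dte} can give no information about the induced endomorphism of $A=\Hom(\CU,\CU^\perp)$. Moreover the logic runs the other way: in Proposition~\ref{ftog} the paper combines the $-1$ of this Lemma with $\SD(\phi_F)=-\phi_F$ to conclude that $\gamma_F$ is \emph{symmetric}, so deriving the $-1$ from the skew-symmetry of instanton self-dualities would leave that deduction unsupported. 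The actual source of the sign is the skew-symmetry of the $2$-forms constituting $A\subset\Lambda^2V^*$: the evaluation $\alpha:A\otimes\CU\to\CU^\perp$ is built from $(\omega,v)\mapsto\iota_v\omega$, and since $\omega(u,v)=-\omega(v,u)$, the dual morphism $\alpha_a^*$, which factors as $V/\CU\xrightarrow{\ \alpha^*\ }A^*\otimes\CU^*\xrightarrow{\ a\ }\CU^*$, also factors as $V/\CU\xrightarrow{\ -a\ }A\otimes(V/\CU)\xrightarrow{\ \alpha^*\ }\CU^*$ under the symmetric identification~\eqref{aas}; applying $\LL_\CO$ then carries $\alpha_a$ to $-\alpha_a$. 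Note that~\eqref{aas} itself is signless by construction, and the observation after Lemma~\ref{by5} that $G$ takes~\eqref{seqy5} with $a$ to the analogous sequence with $a^*$ likewise produces no sign, so without the skew-form computation your argument cannot distinguish $+1$ from $-1$.
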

\begin{proof}
Indeed, we have $\RCHom(\CU,\CO_{Y_5}) = \CU^*$ and $\LL_\CO(\CU^*) = \Cone(V^*\otimes\CO_{Y_5} \to \CU^*) = \CU^\perp[1]$.
Similarly, $\RCHom(\CU^\perp,\CO_{Y_5}) = V/\CU$ and $\LL_\CO(V/\CU) = \Cone(V\otimes\CO_{Y_5} \to V/\CU) = \CU[1]$.

To check the second part take any $a \in A$ and the corresponding morphism $\alpha_a:\CU \to \CU^\perp$.
By definition $\alpha_a$ factors as $\CU \xrightarrow{\ a\ } A\otimes\CU \xrightarrow{\ \alpha\ } \CU^\perp$.
Dualizing we obtain the morphism $\alpha_a^*$ which factorizes as
$V/\CU \xrightarrow{\ \alpha^*\ } A^*\otimes\CU^* \xrightarrow{\ a\ } \CU^*$. Note that it also factorizes as
$V/\CU \xrightarrow{\ -a\ } A\otimes (V/\CU) \xrightarrow{\ \alpha^*\ } \CU^*$.
It follows that after the mutation $\LL_\CO$ (and a shift) we obtain a map $\CU \to \CU^\perp$
which factorizes as $\CU \xrightarrow{\ -a\ } A\otimes\CU \xrightarrow{\ \alpha\ } \CU^\perp$,
hence coincides with $-\alpha_a$.
%
\end{proof}

\subsection{The monadic description}

As Lemma~\ref{by5} shows we have an equivalence
$$
\CB_{Y_5} \cong \D^b(\SQ_A),
$$
where $\D^b(\SQ_A)$ is the derived category of finite dimensional representations of the quiver
with 2 vertices and the space of arrows from the first vertex to the second given by $A$
$$
\SQ_A = \bullet \xrightarrow{\ A\ } \bullet.
$$
The equivalence is given by
$$
\Phi_5:\D^b(\SQ_A) \to \CB_{Y_5},
\qquad
(M_1^\bullet,M_2^\bullet,m) \mapsto
\Cone(M_1^\bullet\otimes\CU \xrightarrow{\ m\ } M_2^\bullet\otimes\CU^\perp).
$$
The inverse equivalence $\Phi_5^{-1}:\CB_{Y_5} \to \D^b(\SQ_A)$ takes any $F \in \CB_{Y_5}$ to the representation $(M_1^\bullet,M_2^\bullet)$ with
$$
M_2^\bullet = \Ext^\bullet(\CU^\perp,F),
\qquad
M_1^\bullet = \Ext^\bullet(F,\CU[1])^*.
$$
To get a monadic description of an instanton we just apply $\Phi_5^{-1}$ to its acyclic extension.

\begin{lemma}
Let $F$ be a semistable vector bundle of rank $n$ with $c_1(F) = 0$ such that $F \in \CB_{Y_5}$.
Then $\Ext^\bullet(F,\CU) = \kk^n[-1]$.
\end{lemma}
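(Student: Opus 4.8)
The plan is to show that $F$ admits a two-term resolution by the bundles of the exceptional pair, namely
$$
0 \to \CU^{\oplus n} \to (\CU^\perp)^{\oplus n} \to F \to 0,
$$
and then read off $\Ext^\bullet(F,\CU)$ from it. Since $F \in \CB_{Y_5} = \langle \CU, \CU^\perp\rangle$, applying $\Phi_5^{-1}$ produces complexes $M_1^\bullet = \Ext^\bullet(F,\CU[1])^*$ and $M_2^\bullet = \Ext^\bullet(\CU^\perp,F)$ together with the canonical triangle $M_2^\bullet\otimes\CU^\perp \to F \to (M_1^\bullet\otimes\CU)[1]$. Because $\Ext^p(F,\CU) = (M_1^{p-1})^*$, the assertion is equivalent to $M_1^\bullet$ being concentrated in degree $0$ with $\dim M_1^0 = n$.

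First I would locate the cohomology of $M_1^\bullet$ by slope arguments, using $\mu(F)=0$, $\mu(\CU)=-\tfrac12$ and $\mu(\CU^\perp)=-\tfrac13$. Stability of $\CU$ against semistability of $F$ forces $\Hom(F,\CU)=0$, and Serre duality with $\omega_{Y_5}\cong\CO_{Y_5}(-2)$ gives $\Ext^3(F,\CU)\cong\Hom(\CU,F(-2))^*=0$ by the same slope sandwich; hence $M_1^\bullet$ lives in degrees $0$ and $1$. Taking the long exact sequence of cohomology sheaves of the triangle above, in which $F$ is a sheaf, kills the higher terms of $M_2^\bullet$ and yields the four-term exact sequence of sheaves
$$
0 \to \CU\otimes M_1^0 \to \CU^\perp\otimes M_2^0 \to F \to \CU\otimes M_1^1 \to \CU^\perp\otimes M_2^1 \to 0.
$$

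The crux is to show $M_1^1=0$. Set $K=\Ker(\CU\otimes M_1^1\to\CU^\perp\otimes M_2^1)$. By exactness $K$ is a quotient of $F$, hence $\mu(K)\ge 0$ by semistability of $F$; but $K$ is also a subsheaf of $\CU\otimes M_1^1$, hence $\mu(K)\le-\tfrac12$ by stability of $\CU$. Therefore $K=0$, so the last arrow is an isomorphism $\CU\otimes M_1^1\cong\CU^\perp\otimes M_2^1$, which is impossible for the non-isomorphic indecomposable bundles $\CU$ and $\CU^\perp$ unless $M_1^1=M_2^1=0$. With the tail gone we obtain $F=\Coker(\CU\otimes M_1^0\to\CU^\perp\otimes M_2^0)$; comparing ranks and using $c_1(F)=0$ forces $\dim M_1^0=\dim M_2^0=n$, giving the asserted resolution. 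Applying $\Ext^\bullet(-,\CU)$ to it and using $\Ext^\bullet(\CU^\perp,\CU)=0$ together with $\Ext^\bullet(\CU,\CU)=\kk$ then collapses the long exact sequence to $\Ext^\bullet(F,\CU)\cong\kk^n[-1]$.

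The main obstacle is precisely the vanishing $M_1^1=0$, equivalently $\Ext^2(F,\CU)=0$: unlike the degree $0$ and $3$ groups it is not given by a single slope inequality, and it is the one place where one must combine the monadic shape of $F$ with semistability. Everything else is formal manipulation of the semiorthogonal decomposition and a Riemann--Roch rank count.
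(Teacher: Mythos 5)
Your proof is correct, but it takes a genuinely different route from the paper's. The paper never produces a resolution of $F$: it uses $\CU \cong \CU^*(-1)$ to write the twisted tautological sequence $0 \to \CU^\perp(-1) \to V^*\otimes\CO_{Y_5}(-1) \to \CU \to 0$, notes that $\Ext^\bullet(F,\CO_{Y_5}(-1)) \cong H^{3-\bullet}(Y_5,F(-1))^* = 0$ because $F \in \CB_{Y_5}$, and so obtains $\Ext^\bullet(F,\CU) \cong \Ext^{\bullet+1}(F,\CU^\perp(-1))$; the group you rightly identify as the crux, $\Ext^2(F,\CU)$, thereby becomes $\Ext^3(F,\CU^\perp(-1)) \cong \Hom(\CU^\perp(-1),F(-2))^*$, which dies by one more slope comparison ($\mu(\CU^\perp(-1)) = -4/3 > -2 = \mu(F(-2))$), and the dimension $n$ is then extracted from Riemann--Roch, using $c_2(F)=n$ and $c_3(F)=0$ from Remark~\ref{k0b}. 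Your route replaces both steps: the five-term sequence plus the sandwich on $K = \mathrm{Im}(F \to \CU\otimes M_1^1)$ (a quotient of the slope-$0$ semistable $F$ and at the same time a torsion-free subsheaf of the slope-$(-1/2)$ semistable bundle $\CU\otimes M_1^1$, hence zero) kills $\Ext^2$, and the multiplicity $n$ follows from additivity of rank and $c_1$ alone, so you never need $c_2$, $c_3$, Riemann--Roch, or the stability of $\CU^\perp$. Two further remarks: your preliminary slope vanishings $\Hom(F,\CU)=0$ and $\Ext^3(F,\CU)=0$ are in fact redundant in your setup, since the long exact sequence of cohomology sheaves of the triangle already forces $M_1^k\otimes\CU \cong M_2^k\otimes\CU^\perp$, hence $M_1^k = M_2^k = 0$, for every $k \neq 0,1$ (a sum of copies of $\CU$ cannot be isomorphic to a sum of copies of $\CU^\perp$, by comparing rank and $c_1$); and your argument establishes along the way the resolution $0 \to \CU^{\oplus n} \to (\CU^\perp)^{\oplus n} \to F \to 0$ with no self-duality hypothesis, which is essentially the first assertion of Proposition~\ref{ftog} --- the paper instead deduces that resolution from this lemma afterwards, via the universal extension. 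The trade-off: the paper's proof is shorter and purely cohomological, needing no decomposition machinery at this point, while yours is more structural and yields the monad as a byproduct.
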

\begin{proof}
First, note that $\CU \cong \CU^*(-1)$ (since $\CU$ has rank 2 and $\det\CU \cong \CO_{Y_5}(-1)$), hence we have
the following exact triple
$$
0 \to \CU^\perp(-1) \to V^*\otimes\CO_{Y_5}(-1) \to \CU \to 0
$$
(this is just the exact triple defining $\CU^\perp$ twisted by $-1$).
By Serre duality we have $\Ext^i(F,\CO_{Y_5}(-1)) = H^{3-i}(Y_5,F(-1))^* = 0$, so it follows that
\begin{equation}\label{uup}
\Ext^\bullet(F,\CU) \cong \Ext^{\bullet + 1}(F,\CU^\perp(-1)).
\end{equation}
Now note that $\mu(\CU) = -1/2$,  $\mu(F) = 0$. Therefore by stability of $F$ and $\CU$ we have
$$
\Hom(F,\CU) = 0,
\qquad\text{and}\qquad
\Hom(\CU,F(-2)) = 0.
$$
On the other hand, $\mu(\CU^\perp(-1)) = - 4/3$ and $\mu(F(-2)) = -2$, hence
by stability of $F$ and $\CU^\perp$ we have
$$
\Hom(\CU^\perp(-1),F(-2)) = 0.
$$
By Serre duality it follows that $\Ext^3(F,\CU) = 0$, $\Ext^3(F,\CU^\perp(-1)) = 0$.
Combining this with~\eqref{uup} we see that $\Ext^i(F,\CU) = 0$ unless $i = 1$.
Computing the Euler characteristic with Riemann--Roch (recall that by Remark~\ref{k0b}
we have $c_2(F) = n$ and $c_3(F) = 0$) we conclude that
$$
\Ext^\bullet(F,\CU) = \kk^n[-1].
$$
which proves the Lemma.
\end{proof}

Let $H$ be a fixed vector space of dimension $n$.

\begin{proposition}\label{ftog}
Let $F$ be a semistable vector bundle of rank $n$ with $c_1(F) = 0$ such that $F \in \CB_{Y_5}$.
Choose an isomorphism $H \cong \Ext^1(F,\CU)$. If $\SD(F) \cong F$ then there is an exact sequence
$$
0 \to H\otimes\CU \xrightarrow{\ \gamma_F\ } H^*\otimes \CU^\perp \xrightarrow{\quad\ } F \to 0.
$$
If the isomorphism $\phi_F:\SD(F) \to F$ is skew-symmetric then the morphism $\gamma_F$ is given
by a symmetric in~$H$ tensor in $A\otimes H^*\otimes H^* = \Hom(H\otimes\CU,H^*\otimes\CU^\perp)$.
\end{proposition}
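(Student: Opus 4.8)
The plan is to deduce both assertions from the decomposition of $F$ with respect to the exceptional pair $(\CU,\CU^\perp)$, i.e.\ by applying the inverse equivalence $\Phi_5^{-1}\colon\CB_{Y_5}\to\D^b(\SQ_A)$, and then to extract the symmetry of the resulting arrow from the self-duality of $F$ via the $-1$-action of $\SD$ on $A$. Applying $\Phi_5^{-1}$ presents $F$ as the two-term complex $\Cone(M_1\otimes\CU\to M_2\otimes\CU^\perp)$ with $M_2=\Ext^\bullet(\CU^\perp,F)$ and $M_1=\Ext^\bullet(F,\CU[1])^*$. By the preceding Lemma $\Ext^\bullet(F,\CU)=\kk^n[-1]$ is concentrated in a single degree, so $M_1$ is an honest $n$-dimensional vector space placed in degree $0$; using $\SD(\CU^\perp)=\CU[1]$ (Lemma~\ref{sd5}) together with $\SD(F)\cong F$, the antiequivalence $\SD$ identifies $\Ext^\bullet(\CU^\perp,F)$ with $\Ext^\bullet(F,\CU[1])$, so $M_2$ is likewise $n$-dimensional in degree $0$ and, by the same duality, canonically dual to $M_1$. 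Writing the two mutually dual spaces as $H$ and $H^*$, with $H\cong\Ext^1(F,\CU)$ (up to relabelling $H\leftrightarrow H^*$), the complex becomes $[H\otimes\CU\to H^*\otimes\CU^\perp]$, which has cohomology only in degrees $-1$ and $0$. Since $F$ is a genuine sheaf its $(-1)$-st cohomology must vanish, so $\gamma_F$ is a fiberwise injection of vector bundles with cokernel $F$; a rank count ($3n-2n=n$) confirms consistency, and this is exactly the asserted short exact sequence.

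For the symmetry, note first that by Lemma~\ref{by5} the differential lives in $\Hom(H\otimes\CU,H^*\otimes\CU^\perp)=A\otimes H^*\otimes H^*$, so $\gamma_F$ is an $A$-valued bilinear form on $H$, and ``symmetric in $H$'' means $\gamma_F\in A\otimes S^2H^*$. I would then apply the antiautoequivalence $\SD$ to the whole sequence. Since $\SD$ interchanges $\CU$ and $\CU^\perp$ and, crucially, acts as $-1$ on $A=\Hom(\CU,\CU^\perp)$ (Lemma~\ref{sd5}), the image $\SD(\gamma_F)$ equals $-\gamma_F^{\,t}$, where $\gamma_F^{\,t}$ denotes $\gamma_F$ with its two $H^*$-factors transposed. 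On the other hand the self-duality isomorphism $\phi_F\colon\SD(F)\to F$ identifies $\SD$ of the sequence with the sequence itself; under $\Phi_5^{-1}$ it becomes an isomorphism of quiver representations intertwining $\gamma_F$ with $\SD(\gamma_F)=-\gamma_F^{\,t}$.

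Finally I would match the signs. The hypothesis is that $\phi_F$ is \emph{skew}-symmetric in the sense of Proposition~\ref{dte}; translating this through $\Phi_5^{-1}$, the pair of isomorphisms of multiplicity spaces realizing $\phi_F$ is itself skew, contributing an additional factor $-1$ to the intertwining relation. Thus, after normalizing $\phi_F$ to the identity on the $H$-spaces, the relation reads $\gamma_F=(-1)\cdot(-\gamma_F^{\,t})=\gamma_F^{\,t}$: the sign coming from $\SD|_A=-1$ cancels the sign coming from the skew-symmetry of $\phi_F$, and $\gamma_F$ is symmetric in $H$. I expect this sign bookkeeping to be the main obstacle, since one must combine the skew-symmetry of $\phi_F$ with the computation $\SD|_A=-1$ and carefully track the transpositions introduced by the contravariant antiequivalence; a single misplaced sign would spuriously render $\gamma_F$ antisymmetric. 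Everything else — the vanishing of the $(-1)$-st cohomology and the rank count identifying the two-term complex with a genuine short exact sequence — is routine.
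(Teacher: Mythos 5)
Your proposal is correct and follows essentially the same route as the paper's proof: both decompose $F$ with respect to the exceptional pair $(\CU,\CU^\perp)$ of Lemma~\ref{by5}, and both derive the symmetry of $\gamma_F$ from the cancellation of exactly the two signs you identify, namely $\SD|_A=-1$ from Lemma~\ref{sd5} and the skew-symmetry $\SD(\phi_F)=-\phi_F$, the latter forcing the two induced isomorphisms of multiplicity spaces to be negative transposes of each other. The only (harmless) variations are that the paper obtains the middle term as $(\CU^\perp)^n$ via the universal extension $0\to H\otimes\CU\to F'\to F\to 0$, observing that $F'$ is a rank-$3n$ vector bundle right orthogonal to $\CU$, $\CO_{Y_5}$ and $\CO_{Y_5}(1)$ (so this step does not even use $\SD(F)\cong F$, which you invoke to place $\Ext^\bullet(\CU^\perp,F)$ in degree $0$), and that what follows directly from the vanishing of $\CH^{-1}$ of the two-term complex is injectivity of $\gamma_F$ as a map of sheaves, with the fiberwise injectivity you assert only following a posteriori from local freeness of the cokernel $F$.
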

\begin{proof}
Consider the universal extension
\begin{equation}\label{he}
0 \to H\otimes \CU \to F' \to F \to 0.
\end{equation}
It follows that $\Ext^\bullet(F',\CU) = 0$. On the other hand,
$\Ext^\bullet(\CO_{Y_5},F') = \Ext^\bullet(\CO_{Y_5}(1),F') = 0$ since this is true
both for $F$ and $\CU$. Hence looking at exceptional collection~\eqref{dby5}
we see that $F' \in \langle \CU^\perp \rangle$, hence $F'$ is a direct sum of shifts of $\CU^\perp$.
On the other hand, from~\eqref{he} we see that $F'$ is a vector bundle of rank $2n + n = 3n$.
Hence $F' \cong (\CU^\perp)^n$. In other words, we have shown that there is an exact sequence
$$
\xymatrix@1{
0 \ar[r] &  H\otimes\CU \ar[rr]^-{\gamma_F} && H'\otimes\CU^\perp \ar[rr] && F \ar[r] & 0,
}
$$
where $H'$ is another vector space of dimension $n$. Now it is time to use the self-duality of $F$.
Applying $\SD$ and taking into account Lemma~\ref{sd5} we obtain another exact sequence
$$
\xymatrix@1{
0 \ar[r] &  (H')^*\otimes\CU \ar[rr]^-{-\gamma^T_F} && H^*\otimes\CU^\perp \ar[rr] && \SD(F) \ar[r] & 0,
}
$$
Both sequences come from a decomposition of an object of the category $\CB_Y$ with respect to the exceptional
collection $(\CU,\CU^\perp)$, hence the map $\phi_F:\SD(F) \to F$ induces a unique isomorphism of these exact sequences, that is
a pair of isomorphisms $h:H^* \to H'$, $h':(H')^* \to H$ such that the following diagram commutes
$$
\xymatrix{
0 \ar[r] &  (H')^*\otimes\CU \ar[rr]^-{-\gamma_F^T} \ar[d]^{h'} && H^*\otimes\CU^\perp \ar[rr] \ar[d]^h && \SD(F) \ar[r] \ar[d]^{\phi_F}& 0\\
0 \ar[r] &  H\otimes\CU \ar[rr]^-{\gamma_F} && H'\otimes\CU^\perp \ar[rr] && F \ar[r] & 0
}
$$
Applying the duality $\SD$ once again we obtain yet another commutative diagram
$$
\xymatrix{
0 \ar[r] &  (H')^*\otimes\CU \ar[rr]^-{-\gamma_F^T} \ar[d]^{h^T} && H^*\otimes\CU^\perp \ar[rr] \ar[d]^{(h')^T} && \SD(F) \ar[r] \ar[d]^{\SD(\phi_F)}& 0\\
0 \ar[r] &  H\otimes\CU \ar[rr]^-{\gamma_F} && H'\otimes\CU^\perp \ar[rr] && F \ar[r] & 0
}
$$
Since $\SD(\phi_F) = -\phi_F$ we conclude that $h' = -h^T$. Identifying $H'$ with $H^*$ via $h$
we see from the first diagram that $-\gamma_F = -\gamma_F^T$, so $\gamma_F^T = \gamma_F$, that is $\gamma_F$ is symmetric.
\end{proof}


For each $\gamma \in A\otimes S^2H^*$ consider the induced map $m_\gamma:H \to H^*\otimes A$.
Consider also the composition
$$
\gamma':H\otimes\CU \xrightarrow{\ m_\gamma\otimes\id_\CU\ } H^*\otimes A\otimes\CU \xrightarrow{\ \id_{H^*}\otimes\alpha\ } H^*\otimes\CU^\perp
$$
and
$$
\hat\gamma: H\otimes V \xrightarrow{m_\gamma\otimes\id_V} H^*\otimes A\otimes V \xrightarrow{\id_{H^*}\otimes\ev} H^*\otimes V^*.
$$

%
%
%
%
%

\begin{theorem}\label{inst-y5}
Let $H$ be a vector space of dimension $n$. Denote by $M_n(Y_5)$ the set of all $\gamma \in A\otimes S^2H^*$
which satisfy the following conditions
\begin{enumerate}
\item the map $\gamma':H\otimes\CU \to H^*\otimes\CU^\perp$ is a fiberwise monomorphism of vector bundles;
\item the rank of the map $\hat\gamma:H\otimes V \to V^*\otimes H^*$ equals $4n + 2$.
\end{enumerate}
Then the moduli space $\CMI_n(Y_5)$ of instantons of charge $n$ on $Y_5$ is the quotient $M_n(Y_5)/\GL(H)$.
In particular, any instanton of charge $n$ is the cohomology bundle of a monad
\begin{equation}\label{my5}
0 \to H\otimes\CU \xrightarrow{\ \gamma'\ } H^*\otimes\CU^\perp \to C\otimes\CO_{Y_5} \to 0,
\end{equation}
where $\gamma \in M_n(Y_5)$ and $C = \Coker\hat\gamma \cong \kk^{n-2}$.
\end{theorem}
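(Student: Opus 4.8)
The plan is to build the correspondence between instantons and symmetric tensors by passing through the acyclic extension, and then to translate the reconstruction result of Theorem~\ref{ftoe} into the linear-algebra conditions (1)--(2). The established machinery does almost all of the work: Lemma~\ref{te1}, Proposition~\ref{dte}, Proposition~\ref{ftog} and Theorem~\ref{ftoe} already relate instantons, their acyclic extensions, and decompositions with respect to the exceptional pair $(\CU,\CU^\perp)$; what remains is to match the two numerical conditions defining $M_n(Y_5)$ with the hypotheses of those statements.

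First I would treat the passage from an instanton to a tensor. Given an instanton $E$ of charge $n$, form its acyclic extension $\TE$. By Lemma~\ref{te1} it is a simple $\mu$-semistable bundle of rank $n$ with $c_1 = 0$ lying in $\CB_{Y_5}$, and by Proposition~\ref{dte} it carries a skew-symmetric isomorphism $\SD(\TE)\cong\TE$. The Lemma preceding Proposition~\ref{ftog} gives $\Ext^\bullet(\TE,\CU) = \kk^n[-1]$, so $H := \Ext^1(\TE,\CU)$ has dimension $n$; fixing an identification $H\cong\Ext^1(\TE,\CU)$ and applying Proposition~\ref{ftog} produces a symmetric tensor $\gamma\in A\otimes S^2H^*$ together with a short exact sequence
$$0\to H\otimes\CU\xrightarrow{\ \gamma'\ }H^*\otimes\CU^\perp\to\TE\to 0.$$
Condition (1) holds because $\TE$ is a vector bundle, so the inclusion $\gamma'$ of subbundles is a fiberwise monomorphism. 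Splicing this sequence with the defining sequence $0\to E\to\TE\to C\otimes\CO_{Y_5}\to 0$ of the acyclic extension (with $C\cong\kk^{n-2}$) exhibits $E$ as the cohomology of the monad~\eqref{my5}, the right map being the composite $H^*\otimes\CU^\perp\twoheadrightarrow\TE\to C\otimes\CO_{Y_5}$.

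The core computation, and the step I expect to be the main obstacle, is to identify $C$ and the integer $h$ of Theorem~\ref{ftoe} with the linear algebra of $\hat\gamma$. Dualizing the sequence above gives $0\to\TE^*\to H^*\otimes(V/\CU)\to H\otimes\CU^*\to 0$, and since $\CU$ and $\CU^\perp$ are acyclic one has $H^0(V/\CU) = V$ and $H^0(\CU^*) = V^*$, with all higher cohomology vanishing. Taking cohomology therefore yields $H^{\ge 2}(\TE^*) = 0$ and an exact sequence
$$0\to H^0(\TE^*)\to H^*\otimes V\xrightarrow{\ \psi\ }H\otimes V^*\to H^1(\TE^*)\to 0.$$
Tracing the factorization of $\alpha$ through $\ev\colon A\otimes V\to V^*$ together with the symmetry $A\cong A^*$ of~\eqref{aas}, the map $\psi$ is identified, up to transpose, with $\hat\gamma$; hence $H^0(\TE^*) = \Ker\hat\gamma$ and $H^1(\TE^*) = \Coker\hat\gamma$. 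As $\hat\gamma$ maps between spaces of dimension $5n$, condition (2) says exactly that these have dimension $n-2$, i.e. $h^0(\TE^*) = n-2$ and $C = \Coker\hat\gamma\cong\kk^{n-2}$. This careful bookkeeping of the global-section map is where the real work lies.

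Conversely, starting from $\gamma\in M_n(Y_5)$, condition (1) makes $F := \Coker\gamma'$ a vector bundle of rank $n$ with $c_1 = 0$ lying in $\CB_{Y_5}$; by Remark~\ref{k0b} its remaining Chern classes are forced, and $H^\bullet(F) = H^\bullet(F(-1)) = 0$. The symmetry of $\gamma$ yields a skew-symmetric isomorphism $\SD(F)\cong F$ by reversing the diagram chase of Proposition~\ref{ftog} via Lemma~\ref{sd5}, and the computation above together with condition (2) gives $h^0(F^*) = n-2$. Theorem~\ref{ftoe} then provides a unique instanton $E$ of charge $n$ with $F\cong\TE$, realized as the cohomology of~\eqref{my5}. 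Finally, the two constructions are mutually inverse: $\TE$ determines $E$ by Theorem~\ref{ftoe}, the bundle $F$ determines $\gamma$ up to the choice of identification $H\cong\Ext^1(F,\CU)$, and this ambiguity is precisely the action of $\GL(H)$ (acting by congruence on $A\otimes S^2H^*$). This gives the bijection $\CMI_n(Y_5) = M_n(Y_5)/\GL(H)$.
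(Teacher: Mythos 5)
Your proposal is correct and takes essentially the same route as the paper: form $F=\Coker\gamma'$, verify the hypotheses of Theorem~\ref{ftoe} with the key computation $h^0(F^*)=\dim\Ker\hat\gamma=5n-(4n+2)=n-2$ from the dualized monad, get surjectivity from Proposition~\ref{ftog}, and identify the fibers with $\GL(H)$-orbits. The only step you treat more briskly than the paper is the orbit identification, where the paper shows that an isomorphism of instantons induces a \emph{unique} pair $(f,g)$ of isomorphisms of the monads and uses the symmetry of the $\gamma_i$ to force $g=f^{-T}$, hence congruence; your appeal to the canonicity of the construction in Proposition~\ref{ftog} amounts to the same point (and note two harmless details: the paper also remarks that the construction works in families, which is what yields an actual morphism $M_n(Y_5)\to\CMI_n(Y_5)$, and in your dualized sequence the factors $H$ and $H^*$ are transposed, which the symmetry of $\gamma$ renders immaterial).
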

\begin{proof}
First, let us construct a map $M_n(Y_5) \to \CMI_n(Y_5)$. Take $F = \Coker(\gamma':H\otimes\CU \to H^*\otimes\CU^\perp)$.
Then $F$ satisfies the conditions of Theorem~\ref{ftoe}. Indeed, the only nontrivial thing to check is that
$h^0(F^*) = n-2$. But from the exact sequence
$$
0 \to F^* \xrightarrow{\ \ \ } H\otimes(V/\CU) \xrightarrow{\ \gamma'\ } H^*\otimes\CU^* \to 0
$$
it follows that $H^0(Y_5,F^*)$ is the kernel of the map $H\otimes V \to H^*\otimes V^*$ induced by $\gamma'$.
It is clear that this map coincides with $\hat\gamma$, hence its rank is $4n+2$, so the kernel has dimension $5n - (4n+2) = n-2$.
So, we deduce that $F$ is the acyclic extension of an instanton $E$ of charge $n$ which is the cohomology of the monad
$$
0 \to H\otimes\CU \to H^*\otimes\CU^\perp \to \CO_{Y_5}^{n-2} \to 0.
$$
This construction can be performed in families, so we obtain a morphism $M_n(Y_5) \to \CMI_n(Y_5)$.
This morphism is surjective by Proposition~\ref{ftog}. So, it remains to check that the fibers are
the orbits of $\GL(H)$.

Indeed, assume that the instantons $E_1$ and $E_2$ constructed from $\gamma_1,\gamma_2 \in A\otimes S^2H^*$ are isomorphic.
In other words, the cohomology bundles of the monads
$$
\xymatrix@1{0 \ar[r] & H\otimes\CU \ar[r]^-{\gamma'_1} & H^*\otimes\CU^\perp \ar[r] & \CO_{Y_5}^{n-2} \ar[r] & 0}
\quad\text{and}\quad
\xymatrix@1{0 \ar[r] & H\otimes\CU \ar[r]^-{\gamma'_2} & H^*\otimes\CU^\perp \ar[r] & \CO_{Y_5}^{n-2} \ar[r] & 0}
$$
are isomorphic. Since the monads come from a decomposition with respect to an exceptional collection,
the isomorphism extends to an isomorphism of monads. Thus there are unique isomorphisms $f:H \to H$ and $g:H^* \to H^*$
such that $\gamma'_2\circ f  = g\circ \gamma'_1$. Transposing (and using symmetricity of $\gamma_i$) we obtain
$\gamma'_1\circ g^T = f^T\circ \gamma'_2$. Multiplying with $f^{-T}$ on the left and $g^{-T}$ on the right we obtain
$\gamma'_2\circ g^{-T} = f^{-T}\circ \gamma'_1$. Since $f$ and $g$ are unique it follows that $g = f^{-T}$,
hence $\gamma'_1 = f^T\circ \gamma'_2\circ f$.
\end{proof}

One can rewrite slightly the monad as follows. Note that the morphism $H^*\otimes\CU^\perp \to C\otimes\CO_{Y_5}$
factors as $H\otimes\CU^\perp \to H^*\otimes V^*\otimes\CO_{Y_5} \to C \otimes\CO_{Y_5}$.
Therefore we have the following commutative diagram
$$
\xymatrix{
&& H^*\otimes\CU^\perp \ar[rr] \ar@{=}[d] && H^*\otimes V^*\otimes \CO_{Y_5} \ar[rr] \ar[d] && H^*\otimes\CU^* \\
H\otimes\CU \ar[rr]^-{\gamma'} && H^*\otimes\CU^\perp \ar[rr] && C\otimes \CO_{Y_5}
}
$$
Since the top row is acyclic, it follows that the bottom row is quasi-isomorphic to
\begin{equation}\label{sdm5}
\xymatrix@1{0 \ar[r] & H\otimes\CU \ar[r] & K\otimes\CO_{Y_5} \ar[r] & H^*\otimes\CU^* \ar[r] & 0},
\end{equation}
where $K = \Ker(H^*\otimes V^* \to C) = \Im\hat\gamma$. So, we have proved

\begin{proposition}\label{inst-y5-1}
Any instanton of charge $n$ on $Y_5$ is the cohomology of a self-dual monad~\eqref{sdm5} with $\dim H = n$, $\dim K = 4n+2$.
\end{proposition}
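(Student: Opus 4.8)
The plan is to start from the monadic presentation already produced in Theorem~\ref{inst-y5}, namely the exact sequence~\eqref{my5}
$$
0 \to H\otimes\CU \xrightarrow{\ \gamma'\ } H^*\otimes\CU^\perp \to C\otimes\CO_{Y_5} \to 0,
\qquad \dim H = n,\quad C \cong \kk^{n-2},
$$
and to rewrite it so that its middle term becomes a trivial bundle and its right term becomes $H^*\otimes\CU^*$. The key structural input is the defining exact sequence of $\CU^\perp$,
$$
0 \to \CU^\perp \to V^*\otimes\CO_{Y_5} \to \CU^* \to 0,
$$
which realizes $\CU^\perp$ as a subbundle of the trivial bundle $V^*\otimes\CO_{Y_5}$ and which stays exact (hence acyclic as a complex) after tensoring with $H^*$.

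First I would check that the surjection $H^*\otimes\CU^\perp \to C\otimes\CO_{Y_5}$ in~\eqref{my5} factors through the inclusion $H^*\otimes\CU^\perp \hookrightarrow H^*\otimes V^*\otimes\CO_{Y_5}$. This is forced by the construction of the monad, since the map onto $C = \Coker\hat\gamma$ is the restriction to $\CU^\perp$ of the evaluation map $H^*\otimes V^*\otimes\CO_{Y_5} \to C\otimes\CO_{Y_5}$ induced by $\hat\gamma$. With this factorization I would then assemble the commutative diagram whose top row is the $H^*$-twisted defining sequence of $\CU^\perp$ and whose bottom row is the monad~\eqref{my5}, exactly as displayed before the statement.

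The homological heart of the argument is the observation that the top row $H^*\otimes\CU^\perp \to H^*\otimes V^*\otimes\CO_{Y_5} \to H^*\otimes\CU^*$ is acyclic. A standard cancellation of this acyclic row then identifies the cohomology of the monad~\eqref{my5} with the cohomology of the three-term complex
$$
0 \to H\otimes\CU \to K\otimes\CO_{Y_5} \to H^*\otimes\CU^* \to 0,
$$
where $K := \Ker(H^*\otimes V^* \to C) = \Im\hat\gamma$; this is precisely~\eqref{sdm5}. The dimension count is immediate: $\dim K = \dim(H^*\otimes V^*) - \dim C = 5n - (n-2) = 4n+2$, so that $\dim H = n$ and $\dim K = 4n+2$ as claimed.

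Finally I would verify that the resulting monad is genuinely self-dual. The outer terms $H\otimes\CU$ and $H^*\otimes\CU^*$ are exchanged by $\RCHom(-,\CO_{Y_5})$, so it only remains to see that $K = \Im\hat\gamma$ is self-dual. This is where the symmetry $\gamma^T = \gamma$ of Proposition~\ref{ftog} enters: it makes $\hat\gamma\colon H\otimes V \to H^*\otimes V^*$ a nondegenerate pairing on its image (symmetric in the $H$-variables and skew in the $V$-variables, the latter because $A\subset\Lambda^2V^*$), and in either parity the image of such a map is canonically isomorphic to its dual, giving $K \cong K^*$. I expect the only mildly delicate point to be matching these signs and the identification~\eqref{aas} with the skew-symmetry of $\phi_F$; the homological content itself is routine once the factorization of the monad's right map through $H^*\otimes V^*\otimes\CO_{Y_5}$ has been established.
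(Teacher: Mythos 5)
Your proposal follows the paper's own argument essentially verbatim: you factor the right-hand map of the monad~\eqref{my5} through $H^*\otimes V^*\otimes\CO_{Y_5}$ using the defining sequence of $\CU^\perp$, cancel the acyclic top row of the resulting commutative diagram to identify the cohomology with that of~\eqref{sdm5} with $K=\Im\hat\gamma$, and count $\dim K = 5n-(n-2)=4n+2$, exactly as in the text preceding the Proposition. The only difference is that you make explicit the self-duality of $K$ (via the skew bilinear form $\hat\gamma$ on $H\otimes V$ determined by $\gamma\in A\otimes S^2H^*$ with $A\subset\Lambda^2V^*$), a point the paper leaves implicit; this verification is correct and a welcome addition.
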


\subsection{Instantonic nets of quadrics }

Any tensor $\gamma \in A\otimes S^2H^*$ can be thought of as a net of quadrics in $\PP(H)$ parameterized
by $\PP(A^*)$. So, given an instanton $E$ on $Y_5$ we can consider the corresponding net of quadrics $\gamma_E$.

The space of nets of quadrics, $A\otimes S^2H^*$, is acted upon by the group $\GL(H)$, so one can
speak about GIT stability and semistability of a net of quadrics. Recall that according to~\cite{W}
a net $\gamma$ is unstable if and only if there is a pair of subspaces $H_1,H_2 \subset H$
such that
\begin{itemize}
\item $\dim H_1 + \dim H_2 > \dim H$, and
\item the map $A^* \xrightarrow{\ \gamma\ } S^2H^* \to H_1^*\otimes H_2^*$ is zero.
\end{itemize}

\begin{proposition}\label{gsta}
For any instanton $E$ on $Y_5$ the corresponding net of quadrics $\gamma_E$ is semistable.
\end{proposition}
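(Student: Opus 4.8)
The plan is to argue by contradiction, feeding the combinatorial instability datum produced by Wall's criterion into the monad description of $\TE$ and then contradicting its $\mu$-semistability. Write $F = \TE$ for the acyclic extension of $E$, so that $\gamma = \gamma_E \in A\otimes S^2H^*$ with $H \cong \Ext^1(F,\CU)$ of dimension $n$, and recall from Theorem~\ref{inst-y5} that $F \cong \Coker(\gamma':H\otimes\CU \to H^*\otimes\CU^\perp)$ with $\gamma'$ a fiberwise monomorphism. Suppose for contradiction that $\gamma$ is unstable. By the criterion of~\cite{W} there is a pair of subspaces $H_1,H_2 \subset H$ with $\dim H_1 + \dim H_2 > n$ for which the composition $A^* \xrightarrow{\ \gamma\ } S^2H^* \to H_1^*\otimes H_2^*$ vanishes.

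Next I would translate this vanishing into a factorization of $\gamma'$. Let $\pi:H^* \to H_2^*$ be the restriction map, with kernel $W$ of dimension $n-\dim H_2$. Unwinding $\gamma' = (\id_{H^*}\otimes\alpha)\circ(m_\gamma\otimes\id_\CU)$, the instability condition says exactly that $(\pi\otimes\id_A)\circ m_\gamma$ kills $H_1$, so the composite $H\otimes\CU \xrightarrow{\gamma'} H^*\otimes\CU^\perp \xrightarrow{\pi\otimes\id} H_2^*\otimes\CU^\perp$ factors through a morphism $\bar\phi:(H/H_1)\otimes\CU \to H_2^*\otimes\CU^\perp$. Composing the defining epimorphism $H^*\otimes\CU^\perp \twoheadrightarrow F$ with $\pi\otimes\id$ then exhibits $F$ as mapping onto
$$
Q := \Coker\bigl(\bar\phi:(H/H_1)\otimes\CU \to H_2^*\otimes\CU^\perp\bigr),
$$
since $\pi$ annihilates $W\otimes\CU^\perp$ and the image of $\gamma'$ projects onto $\Im\bar\phi$; thus $Q$ is an honest quotient sheaf of $F$.

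The crux is then a Chern class estimate that is insensitive to the rank of $\Im\bar\phi$. From the four term exact sequence $0 \to \Ker\bar\phi \to (H/H_1)\otimes\CU \to H_2^*\otimes\CU^\perp \to Q \to 0$ and the values $c_1(\CU) = c_1(\CU^\perp) = -1$ one obtains
$$
c_1(Q) = (n - \dim H_1 - \dim H_2) + c_1(\Ker\bar\phi).
$$
Because $\Ker\bar\phi$ is a torsion free subsheaf of the semistable bundle $(H/H_1)\otimes\CU$ of slope $-1/2$, we have $c_1(\Ker\bar\phi) \le 0$, so $c_1(Q) \le n - \dim H_1 - \dim H_2 < 0$. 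A nonzero quotient of $F$ with negative $c_1$ cannot be torsion, so $\rank Q > 0$, and the kernel of $F \twoheadrightarrow Q$ has $c_1 = -c_1(Q) > 0$ and is therefore nonzero, so $\rank Q < n$. Hence $\mu(Q) < 0 = \mu(F)$, contradicting the $\mu$-semistability of $F = \TE$ from Lemma~\ref{te1}; this contradiction shows $\gamma_E$ is semistable.

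The step I expect to be the main obstacle is the second one: matching Wall's combinatorial condition with the geometric factorization of $\gamma'$, and in particular verifying that $Q$ really is a quotient of $F$ rather than an abstract cokernel of unrelated bundles. Once that identification is in place, the slope estimate is forced, because $Q$ is tautologically a quotient of a rank $n$ bundle; in particular no separate case analysis on the dimensions of $H_1$ and $H_2$ is required, as the sign of $c_1(Q)$ alone pins down $0 < \rank Q < n$.
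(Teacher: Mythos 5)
Your proof is correct and is essentially the paper's own argument: you feed Wall's destabilizing pair $(H_1,H_2)$ into the monad presentation of $\TE$, form the induced map $\bar\phi:(H/H_1)\otimes\CU\to H_2^*\otimes\CU^\perp$ (the paper's $\gamma_q$), and reach a contradiction with the $\mu$-semistability of $\TE$ from Lemma~\ref{te1} by first-Chern-class bookkeeping together with the negative slope of $\CU$. The only (harmless) difference is that you read the resulting four-term sequence from the quotient end, exhibiting $Q=\Coker\bar\phi$ as a quotient of $\TE$ with $c_1(Q)<0$, whereas the paper runs the snake lemma and works from the sub end, showing $c_1(\Ker\gamma_q)>0$ against $\Ker\gamma_q\subset(H/H_1)\otimes\CU$; your variant even spares the appeal to the injectivity of $\gamma_E$ from Proposition~\ref{ftog}, which the paper needs to compute $c_1(\Coker\gamma_s)$.
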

\begin{proof}
Assume that $\gamma_E$ is unstable. Let $(H_1,H_2)$ be the destabilizing pair of subspaces.
Consider the subspace $H_2^\perp := \Ker(H^* \to H_2^*)$. Note that the condition $\dim H_1 + \dim H_2 > \dim H$
is equivalent to
$$
\dim H_1 > \dim H_2^\perp.
$$
The second condition says that the image of the map
$H_1\otimes A^* \subset H\otimes A^* \xrightarrow{\ \gamma_E\ } H^*$
is contained in $H_2^\perp$.
%
%
Thus we have a commutative diagram
$$
\xymatrix{
H_1\otimes A^* \ar[r] \ar[d] & H_2^\perp \ar[d] \\
H\otimes A^* \ar[r]^{\gamma_E} & H^*
}
$$
Consider the map $\gamma_s:H_1\otimes\CU \to H_2^\perp\otimes\CU^\perp$ induced by the upper line of the above diagram
and the induced diagram
$$
\xymatrix{
0 \ar[r] & H_1\otimes \CU \ar[r] \ar[d]^{\gamma_s} & H\otimes \CU \ar[r] \ar[d]^{\gamma_E} & (H/H_1)\otimes\CU \ar[r] \ar[d]^{\gamma_q} & 0 \\
0 \ar[r] & H_2^\perp\otimes\CU^\perp \ar[r] & H^*\otimes\CU^\perp \ar[r] & H_2^*\otimes\CU^\perp \ar[r] & 0
}
$$
with exact rows. Since the morphism $\gamma_E$ is injective by Proposition~\ref{ftog} we conclude that $\gamma_s$ is injective as well.
Moreover, we obtain an exact sequence
$$
0 \to \Ker \gamma_q \to \Coker \gamma_s \to \Coker \gamma_E \to \Coker \gamma_q \to 0.
$$
Note that by stability of $\TE \cong \Coker\gamma_E$ the image of the middle arrow should have nonpositive first Chern class,
hence
$$
c_1(\Coker\gamma_s) \le c_1(\Ker\gamma_q).
$$
On the other hand since $\Ker\gamma_s=0$ we have
%
$$
c_1(\Coker\gamma_s) = -\dim H_2^\perp + \dim H_1 > 0,
$$
hence $c_1(\Ker\gamma_q) > 0$. But $\Ker\gamma_q$ is a subsheaf in $(H/H_1)\otimes\CU$, and $\CU$ is stable of negative slope.
This contradiction proves the claim.
%
%
%
\end{proof}

\subsection{Jumping lines}

Again consider the net of quadrics $\gamma \in A\otimes S^2H^*$ associated with an instanton $E$.
Assume for a moment that generic quadric in the net is nondegenerate. Then degenerate quadrics form
a curve (of degree $n$) in $\PP(A^*)$ which we denote by $D_\gamma$.
By definition the curve $D_\gamma$ is the support of the cokernel of the morphism
$H\otimes\CO_{\PP(A^*)}(-2) \xrightarrow{\ \gamma\ } H^*\otimes\CO_{\PP(A^*)}(-1)$ induced by $\gamma$.
The cokernel itself is a coherent sheaf (we denote it by $\theta_\gamma$) with the property that
\begin{equation}\label{theta}
\RCHom(\theta_\gamma,\omega_{D_\gamma}) \cong \theta_\gamma.
\end{equation}
In particular, if the net is regular, the curve $D_\gamma$ is smooth and $\theta_\gamma$
is a theta-characteristic, that is a line bundle which is a square root of the canonical class.
Moreover, as the defining exact sequence
\begin{equation}\label{dgamma}
0 \to H\otimes\CO_{\PP(A^*)}(-2) \xrightarrow{\ \gamma\ } H^*\otimes\CO_{\PP(A^*)}(-1) \xrightarrow{\ \ \ } \theta_\gamma \to 0
\end{equation}
shows, this theta-characteristic is {\em non-degenerate}, that is
\begin{equation}\label{tnd}
H^0(D_\gamma,\theta_\gamma) = 0.
\end{equation}
In case of a nonregular net the sheaf $\theta_\gamma$ is neither locally free nor of rank 1 in general.
But still it enjoys the properties~\eqref{theta} and~\eqref{tnd}. We will call such sheaves
{\sf generalized nondegenerate theta-characteristics}.



Recall that the Fano scheme of lines on $Y_5$ coincides with $\PP(A)$ which itself is identified with $\PP(A^*)$,
so the curve $D_\gamma$ can be thought of as a curve on the Fano scheme of lines. It turns out that it coincides
with the curve of jumping lines of the instanton $E_\gamma$, and the corresponding sheaf $\CL_E$ is obtained
from the theta-characteristic $\theta_\gamma$ by a twist.

\begin{proposition}\label{dgde}
Let $E$ be an instanton on $Y_5$ and $\gamma_E$ the corresponding net of quadrics. Then one has a distinguished triangle
$$
Rp_*q^*E(-1) \xrightarrow{\qquad} H \otimes \CO_{\PP(A^*)}(-3) \xrightarrow{\ \gamma_E\ } H^* \otimes \CO_{\PP(A^*)}(-2).
$$
In particular, generic line is nonjumping for $E$ if and only if generic quadric in the net $\gamma_E$ is nondegenerate.
Furthermore, if these equivalent conditions hold then $D_E = D_\gamma$ and $\CL_E = \theta_\gamma(-1)$.
\end{proposition}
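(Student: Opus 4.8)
The plan is to compute the object $\CF := Rp_*q^*E(-1)$ explicitly by feeding the monad~\eqref{my5} of the acyclic extension $\TE$ into the identity $\CF\cong Rp_{1*}\RCHom(q_1^*\TE,\CJ)$ of Proposition~\ref{tejl}, and then to recognise the two-term complex that comes out as the one in~\eqref{dgamma}. A preliminary move is to trade the universal family $\CJ$ of~\eqref{cj} for a more transparent one. Working on $Y_5\times\PP(A)$ (recall $F(Y_5)=\PP(A)$), I would let $\CK$ be the cone of the universal morphism
$$\Psi\colon q_1^*((V/\CU)(-1))\otimes p_1^*\CO_{\PP(A)}(-1)\longrightarrow q_1^*\CU$$
built from the tautological section $\CO_{\PP(A)}(-1)\to A\otimes\CO_{\PP(A)}$ and the identification $A=\Hom((V/\CU)(-1),\CU)$ of Lemma~\ref{by5}. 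By~\eqref{jly5} its restriction to the fibre over $[a]$ is exactly $J_{L_a}$, so $\CK$ is again a universal family of the objects $J_L$ and hence differs from $\CJ$ only by a twist pulled back from $\PP(A)$; I expect $\CJ\cong\CK\otimes p_1^*\CO_{\PP(A)}(-2)$.

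The advantage of $\CK$ is that $Rp_{1*}\RCHom(q_1^*(-),\CK)$ can be evaluated on the two generators $\CU,\CU^\perp$ of $\CB_{Y_5}$ by pure homological algebra. Applying this functor to the defining triangle of $\CK$ and using the flat base change $Rp_{1*}q_1^*(-)\cong\RGamma(Y_5,-)\otimes\CO_{\PP(A)}$, the outcome is governed by the four groups $\Ext^\bullet(\CU,\CU)=\kk$, $\Ext^\bullet(\CU,(V/\CU)(-1))=0$, $\Ext^\bullet(\CU^\perp,\CU)=0$ and $\Ext^\bullet(\CU^\perp,(V/\CU)(-1))=\kk[-1]$, the vanishings being the semiorthogonality of the exceptional pairs $\langle\CU,\CU^\perp\rangle$ and $\langle(V/\CU)(-1),\CU\rangle$ of Lemma~\ref{by5}. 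This gives $Rp_{1*}\RCHom(q_1^*\CU,\CK)\cong\CO_{\PP(A)}$ and $Rp_{1*}\RCHom(q_1^*\CU^\perp,\CK)\cong\CO_{\PP(A)}(-1)$.

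Feeding the monad $0\to H\otimes\CU\xrightarrow{\gamma'}H^*\otimes\CU^\perp\to\TE\to0$ into $Rp_{1*}\RCHom(q_1^*(-),\CK)$ then produces a triangle
$$Rp_{1*}\RCHom(q_1^*\TE,\CK)\to H\otimes\CO_{\PP(A)}(-1)\xrightarrow{\ \beta\ }H^*\otimes\CO_{\PP(A)},$$
which after the twist by $p_1^*\CO_{\PP(A)}(-2)$ and the self-duality $\PP(A)\cong\PP(A^*)$ of~\eqref{aas} becomes the asserted triangle. To identify $\beta$ I would use that $\gamma'$ factors as $H\otimes\CU\xrightarrow{m_\gamma}H^*\otimes A\otimes\CU\xrightarrow{\alpha}H^*\otimes\CU^\perp$: the map induced by the evaluation $\alpha$ on the pushforwards is, by construction of $\Psi$ and via~\eqref{aas}, the tautological inclusion $\CO_{\PP(A)}(-1)\to A\otimes\CO_{\PP(A)}$, so that $\beta$ is precisely the sheaf map attached to the net $\gamma_E\in A\otimes S^2H^*$, i.e. the map of~\eqref{dgamma} twisted by $\CO_{\PP(A^*)}(1)$. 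This is the crux, and the step I expect to be most delicate: pinning down simultaneously the overall twist ($\CK$ versus $\CJ$) and the coincidence of $\beta$ with $\gamma_E$ on the nose rather than up to a scalar. The twist can in any case be fixed a posteriori, since the already established result on the curve of jumping lines provides $\CL_E^2\cong\CO_{D_E}((n-5)D_L)$ for an instanton without $2$-jumping lines, and this normalization is consistent only with $H\otimes\CO_{\PP(A^*)}(-3)\to H^*\otimes\CO_{\PP(A^*)}(-2)$ (equivalently with $\CL_E=\theta_\gamma(-1)$); as the two twisting constants are geometric invariants of $\CU$ and $V/\CU$ on $Y_5$, independent of $E$, testing them on one such instanton fixes the triangle for all $E$.

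Granting the triangle, the remaining assertions are formal. Restricting to the fibre over $[a]\in\PP(A^*)$, the complex $H\otimes\CO(-3)\xrightarrow{\gamma_E}H^*\otimes\CO(-2)$ specialises to the symmetric map $H\to H^*$ cutting out the quadric of the net at $[a]$; hence the line $L_a$ is nonjumping, i.e. $\CF$ vanishes at $[a]$, exactly when that quadric is nondegenerate, which gives the equivalence of the two genericity conditions. When they hold, $\gamma_E$ is injective with torsion cokernel supported on $D_\gamma$, so the triangle collapses to the sequence~\eqref{dgamma} twisted by $\CO(-1)$ and yields $\CF\cong\theta_\gamma(-1)[-1]$. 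Comparing with $\CF\cong i_*\CL_E[-1]$ from the general theorem gives $D_E=D_\gamma$ and $\CL_E\cong\theta_\gamma(-1)$ at once.
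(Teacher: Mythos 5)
Your proposal is correct and takes essentially the same route as the paper: identify $Rp_*q^*E(-1)$ with $Rp_{1*}\RCHom(q_1^*\TE,\CJ)$ via Proposition~\ref{tejl}, resolve the universal family $\CJ$ by the relative version of~\eqref{jly5} (a cone of $(V/\CU)(-1)\boxtimes\CO(-3)\to\CU\boxtimes\CO(-2)$), push forward using exactly the four $\Ext$-computations $\Ext^\bullet(\CU,\CU)=\kk$, $\Ext^\bullet(\CU,(V/\CU)(-1))=\Ext^\bullet(\CU^\perp,\CU)=0$, $\Ext^\bullet(\CU^\perp,(V/\CU)(-1))=\kk[-1]$, and feed in the monad of $\TE$. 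The only divergence is bookkeeping: the paper builds the twists $\CO(-3)$, $\CO(-2)$ directly into the relative version of~\eqref{ily5}, whereas you normalize the twist a posteriori via the self-duality $\CL_E\cong\RCHom(\CL_E,\CO_{D_E}((n-5)D_L))$, which does pin down $\CL_E=\theta_\gamma(-1)$ correctly (and for the twist-sensitive claims the given instanton $E$ itself serves as the test case, so no extra existence input is needed).
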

\begin{proof}
By Lemma~\ref{tejl} we know that $Rp_*q^*E(-1) \cong Rp_{1*}\RCHom(q_1^*\TE,\CJ)$.
On the other hand, one can easily write a relative version of~\eqref{ily5}
$$
0 \to \CU\boxtimes\CO_{\PP(A^*)}(-3) \xrightarrow{\quad} \CU^\perp\boxtimes\CO_{\PP(A^*)}(-2) \xrightarrow{\quad} I_Z \to 0
$$
which gives a distinguished triangle
$$
(V/\CU)(-1) \boxtimes \CO_{\PP(A^*)}(-3) \xrightarrow{\quad} \CU \boxtimes \CO_{\PP(A^*)}(-2) \xrightarrow{\quad} \CJ.
$$
Now we combine this triangle with the exact sequence
$$
H\otimes\CU \xrightarrow{\ \gamma_E\ } H^*\otimes\CU^\perp \xrightarrow{\quad} \TE.
$$
Note that $\Ext^\bullet(\CU,(V/\CU)(-1)) = \Ext^\bullet(\CU^\perp,\CU) = 0$ by Lemma~\eqref{by5},
$\Ext^\bullet(\CU,\CU) = \kk$ since $\CU$ is exceptional and $\Ext^\bullet(\CU^\perp,(V/\CU)(-1)) = \kk[-1]$ by~\eqref{seqy5}.
This gives the desired distinguished triangle
%
$$
Rp_{1*}\RCHom(q_1^*\TE,\CJ) \to H \otimes \CO_{\PP(A^*)}(-3) \xrightarrow{\ \gamma_E\ } H^* \otimes \CO_{\PP(A^*)}(-2).
$$
The rest of the Proposition easily follows.
\end{proof}


The above Proposition gives the following reinterpretation of Conjecture~\ref{jl-conj} in terms
of the associated net of quadrics --- if $\gamma$ is an instantonic net of quadrics then generic quadric
in the net is nondegenerate. In fact we believe that this should follow from the semistability of the net.
To be more precise, we have the following

\begin{conjecture}\label{ssnq}
If $\gamma$ is a semistable net of quadrics then generic quadric is nondegenerate.
\end{conjecture}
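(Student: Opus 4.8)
The plan is to argue by contraposition: assuming that \emph{every} quadric of the net is degenerate, I would produce a destabilizing pair of subspaces $(H_1,H_2)$ in the sense of~\cite{W}, i.e.\ subspaces with $\dim H_1+\dim H_2>\dim H$ on which the net vanishes identically. One case is immediate and instructive. Let $H_0=\bigcap_{a\in A^*}\Ker\bigl(\gamma(a)\colon H\to H^*\bigr)$ be the common kernel of all the quadrics. If $H_0\ne 0$, then the pair $(H_0,H)$ already destabilizes $\gamma$, so one may assume from the outset that $H_0=0$, equivalently that the induced map $H\to A\otimes H^*$ is injective. It is this case that carries all the content.

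The main tool I would use is the kernel sheaf of the net. Regarding $\gamma$ as a symmetric morphism of vector bundles on $\PP(A^*)=\PP^2$,
\[
\phi\colon H\otimes\CO_{\PP(A^*)}(-1)\longrightarrow H^*\otimes\CO_{\PP(A^*)},
\]
which is the morphism of Proposition~\ref{dgde} up to a twist, the hypothesis that the generic quadric is degenerate says exactly that $\phi$ drops rank everywhere, so that $\CK:=\Ker\phi$ is a nonzero torsion-free sheaf of some rank $r\ge 1$, equal to the generic corank of the net. Because $\phi$ is symmetric one has $\phi^\vee\cong\phi$ after an appropriate twist, so the image $\CG:=\Im\phi\subset H^*\otimes\CO$ and the cokernel of $\phi$ are governed by the same sheaf $\CK$; this self-duality is what will let me convert information about $\CK$ into a \emph{symmetric} pair $(H_1,H_2)$. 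Concretely I would take $H_1\subset H$ to be the subspace spanned by the fibres of $\CK$ (the minimal $H_1$ with $\CK\subset H_1\otimes\CO(-1)$) and $H_2:=\Ker\bigl(H\to A\otimes H_1^*\bigr)$, the largest subspace pairing trivially with $H_1$ under the whole net. By construction $\gamma|_{H_1\otimes H_2}=0$.

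Since $\dim H_2=\dim H-\rank\bigl(H\to A\otimes H_1^*\bigr)$, the desired inequality $\dim H_1+\dim H_2>\dim H$ is equivalent to the single rank bound $\rank\bigl(H\to A\otimes H_1^*\bigr)<\dim H_1$. I would attack this bound by a Chern-class computation on $\PP^2$ in the spirit of the slope argument in the proof of Proposition~\ref{gsta}. The absence of a common kernel means $H^0(\CK(1))=H_0=0$, and in any case $\CK(1)\subset\CO^{\oplus n}$ is a subsheaf of a semistable bundle of slope $0$, which forces $c_1(\CK)\le -r$. Feeding this, together with the self-duality $\phi^\vee\cong\phi$ and the exact sequences linking $\CK$, $\CG$ and the torsion cokernel of $\phi$, into a degree count should constrain both $\dim H_1$ and the rank of the pairing $H\to A\otimes H_1^*$ tightly enough to yield the bound.

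The hard part is precisely this last step, and I expect it to be the real obstacle --- which is no doubt why the statement is only conjectured. When the kernels move as much as possible, i.e.\ when the classifying rational map $\PP(A^*)\dashrightarrow\Gr(r,H)$, $a\mapsto\Ker\gamma(a)$, has a large nondegenerate image, the span $H_1$ is too big and the crude estimate $\dim H_2\ge\dim H-3\dim H_1$ is worthless. To handle this I would try to globalize the pencil picture: restricting the net to a general line $\ell\subset\PP(A^*)$ yields a pencil of quadrics with $\det\equiv 0$, to which the Kronecker--Weierstrass classification of singular pencils of symmetric forms applies and produces a distinguished flag of isotropic and kernel subspaces depending on $\ell$. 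The crux is then a monodromy problem: to show that these line-by-line flags assemble into fixed subspaces $H_1,H_2\subset H$ independent of $\ell$ and satisfying the dimension inequality. Controlling this monodromy --- equivalently, ruling out a semistable net whose kernel sheaf $\CK$ is itself irreducible and admits no invariant subspace --- is where a genuinely new idea seems to be required.
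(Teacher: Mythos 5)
The statement you are trying to prove is Conjecture~\ref{ssnq} of the paper: it is \emph{not} proved there, and is supported only by the remark following it, which verifies the pencil case (via the splitting type of the image of $H\otimes\CO_{\PP^1}(-1)\to H^*\otimes\CO_{\PP^1}$, essentially your restriction-to-a-line step) and, crucially, observes that the analogous statement is \emph{false} for $5$-dimensional systems of quadrics: the Pl\"ucker quadrics of $\Gr(2,5)$ form a stable system consisting entirely of degenerate quadrics. Your proposal is honest about not being a proof, and indeed it is not one: the two steps you flag as hard are exactly where all the content lies, and neither is close to being established. First, the central inequality $\rank\bigl(H\to A\otimes H_1^*\bigr)<\dim H_1$ has no argument behind it; the ingredients you propose to feed into a degree count ($c_1(\CK)\le -r$, the symmetry $\phi^\vee\cong\phi$, $H^0(\CK(1))=0$) do not control $\dim H_1$ at all. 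Already for $r=1$ a kernel line subsheaf $\CO_{\PP^2}(-m)\hookrightarrow H\otimes\CO_{\PP^2}(-1)$ with $m$ large will generically have fibres spanning all of $H$, so $H_1=H$, $H_2=0$, and your pair destabilizes nothing. Second, the monodromy/assembly step (gluing the Kronecker--Weierstrass flags of the pencils $\gamma|_\ell$ into subspaces of $H$ independent of $\ell$) is precisely the open problem restated, and semistability gives no visible obstruction to the flags varying with $\ell$.

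There is also a structural warning your sketch does not confront: your kernel-sheaf formalism (replace $\PP^2$ by $\PP^4$) applies verbatim to the Pl\"ucker counterexample above, where the conclusion fails. Hence any completion of your strategy must use $\dim A=3$ in an essential, non-crude way --- the only place it currently enters is the worthless estimate $\dim H_2\ge\dim H-3\dim H_1$ --- and your proposal does not identify where three-dimensionality would enter the sheaf-theoretic bounds. The parts that do work (reduction to the contrapositive via Wall's instability criterion, disposing of a nonzero common kernel $H_0$ via the pair $(H_0,H)$, the identification $H^0(\CK(1))=H_0$, and the pencil case) are correct but are the easy preliminaries; the paper itself goes no further, which is why the statement is a conjecture there.
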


\begin{remark}
Analogous statement for pencils of quadrics is very easy to prove by analyzing the possible
isomorphism classes of the images of the map $H\otimes\CO_{\PP^1}(-1) \to H^*\otimes\CO_{\PP^1}$
given by the pencil. If the image is $\CO_{\PP^1}^a \oplus \CO_{\PP^1}(-1)^b$ with $a + b < \dim H$
then taking $H_1 = \Ker(H\otimes\CO_{\PP^1}(-1) \to \CO_{\PP^1}(-1)^b)$ and $H_2 = \Coker(\CO_{\PP^1}^a \to H^*\otimes\CO_{\PP^1})^*$
we get a destabilizing pair of subspaces.

On the other hand, for higher dimensional linear spaces of quadrics the analogous statement is wrong.
For example, the 5-dimensional space of Pl\"ucker equations of $\Gr(2,5)$ consists of degenerate
quadrics, but is stable.
\end{remark}

We can also use Proposition~\ref{dgde} to deduce Conjecture~\ref{jlines}.

\begin{corollary}
For Fano threefold of degree $5$ Conjecture~{\rm\ref{jlines}} is true.
\end{corollary}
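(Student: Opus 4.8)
The plan is to reduce the reconstruction of $E$ to the classical reconstruction of a net of quadrics from its degeneracy data. We work, as in the statement of Conjecture~\ref{jlines}, with an instanton $E$ on $Y_5$ for which a generic line is not jumping, which by Proposition~\ref{dgde} is equivalent to the generic quadric in the associated net $\gamma_E \in A\otimes S^2H^*$ being nondegenerate. In that situation Proposition~\ref{dgde} gives $D_E = D_{\gamma_E}$ as subschemes of $\PP(A^*) = F(Y_5)$ and $\CL_E = \theta_{\gamma_E}(-1)$. Hence the pair $(D_E,\CL_E)$ carries exactly the same information as the pair $(D_{\gamma_E},\theta_{\gamma_E})$: the curve $D_{\gamma_E}$ is $D_E$, and the generalized nondegenerate theta-characteristic is recovered by twisting, $\theta_{\gamma_E} = \CL_E(1)$. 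So it suffices to show that the net $\gamma_E$ is determined, up to the action of $\GL(H)$, by $(D_{\gamma_E},\theta_{\gamma_E})$; by Theorem~\ref{inst-y5} this $\GL(H)$-orbit is precisely the datum that cuts out a point of $\CMI_n(Y_5) = M_n(Y_5)/\GL(H)$, hence the instanton itself.

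First I would recover the vector space $H$ and the net $\gamma_E$ from the sheaf $\theta := \theta_{\gamma_E}$ by means of its minimal locally free resolution on $\PP(A^*) = \PP^2$. Twisting the defining sequence~\eqref{dgamma} by $\CO(1)$ and passing to cohomology yields $H^* \cong H^0(\PP^2,\theta(1))$, using $H^1(\PP^2,\CO(-1)) = 0$; thus $H$ and its dimension are intrinsic to $\theta$. Since $\theta$ is a pure one-dimensional sheaf on $\PP^2$, it admits a locally free resolution of length one, and the self-duality~\eqref{theta} together with the nondegeneracy~\eqref{tnd} forces this resolution to take the symmetric shape~\eqref{dgamma}, namely
$$
0 \to H\otimes\CO_{\PP^2}(-2) \xrightarrow{\ \gamma\ } H^*\otimes\CO_{\PP^2}(-1) \to \theta \to 0
$$
with $\gamma$ a symmetric map. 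This is the classical correspondence between nondegenerate theta-characteristics on plane curves and symmetric determinantal representations (in the spirit of Dixon and Beauville). The minimal resolution being unique up to isomorphism of its free terms, the only ambiguity in $\gamma$ is the change of bases $\GL(H)\times\GL(H^*)$; the self-duality identifies $H^*$ with the dual of $H$ compatibly with $\gamma$, cutting this ambiguity down to the single group $\GL(H)$ acting by $\gamma \mapsto f^T\gamma f$. This recovers $\gamma_E$ precisely up to $\GL(H)$.

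Finally I would feed the recovered $\GL(H)$-orbit of $\gamma_E$ into Theorem~\ref{inst-y5}: since $E$ is the cohomology bundle of the monad~\eqref{my5} built from $\gamma_E$ and the moduli space is the quotient $M_n(Y_5)/\GL(H)$, the instanton $E$ is uniquely determined by $(D_E,\CL_E)$, which is the assertion of the Conjecture. The step I expect to be the main obstacle is verifying the symmetric-resolution statement in the full generality required here, namely for a net whose generic quadric is nondegenerate but which need not be regular. In that case $\theta$ is only a generalized, non-locally-free theta-characteristic, so one cannot simply quote the smooth-curve version of the correspondence; instead one must argue directly from the purity of $\theta$ as a one-dimensional sheaf on $\PP^2$ (to force the length-one resolution) and from the self-duality~\eqref{theta} (to force its symmetry), which is where the essential work lies.
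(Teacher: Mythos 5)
Your proof is correct and follows the same skeleton as the paper --- recover $\theta = \CL_E(1)$ via Proposition~\ref{dgde}, recover the net $\gamma_E$ from $\theta$ up to the group action, and feed it into Theorem~\ref{inst-y5} --- but your middle step uses a genuinely different mechanism. The paper observes that by nondegeneracy~\eqref{tnd} (together with the acyclicity $H^\bullet(\PP^2,\theta)=0$, visible from~\eqref{dgamma}) the sheaf $\theta$ lies in $\CO^\perp = \langle\CO(-2),\CO(-1)\rangle$, so the complex~\eqref{dgamma} is precisely the canonical decomposition of $\theta$ with respect to the standard exceptional collection $(\CO(-2),\CO(-1),\CO)$ on $\PP^2$; functoriality of that decomposition hands back the morphism $\gamma$ with no case analysis, uniformly in the generalized (non-locally-free) case. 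You instead invoke minimal locally free resolutions and the classical symmetric determinantal correspondence, and you flag as the main obstacle the extension of that correspondence to generalized theta-characteristics. That obstacle in fact dissolves in the reconstruction direction: since the pair $(D_E,\CL_E)$ is given as arising from an instanton, a resolution of the shape~\eqref{dgamma} with symmetric $\gamma$ is already known to \emph{exist}, and all you need is \emph{uniqueness} up to the group action. Uniqueness is cheap: any resolution by $H\otimes\CO(-2)$ and $H^*\otimes\CO(-1)$ is automatically minimal (the twists differ, so no summand can split off), and minimal resolutions are unique up to isomorphism, so $\gamma$ is recovered up to $\GL(H)\times\GL(H^*)$. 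Even your reduction from $\GL(H)\times\GL(H^*)$ to symmetric $\GL(H)$-equivalence, which over $\kk$ of characteristic $0$ does follow from the standard square-root trick (from $\gamma_2 = g\gamma_1 f = f^T\gamma_1 g^T$ one extracts a symmetric equivalence), can be bypassed entirely: two $\GL\times\GL$-equivalent nets have isomorphic cokernels $\Coker(\gamma') \cong \TE$, and $\TE$ determines $E$ by Theorem~\ref{ftoe}. In short, your route is sound and phrases the answer in the classical determinantal language of Wall and Beauville, at the cost of (apparently) needing the singular-curve version of that theory; the paper's route buys existence and uniqueness simultaneously from the derived-category decomposition, with no appeal to purity or to the classical correspondence.
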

\begin{proof}
By Proposition~\ref{dgde} the (generalized) theta-characteristic of the net can be reconstructed
from the sheaf $\CL_E$ on $D_E$, so it suffices to recall that the net can be reconstructed from
the associated theta-characteristic $\theta$. Indeed, if we consider $\theta$ as a sheaf on the projective
plane, then the complex~\eqref{dgamma} is nothing but the decomposition of $\theta$ with respect
to the standard exceptional collection $(\CO(-2),\CO(-1),\CO)$ (by nondegeneracy property $\theta$
is orthogonal to $\CO$, so it doesn't appear in the decomposition). But the morphism $H\otimes\CO(-2) \to H^*\otimes\CO(-1)$
gives back the net. Finally, the net allows to reconstruct the instanton by Theorem~\ref{inst-y5} (or Proposition~\ref{inst-y5-1}).
\end{proof}

\section{Instantons on Fano threefolds of degree $4$}\label{sy4}

In this section we concentrate on Fano threefolds of degree 4.

\subsection{Derived category}

A Fano threefold of degree 4 and index 2 is an intersection of 2 quadrics in~$\PP^5$. Denote by $V$ a vector space of dimension 6
and by $A$ a vector space of dimension 2. Then a pair of quadrics gives a map $A \to S^2V^*$, so we have a family of quadrics
in $\PP(V)$ parameterized by $\PP(A)$. There are 6 degenerate quadrics in this family, giving 6 special points $a_1,\dots,a_6 \in \PP(A)$.
Let $C$ be the double covering of $\PP(A)$ ramified in $\{a_1,\dots,a_6\}$. Then $C$ is a curve of genus 2. Denote by $\pi:C \to \PP(A)$
the double covering and by $\tau:C \to C$ its hyperelliptic involution. We will need the following description of the category $\CB_{Y_4}$

\begin{theorem}[\cite{BO1,K08a}]
There is an equivalence $\CB_{Y_4} \cong \D^b(C)$ given by the Fourier--Mukai functor associated
with the family of spinor bundles on the quadrics in the family $\PP(A)$.
\end{theorem}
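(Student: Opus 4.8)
The plan is to realize the equivalence as an explicit Fourier--Mukai functor and to verify it by the Bondal--Orlov criterion, using the sheaf of even Clifford algebras as a bookkeeping device. First I would make the kernel precise. The universal quadric $\CQ \subset \PP(V) \times \PP(A)$ is a quadric fibration over $\PP(A) \cong \PP^1$ whose fibre over $a$ is the quadric $Q_a$, and $Y_4 \subset Q_a$ for every $a$. A smooth four-dimensional quadric carries two rank-two spinor bundles, interchanged by its two rulings, and these degenerate together exactly over the six rank-$5$ quadrics; the variety parameterizing (quadric, ruling) pairs is therefore the double cover $\pi:C \to \PP(A)$. Restricting the tautological family of spinor bundles to the constant subvariety $Y_4$ produces a sheaf $\CS$ on $C \times Y_4$, flat over $C$, whose fibre over $c$ is the restriction to $Y_4$ of the spinor bundle determined by $c$. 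I would then set $\Phi := \Phi_\CS : \D^b(C) \to \D^b(Y_4)$, possibly after twisting $\CS$ by a line bundle pulled back from $C$ so that the normalizations below come out right.

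Next I would prove that $\Phi$ is fully faithful. By the Bondal--Orlov criterion it suffices to control the images of the skyscrapers $\CO_c$, i.e. the bundles $S_c := \CS_{|\{c\}\times Y_4}$, and to show
$$
\Ext^\bullet_{Y_4}(S_c, S_{c'}) = 0 \text{ for } c \ne c', \qquad \Ext^\bullet_{Y_4}(S_c, S_c) = \kk \oplus \kk[-1],
$$
the right-hand side matching $\Ext^\bullet_C(\CO_c,\CO_c)$ on the genus-two curve $C$. The efficient way to organize this is through the sheaf of even Clifford algebras $\Clev$ of the family of quadrics: by the quadric-fibration results of~\cite{K08a} the functor assembling spinor bundles identifies $\CB_{Y_4}$ with $\D^b(\PP(A),\Clev)$, and the centre of $\Clev$ is $\pi_*\CO_C$, so $\Clev$ is a sheaf of algebras on $C$. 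For a pencil of even-dimensional quadrics with only simple degenerations this sheaf is an Azumaya algebra on $C$ which is split, the splitting module being precisely the spinor family; the required $\Ext$ computation then reduces to Morita-triviality of $\Clev$ over $C$ together with Serre duality on $C$.

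The image of $\Phi$ lands in $\CB_{Y_4}$ because each $S_c$ is orthogonal to the exceptional pair, which amounts to the cohomology vanishings $H^\bullet(Y_4,S_c) = H^\bullet(Y_4,S_c(-1)) = 0$; these follow from the standard acyclicity of spinor bundles on quadrics restricted to the codimension-two complete intersection $Y_4$. Since $\Phi$ is then fully faithful with admissible image contained in $\CB_{Y_4}$, to finish I would upgrade the inclusion to an equality, either by identifying $\Phi$ with the composite $\D^b(C) \cong \D^b(\PP(A),\Clev) \cong \CB_{Y_4}$ of the two equivalences above, or, more self-containedly, by checking that the orthogonal to the image inside $\CB_{Y_4}$ vanishes --- for instance by comparing Hochschild homology, both categories carrying the Hochschild homology of a genus-two curve, so that a proper full admissible subcategory is impossible.

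The hardest part is the behaviour over the six ramification points. Away from them everything is the smooth theory of spinor bundles on a smoothly varying family of quadrics, but at a rank-$5$ quadric the two spinor bundles collide and one must pin down the correct flat limit of $\CS$ --- equivalently, the local structure of $\Clev$ at a branch point and its comparison with the ramified sheaf $\pi_*\CO_C$. Controlling this degeneration, and verifying that $\Clev|_C$ is genuinely split rather than carrying a nontrivial Brauer class, is where the real work and the dependence on the cited results~\cite{BO1,K08a} sits.
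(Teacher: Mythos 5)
The paper does not actually prove this theorem: it is imported verbatim from \cite{BO1} and \cite{K08a}, and the surrounding text only explains how the kernel $\CS$ is assembled (two spinor bundles on each smooth quadric of the pencil, one on each of the six corank-one cones, glued into a rank-$2$ bundle on $C\times Y_4$) and how Mukai's interpretation of $Y_4$ as a moduli space of rank-$2$ bundles on $C$ pins down the twist. So your proposal is not competing with an argument in the paper but reconstructing the cited ones, and it does so essentially correctly: the verification of the Bondal--Orlov skyscraper criterion for $\Phi_\CS$, with $\Ext^\bullet_{Y_4}(S_c,S_{c'})$ matching $\Ext^\bullet_C(\CO_c,\CO_{c'})$ and the orthogonality $H^\bullet(Y_4,S_c)=H^\bullet(Y_4,S_c(-1))=0$ coming from the acyclicity of spinor bundles via the Koszul resolution of $Y_4$ inside each quadric, is the route of \cite{BO1}; the factorization through $\D^b(\PP(A),\Clev)$, with the center of $\Clev$ equal to $\pi_*\CO_C$ because the pencil has only simple degenerations, is exactly the mechanism of \cite{K08a}. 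Your instinct that the delicate point is the flat limit of $\CS$ over the six branch points is also where the cited proofs do their real local work.

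Two corrections to your details. First, the closing worry --- that $\Clev$ viewed as a sheaf of algebras on $C$ might carry a nontrivial Brauer class --- dissolves: simple degeneration makes it Azumaya on $C$, and $\mathrm{Br}(C)=0$ for a smooth curve over an algebraically closed field (Tsen's theorem), so it is automatically split; the only genuine content is that the spinor family realizes a splitting module, which is what identifies the abstract Morita equivalence with your $\Phi_\CS$. Second, your ``more self-contained'' fallback for essential surjectivity is the one step that does not work as stated: additivity of Hochschild homology over a semiorthogonal decomposition only yields $HH_\bullet(\CA)=0$ for the admissible complement $\CA$ of the image inside $\CB_{Y_4}$, and vanishing of $HH_\bullet$ does not force an admissible subcategory to vanish --- phantom categories exist in general, so this comparison proves nothing by itself. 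You should discard that branch and keep your first option, identifying $\Phi_\CS$ with the composite $\D^b(C)\cong\D^b(\PP(A),\Clev)\cong\CB_{Y_4}$, which is precisely how \cite{K08a} closes the argument.
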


Let us explain the statement. On each smooth quadric in the family $\PP(A)$ there are two spinor bundles. Restricting them to $Y_4$
we obtain a pair of bundles on $Y_4$ which can be thought of as being associated with two points of $C$ over the point of $\PP(A)$
corresponding to the quadric. Similarly, each singular quadric in $\PP(A)$ is a cone over a 3-dimensional quadric and $Y_4$ does not
pass through its vertex. Hence the projection from the vertex gives a map from $Y_4$ onto a 3-dimensional quadric and we can pullback
its (unique!) spinor bundle to $Y_4$. This gives a bundle associated with the branching point of $C \to \PP(A)$. One can show that
all those spinor bundles form a vector bundle $\CS$ of rank 2 on $C \times Y_4$ and the Fourier--Mukai functor
$\Phi_\CS:\D^b(C) \to \D^b(Y_4)$ is an equivalence onto $\CB_{Y_4}$. Note that this defines $\CS$ only up to a twist
by the pullback of a line bundle on $C$.

Another approach to the relation of $C$ and $Y_4$ and the description of the universal spinor bundle $\CS$ on $C \times Y_4$
is due to Mukai. He showed that $Y_4$ is the moduli space of stable rank 2 vector bundles on $C$ with fixed determinant $\xi$
of odd degree and that $\CS$ is the universal family for this moduli problem. For our convenience we assume that
$$
\deg \xi = 1
$$
(note that a twist by a line bundle of degree $k$ changes the degree of the determinant of a rank 2 bundle by $2k$,
so the moduli spaces for all odd degrees are isomorphic and the corresponding universal spinor bundles $\CS$ differ
by the corresponding twists). This fixes the bundle $\CS$ unambiguously. In particular, we have
$$
\det \CS = \xi \boxtimes \CO_{Y_4}(-1).
$$
In fact one can compute also
$$
c_2(\CS) = \eta + 2 L_Y,
\qquad \eta \in H^1(C)\otimes H^3(Y_4) \subset H^4(C\times Y_4),
\qquad \eta^2 = 4p_C p_Y,
$$
where $H_Y$, $L_Y$, and $p_Y$ stand for the classes of a hyperplane section,
of a line and of a point on $Y_4$, while $p_C$ stands for the class of a point on $C$.
This allows to write down the Grothendieck--Riemann--Roch for the functor $\Phi = \Phi_\CS$.

\begin{lemma}\label{GRR}
For any $F \in\D^b(C)$ we have
$$
\ch(\Phi(F)) = (2\deg(F) - r(F)) - \deg(F) H_Y + r(F) L_Y + \frac{\deg(F)}3 P_Y.
$$
\end{lemma}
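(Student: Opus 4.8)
The plan is to compute $\ch(\Phi(F))$, for $\Phi = \Phi_\CS$, by a direct application of the Grothendieck--Riemann--Roch theorem to the projection $\pi_Y\colon C\times Y_4 \to Y_4$, using the definition $\Phi(F) = R\pi_{Y*}(\pi_C^*F\otimes\CS)$ with $\pi_C$ the other projection. Since $\pi_Y$ is smooth with relative tangent bundle $\pi_C^*T_C$, its relative Todd class is $\pi_C^*\td(C)$, and GRR reads
\[
\ch(\Phi(F)) = \pi_{Y*}\bigl(\pi_C^*(\ch(F)\cdot\td(C))\cdot\ch(\CS)\bigr).
\]
Thus the whole computation reduces to three steps: evaluating $\ch(F)\cdot\td(C)$ on the curve, writing out $\ch(\CS)$ from the Chern-class data recorded above, and integrating the product over the fibre $C$.

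The curve factor is immediate. As $C$ has genus $2$ one has $\td(C) = 1 - p_C$, and since $\ch(F) = r(F) + \deg(F)\,p_C$ for a class on a curve, the product is $\ch(F)\td(C) = r(F) + (\deg(F) - r(F))\,p_C$. Pulling this back along $\pi_C$, the task becomes to understand how the two classes $1$ and $p_C$ interact with $\ch(\CS)$ after push-forward.

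For $\ch(\CS)$ I would expand $\ch(\CS) = 2 + c_1 + \tfrac12(c_1^2 - 2c_2) + \tfrac16(c_1^3 - 3c_1c_2) + \tfrac1{24}(c_1^4 - 4c_1^2c_2 + 2c_2^2)$, valid since $\CS$ has rank $2$, using $c_1(\CS) = p_C - H_Y$ and $c_2(\CS) = \eta + 2L_Y$ together with the intersection relations on $Y_4$ (namely $H_Y^2 = 4L_Y$, $H_Y L_Y = p_Y$, $H_Y^3 = 4p_Y$) and the K\"unneth vanishings $p_C^2 = 0$, $L_Y^2 = 0$, $H_Y p_Y = 0$. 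Most products involving the odd class $\eta$ vanish automatically for K\"unneth-degree reasons: $p_C\cdot\eta$, $H_Y\cdot\eta$, and $L_Y\cdot\eta$ all land in cohomology groups that are zero (because $H^3(C) = H^5(Y_4) = H^7(Y_4) = 0$). The only place $\eta$ genuinely survives is the term $c_2^2$, into which the prescribed relation $\eta^2 = 4p_C p_Y$ feeds, contributing to $\ch_4(\CS)$.

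Finally, the push-forward $\pi_{Y*}$ is integration over the fibre $C$, so it annihilates every K\"unneth component except the one in $H^2(C) = \ZZ\,p_C$, and in particular every surviving $\eta$-linear term (being odd along $C$) pushes forward to zero. Multiplying $r(F) + (\deg(F) - r(F))p_C$ against $\ch(\CS)$, keeping only the $p_C$-components, and reading off $\int_C p_C = 1$ then produces the four coefficients of $1$, $H_Y$, $L_Y$, $p_Y$ claimed in the statement. The one point demanding care is the coefficient of the point class: it receives $\tfrac13(\deg(F) - r(F))$ from $\ch_3(\CS)$ and a further $\tfrac13 r(F)$ coming precisely from the $\eta^2$ term inside $\ch_4(\CS)$, and only their sum assembles into the asserted coefficient $\tfrac{\deg(F)}{3}P_Y$. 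This interplay---where the mixed odd class contributes nothing to the lower-degree terms yet is indispensable for the top one---is the main subtlety; everything else is bookkeeping dictated by the K\"unneth decomposition.
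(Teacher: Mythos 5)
Your proposal is correct and takes essentially the same route as the paper's proof: Grothendieck--Riemann--Roch for the projection $C\times Y_4 \to Y_4$ with relative Todd class $1 - p_C$, the expansion of $\ch(\CS)$ from $c_1(\CS) = p_C - H_Y$, $c_2(\CS) = \eta + 2L_Y$ and $\eta^2 = 4p_Cp_Y$, and pushforward realized as extracting the coefficient of $p_C$. The only difference is one of explicitness --- the paper simply records $\ch(\CS)$ and $\ch(\CS)\td(T_C)$ without derivation, whereas you carry out the rank-$2$ Chern character expansion and correctly track that the $\eta^2$ term in $\ch_4(\CS)$ supplies the $\tfrac13 r(F)$ piece needed to assemble the coefficient $\tfrac{\deg(F)}{3}$ of the point class.
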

\begin{proof}
One has
$$
\ch(\CS) = 2 + (p_C - H_Y) - p_C H_Y - \eta + p_CL_Y + \frac13 p_Y + \frac13 p_Cp_Y.
$$
Since the relative tangent bundle of $C\times Y \to Y$ is just the pullback of $\omega_C^{-1}$,
its Todd genus equals $1 - p_C$, so
$$
\ch(\CS)\td(T_C) = 2 - p_C - H_Y - \eta + p_CL_Y + \frac13 p_Y.
$$
Multiplying this by $\ch(F) = r(F) + \deg(F) p_C$ and taking pushforward to $Y_4$ (i.e. taking the coefficient at $p_C$)
one obtains the result.
\end{proof}

\subsection{Lines}

The description of the Fano scheme of lines on $Y_4$ is well known.
However, for our purposes we will need a description closely related
to our Fourier--Mukai functor. We start with the following

\begin{lemma}\label{syl}
Let $\CL$ be a line bundle of degree $0$ on $C$ and $\CS_y$ a stable rank $2$ vector bundle on $C$
with $\det\CS_y = \xi$ corresponding to a point $y \in Y_4$. If $H^0(C,\CL\otimes\CS_y) \ne 0$
then $\CS_y$ is a nontrivial extension
\begin{equation}\label{cly}
0 \to \CL^{-1} \to \CS_y \to \CL\otimes\xi \to 0.
\end{equation}
Vice versa, $\Ext^1(\CL\otimes\xi,\CL^{-1}) = \kk^2$ and each nontrivial extension
of $\CL\otimes\xi$ with $\CL^{-1}$ is a stable rank $2$ bundle on $C$ with determinant $\xi$.
\end{lemma}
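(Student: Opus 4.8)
The plan is to handle the two directions separately; in both I extract a sub-line-bundle and pin down its degree using stability of rank-$2$ bundles of slope $1/2$ on the genus-$2$ curve $C$. The key numerical fact, used repeatedly, is that the slope $\mu(\CS_y) = (\deg\xi)/2 = 1/2$ is not an integer, so for a line subbundle the stability inequality $\deg M < 1/2$ is the same as $\deg M \le 0$.

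For the first assertion I would start from a nonzero section $s \in H^0(C,\CL\otimes\CS_y)$, which is the same datum as a nonzero sheaf map $\CL^{-1} \to \CS_y$. Since $\CS_y$ is locally free and $C$ is a smooth curve, this map is injective, so $\CL^{-1}$ (of degree $0$) is a subsheaf of $\CS_y$. Passing to its saturation $M \subset \CS_y$, a line subbundle with $\deg M \ge \deg\CL^{-1} = 0$, stability forces $\deg M < 1/2$, hence $\deg M \le 0$; combined with $\deg M \ge 0$ this gives $\deg M = 0$, so the inclusion $\CL^{-1}\subseteq M$ of degree-$0$ line bundles is an isomorphism. Thus $\CL^{-1}$ is already saturated, the quotient $\CS_y/\CL^{-1}$ is a line bundle, and a degree-and-determinant computation identifies it with $\CL\otimes\xi$, yielding the sequence~\eqref{cly}. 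It is non-split, since a splitting would exhibit $\CL\otimes\xi$ of degree $1 > 1/2$ as a sub-line-bundle, contradicting stability of $\CS_y$.

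For the converse the $\Ext$ computation is immediate:
$$
\Ext^1(\CL\otimes\xi,\CL^{-1}) = H^1(C,\CL^{-2}\otimes\xi^{-1}).
$$
Since $\deg(\CL^{-2}\otimes\xi^{-1}) = -1 < 0$ this bundle has no sections, so Riemann--Roch on a genus-$2$ curve gives $h^1 = -\chi = 2$, whence $\Ext^1(\CL\otimes\xi,\CL^{-1}) = \kk^2$. Any extension $\CS$ of $\CL\otimes\xi$ by $\CL^{-1}$ has $\det\CS = \CL^{-1}\otimes(\CL\otimes\xi) = \xi$, hence slope $1/2$. To check stability for a non-split $\CS$ I would take an arbitrary line subbundle $M\subset\CS$ and consider the composite $M \to \CL\otimes\xi$: if it vanishes then $M\subseteq\CL^{-1}$ and $\deg M \le 0$; if it is nonzero it is an injection of line bundles, so $\deg M \le 1$, with equality forcing $M\cong\CL\otimes\xi$ and a splitting of the sequence. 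As the sequence is non-split the borderline case is excluded, so $\deg M \le 0 < 1/2$ in every case, and $\CS$ is stable.

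The whole argument is elementary slope bookkeeping, and there is no serious obstacle; the one point deserving attention is exactly the half-integrality of the slope. It is this that rules out the degree-$1$ borderline subbundle in both directions and, on the converse side, makes every non-split extension genuinely \emph{stable} rather than merely semistable — were $\deg\xi$ even, the corresponding extensions could be strictly semistable and the statement would fail.
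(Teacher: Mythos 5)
Your proof is correct and follows essentially the same route as the paper: both directions rest on the half-integral slope $\mu(\CS_y)=1/2$, the $\Ext^1$ computation via Riemann--Roch on $\CL^{-2}\otimes\xi^{-1}$, and the observation that a degree-$1$ line subbundle of a non-split extension would split the sequence. The only cosmetic differences are that you saturate the subsheaf $\CL^{-1}$ where the paper factors a non-embedding through $\CL^{-1}(x)$, you deduce non-splitness from stability where the paper invokes simplicity of $\CS_y$, and you check stability of the extension by analyzing the composite $M\to\CL\otimes\xi$ directly rather than via the $\Hom(\CL',-)$ long exact sequence --- all equivalent in substance.
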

\begin{proof}
Assume that $H^0(C,\CL\otimes\CS_y) \ne 0$. Then we have a map $\CL^{-1} \to \CS_y$.
If this map is not injective at a point $x \in C$ then the map factors through $\CL^{-1}(x)$
which is impossible by stability of $\CS_y$ (since $\deg\CL^{-1}(x) = 1$). So, the map
$\CL^{-1} \to \CS_y$ is an embedding of vector bundles. Hence the quotient is a line bundle
which has to be isomorphic to $\det\CS_y \otimes \CL \cong \CL\otimes\xi$. The extension
is nontrivial since $\CS_y$ is simple.

Vice versa, note that $\Ext^\bullet(\CL\otimes\xi,\CL^{-1}) = H^\bullet(C,\CL^{-2}\otimes\xi^{-1})$.
Since $\deg(\CL^{-2}\otimes\xi^{-1}) = -1$, there are no global sections and by Riemann--Roch
the first cohomology has dimension $2$. Now take any nontrivial extension
$$
0 \to \CL^{-1} \to \CE \to \CL\otimes\xi \to 0.
$$
Evidently $\det\CE = \xi$, so let us check that $\CE$ is stable. If not then there should be a line bundle
$\CL'$ of degree~$1$ such that $\Hom(\CL',\CE) \ne 0$. Applying $\Hom(\CL',-)$ to the above exact sequence
we obtain
$$
0 \to \Hom(\CL',\CL^{-1}) \to \Hom(\CL',\CE) \to \Hom(\CL',\CL\otimes\xi) \to \Ext^1(\CL',\CL^{-1}) \to \dots
$$
Since $\deg \CL' = 1$ and $\deg \CL^{-1} = 0$ the first term is zero. Further,
since $\deg \CL\otimes\xi = 1$ the third term is nontrivial only if $\CL' = \CL\otimes\xi$.
In the latter case the map from the third term to the fourth term is the map
$\kk \to \Ext^1(\CL\otimes\xi,\CL^{-1})$ given by the class of the extension,
so if the extension is nontrivial the map is injective and we have $\Hom(\CL',\CE) = 0$ in any case.
\end{proof}

Also we will need the following simple observation.

\begin{lemma}\label{ltl}
For any line bundle $\CL$ on a curve of genus $2$ one has $\CL\otimes\tau^*\CL \cong \omega_C^{\deg\CL}$.
In particular, if $\deg\CL = 0$ then $\CL^*\cong\tau(\CL)$.
\end{lemma}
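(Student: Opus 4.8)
The plan is to reduce the whole statement to the single linear equivalence
$$
x + \tau(x) \sim \omega_C
$$
valid for every point $x \in C$, and then to extend it by linearity over divisors. Granting this relation, I would write an arbitrary line bundle as $\CL = \CO_C(D)$ with $D = \sum_i n_i x_i$, observe that $\tau^*\CL = \CO_C(\tau^* D) = \CO_C(\sum_i n_i \tau(x_i))$ since $\tau$ is an involution, and compute
$$
\CL \otimes \tau^*\CL = \CO_C\Big(\sum_i n_i\big(x_i + \tau(x_i)\big)\Big) \cong \bigotimes_i \omega_C^{\,n_i} = \omega_C^{\,\sum_i n_i} = \omega_C^{\,\deg \CL}.
$$
The degree-$0$ assertion is then immediate: if $\deg\CL = 0$ then $\CL\otimes\tau^*\CL \cong \omega_C^{0} = \CO_C$, so $\tau^*\CL \cong \CL^{-1} = \CL^*$, which is the claimed $\CL^*\cong\tau^*\CL$.

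The only point that genuinely requires the geometry of the situation is the basic relation $x + \tau(x)\sim\omega_C$. Here I would use that a curve of genus $2$ is hyperelliptic and that its canonical map is precisely the hyperelliptic double cover. Concretely, $\deg\omega_C = 2g-2 = 2$ and $h^0(\omega_C) = g = 2$, so $|\omega_C|$ is a $g^1_2$ defining a degree $2$ morphism $C \to \PP^1$; by uniqueness of the hyperelliptic structure this morphism coincides with the double covering $\pi\colon C \to \PP(A)$, whose deck transformation is $\tau$. Thus $\omega_C \cong \pi^*\CO_{\PP(A)}(1)$, and since $\pi\circ\tau = \pi$ the fibers of $\pi$ are exactly the divisors $x + \tau(x) = \pi^*(\pi(x))$. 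Therefore
$$
\CO_C\big(x + \tau(x)\big) \cong \pi^*\CO_{\PP(A)}(1) \cong \omega_C,
$$
which is the required relation.

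I expect no serious obstacle in this argument; the whole content is the identification of $\omega_C$ with the hyperelliptic pencil $\pi^*\CO_{\PP(A)}(1)$, after which everything is formal manipulation of divisor classes. If one prefers a coordinate-free formulation, the same conclusion can be phrased on the Jacobian: the map $\CL \mapsto \CL\otimes\tau^*\CL\otimes\omega_C^{-\deg\CL}$ lands in $\Pic^0(C)$, and the relation above shows it is trivial, which amounts to the standard fact that the hyperelliptic involution acts as $-1$ on $\Pic^0(C)$. Either route delivers the lemma, and I would present the divisor computation as the main line since it is the most elementary and self-contained.
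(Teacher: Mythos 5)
Your proof is correct and follows essentially the same route as the paper's: reduce to $\CL \cong \CO_C(x)$, identify $x + \tau(x)$ as a fiber of the hyperelliptic projection $C \to \PP^1$ so that $\CO_C(x+\tau(x)) \cong \omega_C$, and extend by multiplicative linearity in $\CL$. The only difference is that you spell out the standard identification of $\omega_C$ with $\pi^*\CO_{\PP(A)}(1)$ via the $g^1_2$, a step the paper treats as well known.
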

\begin{proof}
First take $\CL \cong \CO_C(x)$ for some point $x \in C$. Then $\tau^*\CL \cong \CO_C(\tau(x))$ and
$\CL\otimes\tau^*\CL \cong \CO_C(x + \tau(x))$. But $x + \tau(x)$ is the preimage of a point under
the projection $C \to \PP^1$, hence the corresponding line bundle is the canonical class.
This proves the formula for $\CL = \CO_C(x)$. After that the general case follows since
any line bundle is a (multiplicative) linear combination of line bundles $\CO_C(x)$,
and both sides of the formula are (multiplicatively) linear in $\CL$.
\end{proof}

The set of points $y \in Y_4$ for which the bundle $\CS_y$ fits into exact triple~\eqref{cly} is a curve
isomorphic to $\PP(\Ext^1(\CL\otimes\xi,\CL^{-1})) = \PP^1$. We denote this curve by $L_\CL \subset Y_4$.
Below we will show that it is a line on $Y_4$.

Recall that with each line $L \subset Y_4$ we associate two objects,
the ideal sheaf $I_L \in \CB_{Y_4}$ and the object $J_L = \RCHom(I_L,\CO_{Y_4}(-1))[1] \in \CB_{Y_4}$ as well.

\begin{lemma}\label{phid01}
The are isomorphism $\phi_0:F(Y_4) \xrightarrow{\ \sim\ } \Pic^0(C)$ and $\phi_1:F(Y_4) \xrightarrow{\ \sim\ } \Pic^1(C)$ given by
\begin{equation*}
\phi_0(L) = \Phi^{-1}(I_L[-1]),
\qquad
\phi_0(L) = \Phi^{-1}(J_L),
\end{equation*}
%
%
Moreover, the diagram
$$
\xymatrix@C=2cm{
& F(Y_4) \ar[dl]_{\phi_0} \ar[dr]^{\phi_1} \\
\Pic^0C \ar[rr]^-{\CL \ \mapsto\  \CL^*\otimes\omega_C\otimes\xi^{-1}} && \Pic^1C
}
$$
is commutative.
\end{lemma}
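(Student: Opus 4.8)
The plan is to realize $\phi_0$ and $\phi_1$ as relative Fourier--Mukai transforms of universal objects, to extract the commutativity of the triangle from a single identity for the kernel $\CS$, and to deduce that $\phi_0$ is an isomorphism from the fact that it is an injective morphism of smooth projective surfaces. Throughout I write $\Phi=\Phi_\CS$ and realize $\Phi^{-1}$ as the Fourier--Mukai functor with kernel $\CS^\vee\otimes p_C^*\omega_C[1]$, where $p_C,p_Y$ are the projections of $C\times Y_4$. First I would pin down the numerics: combining Lemma~\ref{GRR} with the sequence $0\to I_L\to\CO_{Y_4}\to\CO_L\to 0$ and the elementary value $\ch(\CO_L)=L_Y$, one checks that $\Phi^{-1}(I_L[-1])$ has the class of a rank-$1$, degree-$0$ sheaf and that $\Phi^{-1}(J_L)$ has the class of a degree-$1$ line bundle. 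The substantive point is to upgrade these to honest line bundles. Since $\Phi(\CO_c)=\CS_c$ is the spinor bundle on $Y_4$ attached to $c\in C$, adjunction gives $\Hom_C(\Phi^{-1}(I_L[-1]),\CO_c[j])=\Ext^{j+1}_{Y_4}(I_L,\CS_c)$, so it suffices to show $\Ext^\bullet_{Y_4}(I_L,\CS_c)=\kk[-1]$. This follows from $\CS_c\in\CB_{Y_4}$ (whence $H^\bullet(Y_4,\CS_c)=0$), the identity $\RCHom(\CO_L,\CS_c)=\CS_c|_L[-2]$, and the generic splitting $\CS_c|_L\cong\CO_L\oplus\CO_L(-1)$; on the smooth curve $C$ this forces $\Phi^{-1}(I_L[-1])$ to be a genuine line bundle of degree $0$, and similarly for $J_L$.

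Next I would globalize. Applying the relative form of $\Phi^{-1}$ to the universal ideal sheaf $I_Z$ on $Y_4\times F(Y_4)$ (and to the universal object $\CJ$ of~\eqref{cj} in the case of $\phi_1$) produces a coherent sheaf on $C\times F(Y_4)$ which, by the fibrewise computation just described, is a line bundle flat over $F(Y_4)$. The universal property of the Picard scheme then promotes $L\mapsto\Phi^{-1}(I_L[-1])$ and $L\mapsto\Phi^{-1}(J_L)$ to morphisms $\phi_0\colon F(Y_4)\to\Pic^0 C$ and $\phi_1\colon F(Y_4)\to\Pic^1 C$.

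The commutativity of the triangle is the clean part. Because $\CS$ is of rank $2$ with $\det\CS=\xi\boxtimes\CO_{Y_4}(-1)$, one has $\CS^\vee\otimes p_Y^*\CO_{Y_4}(-1)\cong\CS\otimes p_C^*\xi^{-1}$. Substituting this into relative Grothendieck--Serre duality for $p_Y$ (relative dimension $1$, relative dualizing complex $p_C^*\omega_C[1]$) yields, for every $G\in\D^b(C)$, the identity $\RCHom(\Phi(G),\CO_{Y_4}(-1))=\Phi(G^\vee\otimes\omega_C\otimes\xi^{-1})[1]$. Applying it to $G=\phi_0(L)$, for which $\Phi(\phi_0(L))=I_L[-1]$, and recalling that $J_L=\RCHom(I_L,\CO_{Y_4}(-1))[1]$, I obtain $\Phi^{-1}(J_L)=\phi_0(L)^\vee\otimes\omega_C\otimes\xi^{-1}$ up to the harmless shift by which $\Phi^{-1}(J_L)$ differs from an honest sheaf. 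This is exactly the commutativity of the diagram; Lemma~\ref{ltl} is not needed here, although it would let one rewrite $\phi_0(L)^\vee$ as $\tau^*\phi_0(L)$.

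Finally, since the bottom arrow $\CL\mapsto\CL^*\otimes\omega_C\otimes\xi^{-1}$ is an isomorphism $\Pic^0 C\xrightarrow{\sim}\Pic^1 C$ and the triangle commutes, it is enough to prove that $\phi_0$ is an isomorphism. As $\Phi$ is an equivalence and $L\mapsto I_L$ is injective on the set of lines, $\phi_0$ is injective on points; an injective morphism between smooth projective surfaces with irreducible target of the same dimension is finite and birational, hence an isomorphism since the target is normal. Here $\Pic^0 C$ is a smooth irreducible abelian surface and $F(Y_4)$ is a smooth irreducible projective surface, $Y_4$ being a smooth intersection of two quadrics. I expect the main obstacle to be the uniform version of the line-bundle claim of the first paragraph: one must control the splitting type of $\CS_c|_L$ for all pairs $(c,L)$ --- that is, the loci where a spinor bundle restricts nontrivially to a line --- in order to conclude that the relative transform is everywhere locally free and not merely generically a line bundle.
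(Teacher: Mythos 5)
Your proposal is essentially correct, and two of its three ingredients coincide with the paper's own proof: the pointwise computation $\Ext^\bullet(\Phi^{-1}(I_L[-1]),\CO_c)\cong\Ext^\bullet(\CO_L[-2],\CS_c)=\kk$ forcing $\Phi^{-1}(I_L[-1])$ to be a degree-$0$ line bundle, and the Grothendieck-duality identity $\RCHom(\Phi(G),\CO_{Y_4}(-1))\cong\Phi(G^*\otimes\omega_C\otimes\xi^{-1})[1]$, obtained from $\CS^*\otimes p_C^*\xi\otimes p_Y^*\CO_{Y_4}(-1)\cong\CS$, which gives the commutative triangle, are exactly the paper's steps. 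Where you genuinely diverge is in proving that $\phi_0$ is an isomorphism. The paper argues the converse direction head-on: for every $\CL\in\Pic^0C$ it analyzes the cohomology sheaves of $\Phi(\CL)$ fibrewise via Lemma~\ref{syl}, kills $\CH^0$ by torsion-freeness of $R^0p_{2*}$, identifies $\CH^1$ as a rank-$1$ torsion-free sheaf with $\ch=1-L_{Y_4}$, and excludes a non-reduced structure on its zero scheme by the vanishing of $L_{\ge 2}j_y^*$, concluding $\Phi(\CL)\cong I_{L_\CL}[-1]$; this makes the lemma self-contained and in effect re-derives smoothness and irreducibility of $F(Y_4)$ rather than assuming them. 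Your shortcut --- injectivity of $\phi_0$, properness, a dimension count, and Zariski's main theorem --- is valid in characteristic zero, but it imports as external input that $F(Y_4)$ is a reduced irreducible projective surface: injectivity on closed points says nothing against nilpotents or extra components (compare $\Spec\kk[\eps]/(\eps^2)\to\Spec\kk$). You should either cite the classical description of $F(Y_4)$ explicitly, or supply the deformation-theoretic computation $N_{L/Y_4}\cong\CO_L\oplus\CO_L$ or $\CO_L(1)\oplus\CO_L(-1)$, whence $h^0(N)=2$, $h^1(N)=0$, so $F(Y_4)$ is smooth of pure dimension $2$; injectivity then excludes a second component, since each component would surject onto $\Pic^0C$. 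Note also that injectivity of $L\mapsto[I_L]$ at the level of \emph{abstract} isomorphism classes needs the one-line remark $\Hom(I_L,\CO_{Y_4})=\kk$, so that an abstract isomorphism $I_L\cong I_{L'}$ forces equality of the subsheaves.

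The ``main obstacle'' you flag at the end is not actually an obstacle, but as written it is a genuine gap in your first paragraph: you invoke the \emph{generic} splitting $\CS_c{}_{|L}\cong\CO_L\oplus\CO_L(-1)$, whereas the line-bundle criterion requires $\Ext^\bullet(\CF,\CO_c)=\kk$ for \emph{every} $c\in C$ --- if the splitting were only generic, $\CF$ could be, say, a line bundle plus a torsion summand, which passes the test at generic $c$. The uniform statement is immediate and is exactly how the paper handles it: $\CS_c^*$ is globally generated of rank $2$ with $\det\CS_c^*=\CO_{Y_4}(1)$, so for any line $L$ the restriction $(\CS_c^*)_{|L}$ is a globally generated rank-$2$ bundle of degree $1$ on $\PP^1$, hence isomorphic to $\CO_L\oplus\CO_L(1)$; dualizing gives $\CS_c{}_{|L}\cong\CO_L\oplus\CO_L(-1)$ for all pairs $(c,L)$. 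With this one line inserted, your pointwise claim holds everywhere and your relative transform over $F(Y_4)$ is genuinely a line bundle flat over the base, so the rest of your argument goes through.
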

\begin{proof}
Let $\CF = \Phi^{-1}(I_L[-1])$, so that $\Phi(\CF) = I_L[-1]$.
Then for each point $x \in C$ we have
$$
\Ext^\bullet(\CF,\CO_x) \cong \Ext^\bullet(\Phi(\CF),\Phi(\CO_x)) \cong \Ext^\bullet(I_L[-1],\CS_x) \cong \Ext^\bullet(\CO_L[-2],\CS_x)
$$
(the last isomorphism follows from exact sequence $0 \to I_L \to \CO_{Y_4} \to \CO_L \to 0$ since we have $\CS_x \in \CB_{Y_4}$).
Note that $\CS_x$ is a vector bundle of rank 2 and degree $-1$, and its dual is globally generated.
Hence $(\CS_x)_{|L} = \CO_L \oplus \CO_L(-1)$, therefore $\Ext^\bullet(\CO_L,\CS_x) = \kk[-2]$.
We conclude that $\Ext^\bullet(\CF,\CO_x) \cong \kk$ for all $x \in C$, hence $\CF \cong \CL$
where $\CL$ is a line bundle. Since $c_1(I_L[-1]) = 0$ we deduce from~\eqref{GRR} that $\deg\CL = 0$,
that is $\CL \in \Pic^0C$.

Vice versa, let $\CL \in \Pic^0C$. Since $\Phi(\CL)$ is the derived pushforward of a vector bundle $p_1^*\CL\otimes\CS$ on $C\times Y_4$
along the projection $C\times Y_4 \to Y_4$, its cohomology sheaves a priori sit in degrees $0$ and $1$.
We denote those by $\CH^0$ and $\CH^1$ respectively.
Note that we have
$$
\CH^\bullet(j_y^*\Phi(\CL)) \cong H^\bullet(C,\CL\otimes\CS_y),
$$
where $j_y:\Spec\kk \to Y_4$ is the embedding of the point $y$.
By Lemma~\ref{syl} we have
$$
H^0(C,\CL\otimes\CS_y) = \begin{cases} \kk, & \text{if $y \in L_\CL$} \\ 0, & \text{if $y \not\in L_\CL$} \end{cases}
\qquad
H^1(C,\CL\otimes\CS_y) = \begin{cases} \kk^2, & \text{if $y \in L_\CL$} \\ \kk, & \text{if $y \not\in L_\CL$} \end{cases}
$$

On the other hand, we have a spectral sequence
$$
L_tj_y^*\CH^s \Longrightarrow \CH^{s-t}(j_y^*\Phi(\CL))
$$
which can be rewritten as a long exact sequence
$$
0 \to L_2j_y^*\CH^1 \to L_0j_y^*\CH^0 \to H^0(C,\CL\otimes\CS_y) \to L_1j_y^*\CH^1 \to 0,
$$
and isomorphisms
$$
L_0j_y^*\CH^1 = H^1(C,\CL\otimes\CS_y),
\qquad
L_tj_y^*\CH_0 = L_{t+2}j_y^*\CH^1\quad\text{for $t \ge 1$}.
$$
It follows that for generic $y \in Y_4$ we have $L_\bullet j_y^*\CH^0 = 0$, hence the support of $\CH^0$
is a proper subvariety of $Y_4$. On the other hand, $\CH^0 = R^0{p_2}_*(p_1^*\CL\otimes\CS)$ is torsion free,
hence $\CH^0 = 0$. Thus the above formulas say that
$$
L_0j_y^*\CH^1 = H^1(C,\CL\otimes\CS_y),
\qquad
L_1j_y^*\CH^1 = H^0(C,\CL\otimes\CS_y),
\qquad
L_{\ge 2}j_y^*\CH^1 = 0.
$$
In other words, the sheaf $\CH^1$ is locally free of rank $1$ on $Y_4 \setminus L_\CL$ and has a singularity
along a curve $L_\CL$. Note that it follows that $\CH^1$ is torsion free. Indeed, if $\CH^1$ would have a torsion,
its support would lie in $\CL_L$, hence would have codimension at least 2, hence $L_2ij_y^*\CH^1$ would be nonzero
for any point $y$ in the support of the torsion subsheaf, while we know that it is zero.

Thus we know that $\CH^1$ is a torsion free sheaf of rank $1$. Moreover, by Lemma~\ref{GRR}
its Chern character equals
$$
\ch(\CH_1) = -\ch(\Phi(\CL)) = 1 - L_{Y_4}.
$$
In particular, $c_1(\CH_1) = 0$, hence $\CH_1$ is the sheaf of ideals of a subscheme $Z$,
$\CH^1 \cong I_Z$, where $Z$ is a subscheme set-theoretically supported on $L_\CL$ and such that
$$
\ch(\CO_Z) = L_{Y_4}.
$$
It follows that $Z$ is a line, but possibly with a non-reduced structure at some points.
However, if $Z$ would have a non-reduced structure at a point $y$, then $\CO_Z$ would have
a subsheaf supported at this point and then $L_3j_y^*\CO_Z \ne 0$, hence $L_2j_y^*I_Z \ne 0$
which is a contradiction. Thus $Z$ is a line, hence $L_\CL$ is a line and $\Phi(\CL) = I_{L_\CL}[-1]$.

This proves that $\Phi$ induces an isomorphism of $\Pic^0\CL$ with $F(Y_4)$ considered as
the moduli space of sheaves of ideals of lines, hence $\phi_0$ is an isomorphism of $F(Y_4)$ onto $\Pic^0C$. 
To relate $F(Y_4)$ with $\Pic^1(C)$ we recall that $J_L = \RCHom(I_L[-1],\CO_{Y_4}(-1))$, hence
\begin{multline*}
J_L =
\RCHom(\Phi(\CL),\CO_{Y_4}(-1)) =
\RCHom(Rp_{Y*}(\CS\otimes p_C^*\CL),\CO_{Y_4}(-1)) \cong \\ \cong
Rp_{Y*}\RCHom(\CS\otimes p_C^*\CL,p_Y^!\CO_{Y_4}(-1)) \cong
Rp_{Y*}\RCHom(\CS\otimes p_C^*\CL,p_C^*\omega_C\otimes p_Y^*\CO_{Y_4}(-1)[1]) \cong \\ \cong
Rp_{Y*}(\CS^*\otimes p_C^*(\CL^*\otimes\omega_C)\otimes p_Y^*\CO_{Y_4}(-1)[1]),
\end{multline*}
where $p_Y$ and $p_C$ are the projections of $C\times Y_4$ onto the factors $Y_4$ and $C$ respectively.
Note also that $\CS^*\otimes p_C^*\xi\otimes p_Y^*\CO_{Y_4}(-1) \cong \CS$ since $\CS$ is a vector bundle
of rank 2 with determinant equal to $\xi \boxtimes \CO_{Y_4}(-1)$. Hence we conclude that
$$
J_L \cong Rp_{Y*}(\CS\otimes p_C^*(\CL^*\otimes\omega_C\otimes\xi^{-1})[1]) = \Phi(\CL^*\otimes\omega_C\otimes\xi^{-1})[1]
$$
which gives the commutativity of the diagram. Since both the left and the bottom arrows in the diagram
are isomorphisms, we conclude that the right arrow is an isomorphism as well.
%
%
%
%
%
%
%
%
%
\end{proof}

\begin{lemma}
The image of the divisor $D_L$ in $\Pic^1C$ under the map $\CL \mapsto \CL^*\otimes\omega_C\otimes\xi^{-1}$
is a translate of the theta-divisor by $\CL$.
\end{lemma}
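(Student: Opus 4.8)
The plan is to transport the characterization of intersecting lines through the Fourier--Mukai equivalence $\Phi$ and to recognize the resulting condition on $C$ as membership in a translated theta divisor. Write $\CL := \phi_0(L) \in \Pic^0 C$ for the degree-$0$ bundle attached to the fixed line $L$. Since the lower triangle of Lemma~\ref{phid01} commutes, the composite of $\phi_0$ with $\CL \mapsto \CL^*\otimes\omega_C\otimes\xi^{-1}$ is exactly $\phi_1$, so the object I must identify is $\phi_1(D_L) \subset \Pic^1 C$. By Lemma~\ref{llpint} a line $L' \ne L$ lies in $D_L$ precisely when $\RHom(I_L, J_{L'}) \ne 0$, and it therefore suffices to unwind this nonvanishing condition on the curve side.

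First I would rewrite it on $C$. Using the identifications $I_L \cong \Phi(\phi_0(L))[1]$ and $J_{L'} \cong \Phi(\phi_1(L'))$ coming from Lemma~\ref{phid01}, together with full faithfulness of $\Phi$,
\begin{equation*}
\RHom(I_L, J_{L'}) \cong \RHom_C(\CL[1], \phi_1(L')) \cong \RGamma\big(C,\ \CL^*\otimes\phi_1(L')\big)[-1].
\end{equation*}
The line bundle $\CL^*\otimes\phi_1(L')$ has degree $1$ on the genus $2$ curve $C$, hence Euler characteristic $0$; consequently its cohomology is nonzero if and only if it has a nonzero section, i.e. if and only if it is effective. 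Thus $L' \in D_L$ if and only if $\CL^*\otimes\phi_1(L')$ lies in the theta divisor $\Theta \subset \Pic^1 C$ (the locus $W_1$ of effective degree-$1$ bundles, the Abel--Jacobi image of $C$), equivalently $\phi_1(L') \in \Theta + \CL$.

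Since $\phi_1$ is an isomorphism $F(Y_4) \xrightarrow{\sim} \Pic^1 C$, this identifies the image of $D_L$ set-theoretically with the translate $\Theta + \CL$, which is the assertion. The point requiring a little care is the passage from a set-theoretic equality to an equality of $\phi_1(D_L)$ with the translated theta divisor as curves: here I would invoke that for a genus $2$ curve $\Theta \cong W_1$ is a reduced irreducible curve and that $\phi_1$ is an isomorphism, so it carries the reduced curve $D_L$ isomorphically onto $\Theta + \CL$. The main obstacle is really the bookkeeping in the displayed Fourier--Mukai computation --- keeping track of the twists by $\xi$ and $\omega_C$ and of the cohomological shift so that the relevant bundle comes out with degree exactly $1$ and the translate is by $\CL = \phi_0(L)$ rather than by some other point. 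Once the degree is pinned down, the elementary vanishing behaviour of degree-$1$ bundles on a genus $2$ curve closes the argument.
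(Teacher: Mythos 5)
Your proof is correct and follows essentially the same route as the paper: both transport the intersection criterion of Lemma~\ref{llpint} through $\Phi$ via the identifications of Lemma~\ref{phid01} and reduce to the observation that a degree-$1$ line bundle on the genus-$2$ curve $C$ has a section if and only if it is of the form $\CO_C(x)$, i.e.\ lies on $W_1 = \Theta$. The only discrepancy is a harmless shift in your identification $J_{L'} \cong \Phi(\phi_1(L'))$ (the paper has $J_{L'} = \Phi(\CL')[1]$), which does not affect the nonvanishing of $\RHom$ that your argument actually uses; your extra remark on passing from the set-theoretic identification to an identification of reduced curves is a point the paper leaves implicit.
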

\begin{proof}
Recall that for any lines $L,L'$ on $Y_4$ we can write $I_L = \Phi(\CL)[1]$, $J_{L'} = \Phi(\CL')[1]$, where $\CL \in \Pic^0C$, $\CL' \in \Pic^1C$.
So,
$$
\Hom(I_L,J_{L'}) = \Hom(\Phi(\CL),\Phi(\CL')) = \Hom(\CL,\CL') = H^0(\CL^{-1}\otimes\CL').
$$
Since $\CL^{-1}\otimes\CL'$ is a line bundle of degree $1$, it has a global section if and only if it is isomorphic
to the line bundle $\CO_C(x)$ for some point $x \in C$, that is if $\CL' \cong \CL(x)$. Thus by Lemma~\ref{llpint}
the curve $D_L \subset \Pic^1C$ is the theta-divisor translated by $\CL$.
\end{proof}

\subsection{The action of the antiautoequivalence}

We also can identify the action of the antiautoequivalence $\SD$ on $\D^b(C)$.

\begin{proposition}\label{dy4}
We have $\SD(\CF) \cong \tau^*\CF^*[2]$.
\end{proposition}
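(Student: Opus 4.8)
The plan is to transport $\SD$ through the Fourier--Mukai equivalence $\Phi=\Phi_\CS\colon\D^b(C)\xrightarrow{\sim}\CB_{Y_4}$ and to identify the resulting functor by its normal form. Set $\SD_C:=\Phi^{-1}\circ\SD\circ\Phi$. Since $\SD$ is an involutive antiautoequivalence of $\CB_{Y_4}$ and $\Phi$ is an equivalence, $\SD_C$ is an antiautoequivalence of $\D^b(C)$. As $C$ has genus $2$ its canonical bundle is ample, so by the Bondal--Orlov reconstruction theorem every antiautoequivalence of $\D^b(C)$ is standard: there are an automorphism $\sigma\in\mathrm{Aut}(C)$, a line bundle $\CN\in\Pic(C)$ and an integer $m$ with
$$
\SD_C(\CF)\cong\sigma^*\RCHom(\CF,\CO_C)\otimes\CN[m]
$$
for all $\CF\in\D^b(C)$. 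It then remains to pin down $\sigma$, $\CN$ and $m$.

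The key input is the already-established behaviour of $\SD$ on ideals of lines. By Proposition~\ref{dil} we have $\SD(I_L)\cong I_L$ for every line $L\subset Y_4$, while Lemma~\ref{phid01} gives $I_L\cong\Phi(\phi_0(L))[1]$ with $\phi_0(L)\in\Pic^0C$. Since $\SD$ reverses shifts, this forces $\SD(\Phi(\CL))\cong\Phi(\CL)[2]$, i.e. $\SD_C(\CL)\cong\CL[2]$ for every $\CL\in\Pic^0C$ (here I use that $\phi_0$ is an isomorphism, so all of $\Pic^0C$ occurs). Comparing with the normal form and using $\RCHom(\CL,\CO_C)\cong\CL^{-1}$ for a degree-$0$ line bundle, I read off $m=2$ together with $\sigma^*(\CL^{-1})\otimes\CN\cong\CL$ for all $\CL\in\Pic^0C$. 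Taking $\CL=\CO_C$ yields $\CN\cong\CO_C$, and then $\sigma^*\CL\cong\CL^{-1}$, so $\sigma^*$ acts as $-1$ on $\Pic^0C$.

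Finally I would identify $\sigma$ with $\tau$. By Lemma~\ref{ltl} the hyperelliptic involution also acts as $-1$ on $\Pic^0C$, so $\sigma^*$ and $\tau^*$ coincide as automorphisms of the polarized Jacobian $(\Pic^0C,\Theta)$; by the Torelli theorem the map $\mathrm{Aut}(C)\hookrightarrow\mathrm{Aut}(\Pic^0C,\Theta)$ is injective for genus $\ge 2$, whence $\sigma=\tau$. With $\sigma=\tau$, $\CN\cong\CO_C$ and $m=2$ the normal form becomes $\SD_C(\CF)\cong\tau^*\RCHom(\CF,\CO_C)[2]=\tau^*\CF^*[2]$, which is the assertion. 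As a consistency check, for $\CL\in\Pic^0C$ Lemma~\ref{ltl} gives $\tau^*\CL^{-1}\cong\CL$, so the formula indeed returns $\SD_C(\CL)\cong\CL[2]$.

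The main obstacle is the last identification $\sigma=\tau$: extracting $\sigma^*=-1$ on $\Pic^0C$ is immediate, but upgrading this to an equality of automorphisms of $C$ genuinely uses the rigidity furnished by Torelli. An alternative that avoids Torelli is to compute $\SD_C$ on a skyscraper $\CO_x$: the normal form predicts $\SD_C(\CO_x)\cong\CO_{\sigma^{-1}(x)}[1]$ (via $\RCHom(\CO_x,\CO_C)\cong\CO_x[-1]$), so it would suffice to show $\SD(\Phi(\CO_x))=\SD(\CS_x)\cong\CS_{\tau(x)}[1]$, reflecting the fact that $\tau$ exchanges the two spinor bundles over a point of $\PP(A)$. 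This route trades the Torelli input for the geometry of the universal spinor bundle $\CS$, but at the cost of a direct evaluation of the mutation $\LL_\CO$ entering the definition of $\SD$, which is precisely what the classification argument above is designed to sidestep.
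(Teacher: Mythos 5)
Your proof is correct, and it diverges from the paper's argument at exactly one point: how the automorphism part of the normal form is pinned down. Both arguments begin identically --- transport $\SD$ to $\D^b(C)$, invoke the Bondal--Orlov classification (legitimate since $\omega_C$ is ample) to write $\SD_C(\CF)\cong\sigma^*\RCHom(\CF,\CO_C)\otimes\CN[m]$, and both exploit $\SD(I_L)\cong I_L$ together with $\Phi(\CL)\cong I_L[-1]$. But the paper uses the degree-zero line bundles only to fix the twist $\CN$: it determines $\sigma=\tau$ and $m=2$ beforehand by evaluating $\SD$ on skyscrapers, i.e.\ on the spinor bundles $\CS_x$, using the geometric fact that $H^\bullet(Y_4,\CS_x^*)=\kk^4$ and that the evaluation map $\CO_{Y_4}^{\oplus 4}\to\CS_x^*$ is surjective with kernel $\CS_{\tau(x)}$ (checked on the corresponding quadric), giving $\SD(\CS_x)\cong\CS_{\tau(x)}[1]$ --- precisely the direct mutation computation you flag at the end as the alternative route. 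You instead extract $m=2$, $\CN\cong\CO_C$, and $\sigma^*=-1$ on $\Pic^0C$ entirely from the ideals of lines (legitimately, since Lemma~\ref{phid01} shows every $\CL\in\Pic^0C$ arises as $\Phi^{-1}(I_L[-1])$, and matching cohomological degrees of shifted line bundles does force $m=2$), and then upgrade the equality $\sigma^*=\tau^*$ on $\Pic^0C$ to $\sigma=\tau$ via the injectivity of $\mathrm{Aut}(C)\to\mathrm{Aut}(\Pic^0C)$ in genus $\ge 2$ --- which is sound: an automorphism acting trivially on the Jacobian acts trivially on $H^1(C,\CO_C)\cong H^0(C,\omega_C)^*$, hence commutes with and induces the identity on the canonical (hyperelliptic) map, so it is $\mathrm{id}$ or $\tau$, and $\tau$ acts as $-1\ne 1$ on $H^0(\omega_C)$. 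What your route buys is independence from the spinor-bundle geometry and from any direct evaluation of the mutation $\LL_\CO$; what it costs is the Torelli-type rigidity input, and it forgoes the explicit formula $\SD(\CS_x)\cong\CS_{\tau(x)}[1]$, which the paper's computation produces as a useful byproduct.
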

\begin{proof}
Since $C$ is a variety of general type we know by~\cite{BO2} that any antiautoequivalence of $\D^b(C)$
is a composition of the usual dualization with a shift, a twist, and an automorphism.
First, let us check how $\SD$ acts on the structure sheaves of points, that is, in terms of $\CB_Y$, on spinor bundles $\CS_x$.
First, note that $H^\bullet(Y,\CS_x^*) = \kk^4$, the induced map $\CO_Y^{\oplus 4} \to \CS_x^*$ is surjective and its kernel
is $\CS_{\tau(x)}$ (this can be checked on the corresponding quadric). Thus $\SD(\CS_x) \cong \CS_{\tau(x)}[1]$. In other words, $\SD(\CO_x) \cong \CO_{\tau(x)}[1]$.
Since $\RCHom(\CO_x,\CO_C) \cong \CO_x[-1]$, we see that the shift part is $[2]$ and the automorphism part is given by $\tau$.
To identify the twist part we apply $\SD$ to a line bundle $\CL$ of degree zero. Since $\Phi(\CL) \cong I_L[-1]$
for some line $L$ on $Y$ and since $\SD(I_L) \cong I_L$ by Proposition~\ref{dil}, we conclude that
$$
\SD(\Phi(\CL)) \cong I_L[1] \cong \Phi(\CL[2]).
$$
Since $\tau^*\CL \cong \CL^*$ by Lemma~\ref{ltl}, the claim follows.
\end{proof}


\subsection{Description of instantons}

Now to get a description of the moduli space of instantons we will need to know $\Phi^!(\CO_Y)$.
It turns out that (up to a shift) it is a very interesting vector bundle on $C$, so called
the {\sf second Raynaud bundle}~\cite{R}. By definition this is the (shift of the)
Fourier--Mukai transform of the bundle $\CO_{\Pic C}(-2\Theta)$ with the kernel
given by the Poincare bundle. Note that the theta divisor on $\Pic C$ is defined only
up to a translation, accordingly the second Raynaud bundle is defined up to a twist by a line
bundle of degree $0$ (so more precisely it would be to speak about the Raynaud class of bundles).
We will need the following important property of the Raynaud class of bundles.

\begin{lemma}[\cite{P}]\label{rayprop}
Let $\CR$ be a semistable vector bundle of rank $4$ and of degree $4$ on a curve $C$ of genus~$2$.
If for any line bundle $\CL$ of degree $0$ on $C$ we have $\Hom(\CL,\CR) \ne 0$ then $\CR$ is a second Raynaud bundle.
\end{lemma}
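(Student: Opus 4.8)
The plan is to reformulate the hypothesis as the statement that $\CR$ has \emph{no theta divisor}, and then to characterize such bundles through the Fourier--Mukai transform on the Jacobian. First I would record the numerics: since $\CR$ has rank $4$ and degree $4$ on a curve of genus $2$, its slope is $1 = g-1$, so $\chi(C,\CR\otimes\CL) = \deg\CR + r(\CR)(1-g) = 0$ for every $\CL\in\Pic^0(C)$. The hypothesis $\Hom(\CL,\CR) = H^0(C,\CR\otimes\CL^{-1})\ne 0$ for all $\CL$ therefore forces $h^0(C,\CR\otimes\CL) = h^1(C,\CR\otimes\CL) > 0$ for \emph{all} $\CL\in\Pic^0(C)$. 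In other words the generalized theta divisor $\Theta_{\CR} = \{\CL : h^0(\CR\otimes\CL)\ne 0\}$, which for a semistable bundle of slope $g-1$ is expected to be an honest divisor in $|4\Theta|$, degenerates to all of $\Pic^0(C)$. This is exactly the defining property of Raynaud's examples, so the content of the Lemma is a rigidity (converse) statement.

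Next I would transport the problem to the abelian surface $A = \Pic^0(C)$. Fix an Abel--Jacobi embedding $a\colon C\hookrightarrow A$; since $g = 2$ its image is a translate of the principal polarization $\Theta$, and by autoduality of the Jacobian the pullback $a^*\colon \Pic^0(A) = \hat A \xrightarrow{\sim} \Pic^0(C)$ is an isomorphism, so every $\CL\in\Pic^0(C)$ has the form $a^*P_\alpha$ for a unique $\alpha\in\hat A$. Set $\CF = a_*\CR$, a pure one--dimensional sheaf on $A$ supported on $\Theta$; the projection formula gives $H^\bullet(C,\CR\otimes\CL) = H^\bullet(A,\CF\otimes P_\alpha)$, so the reformulated hypothesis says precisely that the cohomological support loci of $\CF$ are maximal: $V^0(\CF) = V^1(\CF) = \hat A$ while $V^2(\CF) = \varnothing$ (the last because $\CF$ has one--dimensional support). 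I would also note the compatibility $\Psi(\CR) \cong \mathsf{S}(\CF)$, where $\Psi\colon\D^b(C)\to\D^b(A)$ is the Poincaré transform and $\mathsf{S}\colon\D^b(A)\xrightarrow{\sim}\D^b(\hat A)$ is Mukai's transform, and that the Poincaré functor $\D^b(\Pic^0 C)\to\D^b(C)$ factors as $a^*\circ\mathsf{S}$. In particular the second Raynaud bundle is $\CR_2 = a^*\mathsf{S}(\CO_A(-2\Theta))[2]$, a sheaf whose invariants ($\CO_A(-2\Theta)$ has $\mathsf{ch} = 1 - 2\Theta + 4[\mathrm{pt}]$, is $IT_2$ of index $2$, so $\mathsf{S}(\CO_A(-2\Theta))[2]$ is a rank $4$ bundle with $c_1 = 2\Theta$, restricting on $C$ to rank $4$, degree $2\Theta\cdot\Theta = 4$) match those of $\CR$. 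Thus $\CF_2 = a_*\CR_2$ is the universal model to compare against.

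I would then run the argument through $\mathsf{S}$ and its inversion $\mathsf{S}\circ\mathsf{S}\cong(-1_A)^*[-2]$. Grothendieck--Riemann--Roch and the cohomological Fourier transform determine the rank, first Chern class and Euler characteristic of $\mathsf{S}(\CF)$ completely from those of $\CF$, and the maximality of $V^0,V^1$ constrains its cohomology sheaves. The decisive input is semistability of $\CR$: it must be used to upgrade this numerical coincidence to an actual identification of $\mathsf{S}(\CF)$ with the transform of the Raynaud model (up to the translation ambiguity inherent in the choice of $\Theta$), i.e. to exclude sheaves with the correct invariants but extra torsion, nontrivial extensions, or translated summands. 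Any such competitor would violate either purity of $\CF$ or the slope constraints coming from semistability of $\CR$. Once $\mathsf{S}(\CF)$ is identified, applying the inverse transform recovers $\CF$, hence $\CR$, as the Raynaud datum, placing $\CR$ in the Raynaud class.

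The main obstacle is exactly this identification step: passing from ``$\mathsf{S}(\CF)$ has the same rank, $c_1$ and $\chi$ as the Raynaud transform'' to ``$\mathsf{S}(\CF)$ is isomorphic to it up to translation.'' Numerically there is nothing to prove, but the rigidity is a genuinely global statement about sheaves on the abelian surface, and this is where the semistability of $\CR$ must be fed in to rule out all alternatives. I expect this to be the heart of Popa's proof, most naturally phrased through the generic vanishing / $M$-regularity formalism of Pareschi--Popa applied to the non-$GV$ sheaf $\CF$, and to rely crucially on the special genus--$2$, rank--$4$ numerics $\deg\CR = r(\CR)(g-1)$; by contrast the reductions in the first two paragraphs are essentially formal.
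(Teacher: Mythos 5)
First, a point of comparison: the paper contains no proof of this lemma at all --- the bracketed citation to \cite{P} means it is imported as a black box from Popa's survey (it is used only as an input to Lemma~\ref{cw}), so your proposal can only be measured against the cited literature, not against an in-paper argument. That said, your first two paragraphs are correct and do reconstruct the standard framing: since $\mu(\CR)=1=g-1$ we have $\chi(\CR\otimes\CL)=0$ for every $\CL\in\Pic^0(C)$, so the hypothesis says exactly that the theta-locus $\{\CL : h^0(\CR\otimes\CL)\neq 0\}$, which for a semistable bundle of slope $g-1$ is either a divisor in $|4\Theta|$ or everything, degenerates to all of $\Pic^0(C)$; and your identification of the second Raynaud bundle as the (shifted) Fourier--Mukai transform of $\CO(-2\Theta)$ restricted along the Abel--Jacobi embedding, together with the check that its rank, $c_1$ and restricted degree match, is accurate.

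However, the proposal has a genuine gap, and you name it yourself: the entire content of the lemma is the rigidity step of your third paragraph, and your text replaces that step with the expectation that it is ``the heart of Popa's proof.'' The reductions in the first two paragraphs are formal and essentially reversible --- they show that a Raynaud bundle satisfies the hypothesis, not the converse. Knowing that the transform of $\CR$ has the same rank, $c_1$ and $\chi$ as the transform of the Raynaud model determines only its numerical class, and on an abelian surface a fixed numerical class is realized by many sheaves (translates, twists by degree-zero line bundles on the curve, twisted ideal sheaves, non-locally-free degenerations, extensions). To close the gap one must actually prove, for instance, that the hypothesis forces $h^0(\CR\otimes\CL)=1$ for \emph{every} $\CL$ (not merely $\geq 1$), that the relevant cohomology sheaf of the Poincar\'e transform of $\CR$ is therefore torsion-free of rank one and in fact an honest line bundle of type $-2\Theta$ up to translation rather than a twisted ideal sheaf, and that semistability of $\CR$ excludes all remaining competitors; this is precisely where the known classification of rank-$4$, genus-$2$ semistable bundles without theta divisor (Pauly, Popa) does nontrivial work, and nothing in your sketch accomplishes it. As it stands, your text is a correct reduction plus an accurate diagnosis of where the difficulty lies, but it is a proof plan, not a proof.
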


This property can be used to identify the object $\Phi^!(\CO_{Y_4})$.


\begin{lemma}\label{cw}
We have $\Phi^!(\CO_Y) \cong \CR[1]$, where $\CR$ is a second Raynaud bundle on $C$.
\end{lemma}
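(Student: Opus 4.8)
The plan is to verify the four hypotheses of Popa's characterization (Lemma~\ref{rayprop}): that $\Phi^!(\CO_Y)$ is a vector bundle placed in homological degree $-1$, of rank $4$ and degree $4$, semistable, and with $\Hom(\CL,\CR)\ne 0$ for every $\CL\in\Pic^0C$; Lemma~\ref{rayprop} then identifies $\CR$ as a second Raynaud bundle. First I would realize $\Phi^!=\Phi_\CS^R$ as a Fourier--Mukai functor with kernel $\CS^*\otimes p_C^*\omega_C[1]$, so that by the projection formula $\Phi^!(\CO_Y)\cong (Rp_{C*}\CS^*)\otimes\omega_C[1]$. The fibre of $Rp_{C*}\CS^*$ over $c\in C$ is $H^\bullet(Y_4,\CS_c^*)$, which is $\kk^4$ concentrated in degree $0$ by the computation already made in the proof of Proposition~\ref{dy4}. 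Cohomology and base change then show $Rp_{C*}\CS^*$ is locally free of rank $4$, so $\Phi^!(\CO_Y)\cong\CR[1]$ with $\CR:=(Rp_{C*}\CS^*)\otimes\omega_C$ a rank $4$ bundle.

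For the numerical invariants I would argue by adjunction, $\RHom_C(\CF,\Phi^!\CO_Y)\cong\RHom_Y(\Phi(\CF),\CO_Y)$. Taking $\CF=\CO_x$ gives $\chi_Y(\CS_x,\CO_Y)=4$ (again from the proof of Proposition~\ref{dy4}), while the left side equals $-\chi_C(\CO_x,\CR)=r(\CR)$, so $r(\CR)=4$. Taking $\CF=\CO_C$ and computing $\chi_Y(\Phi(\CO_C),\CO_Y)$ by Hirzebruch--Riemann--Roch from $\ch(\Phi(\CO_C))=-1+L_Y$ (Lemma~\ref{GRR}) gives $0$, while the left side equals $4-\deg\CR$, whence $\deg\CR=4$. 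For the $\Pic^0$ condition I would use Lemma~\ref{phid01}: for $\CL\in\Pic^0C$ one has $\Phi(\CL)\cong I_L[-1]$, so $\RHom_C(\CL,\CR[1])\cong\RHom_Y(I_L[-1],\CO_Y)\cong\RHom_Y(I_L,\CO_Y)[1]$. Applying $\RHom_Y(-,\CO_Y)$ to $0\to I_L\to\CO_Y\to\CO_L\to 0$ together with $\RCHom(\CO_L,\CO_Y)\cong\CO_L[-2]$ (as in the proof of Proposition~\ref{dil}) gives $\RHom_Y(I_L,\CO_Y)\cong\kk\oplus\kk[-1]$, hence $\RHom_C(\CL,\CR[1])\cong\kk\oplus\kk[1]$ and in particular $\Hom_C(\CL,\CR)=\kk\ne 0$ for every $\CL\in\Pic^0C$.

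The main obstacle is semistability, and here I would argue by contradiction. If $\CR$ is not semistable, let $\CR_1\subset\CR$ be the maximal destabilizing subbundle; it is semistable with $\mu(\CR_1)>\mu(\CR)=1$. Since $\CR_1$ is a subsheaf, $\Hom(\CL,\CR_1)\hookrightarrow\Hom(\CL,\CR)=\kk$, so $h^0(\CL^*\otimes\CR_1)\le 1$ for all $\CL\in\Pic^0C$; as $\chi(\CL^*\otimes\CR_1)=\deg\CR_1-r(\CR_1)\ge 1$, this forces $\deg\CR_1=r(\CR_1)+1$ and the identities $h^0(\CL^*\otimes\CR_1)=1$, $h^1(\CL^*\otimes\CR_1)=0$ on all of $\Pic^0C$. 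I would then contradict this rigidity: for $r(\CR_1)=1$ the choice $\CL=\CR_1\otimes\omega_C^{-1}$ gives $h^0(\omega_C)=2$; for $r(\CR_1)\in\{2,3\}$ the Mukai--Sakai bound on minimal-degree quotient line bundles (Nagata's bound when $r(\CR_1)=2$) produces a quotient line bundle of $\CR_1$ of degree $\le 2$, which combined with semistability of $\CR_1$ (slope $>1$) has degree exactly $2$; writing this bundle as $N=\CL_0\otimes\omega_C$ with $\CL_0\in\Pic^0C$, Serre duality gives $H^1(\CL_0^*\otimes\CR_1)\cong\Hom(\CR_1,N)^*\ne 0$. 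Either way one of the two identities fails, so $\CR$ is semistable.

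With all four hypotheses established, Lemma~\ref{rayprop} yields that $\CR$ is a second Raynaud bundle and hence $\Phi^!(\CO_Y)\cong\CR[1]$, as claimed. I expect the delicate point to be precisely the semistability step: everything else is the formal Fourier--Mukai/adjunction calculus combined with the fibrewise computations already recorded for the spinor family $\CS$, whereas semistability is the only place that genuinely invokes curve-specific bounds on sub- and quotient line bundles.
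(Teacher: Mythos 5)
Your proposal is correct, and its first half follows the same path as the paper: the paper likewise gets $\Phi^!(\CO_Y) \cong \CR[1]$ with $\CR$ a rank $4$ bundle from $\Ext^\bullet(\CO_x,\Phi^!(\CO_Y)) = H^\bullet(Y,\CS_x^*) = \kk^4$ (your kernel/projection-formula formulation $\CR \cong (Rp_{C*}\CS^*)\otimes\omega_C$ is just a more explicit version of this), and it obtains $\deg\CR = 4$ and the $\Pic^0$-property simultaneously from $\Ext^\bullet(\CL,\CR) \cong \Ext^\bullet(I_L[-1],\CO_Y[-1]) = \kk\oplus\kk[-1]$ before invoking Lemma~\ref{rayprop}. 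Where you genuinely diverge is the semistability step, which is indeed the delicate point. The paper transports a putative destabilizing sequence $0 \to F \to \CR \to G \to 0$ back to $Y_4$: it computes that $\Phi(\Phi^!(\CO_{Y_4}))$ has cohomology sheaves $T_{\PP(V)|Y_4}(-2)$ in degree $-1$ and $\CO_{Y_4}$ in degree $0$, runs a case analysis on $(r(F),\deg F)$, rules out the cases $r(F)\in\{1,2\}$ by a slope comparison against the stability of $T_{\PP(V)|Y_4}(-2)$ (itself proved via Hoppe's criterion, Lemma~\ref{hoppe}), and excludes $r(F)=3$ because it would force $G=\CL\in\Pic^0C$ and $I_L$ is not a quotient of $\CO_{Y_4}$. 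You instead stay entirely on the curve, exploiting the \emph{exact} value $\hom(\CL,\CR)=1$ (not merely nonvanishing) uniformly over $\Pic^0C$: for the maximal destabilizing $\CR_1$ this forces $\deg\CR_1 = r(\CR_1)+1$ together with $h^0(\CL^*\otimes\CR_1)=1$ and $h^1(\CL^*\otimes\CR_1)=0$ for \emph{all} $\CL\in\Pic^0C$, which you then contradict via $h^0(\omega_C)=2$ in rank $1$, and in ranks $2$ and $3$ via a quotient line bundle $N$ of degree exactly $2$ (Nagata, resp. Mukai--Sakai: the bound $(d+(r-1)g)/r$ gives $\deg N \le 2$ for $(r,d)\in\{(2,3),(3,4)\}$, and semistability of $\CR_1$ pins it to $2$) combined with Serre duality, $h^1(\CL_0^*\otimes\CR_1) = \hom(\CR_1,\CL_0\otimes\omega_C) \ne 0$ with $\CL_0 = N\otimes\omega_C^{-1}\in\Pic^0C$. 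I checked the numerics and the argument is sound. The trade-off: your route is shorter and purely curve-theoretic, avoiding both the identification of $\LL_{\CB_{Y_4}}(\CO_{Y_4})$ and the stability of the restricted tangent bundle, at the cost of importing the classical Nagata/Mukai--Sakai subbundle bounds, which the paper never uses; the paper's longer argument stays within the Fourier--Mukai apparatus it has already built and uses only facts about $Y_4$ itself.
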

\begin{proof}
We have $\Ext^\bullet(\CO_x,\Phi^!(\CO_Y)) = \Ext^\bullet(\Phi(\CO_x),\CO_Y) = \Ext^\bullet(\CS_x,\CO_Y) = H^\bullet(Y,\CS_x^*) \cong \kk^4$.
It follows that $\Phi^!(\CO_Y) \cong \CR[1]$, where $\CR$ is a vector bundle of rank 4. Further we have
$$
\Ext^\bullet(\CL,\CR) \cong
\Ext^\bullet(\CL,\Phi^!(\CO_Y[-1])) \cong
\Ext^\bullet(\Phi(\CL),\CO_Y[-1]) \cong
\Ext^\bullet(I_L[-1],\CO_Y[-1]) = \kk \oplus \kk[-1].
$$
It follows from Riemann--Roch that the degree of $\CR$ is $4$. Also it follows that the main property
of Raynaud bundles is true for the bundle $\CR$. So it only remains to check that $\CR$ is semistable.

First consider $\Phi(\CR) = \Phi(\Phi^!(\CO_{Y_4}))[-1]$. Note that by definition of the mutation functor we have a distinguished triangle
$$
\Phi(\Phi^!(\CO_{Y_4})) \to \CO_{Y_4} \to \LL_{\CB_{Y_4}}(\CO_{Y_4}).
$$ 
On the other hand, since we have a semiorthogonal decomposition $\D^b(Y_4) = \langle \CB_{Y_4},\CO_{Y_4},\CO_{Y_4}(1) \rangle$
we know that $\LL_{\CB_{Y_4}}(\CO_{Y_4}) \cong \SS(\RR_{\CO_{Y_4}(1)}(\CO_{Y_4}))$, where $\SS$ is the Serre functor.
Since $\Ext^\bullet(\CO_{Y_4},\CO_{Y_4}(1)) = V^*$ we deduce that $\RR_{\CO_{Y_4}(1)}(\CO_{Y_4}) \cong T_{\PP(V)|Y_4}[-1]$,
the shift of the tangent bundle to $\PP(V)$ restricted to $Y_4$. Hence $\LL_{\CB_{Y_4}}(\CO_{Y_4}) \cong T_{\PP(V)|Y_4}(-2)[2]$. 
Thus the above triangle shows that $\Phi(\Phi^!(\CO_{Y_4}))$ has two cohomology sheaves, $\CO_{Y_4}$ in degree $0$ and 
$T_{\PP(V)|Y_4}(-2)$ in degree $-1$.

Assume that $0 \to F \to \CR \to G \to 0$ is a destabilizing exact sequence of vector bundles with $F$ stable.
Applying the functor $\Phi$ we get a distinguished triangle
$$
\Phi(F) \to \Phi(\CR) \to \Phi(G)
$$
which gives a long exact sequence of cohomology sheaves
\begin{equation}\label{lesr}
0 \to  \CH^0(\Phi(F)) \to T_{\PP(V)|Y_4}(-2) \to \CH^0(\Phi(G)) \to \CH^1(\Phi(F)) \to \CO_{Y_4} \to \CH^1(\Phi(G)) \to 0
\end{equation}
(note that since $\dim C = 1$ the functor $\Phi$ applied to a sheaf can have cohomology sheaves only in degrees $0$ and $1$).
Now since $r(\CR) = 4$ and $\deg(\CR) = 4$ we have
\begin{itemize}
\item either $r(F) = 1$ and $\deg(F) \ge 2$, 
\item or $r(F) = 2$ and $\deg(F) \ge 3$,
\item or $r(F) = 3$ and $\deg(F) \ge 4$.
\end{itemize}
Consider the first two cases. Note that the slope of $F$ is greater or equal than $3/2$ in these cases.
Note also that for any $y \in Y_4$ we have by Serre duality
$$
H^1(C,\CS_y\otimes F) \cong \Hom(F,\CS_y^*\otimes\omega_C)^*.
$$
The second bundle here has slope $2 - 1/2 = 3/2$ and $F$ in the first two cases has slope which is not smaller. Hence 
by stability of $F$ and $\CS_y$ the above space is zero unless $F \cong \CS_y^*\otimes\omega_C$. Since the above is possible
only for one $y$, we conclude that $\CH^1(\Phi(F))$ is either $0$, or is the structure sheaf of a point. In any case 
its rank and $c_1$ is zero. Thus the rank and $c_1$ of the sheaf $\CH^0(\Phi(F))$ coincide with those of $\Phi(F)$
and so by the Grothendieck--Riemann--Roch formula (Lemma~\ref{GRR}) we have
\begin{equation*}
\mu(\CH^0(\Phi(F))) = -\frac{\deg(F)}{2\deg(F) - r(F)}.
\end{equation*}
Under our assumptions on $F$ this is greater than $-4/5$, the slope of $T_{\PP(V)|Y_4}(-2)$.
This contradicts the stability of the latter bundle (which can be easily shown by using Hoppe's criterion, see Lemma~\ref{hoppe})
excluding the first two cases.

In the last case we have $r(G) = 1$, $\deg(G) \le 0$. Such $G$ can be embedded into appropriate line bundle $\CL$ of degree $0$,
hence $\CH^0(\Phi(G)) \subset \CH^0(\Phi(\CL))$ which was shown to be zero (see the proof of Lemma~\ref{phid01}).
Thus by Lemma~\ref{GRR} we have
\begin{equation*}
r(\CH^1(\Phi(G))) = - r(\Phi(G)) = 1 - 2\deg(G).
\end{equation*}
Since $\deg(G) \le 0$ this is greater or equal than $1$. On the other hand, it follows from~\eqref{lesr} 
that $\CH^1(\Phi(G))$ is a quotient of $\CO_{Y_4}$. This is possible only if $\deg(G) = 0$, so $G = \CL \in \Pic^0(C)$. 
Then as we know $\Phi(\CL) = I_L[-1]$ with $L$ a line. Since $I_L$ is not a quotient of $\CO_{Y_4}$ we get a final contradiction.
\end{proof}

Now we are ready to give a description of instantons on $Y_4$.

\begin{theorem}\label{miy4}
Let $\CR$ be a second Raynaud bundle.
The moduli space of instantons $\CMI_n(Y_4)$ is isomorphic to the moduli space of simple vector bundles
$\CF$ on $C$ of rank $n$ and degree $0$ such that
\begin{eqnarray}
&& \CF^* \cong \tau^*\CF,\label{cfsd}\\
&& H^0(C,\CF\otimes\CS_y) = 0\quad\text{for all $y \in Y_4$},\label{cfso}\\
&& \dim\Hom(\CF,\CR) = \dim\Ext^1(\CF,\CR) = n-2.\label{cfr}
\end{eqnarray}
\end{theorem}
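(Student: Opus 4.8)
The plan is to route the correspondence through the acyclic extension and the Fourier--Mukai equivalence $\Phi\colon\D^b(C)\to\CB_{Y_4}$. Given an instanton $E$ of charge $n$, let $\TE\in\CB_{Y_4}$ be its acyclic extension (Lemma~\ref{te1}) and set $\CF:=\Phi^{-1}(\TE)[-1]$, equivalently $\TE\cong\Phi(\CF)[1]$. Applying the Grothendieck--Riemann--Roch formula of Lemma~\ref{GRR} to $\Phi(\CF)=\TE[-1]$ and comparing with $\ch(\TE)=n-nL_Y$ (which records $r(\TE)=n$, $c_1=0$, $c_2=n$, $c_3=0$) forces $\deg\CF=0$ and $r(\CF)=n$; since $\Phi$ is an equivalence and $\TE$ is simple, $\CF$ is simple as well. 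The theorem then reduces to matching the three conditions \eqref{cfsd}--\eqref{cfr} against the properties of $\TE$ collected in Lemma~\ref{te1}.

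For \eqref{cfsd} I would feed the self-duality $\SD(\TE)\cong\TE$ of Lemma~\ref{te1} through Proposition~\ref{dy4}: transporting $\SD$ to $\D^b(C)$ turns this isomorphism into $\tau^*(\Phi^{-1}\TE)^*[2]\cong\Phi^{-1}\TE$, and rewriting with $\Phi^{-1}\TE=\CF[1]$ yields exactly $\CF^*\cong\tau^*\CF$. For \eqref{cfso} the key is base change along $p_Y\colon C\times Y_4\to Y_4$, exactly as in the proof of Lemma~\ref{phid01}: for every $y\in Y_4$ one has $R\Gamma(C,\CF\otimes\CS_y)\cong Lj_y^*\Phi(\CF)=\TE_y[-1]$, concentrated in degree $1$, so $H^0(C,\CF\otimes\CS_y)=0$. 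Conversely, this vanishing together with the constancy $\chi(C,\CF\otimes\CS_y)=-n$ from Riemann--Roch is precisely what forces $\Phi(\CF)[1]$ to be locally free of rank $n$; this is the mechanism guaranteeing that $\CF$ is an honest vector bundle. Finally, for \eqref{cfr} I would use $\CR[1]\cong\Phi^!(\CO_Y)$ (Lemma~\ref{cw}) and adjunction: $\Ext^i_C(\CF,\CR)\cong\Ext^i_C(\CF,\Phi^!(\CO_Y[-1]))\cong\Ext^i_{Y_4}(\TE,\CO_Y)\cong H^i(Y_4,\TE^*)$, whence $\dim\Hom(\CF,\CR)=h^0(\TE^*)$ and $\dim\Ext^1(\CF,\CR)=h^1(\TE^*)$, both equal to $n-2$ by Lemma~\ref{te1}.

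For the converse I would run this backwards through Theorem~\ref{ftoe}. Starting from a simple $\CF$ of rank $n$ and degree $0$ satisfying \eqref{cfsd}--\eqref{cfr}, set $\TE:=\Phi(\CF)[1]$. Condition \eqref{cfso} plus Riemann--Roch makes $\TE$ a vector bundle of rank $n$; its Chern classes $c_1=0$, $c_2=n$, $c_3=0$ follow from Lemma~\ref{GRR} (and are in any case forced by Remark~\ref{k0b}); the vanishings $H^\bullet(\TE)=H^\bullet(\TE(-1))=0$ are automatic from $\TE\in\CB_{Y_4}=\langle\CO_Y,\CO_Y(1)\rangle^\perp$; condition \eqref{cfsd} gives $\SD(\TE)\cong\TE$ via Proposition~\ref{dy4}; and condition \eqref{cfr} supplies $h^0(\TE^*)=n-2$. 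Theorem~\ref{ftoe} then produces a unique instanton of charge $n$ whose acyclic extension is $\TE$, inverse to the assignment $E\mapsto\CF$.

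The main obstacle is not the pointwise dictionary above but upgrading it to an isomorphism of moduli \emph{spaces}. Here I would observe that both halves of the construction are relative: $\Phi$ and $\Phi^!$ are Fourier--Mukai functors with the fixed kernel $\CS$ on $C\times Y_4$ and hence act in families over an arbitrary base, while the acyclic extension $E\mapsto\TE$ and its inverse from Theorem~\ref{ftoe} are universal-extension and image constructions that sheafify over a base (just as in the degree $5$ case, Theorem~\ref{inst-y5}). The genuinely delicate point is checking that $\Phi^{-1}(\TE)[-1]$ is a vector bundle \emph{in families}, i.e.\ that the cohomology-and-base-change argument for \eqref{cfso} is uniform over the moduli space, so that the two mutually inverse morphisms are morphisms of schemes and not merely bijections on closed points.
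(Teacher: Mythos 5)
Your overall route is exactly the paper's: pass to the acyclic extension $\TE$, transport it through the equivalence $\Phi$, match the three conditions (\eqref{cfsd} via Proposition~\ref{dy4}, \eqref{cfso} from $\Phi(\CF)$ being a shifted vector bundle, \eqref{cfr} via Lemma~\ref{cw} and adjunction), and invert by feeding everything into Theorem~\ref{ftoe}. Your converse direction and these three verifications agree with the paper essentially verbatim. But there is a genuine gap in the forward direction: you never prove that $\CF=\Phi^{-1}(\TE)[-1]$ is a \emph{vector bundle} on $C$. A priori it is only an object of $\D^b(C)$, and the rank and degree you extract from Lemma~\ref{GRR} are merely K-theoretic invariants of a complex. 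The sentence you offer as ``the mechanism guaranteeing that $\CF$ is an honest vector bundle'' is circular: the statement that $Lj_y^*\Phi(\CF)$ is concentrated in a single cohomological degree for every $y\in Y_4$ is equivalent to local freeness of $\Phi(\CF)[1]=\TE$ on $Y_4$, which you already know, and it constrains the fibers of $\Phi(\CF)$ over $Y_4$, not the fibers of $\CF$ over $C$. That mechanism runs in the \emph{converse} direction (where $\CF$ is a bundle by hypothesis and one wants $\TE$ to be one); it cannot be reversed for free.

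What is missing is precisely the first half of the paper's proof. There one writes $\CF=\Phi^*(E)[-1]$, observes that $\Phi^*$ is itself a Fourier--Mukai functor with kernel $\CS^*\otimes q^*\CO_{Y_4}(-2)[3]$, so that the fiber of $\CF$ at a point $x\in C$ is $H^{\bullet+2}(Y_4,\CS_x^*\otimes E(-2))$, and then kills three of the four cohomology groups: $H^0$ and $H^3$ vanish by stability of $E$ and $\CS_x$ (together with Serre duality and $E\cong E^*$), while $H^1$ vanishes by combining the instantonic condition $H^\bullet(Y_4,E(-1))=0$ with the restricted spinor sequence $0\to\CS_x^*\to\CO_{Y_4}(1)^4\to\CS_x^*(1)\to 0$ and stability once more. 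Note that this is exactly where the hypothesis that $E$ is an instanton --- and not merely a bundle with simple acyclic extension in $\CB_{Y_4}$ --- enters the argument, so the step cannot be absorbed into formal Fourier--Mukai yoga; you need to supply it. Your closing concern about upgrading the pointwise dictionary to an isomorphism of moduli \emph{spaces} is legitimate but is not treated in the paper either (which, as in the degree-$5$ case, takes for granted that the constructions sheafify over a base); relative to the paper, the pointwise sheafness of $\CF$ is the real missing ingredient.
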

\begin{proof}
For each instanton $E$ consider its acyclic extension $\TE$.
Then as we know $\TE = \Phi(\CF)[-1]$ for some $\CF \in \D^b(C)$.
We are going to show that $\CF$ is a vector bundle.
Indeed, since $\Phi:\D^b(C) \to \CB_{Y_4}$ is an equivalence
we have $\CF = \Phi^*(\TE[-1])$. Since $\Phi^*(\CO_{Y_4}) = 0$ we have $\Phi^*(\TE) = \Phi^*(E)$,
so finally
$$
\CF = \Phi^*(E)[-1].
$$
Further, it is easy to check that the functor $\Phi^*$ is also a Fourier--Mukai transform with the kernel
$\CS^*\otimes q^*\CO_{Y_4}(-2)[3]$. Thus the fiber of the object $\CF$ at a point $x \in C$ is given by
$$
\CF_x = H^{\bullet+2}(Y_4,\CS_x^*\otimes E(-2)),
$$
so our goal is to show that only $H^2$ is nontrivial. First, we note that
$$
H^0(Y_4,\CS_x^*\otimes E(-2)) = \Hom(\CS_x,E(-2)) = 0
$$
by stability of $\CS_x$ and $E$. Similarly, using Serre duality we deduce that
$$
H^3(Y_4,\CS_x^*\otimes E(-2)) = H^0(Y_4,\CS_x\otimes E)^* = \Hom(E,\CS_x)^* = 0
$$
again by stability of $E$ and $\CS_x$. Finally, we note that for any $x \in C$ one has a short exact sequence
$$
0 \to \CS_x^* \to \CO_{Y_4}(1)^4 \to \CS_x^*(1) \to 0
$$
(this is the restriction of the standard exact sequence of spinor bundles from a 4-dimensional quadric).
Since $H^\bullet(Y_4,E(-1)) = 0$ we conclude that
$$
H^1(Y_4,\CS_x^*\otimes E(-2)) = H^0(Y_4,\CS_x^*(1)\otimes E(-2)) = \Hom(\CS_x(1),E) = 0
$$
again by stability of $E$ and $\CS_x$. Thus indeed we have only $H^2$, so $\CF$ is a vector bundle.

Since $\Phi(\CF) \cong \TE[1]$ using Lemma~\ref{GRR} we see that $r(\CF) = n$ and $\deg(F) = 0$.
Moreover, since $\Phi$ is fully faithful and $\TE$ is simple by Lemma~\ref{te1}, we conclude that $\CF$ is simple.

Let us check that $\CF$ enjoys~\eqref{cfsd}, \eqref{cfso}, and~\eqref{cfr}.
The first follows immediately from $\SD(\TE) \cong \TE$ and Lemma~\ref{dy4}.
The second follows from the fact that $\Phi(\CF)$ is a vector bundle shifted by $-1$.
And for the third one can use that by Lemma~\ref{cw}
\begin{multline*}
\Ext^\bullet(\CF,\CR) =
\Ext^\bullet(\CF,\Phi^!(\CO_Y)[-1]) \cong
\Ext^\bullet(\Phi(\CF),\CO_Y[-1]) = \\ =
\Ext^\bullet(\TE[-1],\CO_Y[-1]) =
\Ext^\bullet(\TE,\CO_Y) =
H^\bullet(Y,\TE^*),
\end{multline*}
so~\eqref{cfr} follows from Lemma~\ref{te1}.

Now let us check the inverse statement. If $\CF$ is a vector bundle on $C$ such that~\eqref{cfso} holds
then $\CH^0(\Phi(\CF)) = 0$ and $F := \CH^1(\Phi(\CF))$ is a vector bundle, so one can write $\Phi(\CF) \cong F[-1]$.
By Lemma~\ref{GRR} we deduce that $r(F) = n$ and $c_1(F) = 0$. Since the image of the functor $\Phi$ is $\CB_{Y_4}$
we conclude that $H^\bullet(Y_4,F) = H^\bullet(Y_4,F(-1)) = 0$. Moreover, $\SD(F) \cong F$ by~\eqref{cfsd} and Lemma~\ref{dy4},
and since 
\begin{equation*}
H^i(Y_4,F^*) = \Ext^i(F,\CO_Y) = \Ext^i(\Phi(\CF)[1],\CO_Y) \cong \Ext^i(\CF,\Phi^!(\CO_Y[-1])) \cong \Ext^i(\CF,\CR)
\end{equation*}
we see that~\eqref{cfr} implies $h^0(F^*) = h^1(F^*) = n-2$. Thus Theorem~\ref{ftoe} applies and we conclude
that $F$ is the acyclic extension of appropriate instanton of charge $n$ on $Y_4$.
%
%
%
%
\end{proof}

\subsection{Jumping lines}

The curve $D_E$ of jumping lines of an instanton $E$ together with its natural coherent sheaf $\CL_E$
can be described in terms of the associated vector bundle $\CF_E$ on $C$. Recall that in Lemma~\ref{phid01} 
we have constructed an isomorphism $\phi_1$ of $F(Y_4)$ and $\Pic^1(C)$.

\begin{proposition}\label{ley4}
Let $\CF_E$ be the stable vector bundle on $C$ corresponding to an instanton $E$.
Then isomorphism $\phi_1$ identifies the set of jumping lines $D_E$ of $E$ with the set of $\CL \in \Pic^1C$
such that $\Ext^\bullet(\CF,\CL) \ne 0$. Moreover, let $\CP$ be the Poincare line bundle
on $C \times \Pic^1C$ and $\Phi_\CP:\D^b(C) \to \D^b(\Pic^1 C)$ the associated Fourier--Mukai transform.
Then $\CL_E = \Phi_\CP(\CF_E^*)[1]$.
\end{proposition}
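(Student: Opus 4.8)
The plan is to deduce both assertions from Proposition~\ref{tejl}, which already expresses jumping lines and the sheaf $\CL_E$ in terms of the acyclic extension $\TE$, and then to transport everything to $\D^b(C)$ via the equivalence $\Phi$. The two ingredients I would use are the identifications obtained earlier: $\Phi(\CF_E)\cong\TE[-1]$ (so $\TE\cong\Phi(\CF_E)[1]$), from the proof of Theorem~\ref{miy4}, and $J_L\cong\Phi(\phi_1(L))[1]$ with $\phi_1(L)\in\Pic^1C$, from the proof of Lemma~\ref{phid01}.

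First I would prove the set-theoretic statement. By Proposition~\ref{tejl} a line $L$ is jumping for $E$ if and only if $\Hom(\TE,J_L)\neq0$. Substituting the two identifications and using that $\Phi$ is fully faithful,
$$
\Ext^\bullet(\TE,J_L)=\Ext^\bullet(\Phi(\CF_E)[1],\Phi(\phi_1(L))[1])=\Ext^\bullet(\CF_E,\phi_1(L)).
$$
Since $\CF_E$ has rank $n$ and degree $0$ while $\phi_1(L)$ has degree $1$, Riemann--Roch on $C$ gives $\chi(\CF_E,\phi_1(L))=0$, so $\dim\Hom(\CF_E,\phi_1(L))=\dim\Ext^1(\CF_E,\phi_1(L))$; in particular the nonvanishing of $\Hom$ is equivalent to the nonvanishing of $\Ext^\bullet(\CF_E,\phi_1(L))$. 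Hence $\phi_1$ carries $D_E$ onto $\{\CL\in\Pic^1C:\Ext^\bullet(\CF_E,\CL)\neq0\}$, which is the first claim.

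For the sheaf $\CL_E$, recall from the analogue of the Grauert--M\"ulich theorem that $Rp_*q^*E(-1)\cong i_*\CL_E[-1]$, and from Proposition~\ref{tejl} that this object equals $Rp_{1*}\RCHom(q_1^*\TE,\CJ)$; so it suffices to identify the latter, under $\phi_1$, with $\Phi_\CP(\CF_E^*)$. The key step is a relative version of $J_L\cong\Phi(\phi_1(L))[1]$: the universal family $\CJ$ on $Y\times F(Y)$ is isomorphic to $(\Phi\times\id)(\CP)[1]$, where $(\Phi\times\id)$ is the Fourier--Mukai functor with kernel $\CS$ applied in families over $F(Y)\cong\Pic^1C$ and $\CP$ is the Poincar\'e bundle. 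Granting this, the relative analogue of the full faithfulness of $\Phi$ together with Grothendieck duality reduces the pushforward-$\RCHom$ over $Y$ to a pushforward over $C$:
$$
Rp_{1*}\RCHom(q_1^*\Phi(\CF_E),\CJ)\cong Rp_{\Pic*}\RCHom(p_C^*\CF_E,\CP)[1]=Rp_{\Pic*}(p_C^*\CF_E^*\otimes\CP)[1]=\Phi_\CP(\CF_E^*)[1].
$$
Tracking the shift from $\TE=\Phi(\CF_E)[1]$, which contributes a $[-1]$ under the contravariant $\RCHom$, one obtains $Rp_*q^*E(-1)\cong\Phi_\CP(\CF_E^*)$, hence $\CL_E[-1]\cong\Phi_\CP(\CF_E^*)$ and finally $\CL_E\cong\Phi_\CP(\CF_E^*)[1]$. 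As a consistency check, $\Phi_\CP(\CF_E^*)[1]$ really is a sheaf supported on $\phi_1(D_E)$: the zeroth cohomology sheaf $R^0p_{\Pic*}(p_C^*\CF_E^*\otimes\CP)$ is torsion-free yet supported on the proper jumping locus, so it vanishes, exactly as in the proofs of Lemma~\ref{phid01} and Lemma~\ref{cw}.

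The main obstacle is establishing the universal identification $\CJ\cong(\Phi\times\id)(\CP)[1]$ as an honest isomorphism of objects rather than merely fibrewise. On each fibre over $\CL\in\Pic^1C$ it is precisely the computation $J_L\cong\Phi(\phi_1(L))[1]$ of Lemma~\ref{phid01}; upgrading it to a global statement requires a base-change and torsion-freeness argument showing that both families are flat over $F(Y)$ and determined by their fibres, together with a careful matching of the residual twists (the Poincar\'e bundle is defined only up to pullback from $\Pic^1C$, whereas $\CS$ has been rigidified by fixing $\det\CS=\xi\boxtimes\CO_{Y_4}(-1)$). Once the twist is pinned down on a single fibre, the relative Hom-computation and Grothendieck duality are routine and yield the stated formula.
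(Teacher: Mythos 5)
Your argument is correct and is essentially the paper's own proof: the first claim is the same chain $\Ext^\bullet(\TE,J_L)\cong\Ext^\bullet(\Phi(\CF_E)[1],\Phi(\phi_1(L))[1])\cong\Ext^\bullet(\CF_E,\phi_1(L))$ fed into Proposition~\ref{tejl}, and your second part simply spells out what the paper compresses into the phrase ``the relative version of the above equality,'' namely the identification of $\CJ$ with the relative Fourier--Mukai image of the Poincar\'e bundle followed by relative full faithfulness, Grothendieck duality, and pushforward, with the shift bookkeeping ($\TE\cong\Phi(\CF_E)[1]$, so $\CL_E\cong\Phi_\CP(\CF_E^*)[1]$) matching the convention used in the paper's displayed computation. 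The normalization-of-$\CP$ issue you flag as the main obstacle is equally implicit in the paper, which only invokes the fibrewise property that $\CP$ restricts to $\CL$ on the fibre over $\CL$, so your treatment is if anything slightly more careful than the original.
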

\begin{proof}
Indeed, we have
$$
\Ext^\bullet(E,J_L) \cong
\Ext^\bullet(\TE,J_L) \cong 
\Ext^\bullet(\Phi(\CF_E)[1],\Phi(\CL)[1]) = 
\Ext^\bullet(\CF_E,\CL) = 
H^\bullet(C,\CF_E^*\otimes\CL)
$$
and we deduce the first part from Proposition~\ref{tejl}. Moreover, the relative version of the above equality
gives the second part as soon as we observe that the restriction of $\CP$ to the fiber of $C\times\Pic^1C$ over
the point of $\Pic^1C$ corresponding to $\CL$ is $\CL$ itself, so the RHS of the above formula computes
the (derived) restriction of $\Phi_\CP(\CF_E^*)$ to the corresponding point of $\Pic^1C$.
\end{proof}

The above Proposition allows to reinterpret Conjectures~\ref{jl-conj} and~\ref{jlines}.

\begin{corollary}
Assume that for any vector bundle $\CF$ on $C$ of rank $n$ and degree $0$ 
which satisfy~\eqref{cfsd}, \eqref{cfso}, and~\eqref{cfr} one has
$\Hom(\CF,\CL) = 0$
for generic $\CL \in \Pic^1(C)$. Then Conjecture~$\ref{jl-conj}$ is true for the Fano threefold $Y_4$.
\end{corollary}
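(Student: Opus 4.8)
The plan is to reduce everything to the genus-$2$ curve $C$ by means of the Fourier--Mukai equivalence $\Phi\colon\D^b(C)\xrightarrow{\sim}\CB_{Y_4}$ and the dictionary between lines on $Y_4$ and points of $\Pic^1(C)$ assembled in Proposition~\ref{ley4}. Since the statement is conditional, no new geometric input is needed beyond translating the hypothesis through the equivalence; essentially all of the substantive work has already been carried out in Theorem~\ref{miy4} and Proposition~\ref{ley4}.

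First I would take an arbitrary instanton $E$ of charge $n$ on $Y_4$ and pass to the associated bundle $\CF=\CF_E$ on $C$. By Theorem~\ref{miy4} this is exactly a vector bundle of rank $n$ and degree $0$ satisfying conditions \eqref{cfsd}, \eqref{cfso} and \eqref{cfr}; in other words $\CF_E$ lies in the family of bundles to which the hypothesis of the Corollary applies. Consequently the hypothesis yields $\Hom(\CF_E,\CL)=0$ for generic $\CL\in\Pic^1(C)$.

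Next I would invoke Proposition~\ref{ley4}, according to which the isomorphism $\phi_1\colon F(Y_4)\xrightarrow{\sim}\Pic^1(C)$ identifies the set $D_E$ of jumping lines of $E$ with the locus $\{\CL\in\Pic^1(C)\mid\Hom(\CF_E,\CL)\neq 0\}$. This rests on Proposition~\ref{tejl}, which characterises jumping lines by the non-vanishing of $\Hom(\TE,J_L)$, together with the identification $\Ext^\bullet(E,J_L)\cong\Ext^\bullet(\CF_E,\CL)$ for $\CL=\phi_1(L)$ established in that proof. Combining this with the previous step, the jumping locus corresponds under $\phi_1$ to a subset of $\Pic^1(C)$ on which $\Hom(\CF_E,\CL)$ vanishes generically, hence is contained in a proper closed subset.

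Finally, since $\phi_1$ is an isomorphism of varieties it transports genericity faithfully, so $D_E$ is contained in a proper closed subset of $F(Y_4)$; that is, the generic line on $Y_4$ is not jumping for $E$. As $E$ was an arbitrary instanton, Conjecture~\ref{jl-conj} follows for $Y_4$. The only point demanding any care --- which I regard as the sole (and mild) obstacle --- is to confirm that the vanishing genuinely transfers generically: one must check both that $\CF_E$ belongs to the family referred to by the hypothesis (supplied by Theorem~\ref{miy4}) and that ``generic $\CL$'' and ``generic $L$'' correspond under $\phi_1$ (immediate from its being an isomorphism). Everything else is a formal consequence of the equivalence $\Phi$.
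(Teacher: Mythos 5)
Your proof is correct and follows exactly the route the paper intends: the corollary is stated there as an immediate consequence of Theorem~\ref{miy4} (which puts $\CF_E$ in the family covered by the hypothesis) and Proposition~\ref{ley4} (which matches jumping lines with the locus $\Hom(\CF_E,\CL)\neq 0$ via $\phi_1$, using the dimension count $\dim\Hom(\TE,J_L)=\dim\Ext^1(\TE,J_L)$ from Proposition~\ref{tejl}, so that $\Hom(\CF_E,\CL)=0$ already forces $\Ext^\bullet(\CF_E,\CL)=0$). Nothing is missing; transporting genericity through the isomorphism $\phi_1$ is, as you say, immediate.
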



On the other hand, one can check that Conjecture~\ref{jlines} is true in this case.

\begin{proposition}
An instanton on $Y_4$ can be reconstructed from the pair $(D_E,\CL_E)$. 
In particular, Conjecture~$\ref{jlines}$ is true for Fano threefolds of degree $4$.
\end{proposition}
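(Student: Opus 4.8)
The plan is to reduce the reconstruction of the instanton $E$ to that of its associated bundle $\CF_E$ on $C$, and then to recover $\CF_E$ by inverting the Poincar\'e Fourier--Mukai transform. By Theorem~\ref{miy4} the assignment $E \mapsto \CF_E$ identifies $\CMI_n(Y_4)$ with a moduli space of simple bundles on $C$ satisfying~\eqref{cfsd}--\eqref{cfr}; in particular $E$ is determined up to isomorphism by $\CF_E$, and conversely $\CF_E$ produces an instanton via Theorem~\ref{ftoe}. Hence it suffices to recover the bundle $\CF_E$ from the pair $(D_E,\CL_E)$.

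First I would translate the data $(D_E,\CL_E)$ into a single object of $\D^b(\Pic^1C)$. Using the isomorphism $\phi_1:F(Y_4)\xrightarrow{\ \sim\ }\Pic^1C$ of Lemma~\ref{phid01}, we regard $D_E$ as a closed subscheme $i:D_E\hookrightarrow\Pic^1C$ carrying the coherent sheaf $\CL_E$. By Proposition~\ref{ley4} one has $\CL_E\cong\Phi_\CP(\CF_E^*)[1]$, where $\Phi_\CP:\D^b(C)\to\D^b(\Pic^1C)$ is the Fourier--Mukai transform with kernel the Poincar\'e bundle $\CP$ on $C\times\Pic^1C$. Consequently, forming $i_*\CL_E[-1]$ --- an operation that uses only the given pair --- recovers exactly the object $\Phi_\CP(\CF_E^*)\in\D^b(\Pic^1C)$. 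Thus everything reduces to inverting $\Phi_\CP$ on this object.

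The key structural fact is that $\Phi_\CP$ is fully faithful, and I would establish this by factoring it through the Jacobian. The Abel--Jacobi embedding $\iota:C\hookrightarrow\Pic^0C$ gives a fully faithful pushforward $\iota_*:\D^b(C)\to\D^b(\Pic^0C)$, and since $C$ has genus $2$ the abelian surface $\Pic^0C$ is principally polarized, hence identified with the dual abelian variety underlying $\Pic^1C$; under this identification $\CP$ is, up to the translation by a fixed degree-$1$ line bundle, the restriction along $\iota\times\id$ of the Poincar\'e bundle of $\Pic^0C$. Therefore $\Phi_\CP$ is the composite of $\iota_*$ with the classical Mukai equivalence $\D^b(\Pic^0C)\xrightarrow{\ \sim\ }\D^b(\Pic^1C)$, so it is fully faithful and admits a quasi-inverse on its image. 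Applying this quasi-inverse to $\Phi_\CP(\CF_E^*)=i_*\CL_E[-1]$ recovers $\CF_E^*$, dualizing yields $\CF_E$, and Theorem~\ref{miy4} then produces the instanton $E$; this establishes Conjecture~\ref{jlines} for $Y_4$. The one step requiring care --- the main obstacle --- is precisely this identification of $\Phi_\CP$ with the composite above: matching the Poincar\'e bundle on $C\times\Pic^1C$ (normalized so that its fibers are the degree-$1$ line bundles $\CL$) with the restriction of the abelian Poincar\'e bundle after a translation, and verifying full faithfulness of the Abel--Jacobi pushforward. Once this is in place the reconstruction is purely formal.
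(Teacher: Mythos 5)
Your reduction is the right one and matches the paper's first step: via Theorem~\ref{miy4} it suffices to recover $\CF_E$ from $(D_E,\CL_E)$, and via Proposition~\ref{ley4} and the isomorphism $\phi_1$ of Lemma~\ref{phid01} the pair encodes exactly the object $\Phi_\CP(\CF_E^*)\cong i_*\CL_E[-1]$ in $\D^b(\Pic^1C)$. But the structural claim you then rest everything on is false: the pushforward $\iota_*:\D^b(C)\to\D^b(\Pic^0C)$ along the Abel--Jacobi embedding is \emph{not} fully faithful. For a closed embedding of a curve into a surface there are always extra $\Ext$'s: since $\omega_{\Pic^0C}$ is trivial, Serre duality on the abelian surface gives $\Ext^2(\iota_*\CO_C,\iota_*\CO_C)\cong\Hom(\iota_*\CO_C,\iota_*\CO_C)^*=\kk$, whereas $\Ext^2_C(\CO_C,\CO_C)=0$; equivalently, $L\iota^*\iota_*F$ is not $F$ but an extension involving $F\otimes N^*_{C/\Pic^0C}[1]$. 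Consequently $\Phi_\CP$, which is indeed the Mukai equivalence composed with $\iota_*$ (your factorization is correct, and the paper uses it too), is not fully faithful, and the ``quasi-inverse on its image'' you invoke does not exist; a functor that is not fully faithful need not even reflect isomorphism classes, so the final step of your argument has no justification as written. You correctly flagged this as the step requiring care, but the care cannot consist in verifying full faithfulness, because full faithfulness fails.

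What rescues the conclusion --- and is the actual content of the paper's proof --- is an explicit computation of the composition $\Phi_{\CP^*}\circ\Phi_\CP:\D^b(C)\to\D^b(C)$. Using the restriction of the Poincar\'e bundle from $\Pic^1C\times\Pic^1C$, the fact that $\Phi_{\CP^*}$ is (up to a shift by $2$) the adjoint of the Mukai equivalence, and Mukai's theorem, the kernel of $\Phi_{\CP^*}\circ\Phi_\CP$ is the derived restriction to $C\times C$ of the diagonal structure sheaf of $\Pic^1C\times\Pic^1C$, shifted by $-2$; this restriction is a cone of a morphism $\Delta_*\CO_C[-2]\to\Delta_*N^*_{C/\Pic^1C}$. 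For a vector bundle $F$ the resulting connecting map is a class in $\Ext^2(F,F\otimes N^*)=H^2(C,F^*\otimes F\otimes N^*)$, which vanishes because $C$ is a curve, whence $\Phi_{\CP^*}(\Phi_\CP(F))\cong F[-1]\oplus(F\otimes N^*)$ and $F\cong\CH^1(\Phi_{\CP^*}(\Phi_\CP(F)))$. Note the extra summand $F\otimes N^*$ is precisely the failure of full faithfulness made visible; the reconstruction works not through a quasi-inverse but through applying the adjoint functor and extracting a single cohomology sheaf. If you replace your full-faithfulness step by this computation, the rest of your argument goes through verbatim.
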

\begin{proof}
Since we know that an instanton $E$ can be reconstructed from the associated vector bundle $\CF_E$ on $C$
(Theorem~\ref{miy4}), and since $\CL_E$ is the shift of the Fourier--Mukai image of $\CF_E^*$
with respect to the Fourier--Mukai transform with kernel given by the Poincare bundle, it suffices to check
that one can reconstruct a vector bundle on a curve $C$ from its Fourier--Mukai transform in $\D^b(\Pic^1 C)$.

For this we compute the composition of Fourier--Mukai transforms $\Phi_{\CP^*}\circ\Phi_\CP:\D^b(C) \to \D^b(C)$.
Note that $\Pic^1C$ is a self-dual abelian variety and the Poincare bundle on $C\times \Pic^1C$ is the restriction
of the Poincare bundle from $\Pic^1C\times\Pic^1C$ which is considered as a product of an abelian variety and its dual.
Moreover, since the canonical class of an abelian variety is trivial, the Fourier--Mukai transform 
$\D^b(\Pic^1 C) \to \D^b(\Pic^1 C)$ with the kernel given by the dual of the Poincare bundle is the adjoint (shifted by 2)
of the original Fourier--Mukai functor. Since the Fourier--Mukai functor between the derived categories of $\Pic^1C$ is
an equivalence (see~\cite{Mu}), the composition with the left adjoint functor is the identity, hence the kernel giving the functor 
$\Phi_{\CP^*}\circ\Phi_\CP:\D^b(C) \to \D^b(C)$ is the (derived) restriction of the structure sheaf of the diagonal
on $\Pic^1C\times\Pic^1C$ shifted by $-2$. The above restriction is very easy to compute, it is isomorphic to a cone
of a morphism $\Delta_*\CO_C[-2] \to \Delta_*N^*_{C/\Pic^1C}$ on $C\times C$ (here $\Delta:C \to  C\times C$ is the diagonal
embedding). In particular, it follows that for any vector bundle $F$ on $C$ we have a distinguished triangle
$$
F[-2] \to F\otimes N^*_{C/\Pic^1 C} \to \Phi_{\CP^*}(\Phi_\CP(F)).
$$
Note that the map $F[-2] \to F\otimes N^*$ is given by an element in $\Ext^2(F,F\otimes N^*) = H^2(C,F^*\otimes F\otimes N^*)$.
Since $C$ is a curve this space is zero, hence we have
$$
\Phi_{\CP^*}(\Phi_\CP(F)) \cong F[-1] \oplus F\otimes N^*.
$$
This shows that $F \cong \CH^1(\Phi_{\CP^*}(\Phi_\CP(F)))$ can be reconstructed from $\Phi_\CP(F)$.
Applying this to $F = \CF_E$ we deduce the Proposition.
\end{proof}

%
%
%
%
%
%
%

\section{Further remarks}\label{s_further}

One can continue research in several directions. First of all one can consider Fano threefolds of index~2 and degree $\le 3$.

\subsection{Fano threefolds of degree 3}

Let $Y_3$ be a Fano threefold of index 2 and degree 3, that is a cubic threefold in $\PP^4$.
There are at least two approaches to the description of the category $\CB_{Y_3}$.
First of all, it is proved in~\cite{K03} that $\CB_{Y_3}$ is equivalent to the nontrivial component
of the derived category of $X_{14}$, a certain Fano threefold of index $1$ and degree $14$ 
which can be associated with $Y_3$ (by the way to construct $X_{14}$ from $Y_3$ one need to choose
a minimal instanton on $Y_3$). So, one can describe instantons on $Y_3$ in terms of vector bundles on $X_{14}$.
This approach may give some interesting results, but it does not look as a way to simplify the question.
The manifold $X_{14}$ does not look more simple than $Y_3$ itself, so it is doubtful that it would be easier
to study vector bundles on $X_{14}$ than on $Y_3$.

Another description of $\CB_{Y_3}$ can be given as follows. Consider a line on $Y_3$ and a projection
from this line $Y_3 \dashrightarrow \PP^2$. It is a conic bundle, so one can consider the associated sheaf $\CC_0$
of even parts of Clifford algebras on $\PP^2$. One can check that $\CB_{Y_3}$ is equivalent to a semiorthogonal
component of the derived category of sheaves of $\CC_0$-modules on $\PP^2$. This is more promising, since
$\PP^2$ has dimension smaller than $Y_3$, so one can hope to have a grip on the structure of the moduli space
of instantons. I would also like to mention that this approach to the description of the category $\CB_{Y_3}$
was used in~\cite{BMMS}.

\subsection{Fano threefolds of degree 2}

Let $Y_2$ be a Fano threefold of index 2 and degree 2, that is a double covering of $\PP^3$
ramified in a smooth quartic surface. Then the category $\CB_{Y_2}$ has the following interesting
property --- its Serre functor is isomorphic to the composition of the shift by 2 with the action
of the involution of the double covering. This behavior is very similar to the behavior of the Serre
functor of Enriques surfaces. And in fact, conjecturally the derived categories of some Enriques
surfaces can be obtained as specializations of $\CB_{Y_2}$ for very special double coverings 
known as Artin--Mumford double solids, see~\cite{IK} for more details. I think it may be interesting
to investigate what kind of moduli space on  Enriques surface appears in this way.

\subsection{Matrix factorizations}

For Fano threefolds which can be described as hypersurfaces in weighted projective spaces 
(i.e. those of degree $3$, $2$ and $1$) the category $\CB_{Y}$ can be also described
as the category of graded matrix factorizations of the equation of the hypersurface, see~\cite{Or09}.
It may be interesting to describe the corresponding moduli spaces of matrix factorizations.

\subsection{Minimal instantons}

Another interesting question is to investigate the moduli spaces of minimal instantons
on Fano threefolds of index $2$. In case of a cubic threefold $Y_3$ this moduli spaces
was investigated in~\cite{MT} and~\cite{K03}. Moreover, it was shown in~\cite{K03} that
in this case minimal instantons provide a relation of cubic threefolds with Fano threefolds
of index $1$ and degree $14$. Because of this it would be very interesting to understand 
the geometry of minimal instantons and their moduli spaces for other $Y_d$.


\begin{thebibliography}{XXXX}

\bibitem[ADHM]{ADHM} M. Atiyah, V. Drinfeld, N. Hitchin, Yu. Manin,
{\em Construction of Instantons},
Phys. Lett. A65 (1978) 185-187

\bibitem[BMMS]{BMMS} M. Bernardara, E. Macri, S.  Mehrotra, P. Stellari,
{\em A categorical invariant for cubic threefolds},
Advances in Mathematics Volume 229, Issue 2, 30 January 2012, Pages 770-803 

\bibitem[B]{B} A. Bondal,
{\em Representations of associative algebras and coherent sheaves},
(Russian)  Izv. Akad. Nauk SSSR Ser. Mat. {\bf 53} (1989), no. 1, 25--44;
translation in  Math. USSR-Izv. {\bf 34}  (1990),  no. 1, 23--42.

\bibitem[BK]{BK} A. Bondal, M. Kapranov,
{\em Representable functors, Serre functors, and reconstructions} ,
(Russian)  Izv. Akad. Nauk SSSR Ser. Mat. {\bf 53} (1989), no. 6, 1183--1205, 1337;
translation in  Math. USSR-Izv. {\bf 35} (1990), no. 3, 519--541.

\bibitem[BO1]{BO1} A. Bondal, D. Orlov,
{\em Semiorthogonal decomposition for algebraic varieties},
preprint math.AG/9506012.

\bibitem[BO2]{BO2} A. Bondal, D. Orlov,
{\em Reconstruction of a variety from the derived category and groups of autoequivalences},
Compositio Mathematica, 125:3 (2001), 327--344.

\bibitem[Fa]{Fa} D. Faenzi,
{\em Even and odd instanton bundles on Fano threefolds of Picard number $1$},
preprint math.AG/1109.3858.

\bibitem[Fu]{Fu} T. Fujita,
{\em On the structure of polarized varieties with A-genera zero},
J. Fac. Sci. Univ Tokyo. Sec. IA. 22, 103--115.

\bibitem[Ho]{Hop} H. Hoppe, 
{\em Generischer Spaltungstyp und zweite Chernklasse stabiler Vektorraumb\"undel vom Rang $4$ auf $\PP^4$}, 
Math. Z. 187 (1984), no. 3, 345--360.

\bibitem[IK]{IK} C. Ingalls, A. Kuznetsov,
{\em On nodal Enriques surfaces and quartic double solids},
preprint math.AG/1012.3530.


\bibitem[IP]{IP} V. Iskovskikh, Yu. Prokhorov,
{\em Fano varieties},
Algebraic geometry, V, 1--247,
Encyclopaedia Math. Sci., 47, Springer, Berlin, 1999.

\bibitem[KKO]{KKO} A. Kapustin, A. Kuznetsov, D. Orlov,
{\em Noncommutative instantons and twistor transform},
Comm. Math. Phys., 221:2 (2001), 385--432.



\bibitem[K03]{K03} A. Kuznetsov,
{\em Derived categories of cubic and $V_{14}$ threefolds},
Proc. V.A.Steklov Inst. Math, V.~246 (2004), p.~183--207;
preprint math.AG/0303037.

%

\bibitem[K08a]{K08a} A. Kuznetsov,
{\em Derived categories of quadric fibrations and intersections of quadrics},
Advances in Mathematics,  V.~218 (2008), N.~5, 1340-1369.


\bibitem[K09]{K09} A. Kuznetsov,
{\em Derived categories of Fano threefolds},
Proc. V.A.Steklov Inst. Math, V. 264 (2009), p. 110--122.

\bibitem[MT]{MT} D. Markushevich, A. Tikhomirov, 
{\em The Abel-Jacobi map of a moduli component of vector bundles on the cubic threefold}, 
J. Algebraic Geom. 10 (2001), no. 1, 37--62.

\bibitem[Mu]{Mu} S. Mukai,
{\em Duality between $D(X)$ and $D(\hat X)$ with its application to Picard sheaves}, 
Nagoya Mathematical Journal 81: 153–-175.





\bibitem[OSS]{OSS} C. Okonek, M. Schneider, H. Spindler,
{\em Vector bundles on complex projective spaces},
Progress in Mathematics, 3. Birkha\"user, Boston, Mass., 1980. vii+389 pp. ISBN: 3-7643-3000-7.

\bibitem[OS]{OS} C. Okonek, H. Spindler,
{\em Mathematical instanton bundles on $\PP^{2n+1}$},
Journal reine angew. Math. 364, 35--50 (1986).

\bibitem[Or91]{Or91} D. Orlov,
{\em Exceptional set of vector bundles on the variety $V_5$},
(Russian) Vestnik Moskov. Univ. Ser. I Mat. Mekh. 1991, no. 5, 69--71;
translation in  Moscow Univ. Math. Bull.  {\bf 46}  (1991), no. 5, 48--50.


\bibitem[Or09]{Or09} D. Orlov, 
{\em Derived categories of coherent sheaves and triangulated categories of singularities}, 
Algebra, arithmetic, and geometry: in honor of Yu. I. Manin. Vol. II, 503--531, 
Progr. Math., 270, Birkhuser Boston, Inc., Boston, MA, 2009.

\bibitem[P]{P} M. Popa,
{\em Generalized theta linear series on moduli spaces of vector bundles on curves},
to appear in the Handbook of Moduli, G. Farkas and I. Morrison eds. 

\bibitem[R]{R} M. Raynaud,
{\em Sections des fibr\'es vectoriels sur une courbe},
Bull. Soc. Math. France 110 (1982), 103--125.

\bibitem[ST]{ST} H. Spindler, G. Trautmann,
{\em Special instanton bundles on $\PP^{2n+1}$, their geometry and their moduli},
Math. Ann. 286, 559--592 (1990).

\bibitem[W]{W} C.T.C. Wall,
{\em Nets of Quadrics, and Theta-Characteristics of Singular Curves},
Phil. Trans. R. Soc. Lond. A 2 May 1978 vol. 289 no. 1357 229--269.




\end{thebibliography}
\end{document}